 \definecolor{darkred}{HTML}{993333}
\newcommand{\arxiv}[1]{\href{http://arxiv.org/abs/#1}{\tt
    arXiv:\nolinkurl{#1}}}
\theoremstyle{plain}
\newtheorem{thm}{Theorem}[section]
\newtheorem{lemma}[thm]{Lemma}
\newtheorem{prop}[thm]{Proposition}
\newtheorem{cor}[thm]{Corollary}
\newtheorem{df-prop}[thm]{Definition-Proposition}
\newtheorem{itheorem}{Theorem}
\theoremstyle{definition}
\newtheorem{df}[thm]{Definition}
\theoremstyle{remark}
\newtheorem{rk}[thm]{Remark}
\newtheorem{ex}[thm]{Example}
\def\A{\mathrm{A}}
\def\B{\mathrm{B}}
\def\C{\mathrm{C}}
\def\D{\mathrm{D}}
\def\G{\mathrm{G}}
\def\H{\mathrm{H}}
\def\P{\mathrm{P}}
\def\S{\mathrm{S}}
\def\T{\mathrm{T}}
\def\V{\mathrm{V}}
\def\W{\mathrm{W}}
\def\X{\mathrm{X}}
\def\Y{\mathrm{Y}}
\def\bbC{\mathbb{C}}
\def\bbF{\mathbb{F}}
\def\bbH{\mathbb{H}}
\def\bbI{\mathbb{I}}
\def\bbN{\mathbb{N}}
\def\bbQ{\mathbb{Q}}
\def\bbR{\mathbb{R}}
\def\bbU{\mathbb{U}}
\def\bbV{\mathbb{V}}
\def\bbX{\mathbb{X}}
\def\bbZ{\mathbb{Z}}
\def\scrE{\mathscr{E}}
\def\scrF{\mathscr{F}}
\def\frakA{\mathfrak{A}}
\def\frakB{\mathfrak{B}}
\def\frakC{\mathfrak{C}}
\def\frakF{\mathfrak{F}}
\def\frakP{\mathfrak{P}}
\def\frakS{\mathfrak{S}}
\def\frakU{\mathfrak{U}}
\def\calA{\mathcal{A}}
\def\calB{\mathcal{B}}
\def\calC{\mathcal{C}}
\def\calE{\mathcal{E}}
\def\calF{\mathcal{F}}
\def\calH{\mathcal{H}}
\def\calK{\mathcal{K}}
\def\calM{\mathcal{M}}
\def\calO{\mathcal{O}}
\def\calP{\mathcal{P}}
\def\calU{\mathcal{U}}
\def\fraka{\mathfrak{a}}
\def\frakg{\mathfrak{g}}
\def\frakm{\mathfrak{m}}
\def\frakp{\mathfrak{p}}
\def\frakt{\mathfrak{t}}
\def\bfa{\mathbf{a}}
\def\bfb{\mathbf{b}}
\def\bfe{\mathbf{e}}
\def\bff{\mathbf{f}}
\def\bfi{\mathbf{i}}
\def\bfk{\mathbf{k}}
\def\bfo{\mathbf{o}}
\def\bfq{\mathbf{q}}
\def\bfB{\mathbf{B}}
\def\bfH{\mathbf{H}}
\def\bfL{\mathbf{L}}
\def\bfM{\mathbf{M}}
\def\bfS{\mathbf{S}}
\def\bfU{\mathbf{U}}
\def\bfV{\mathbf{V}}
\def\simto{\overset{\sim}\to}
\def\l{\langle}
\def\r{\rangle}
\def\al{\alpha}
\def\lam{\lambda}
\def\vep{\varepsilon}
\def\mod{\operatorname{-mod}\nolimits}
\def\proj{\operatorname{-proj}\nolimits}
\def\bgmod{\operatorname{-\textbf{gmod}-}\nolimits}
\def\Hom{\operatorname{Hom}\nolimits}
\def\End{\operatorname{End}\nolimits}
\def\ext{\operatorname{ext}\limits}
\def\id{\operatorname{id}\nolimits}
\def\pr{\operatorname{pr}\nolimits}
\def\cyc{\operatorname{cyc}\nolimits}
\def\si{\operatorname{if}\nolimits}
\def\sinon{\operatorname{otherwise}\nolimits}
\def\SO{\operatorname{SO}\nolimits}
\def\bbIj{\mathbb{I}^\jmath}
\def\bfUj{\mathbf{U}^\jmath}
\def\bdUj{\dot{\mathbf{U}}^\jmath}
\def\wU{\widetilde{\frakU}^\jmath}
\def\frakUj{\frakU^\jmath}
\newcommand{\excise}[1]{}
\newcommand{\thal}{{}^{\theta}\!\alpha}
\def\fraksl{\mathfrak{sl}}
\def\gr{\operatorname{gr}}
\def\iGr{\imath\mathrm{Gr}}
\newcommand{\hf}{\diamond}
\newcommand{\htodo}{\todo[inline,color=red!20]}
\newcommand{\ptodo}{\todo[inline,color=orange!20]}
\title[Categorification of quantum symmetric pairs I]{Categorification of quantum symmetric pairs I}
 \author[Bao, Shan, Wang and Webster]{Huanchen Bao, Peng Shan,
   Weiqiang Wang and Ben Webster}
\address[huanchen@math.umd.edu]{Department of Mathematics, University of Maryland, College Park, MD 20742, USA}
\address[peng.shan@math.u-psud.fr, pshan@math.tsinghua.edu.cn]{
Laboratoire de Math\'ematiques d'Orsay, Univ.~Paris-Sud, CNRS, 91405 Orsay, France
\newline
Yau Mathematical Sciences Center, Tsinghua University, 100084, Beijing, China
}
\address[ww9c@virginia.edu]{ Department of Mathematics, University of Virginia, Charlottesville, VA 22904, USA}
\address[ben.webster@uwaterloo.ca]{ Department of Mathematics, University of Virginia, Charlottesville, VA 22904 \newline
{\em Current address}: Dept. of Pure Mathematics, University of
Waterloo \&
Perimeter Institute for Theoretical Physics, Waterloo, ON, Canada}
\begin{document}

\numberwithin{equation}{section}

\begin{abstract}
We categorify a coideal subalgebra of the quantum group of $\mathfrak{sl}_{2r+1}$ by introducing a $2$-category analogous to the one defined by Khovanov-Lauda-Rouquier, and show that self-dual indecomposable
$1$-morphisms categorify the canonical basis of this
algebra. This allows us to define a categorical action of this
coideal algebra on the categories of modules over cohomology rings
of partial flag varieties
and on the category $\mathcal{O}$ of type $\B/\C$.
\end{abstract}

\maketitle

\keywords{}

\setcounter{tocdepth}{2}
\tableofcontents

\section{Introduction}

\renewcommand{\theitheorem}{\Alph{itheorem}}

\newcommand{\frakso}{\mathfrak{so}}
\newcommand{\fraksp}{\mathfrak{sp}}
\newcommand{\Spec}{\operatorname{Spec}}

\subsection{Motivation}\,
One important development in representation theory in the last decade is the theory of categorical actions of Lie algebras, in the sense of Chuang and Rouquier \cite{CR}.
The basic object of this theory is a 2-category $\dot\frakU(\frakg)$ associated with a semisimple or Kac-Moody
Lie algebra $\frakg$,  introduced independently in two different presentations by Khovanov-Lauda \cite{KLIII} and Rouquier \cite{R} 
(these were shown to be equivalent by Brundan \cite{B}).  
The notion of a  categorical action of $\frakg$ is made precise by the structure of $2$-morphisms in $\dot\frakU(\frakg)$, 
and it provides an algebraic way of understanding deep structures of
the quantum group $\bfU(\frakg)$ such as canonical bases \cite{Lu92, Lu94, Ka93}. 


\smallskip

One case of particular interest is when  $\frakg=\mathfrak{sl}_r$, where Schur duality plays a basic role.  The Grothendieck group of $\dot\frakU(\mathfrak{sl}_r)$ coincides with the modified quantum group $\dot\bfU=\dot\bfU(\mathfrak{sl}_r)$, 
and the classes of indecomposable $1$-morphisms are identified with Lusztig's canonical bases \cite{L, WebCB}. 
In the quantum setting, Schur duality relates the quantum group $\bfU(\mathfrak{sl}_r)$ to Hecke algebras of type $\A$  \cite{Jim}.
Beilinson-Lusztig-MacPherson \cite{BLM90} constructed the modified quantum group $\dot\bfU(\mathfrak{sl}_r)$
as a limit of a family of finite-dimensional algebras known as Schur algebras, and they realized the Schur algebras 
using functions on the points of $r$-step partial flag varieties over a finite field.
The BLM construction was then adapted in  \cite{GL92} to realize quantum Schur duality. 
This geometric realization of  $\dot\bfU(\mathfrak{sl}_r)$ has been lifted to 
a categorical action of $\dot\frakU(\mathfrak{sl}_r)$  in  \cite{KLIII}.
This construction also leads to a categorified Schur algebra, which is
intimately related to singular Soergel bimodules \cite{WilSSB, MSV, Webcomparison}.  

\smallskip

Schur duality has been generalized in \cite{BW13} in connection
to the Kazhdan-Lusztig theory for the BGG category $\calO$ of type
$\B/\C$. While other generalizations
of Schur duality have changed $\mathfrak{sl}_r$ to a different
classical Lie algebra \cite{BMW1,BMW2},
this construction replaces the Hecke algebra from type $\A$ with one of
type $\B/\C$.  Like in type A, this construction admits a BLM-type realization \cite{BKLW}, where 
$\bfU({\fraksl}_r)$ is replaced by the coideal algebra $\bfU^\jmath$ associated with the Cartan involution composed with the diagram involution of the $\A_{2r}$ Dynkin diagram. 
Note that $(\bfU(\mathfrak{sl}_{2r+1}), \bfU^\jmath)$ forms a quantum symmetric pair in the sense of \cite{Le03}. 
The commutant of the Hecke algebra  of type $\B/\C$ in this setting  is called $\jmath$Schur algebra.
They are naturally quotients of $\bfU^\jmath$, and admit a geometric realization
in terms of partial flag varieties of type $\B/\C$ over finite fields. The idempotented coideal algebra can be realized as a limit of the family of $\jmath$Schur algebras. 
Based on this construction, it was shown in \cite{LW15} that
$\dot{\bfU}^\jmath$ admits a canonical basis with desirable positivity
properties, analogous to Lusztig's canonical basis for $\dot{\bfU}$ as
defined in \cite[Chapter~ 25]{Lu94}.  The $\calA=\bbZ[q,q^{-1}]$-span of this basis gives an integral form ${}_\calA\dot{\bfU}^\jmath$.

By analogy with the ``weak $\mathfrak{sl}_2$-categorifications'' discussed in \cite[5.1]{CR}, 
we can think of the construction of \cite{BW13} as a weak categorical action of $\bfU^\jmath$.  
The construction of \cite{BKLW} shows the existence of a similar weak action of $\bfU^\jmath$ on constructible sheaves on partial flag varieties of type $\B/\C$.  
Following the analogy with type $\A$, it is natural to expect that we can define a strong  categorical action of $\bfU^\jmath$ 
by a $2$-category $\dot\frakU^\jmath$ analogous to $\dot\frakU$. 

The existence of this 2-category together with its basic 2-representations is the principal result of this paper.

\smallskip
\subsection{Main results}\,

To explain the definition of our $2$-category $\dot\frakU^\jmath$, let us first give a description of $\dot{\bfU}^\jmath$.  
 The idempotented algebra $\dot{\bfU}^\jmath$ can be viewed as a category with objects indexed by a weight lattice $\X_\jmath$  
 and morphisms generated by
\[\calE_i:\lambda\to\lambda+\alpha_i\quad\text{and}\quad \calF_i:\lambda\to\lambda-\alpha_i,
 \qquad \text{ for $i=\hf,\hf+1, \ldots, r-\hf$ with $\hf=\frac{1}{2}$.}
\] See Section \ref{coideal-subalgebra} for a precise definition.
For $i>\hf$, the generators $\calE_i$, $\calF_i$ satisfy the same relations as standard Chevalley generators in $\dot\bfU$.  
However, the relations are different when $i=\hf$: there is no relation between length 2 monomials in $\calE_\hf$ and $\calF_\hf$.   
Instead they satisfy inhomogeneous $\jmath$-Serre relations \eqref{eq:jserre1} and \eqref{eq:jserre2}, whose summands are no longer canonical basis elements.
In particular, $\bfU^\jmath$ does not have  a triangular decomposition like that of $\bfU$.

To define $\dot\frakU^\jmath$, we follow the approach of \cite{L, KLIII}: 
we already know of a weak $\dot\frakU^\jmath$-action on the modules over cohomology rings of partial flag varieties, 
so we can extract the relations in $\dot\frakU^\jmath$ based on computations in this category.  
While the $2$-morphisms acting on $\calE_i$ and $\calF_i$ for $i\neq\hf$ satisfy the same relations as in $\dot\frakU$, new relations appear for $i=\hf$. 

One can think of these new relations as combining the KLR relations for the elements $\{E_{\hf},E_{-\hf}\}$ in $\bfU^+(\mathfrak{sl}_{3})$ 
and for $\{E_{\hf},F_{\hf}\}$ in $\bfU(\mathfrak{sl}_2)$.  
In particular, the usual bicross relations which categorify the commutator relation for $\calE_i$ and $\calF_i$ (\ref{bicross down})--(\ref{bicross up}) combine with (\ref{now}) 
to give  \eqref{j-bicross down}--\eqref{j-bicross up}.
Similarly, there is a new relation involving a triple point \eqref{Pi=1} which combines the relation of \cite[Proposition~5.8]{L} with (\ref{qhalast}); 
this relation is used in proving the categorified $\jmath$-Serre relations, as we will explain in Section~ \ref{sec:Grothendieck} and Appendix~\ref{sec:jserre}. 
The bubble relations and bubble slide formulas for $i =\hf$ here are also somewhat different from \cite{L}.  

The complexity of this last triple point relation makes it difficult to verify the relations of $\dot\frakU^\jmath$ directly (much like the presentation of $\dot\frakU$ in \cite{KLIII}). 
However, we show that if $\calE_i$ and $\calF_i$ satisfy the relations of the coideal subalgebra at the decategorified level, then \eqref{Pi=1} 
is forced by the other relations in $\dot\frakU^\jmath$; see Proposition \ref{df-prop}.
The reader can think of Proposition~ \ref{df-prop} as a weak version of ``control from $K_0$'' theorems, such as \cite[5.27]{R}. 
%

\smallskip
Now, let us describe the main results of this paper.   The first is
that $\dot\frakU^\jmath$ is a categorification of $\dot{\bfU}^\jmath$, in
the same sense as the relationship of $\dot\frakU$ and $\dot{\bfU}$.  


\begin{itheorem}[Theorem~\ref{thm:maindecat}]
\label{th:2-cat}
  There is an algebra isomorphism between the Grothendieck group of $\dot\frakU^\jmath$
  and ${}_\calA\dot{\bfU}^\jmath$, with indecomposable self-dual 1-morphisms matching the canonical basis of ${}_\calA\dot{\bfU}^\jmath$.  
  \end{itheorem}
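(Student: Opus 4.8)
The plan is to establish the isomorphism by the standard triangle of arguments familiar from the type $\A$ case of Khovanov--Lauda \cite{KLIII}, adapted to account for the asymmetric behavior at the node $i=\hf$. First I would construct an algebra homomorphism $\gamma\colon {}_\calA\dot{\bfU}^\jmath \to K_0(\dot\frakU^\jmath)$ by sending the generators $\calE_i, \calF_i$ and the idempotents $1_\lambda$ to the classes of the corresponding $1$-morphisms. The fact that this is well defined amounts to checking that all defining relations of $\dot{\bfU}^\jmath$ (the Chevalley relations for $i>\hf$, the commutator/bicross relations, and the inhomogeneous $\jmath$-Serre relations \eqref{eq:jserre1}--\eqref{eq:jserre2}) are satisfied in $K_0$. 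For $i \neq \hf$ this is identical to the type $\A$ verification. For $i=\hf$ the decategorified bicross relations follow from \eqref{j-bicross down}--\eqref{j-bicross up} together with the bubble relations, and the $\jmath$-Serre relations follow from the categorified $\jmath$-Serre isomorphisms established via \eqref{Pi=1}, i.e.\ from the material of Section~\ref{sec:Grothendieck} and Appendix~\ref{sec:jserre}; by Proposition~\ref{df-prop} one knows \eqref{Pi=1} holds in $\dot\frakU^\jmath$, so these categorical identities are available.

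Next I would construct a homomorphism in the other direction, or rather argue surjectivity and injectivity of $\gamma$ separately. Surjectivity is the easier half: $K_0(\dot\frakU^\jmath)$ is generated as an algebra by the classes of $\calE_i 1_\lambda$ and $\calF_i 1_\lambda$, because every $1$-morphism of $\dot\frakU^\jmath$ is by construction a direct sum of grading shifts of composites of these (the generating $1$-morphisms), so $\gamma$ hits a spanning set. For injectivity I would use the weak categorical action on modules over cohomology rings of partial flag varieties of type $\B/\C$, which is already known from \cite{BKLW,BW13}: this gives a functor $\dot\frakU^\jmath \to$ (bimodule $2$-category) whose effect on $K_0$ is, by the BLM-type realization of \cite{BKLW}, precisely the regular-type representation of $\dot{\bfU}^\jmath$ via $\jmath$Schur algebras. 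Since that representation is faithful in the appropriate idempotented/limit sense, the composite ${}_\calA\dot{\bfU}^\jmath \xrightarrow{\gamma} K_0(\dot\frakU^\jmath) \to \prod \text{($\jmath$Schur algebras)}$ is injective, forcing $\gamma$ to be injective.

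To upgrade the isomorphism of algebras to the statement about bases, I would show that the classes $[b]$ of self-dual indecomposable $1$-morphisms $b$ form an $\calA$-basis of $K_0(\dot\frakU^\jmath)$ that is \emph{almost orthonormal} and \emph{bar-invariant} with respect to a semilinear involution and a sesquilinear pairing on $K_0$ coming from graded Hom spaces in $\dot\frakU^\jmath$; these are categorical shadows of the bar involution and the inner product on $\dot{\bfU}^\jmath$ under which Lusztig's $\jmath$-canonical basis is characterized in \cite{LW15}. Matching these structures across $\gamma$ (bar-invariance comes from the contravariant duality on $\dot\frakU^\jmath$, almost-orthonormality from idempotent completion and the fact that $\Hom$ between distinct indecomposables lives in strictly positive degrees) and invoking the uniqueness theorem for the $\jmath$-canonical basis from \cite{LW15}, I conclude that $\gamma$ carries each canonical basis element to $\pm[b]$ for a unique self-dual indecomposable $b$, and a positivity/triangularity argument removes the sign.

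The main obstacle I expect is the injectivity step, more precisely controlling the size of $K_0(\dot\frakU^\jmath)$ from above: one must rule out that $\dot\frakU^\jmath$ has ``too few'' morphisms, so that $K_0$ could be strictly larger than ${}_\calA\dot{\bfU}^\jmath$. In the type $\A$ case this is handled by an explicit basis theorem for the morphism spaces of $\dot\frakU$ (a spanning set coming from a normal form for diagrams, shown to be a basis by comparison with the faithful action). Here the presence of the triple-point relation \eqref{Pi=1} and the more complicated bubble slide formulas at $i=\hf$ make the diagrammatic normal form and the associated basis theorem substantially more delicate; getting a clean spanning set for $\Hom$ spaces in $\dot\frakU^\jmath$, and then matching its cardinality with the dimension predicted by the $\jmath$Schur algebra action, is where the real work lies.
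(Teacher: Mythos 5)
Your overall skeleton (a homomorphism $\aleph\colon {}_\calA\dot{\bfU}^\jmath\to K_0(\dot\frakU^\jmath)$, surjectivity, injectivity via the flag-variety/$\jmath$Schur action, then basis matching) is the paper's skeleton, and your injectivity argument is essentially the one used: one composes with the decategorification of the $2$-functor $\Gamma$ to the singular Soergel bimodule $2$-category $\frakF_{r,m}$ and invokes the faithfulness of $\gamma\colon{}_\calA\dot{\bfU}^\jmath\to\prod_m\bfS^\jmath_{r,m}$ from \cite{BKLW}. But your surjectivity argument has a genuine gap, and it is precisely the step you dismiss as ``the easier half.'' Since $\dot\frakU^\jmath$ is the \emph{idempotent completion}, its $1$-morphisms are direct \emph{summands} of shifts of composites $\calE_{\bfi}1_\lambda$, not merely direct sums of them; the classes $[\calE_\bfi 1_\lambda]$ only span a sub-$\calA$-module of the free $\calA$-module $K_0$ on the indecomposables, and there is no formal reason this submodule is everything. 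The paper's Proposition~\ref{prop:gammasuj} closes this gap by filtering $\dot\frakU^\jmath(\lambda,-)$ by the width of indecomposables, identifying the associated graded with a quotient of the quiver Hecke (KLR) categorification $\calC^+\otimes\Pi_\lambda$ of ${}_\calA\bfU^+$ via a full functor $\zeta$, and then using Lusztig's monomial bases of ${}_\calA\bfU^+$ to see that monomials in divided powers span $K_0$. Your later remark that the real difficulty is ``ruling out that $K_0$ is strictly larger than ${}_\calA\dot{\bfU}^\jmath$'' is exactly this surjectivity issue, so you have mislabeled it as part of injectivity; injectivity itself needs no Hom-space basis theorem and follows formally from the commutative square with the $\jmath$Schur algebras.

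For the basis matching, your proposed route (bar-invariance plus almost-orthonormality of the classes of self-dual indecomposables, then the uniqueness characterization of the canonical basis) is not the paper's and would require a nondegeneracy/graded-Hom basis theorem for $\dot\frakU^\jmath$ that the paper deliberately avoids proving; the triple-point relation \eqref{Pi=1} makes such a theorem delicate, and no almost-orthonormality statement is established. Instead the paper proves that $\Gamma$ (after a mild extension $\Gamma^{\ext}$) is \emph{locally full} onto $\frakF_{r,m}$, using the Libedinsky/Elias--Williamson generators for morphisms of Soergel bimodules; this yields Corollary~\ref{cor:CB-match}, a bijection between indecomposables of $\dot\frakU^\jmath$ not killed by $\Gamma$ and indecomposables of $\frakF_{r,m}$. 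Combined with the fact that self-dual indecomposable singular Soergel bimodules decategorify to the canonical basis of $\bfS^\jmath_{r,m}$ (in residue characteristic zero) and the characterization of $\bfB^\jmath$ from \cite{LW15} as the unique basis compatible with the $\jmath$Schur canonical bases for infinitely many $m$, this gives part (b) without ever producing an inner product on $K_0(\dot\frakU^\jmath)$. If you want to salvage your route, you would first have to prove the spanning/orthogonality statements for Hom spaces that the width-filtration and Soergel-calculus arguments are designed to replace.
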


To prove this theorem, and as evidence for the usefulness of this categorification, we ask for generalizations of the
categorical actions of $\frakU$ discussed earlier.
Let $\G$ be either the group $\SO(2m+1)$ or $Sp(2m)$, and let $\frakg$ be its Lie algebra.
One can define a $2$-category $\frakF_{r,m}$ analogous to the ``flag
category'' of Khovanov and Lauda \cite{KLIII} using cohomology rings of partial flag varieties of $\G$, 
see Section \ref{sec:type-b/c} for the precise definition.  The $2$-category $\dot\frakU^\jmath$ admits 
the following categorical actions. 

\begin{itheorem} [Theorem~\ref{thm:2repflagj}, Theorem~\ref{thm:full1}, Proposition~\ref{prop:Fullgeneral}, Theorem~\ref{thm:actionO}]
\label{th:main-actions}\,\hfill
\begin{enumerate}
\renewcommand{\theenumi}{\alph{enumi}}
\item There is a functor
  $\Gamma: \dot\frakU^\jmath\to \frakF_{r,m}$ for each $m$.
  This functor is essentially surjective on 1-morphisms (and becomes
  full on 2-morphisms after a small modification).

\item The category $\dot\frakU^\jmath$ acts on the category
  $\calO$ of $\frakg$ such that an object $\lambda\in\X_\jmath$ is
  sent to a block of $\calO$, and $\calE_i$, $\calF_i$ act by
  translation functors on blocks.
\end{enumerate}
\end{itheorem}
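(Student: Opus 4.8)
\textbf{Proof proposal for Theorem~\ref{th:main-actions}.}

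The plan is to treat parts (a) and (b) by the same mechanism: construct the candidate $2$-functors explicitly on generators and $2$-morphisms, check the defining relations of $\dot\frakU^\jmath$ survive, and then identify the images with the asserted geometric/representation-theoretic objects. For part (a), I would begin with the weak action already available on $\bigoplus_{d} H^*(\Fl_{d})\mod$, where $\Fl_d$ runs over partial (isotropic) flag varieties for $\G=\SO(2m+1)$ or $Sp(2m)$; this is exactly the data from which the relations of $\dot\frakU^\jmath$ were extracted in Section~\ref{sec:Grothendieck}. The functor $\Gamma\colon\dot\frakU^\jmath\to\frakF_{r,m}$ sends an object $\lambda\in\X_\jmath$ to the ring $H^*(\Fl_{d(\lambda)})$ (equivalently its module category), sends $\calE_i,\calF_i$ to the pull-push (correspondence) functors along the evident one-step-refinement diagrams of flag varieties, and sends the generating $2$-morphisms (dots, crossings, and the $i=\hf$ cap/cup generators) to the explicit operators in the cohomology of the correspondence varieties written down in the local computations. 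The key point is that \emph{all} relations of $\dot\frakU^\jmath$ for $i\neq\hf$ are the standard KLR relations, already known to hold in the type-$\A$ flag category by \cite{KLIII}, so only the $i=\hf$ relations — the $\jmath$-bicross relations \eqref{j-bicross down}--\eqref{j-bicross up}, the triple-point relation \eqref{Pi=1}, and the modified bubble slides — need to be checked, and by Proposition~\ref{df-prop} the hardest of these, \eqref{Pi=1}, follows once the others and the decategorified coideal relations hold. So the real work is verifying the $\jmath$-bicross and bubble relations in $H^*$ of the relevant two-step isotropic flag correspondences; this is a finite localization/equivariant-cohomology computation near the $i=\hf$ node, essentially the $\mathfrak{sl}_3$-type geometry for $\{E_\hf,E_{-\hf}\}$ glued to the $\mathfrak{sl}_2$-type geometry for $\{E_\hf,F_\hf\}$.

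Essential surjectivity on $1$-morphisms is then immediate: every object of $\frakF_{r,m}$ is by definition an $H^*(\Fl_d)$-module, and every $1$-morphism is (a summand of) a composite of pull-push functors, which are precisely the images of composites of $\calE_i,\calF_i$. Fullness on $2$-morphisms ``after a small modification'' I would handle exactly as Khovanov--Lauda do: the naive $\Gamma$ need not hit every bimodule map, but after enlarging $\dot\frakU^\jmath$ by adjoining a finite set of ``box'' or idempotent-completion $2$-morphisms (or equivalently passing to the Karoubi envelope and adding the relevant natural transformations coming from the fundamental classes), the induced functor becomes full; the argument is a dimension count comparing $\Hom$-spaces of $2$-morphisms in $\dot\frakU^\jmath$ with $\Hom$-spaces of bimodules over the cohomology rings, using that both sides are free modules of the same graded rank over the base. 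This step is where I expect to lean most heavily on the detailed local structure established earlier and on the bubble evaluations, since the graded-rank bookkeeping for the $i=\hf$ strands is genuinely different from type $\A$.

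For part (b), the strategy is to transport the action along a known equivalence: by \cite{BW13} (and the BLM-type picture of \cite{BKLW}) the $\jmath$Schur algebra acts on the relevant singular block of category $\calO$ for $\frakg$ of type $\B/\C$, realizing it via type $\B/\C$ Hecke-algebra data, and the $\jmath$Schur algebras are quotients of $\dot{\bfU}^\jmath$ with a compatible family structure. Categorically, one knows that the translation functors on $\calO$ generate a categorical action, and that blocks of $\calO$ are governed by singular Soergel bimodules / the same flag-variety cohomology rings that appear in $\frakF_{r,m}$. So the concrete plan is: (i) identify each block $\calO_\lambda$ with (a module category over) the appropriate $H^*(\Fl_{d(\lambda)})$ up to the Soergel-functor equivalence, so that translation-on-/-off-walls functors match the pull-push functors $\calE_i,\calF_i$; (ii) conclude that the action of $\frakF_{r,m}$ on that flag category, precomposed with $\Gamma$ from part (a), yields the desired action of $\dot\frakU^\jmath$ on $\calO$; (iii) check that $\calE_\hf,\calF_\hf$ really do correspond to translation through the type-$\B/\C$ wall (the ``short root'' wall), which is the step that encodes the coideal — as opposed to full quantum group — structure. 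The main obstacle across the whole theorem is this last compatibility at the $i=\hf$ node: one must show that the geometric operators on isotropic-flag cohomology, the translation functors through the $\B/\C$ wall, and the algebraic $\jmath$-Serre/bicross relations of $\dot\frakU^\jmath$ all agree on the nose (not merely up to isomorphism of functors), which requires pinning down signs and normalizations in the cap/cup and bubble $2$-morphisms. Everything away from $i=\hf$ is a direct citation of the type-$\A$ theory of \cite{KLIII}, and Proposition~\ref{df-prop} is what lets us avoid verifying \eqref{Pi=1} by brute force.
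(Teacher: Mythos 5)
Your overall architecture for part (a) — define $\Gamma$ on generators, cite \cite{KLIII} away from $i=\hf$, verify the bubble and $\jmath$-bicross relations at $i=\hf$ by direct computation, and let Proposition~\ref{df-prop} supply \eqref{Pi=1} from the decategorified $\jmath$-Serre relations — is exactly the paper's route, and essential surjectivity is indeed immediate from the definition of $\frakF_{r,m}$ as the sub-$2$-category of singular Soergel bimodules generated by $\scrE_i,\scrF_i$. The genuine gap is your fullness argument. A graded-rank comparison between $2$-Hom spaces of $\dot\frakU^\jmath$ and Hom spaces of bimodules presupposes that you already know the graded ranks on the $\dot\frakU^\jmath$ side; but that is essentially the nondegeneracy one is trying to establish, and in the paper it is a \emph{consequence} of fullness plus the Grothendieck-group computation, not an input — so as stated your argument is circular. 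The paper instead reduces to the regular weight $\bfo$ by exhibiting split idempotents $1_{\lambda(\bfa)}\hookrightarrow PQ$ with all intermediate bubble degrees positive, and then invokes the Libedinsky/Elias--Williamson generation theorem: morphisms between Bott--Samelson bimodules are locally generated by polynomial multiplication, the Frobenius structure maps for $R\supset R^s$, and the $2m_{st}$-valent vertices, and one produces explicit preimages of each (the vertex at the $\B_2$ node via Elias's dihedral formula). Your ``small modification'' is also misidentified: it is not idempotent completion or box morphisms but the concrete adjunction of the extra bubbles labelled $\hf+r$ coming from $\frakU^\jmath_{r+1}$, needed because for $r<m$ the images of the bubbles fail to generate $R^{\bfa}$ unless some $a_i=a_{i+1}$ or $a_{\hf}=0$.

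For part (b), your step (i) overstates what is available: blocks of $\calO$ are not equivalent to module categories over the cohomology rings, and Soergel's functor $\bbV$ is fully faithful only on projective objects. The paper's mechanism is to pass to (completions of) Harish--Chandra bimodules, where $\widehat{\bbV}$ is fully faithful on the subcategory generated by translation bimodules; one identifies $\widehat{\bbV}({}_{\mu\langle{}_{\pm i}\bfa\rangle}\Phi(V)_{\mu\langle\bfa\rangle})$ with the completed $\scrE_i1_{\bfa}$, $\scrF_i1_{\bfa}$ by a rank count over $\widehat{R}^{{}_{\pm i}\bfa}$ using positivity of parabolic Kazhdan--Lusztig polynomials, and then \emph{inverts} $\widehat{\bbV}$ on its image to obtain a $2$-functor $\frakF_{r,m}\to{}_\lambda\widehat{\calP}_\mu$, hence an action on ${}_{\mu\langle\bfa\rangle}\calM$ and on $\calO$. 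Because the action is transported through a fully faithful functor, your concern about the relations holding ``on the nose'' evaporates; the real content, which your step (iii) should target, is the identification of the translation bimodule with the correct indecomposable singular Soergel bimodule.
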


The actions in Parts (a) and (b) provide the desired enhancements of the weak categorical actions in \cite{BKLW} and in \cite{BW13}, respectively.




These actions are related via the classic links between projective
functors, Harish-Chandra bimodules and singular Soergel bimodules
given by \cite{Soe90,Soe92, Str04}  and \cite{WilSSB}.  The action of
Part (a) allows us to show that $\dot\frakU^\jmath$ acts on any category of representations of Lie algebra of type B/C which are of finite length 
and are closed under tensor product with finite dimensional modules
(see Theorem~\ref{thm:actionO}). Category $\calO$ is an example of
such a category, as are many of the variations on it.

\smallskip
\subsection{Structure of the paper}

In Section \ref{sec:prelim} we will recall the basic structure results on the algebras $\bfU$ and $\bfU^\jmath$, 
along with their idempotented forms $\dot{\bfU}$ and $\dot{\bfU}^\jmath$.
In Section \ref{sec:coideal-2-cat}, we give the definition of the 2-category $\dot\frakU^\jmath$, presented in diagrammatic terms, and study some basic properties of it. 
In Section \ref{sec:Grothendieck}, we precisely formulate the categorification of the $\jmath$-Serre relations, 
which allows us to define a surjective algebra homomorphism $\dot\bfU^\jmath\to K_0(\dot\frakU^\jmath)$. 
The categorification of $\jmath$-Serre relations involves some lengthy diagrammatic computation, which will be given in Appendix \ref{sec:jserre}. 
We construct the $2$-functor $\Gamma: \dot\frakU^\jmath\to \frakF_{r,m}$  in Section~\ref{sec:Schur},
and prove it is locally full in Section~\ref{sec:Schur2}. This allows us to deduce the isomorphism
$\dot\bfU^\jmath\simeq K_0(\dot\frakU^\jmath)$ and the matching of canonical bases.
Finally, we describe the action of $\dot\frakU^\jmath$ on category $\calO$ in Section~\ref{sec:relat-proj-funct}.

\smallskip
\subsection{Future developments}

The construction of this paper seems likely to be only one of a family
of categorifications corresponding to coideal subalgebras, and
relevant to the representation theory of classical groups and Lie
algebras of types
$B/C/D$ in many contexts.  Whereas categorical actions of any
Kac-Moody algebra are built from categorical
$\mathfrak{sl}_2$-actions, actions of $\dot\frakUj$ include a new basic
building block, which may be useful in other contexts.
For reasons of space and complexity, we have left the consideration of several natural questions that arise in this framework to future work. 

In a sequel to this work, we will consider analogues of cyclotomic KLR algebras for $\dot\frakU^\jmath$; 
these are algebras which naturally categorify  the restrictions to $\bfU^\jmath$ of finite-dimensional simple $\bfU$-modules 
and their $\jmath$canonical bases defined in \cite{BW13}. 
We will also study the relationship between $\dot\frakU^\jmath$ and $\dot\frakU$ which can be regarded 
as a categorical quantum symmetric pair.

The connection between the coideal algebras and category $\calO$ of type $\D$ has been independently observed
in \cite{ES13}, where they also studied relations between morphisms in $\calO$ with Nazarov-Wenzl algebras and affine Brauer algebras. 
We expect the categorical action of our $2$-category $\dot\frakU^\jmath$ will bring a new perspective on these algebras.
Note that there should also be a type $\D$ analogue of our main results;
see \cite{Ba16} for Kazhdan-Lusztig theory of type $\D$ and \cite{FL15} for geometric Schur duality of type $\D$.

The algebra ${\bfU}^\jmath$ is merely a special example of the coideal algebras arising from quantum symmetric pairs \cite{Le03, Ko14}. 
Particularly important special cases include the  coideal subalgebra  ${\bfU}^\imath\subset \bfU(\mathfrak{sl}_{2r})$ 
associated to its diagram automorphism, 
and the affine analogues of these algebras, attached to diagram automorphisms for $\widehat{\fraksl}_n$ \cite{FLLLW}.   
The former  also appears in geometric Schur duality and the study of category $\calO$ in type $\B/\C$ \cite{BW13, BKLW, LW15} 
while the latter is expected to play a fundamental role in the study
of modular representations of type $\B$/$\C$.   These other symmetric
pairs will require more work to categorify.
Since these algebras admit natural geometric realizations and canonical bases, 
the techniques of this paper can likely be applied to them as well.

\medskip
\subsection*{Acknowledgements}

H.B. is partially supported by an AMS-Simons travel grant. 
P.S. is partially supported by the ANR grant number ANR-12-JS01-0003, ANR-13-BS01-0001-01. 
W.W. is partially supported by the NSF grant DMS-1405131. 
B.W. is partially supported by the NSF grant DMS-1151473 and the Alfred P. Sloan Foundation. 
We thank the Institute of Mathematical Science at the University of Virginia, Institute of Mathematics, Academia Sinica (Taipei), 
Yau Mathematical Sciences Center at Tsinghua University, and Max
Planck Institute for Mathematics for support which facilitated  this
collaboration.  We thank Zajj Daugherty, Aaron Lauda, Yiqiang Li, Marco Mackaay,
Arun Ram and Catharina Stroppel for useful conversations.  We thank the referee for a careful reading and corrections on the proof of Lemma~A.1.

\section{Preliminaries on coideal subalgebras}
\label{sec:prelim}
\subsection{Notations and conventions}
\label{sec:notation}

Let $q$ be a formal variable and let $\calA =\bbZ[q,q^{-1}]$. For $a \in \bbZ$, let
\[[a] = \frac{q^{a}- q^{-a}}{q-q^{-1}}\qquad \text{and}\qquad 
[a]! = [1][2] \cdots [a] \quad \text{ for $a\geqslant 0$.}\] 
The $a$-th {\em divided power} of an element $E$ in a $\bbQ(q)$-algebra is the element $E^{(a)}:=E^a/[a]!$.

Let $\calC$ be an additive category. For any object $x$ in $\calC$, we
denote by $1_x$ the identity endomorphism of $x$. A {\em grading} on
$\calC$ is an auto-equivalence $\{1\}\colon \calC\to \calC$. Let $\{\ell\}$ denote the
$\ell$-fold composition of the auto-equivalence $\{1\}$. Given an
object $x$ in $\calC$ and $f=\sum f_sq^s\in\calA$, we write
$\oplus_fx$ or $x^{\oplus f}$ for the direct sum over $s$ of $f_s$ copies
of $x\{s\}$. The {\em split Grothendieck group} $K_0(\calC)$ is the
abelian group generated by symbols $[x]$ for $x$ an object, with the
relation $[x\oplus y]=[x]+[y]$.  A grading induces an $\calA$-module
on $K_0(\calC)$ such that $q[x]=[x\{1\}]$.
Given two objects $x$, $y$, let $\Hom^s(x,y)$ denote the space $\Hom(x\{s\},y)$ of morphisms of degree $s$. We abbreviate $\Hom^\bullet(x,y)=\bigoplus_{s\in\bbZ}\Hom^s(x,y)$. 
The idempotent completion (or Karoubi envelope) $\dot\calC$ is the
category whose objects are pairs $(M,e)$ of  $M\in \operatorname{Ob}(\calC)$ and
idempotent endomorphisms $e\colon M \to M$, with $(M,e)$ serving as
the image of the idempotent $e$.  The category $\dot\calC$ is universal
among idempotent complete categories with a functor from $\calC$.

Given a ring $\bfk$, a {\em graded $\bfk$-linear $2$-category} $\frakA$ is a category enriched over graded additive $\bfk$-linear
categories, that is,  a $2$-category such that the $\calH{om}$ categories are graded additive $\bfk$-linear categories 
and the composition maps $\calH{om}_\frakA(x,y)\times\calH{om}_\frakA(y,z)\to\calH{om}_\frakA(x,z)$ form a graded additive $\bfk$-linear functor. 
We will abbreviate $\frakA(x,y)=\calH{om}_\frakA(x,y)$. 
Objects and morphisms in $\frakA(x,y)$ will be respectively called $1$-morphisms and $2$-morphisms in $\frakA$. 
A $2$-category is called {\em strict} if $1$-morphisms compose strictly associatively, and a $2$-functor is called {\em strict} 
when it preserves compositions of $1$-morphisms on the nose. We will always assume that $\frakA(x,x)$ has a unit object, denoted by $1_x$. The Grothendieck group of $\frakA$ is the category $K_0(\frakA)$ with same objects as $\frakA$, and with morphisms given by the Grothendieck group of the corresponding $\calH{om}$-category. If $\frakA$ is graded, then $K_0(\frakA)$ is an $\calA$-linear category.


Recall that  a functor is {\em essentially surjective} if any object in the target is isomorphic to an object in the image. 
It is {\em full/faithful} if it is surjective/injective on Hom sets.  A fully faithful, essentially surjective functor is an equivalence.
A $2$-functor $\Phi: \frakA\to\frakB$ is \emph{locally full (resp. faithful, essentially surjective)} 
if the induced functors $\frakA(x,y)\to \frakB(\Phi(x),\Phi(y))$ is full (resp. faithful, essentially surjective)
for all objects $x,y\in\frakA$.

\medskip


%
%
\subsection{Reminders on $\dot{\mathbf{U}}$}
\label{subsec:theta}

Fix a positive integer $r$ and define
\begin{equation}
    \label{eq:I}
\bbI=\mathbb{I}_{2r}  = \big\{i \in \bbZ+\frac 12 \mid -r < i < r \big\}.
\end{equation}
Due to an extensive use of $\frac 12$ throughout the paper, we will often write 
$$
\hf=\frac 12.
$$
Consider the root datum of type $A_{2r}$ with Cartan matrix indexed by $\bbI$,
weight lattice $\X$, simple roots $\{\alpha_i\}_{i\in \bbI}\subset \X$, simple coroots $\{\alpha^\vee_i\}_{i\in \bbI}$, and the coroot lattice $\Y=\bigoplus_{i\in\bbI}\bbZ\alpha^\vee_i$. There is a perfect pairing 
\begin{equation}\label{eq:pairing}
\langle \cdot, \cdot\rangle: \Y\times\X\longrightarrow  \bbZ.
\end{equation} 
The entries of the Cartan matrix are given by
$\l\alpha^\vee_i,\alpha_j\r$ for $i,j\in\bbI$. 

Consider the lattice $\bigoplus_{a=-r}^r\bbZ \varepsilon_a$ with  the standard pairing $\l\vep_a,\vep_b\r=\delta_{a,b}$.
We will identify 
\begin{align*}
\X &=\bigoplus_{a=-r}^r\bbZ \varepsilon_a \Big / \bbZ (\sum_{a=-r}^r \varepsilon_a). 
\end{align*}
Then $\alpha_i=\varepsilon_{i-{\hf}}-\varepsilon_{i+\hf} \; (\text{mod } \bbZ \sum_{a=-r}^r \varepsilon_a)$ lies in $\X$. 

The quantum group $\bfU=\bfU_q(\fraksl_{2r+1})$ is the $\bbQ(q)$-algebra generated by $E_i$, $F_i$, $K_i^{\pm 1}$, $i\in\bbI$ subject to the relations 
\begin{align*}
K_iK_j=K_jK _i,  
& \qquad
K_iE_jK_i^{-1}=q^{\l\alpha_i^\vee,\alpha_j\r}E_j, 
\\
[E_i,F_j]=\delta_{ij}\frac{K_i-K_i^{-1}}{q-q^{-1}}, 
& \qquad
K_iF_jK_i^{-1}=q^{-\l\alpha_i^\vee,\alpha_j\r}F_j, 
\end{align*}
and the quantum Serre relations (see \eqref{Serre-dot} below). We may write $\bfU=\bfU_{2r+1}$ if necessary.
It is a Hopf algebra, with a coproduct   
$\Delta:\bfU\to\bfU\otimes\bfU$ such that 
$$
\Delta(K_i)=K_i\otimes K_i, 
\quad
\Delta(E_i)=1\otimes E_i+E_i\otimes K_i^{-1}, 
\quad
\Delta(F_i)=F_i\otimes 1+K_i\otimes F_i, \forall i.
$$
There is 
a unique involution (called \emph{bar involution})
$\psi:\bfU\to\bfU$ as $\bbQ$-algebra which sends $q$ to $q^{-1}$, $K_i$ to $K_i^{-1}$ and fixes $E_i$, $F_i$, for all $i$.

We will be interested in an idempotented (or modified) form of $\bfU$ introduced by Lusztig \cite{Lu94}. 
Consider the $\bbQ(q)$-linear category $\dot\bfU$ with the object set $\X$ and morphisms generated by 
$E_i:\lambda\to\lambda+\alpha_i$, $F_i: \lambda\to\lambda-\alpha_i$, subject to the relations 
\begin{align}
[E_i,F_j]1_\lambda&=\delta_{ij}[\l\alpha_i^\vee,\lambda\r] 1_\lambda,
\notag \\
\sum_{\scriptscriptstyle{a+b=1-\l\alpha_i^\vee,\alpha_j\r}}(-1)^aE_i^{(a)}E_jE_i^{(b)}
&=\sum_{\scriptscriptstyle{a+b=1-\l\alpha_i^\vee,\alpha_j\r}}(-1)^aF_i^{(a)}F_jF_i^{(b)}=0,\quad\forall  i\neq j.
\label{Serre-dot}
\end{align}
Lusztig's idempotented algebra (also denoted by $\dot{\bfU}$) can be identified with the direct sum of all Hom-spaces in this category.
Let ${}_\calA\dot\bfU$ be the $\calA$-linear subcategory of $\dot\bfU$ with the same objects and with 
morphisms generated over $\calA$ by $E_i^{(a)}$, $F_i^{(a)}$ for $i\in\bbI$, $a\geqslant 0$.
This corresponds to Lusztig's integral $\calA$-form of $\dot\bfU$.

\subsection{The coideal category $\dot{\mathbf{U}}^\jmath$}
\label{coideal-subalgebra}

Set
\begin{equation}
  \label{eq:Ij}
\mathbb{I}^{\jmath} =\mathbb{I}^{\jmath}_r =\mathbb{I}\cap\bbR_{>0}= \Big\{\hf, \hf+1, \ldots, \hf+r-1 \Big\}.
\end{equation}
We shall consider a $\bbQ(q)$-algebra $\bfU^\jmath$ which admits a presentation 
with generators $\calE_i, \calF_i, \calK_i^{\pm 1}$ for $i\in\bbI^\jmath$, 
and  a set of relations; see \cite[Section~6.1]{BW13}. There is an embedding of algebras 
$\jmath: \bfU^\jmath \rightarrow \bfU$ such that 
\begin{align*}
\calE_i \mapsto E_i+K_i^{-1}F_{-i},
\quad
\calF_i \mapsto F_iK_{-i}^{-1}+E_{-i},
\quad
\calK_i \mapsto K_iK_{-i}^{-1}, \quad  \forall i\in\bbI^\jmath.
\end{align*} 
We will consider the algebra $\bfU^\jmath$ as a subalgebra of $\bfU$ under the embedding $\jmath$.
We may write $\bfU^\jmath=\bfU^\jmath_r$ if necessary.

The subalgebra $\bfU^\jmath$ has a further compatibility with the
coproduct $\Delta$: it is a \emph{coideal} subalgebra of $\bfU$. That
is, the coproduct $\Delta$ on $\bfU$ restricts to a $\bbQ(q)$-algebra
homomorphism $\Delta: \bfU^\jmath\to\bfU^\jmath\otimes\bfU$. 
The specialization of $\bfU^\jmath$ at $q=1$ is just the enveloping
algebra of $\fraksl_{2r+1}^{\vartheta}$, the Lie subalgebra fixed by the involution $\vartheta\colon
\fraksl_{2r+1}\to \fraksl_{2r+1}$ given by rotating
$(2r+1) \times (2r+1)$-matrices by $\pi$ radians. 
Hence $(\bfU,\bfU^\jmath)$ is an example of a quantum symmetric pair \cite{Le03, Ko14}.


Let $\theta$ be the involution of the  lattice $\X$ induced by letting
$\theta(\varepsilon_{a}) = - \varepsilon_{-a}$ for $-r\leqslant a \leqslant r$.
We may write $\lambda^{\theta} = \theta(\lambda)$, for $\lambda \in \X$. 
Denote by $\X^{\theta} $ the sublattice of $\theta$-fixed points in $\X$. 
Note that $ \alpha^{\theta}_i = \alpha_{-i}$ for all $i \in \mathbb{I}$, hence $\theta$ induces an automorphism of the root system. 
Set $\thal_i^\vee=\alpha_i^\vee - \alpha_{-i}^\vee$ for $i\in\bbI^\jmath$. Let
\begin{equation}
\label{Xj}
\X_\jmath =\X/\X^\theta, \qquad \Y^\jmath = \bigoplus_{i \in {\bbI^\jmath}} \bbZ \,\thal_i^\vee.
\end{equation}
The lattice $\X_\jmath$ can be regarded as a weight lattice for $\bfU^\jmath$. 
The pairing \eqref{eq:pairing} induces a nondegenerate pairing
$$\langle \cdot, \cdot\rangle : \Y^\jmath \times {\X_{\jmath}}  \longrightarrow \bbZ.$$
For $\lambda\in \X_\jmath$, we write 
$$
\lambda_i=\l\thal_i^\vee,\lambda\r, \qquad \text{for } i \in \bbI^\jmath.
$$
For $\lambda\in \X$ we will denote its image in $\X_\jmath$ again by $\lambda$ if there is no confusion.
In particular, we will often regard $\alpha_i \in \X$ for $i\in\bbI^\jmath$ as an element in $\X_\jmath$. Note the unusual pairing
\[
\l\thal_\hf^\vee,\alpha_\hf \r =3.
\]


Let $\dot{\bfU}^\jmath$ be the $\bbQ(q)$-linear category with the object set $\X_\jmath$ and morphisms generated by $\calE_i: \lambda\mapsto \lambda+\alpha_i=\lambda-\alpha_{-i}$, 
$\calF_i: \lambda\mapsto \lambda-\alpha_i=\lambda+\alpha_{-i}$, for all $i\in\bbIj$,
subject to the following relations for $i\neq j$:
\begin{align}
[\calE_i,\calF_j]1_\lambda &=0, 
 \label{eq:cartano}\\
[\calE_i,\calF_i]1_\lambda &=[\lambda_i] 1_\lambda,\quad \forall\ i\neq\hf,
 \label{eq:cartan}\\
\sum_{\scriptscriptstyle {a+b=1-\l\alpha_i^\vee,\alpha_j\r}}(-1)^a\calE_i^{(a)}\calE_j\calE_i^{(b)}
 &=\sum_{\scriptscriptstyle{a+b=1-\l\alpha_i^\vee,\alpha_j\r}}(-1)^a\calF_i^{(a)}\calF_j\calF_i^{(b)}=0,  
 \label{eq:serre}\\
(\calE_\hf^{(2)}\calF_\hf-\calE_\hf\calF_\hf\calE_\hf+\calF_\hf\calE_\hf^{(2)})1_\lambda
 &=-(q^{\lambda_\hf+2}+q^{-\lambda_\hf-2})\calE_\hf 1_\lambda,\label{eq:jserre1}
 \\
(\calF_\hf^{(2)}\calE_\hf-\calF_\hf\calE_\hf\calF_\hf+\calE_\hf\calF_\hf^{(2)})1_\lambda
 &=-(q^{\lambda_\hf-1}+q^{-\lambda_\hf+1})\calF_\hf 1_\lambda.
\label{eq:jserre2}
\end{align}
The definition of $\dot{\bfU}^\jmath$ here is basically the same as the one in \cite{LW15}
but differs from the one in \cite{BKLW} which used a different object set. 
Note that a major difference in the presentation of $\dot\bfU^\jmath$ with respect to $\dot\bfU$ is that 
there is no relation between monomials of length $\leq 2$ in $\calE_\hf$, $\calF_\hf$, but rather they satisfy the inhomogeneous 
relations \eqref{eq:jserre1}, \eqref{eq:jserre2}, 
which will be referred as $\jmath$\emph{-Serre relations}.
Note also that the algebra $\bfU^\jmath$ or $\dot{\bfU}^\jmath$ does not have any natural triangular decomposition 
due to the mixture of $\calE_\hf$ and $\calF_\hf$ in these $\jmath$-Serre relations.

Let ${}_{\calA}{\bdUj}$ be the $\calA$-linear subcategory of $\bdUj$ with the same objects and with morphisms generated by 
divide powers $\calE_i^{(a)}1_\lambda$, $\calF_i^{(a)}1_\lambda$, for all $i\in\bbIj$, $\lambda \in \X_\jmath$, and $a\ge 0$. 
It was shown in \cite{BKLW, LW15} that ${}_{\calA}{\bdUj}$ is a free $\calA$-module and $\bbQ(q) \otimes_{\calA} {}_{\calA}{\bdUj}  = {\bdUj}$.

Denote by $\varpi$ the unique element in $\X_\jmath$ such that
\begin{equation*}
\varpi_\hf =1, \qquad \varpi_i=0 \quad (i\in\bbI^\jmath \backslash \{\hf\}).
\end{equation*}
The following are the idempotented counterparts of the symmetries for $\bfU^\jmath$ given in \cite[Lemma 6.1]{BW13}.

\begin{lemma}  \hfill
\label{lem:involutions}
\begin{enumerate}\renewcommand{\theenumi}{\alph{enumi}}
\item
 There exists an involution $\omega_\jmath$ on $\dot\bfU^\jmath$ as a $\bbQ(q)$-algebra
 which  sends  $1_\lambda$ to $1_{-\lambda -\varpi}$ 
 and switches $\calE_i$ with $\calF_i$, for all $i$ and $\lambda \in \X_\jmath$.
 \item
 There exists an anti-involution $\sigma_\jmath$ on $\dot\bfU^\jmath$ as a $\bbQ(q)$-algebra which sends 
 $1_\lambda$ to $1_{-\lambda -\varpi}$ 
and fixes $\calE_i, \calF_i$, for all $i$ and $\lambda \in \X_\jmath$.
 \item
There exists an involution (called  \emph{bar involution}) $\psi_\jmath$ on $\dot \bfU^\jmath$ as a $\bbQ$-algebra
 which sends $q$ to $q^{-1}$,  and fixes $1_\lambda$, $\calE_i$, $\calF_i$, for all $i$ and $\lambda \in \X_\jmath$.  
 \end{enumerate}
 In particular, all three (anti-)involutions $\omega_\jmath, \sigma_\jmath$ and $\psi_\jmath$ preserve the $\calA$-form ${}_{\calA}{\bdUj}$.
 \end{lemma}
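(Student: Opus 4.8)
The plan is to verify each of the three (anti-)involutions on the generators and check compatibility with the defining relations \eqref{eq:cartano}--\eqref{eq:jserre2}, then observe that the prescribed action on divided powers lands in ${}_{\calA}{\bdUj}$. These statements are the idempotented lifts of \cite[Lemma~6.1]{BW13}, so the real content is the bookkeeping of objects (weights in $\X_\jmath$) and of the $\jmath$-Serre relations, which are absent at the level of $\bfU^\jmath$ as presented there.

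First I would treat $\psi_\jmath$. Since $\psi_\jmath$ fixes every generator $\calE_i,\calF_i,1_\lambda$ and only sends $q\mapsto q^{-1}$, the only relations that could obstruct it are those with $q$-dependent coefficients, namely \eqref{eq:cartan}, \eqref{eq:jserre1}, \eqref{eq:jserre2} (the divided powers $\calE_i^{(a)}$ have $[a]!$ in the denominator, but $[a]!$ is bar-invariant, so monomials in divided powers are fixed and \eqref{eq:serre} causes no trouble). For \eqref{eq:cartan} we need $[\lambda_i]$ bar-invariant, which is immediate. For the $\jmath$-Serre relations we need the coefficients $q^{\lambda_\hf+2}+q^{-\lambda_\hf-2}$ and $q^{\lambda_\hf-1}+q^{-\lambda_\hf+1}$ to be bar-invariant, which is clear since each is a sum $q^a+q^{-a}$. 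Hence $\psi_\jmath$ is well defined; it is clearly an involution.

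Next, for $\omega_\jmath$ and $\sigma_\jmath$ I would first check that the map on objects is consistent with the source/target of the generators. For $\omega_\jmath$, the generator $\calE_i\colon\lambda\to\lambda+\alpha_i$ should be sent to $\calF_i$, and under $\lambda\mapsto -\lambda-\varpi$ we need $\calF_i\colon -\lambda-\varpi \to -\lambda-\varpi-\alpha_i = -(\lambda+\alpha_i)-\varpi$, which matches the image of the target $\lambda+\alpha_i$; similarly for $\calF_i$. For $\sigma_\jmath$, being an anti-involution it reverses arrows, so $\calE_i\colon\lambda\to\lambda+\alpha_i$ is sent to a morphism $-\lambda-\varpi-\alpha_i \to -\lambda-\varpi$, i.e. again $\calE_i$ between the appropriate idempotents. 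Then I would verify the relations. For $\omega_\jmath$: applying it to \eqref{eq:cartano}, \eqref{eq:serre} is symmetric in the roles of $\calE$ and $\calF$, so these go to relations of the same shape. For \eqref{eq:cartan} one checks $[\calF_i,\calE_i]1_{-\lambda-\varpi}=-[\lambda_i]1_{-\lambda-\varpi}$ is equivalent to \eqref{eq:cartan} at weight $\mu$ with $\mu_i = -\lambda_i$; here one uses that $\varpi_i=0$ for $i\neq\hf$, so $(-\lambda-\varpi)_i = -\lambda_i$ and $[(-\lambda-\varpi)_i]=-[\lambda_i]$, consistent with swapping the commutator. The $\jmath$-Serre relations are the delicate point: applying $\omega_\jmath$ to \eqref{eq:jserre1} produces $(\calF_\hf^{(2)}\calE_\hf-\calF_\hf\calE_\hf\calF_\hf+\calE_\hf\calF_\hf^{(2)})1_{-\lambda-\varpi} = -(q^{\lambda_\hf+2}+q^{-\lambda_\hf-2})\calF_\hf 1_{-\lambda-\varpi}$, and this must coincide with \eqref{eq:jserre2} at the weight $\mu=-\lambda-\varpi$. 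Since $\varpi_\hf=1$, we have $\mu_\hf = -\lambda_\hf-1$, so the coefficient $-(q^{\mu_\hf-1}+q^{-\mu_\hf+1}) = -(q^{-\lambda_\hf-2}+q^{\lambda_\hf+2})$, which matches exactly; the $\hf\leftrightarrow -\lambda-\varpi$ shift was engineered precisely for this. The analogous check sends \eqref{eq:jserre2} to \eqref{eq:jserre1}. For $\sigma_\jmath$: since it is an anti-automorphism fixing $\calE_i,\calF_i$, it reverses the order of monomials; \eqref{eq:cartano} and \eqref{eq:cartan} are essentially symmetric under reversal up to the sign in the commutator (and the weight shift handles the sign exactly as above), \eqref{eq:serre} is preserved because $\sum_a (-1)^a \calE_i^{(a)}\calE_j\calE_i^{(b)}$ is invariant under reversing the order of factors and swapping $a\leftrightarrow b$, and the $\jmath$-Serre relations \eqref{eq:jserre1}, \eqref{eq:jserre2} are each individually invariant under reversing monomials (e.g. $\calE_\hf^{(2)}\calF_\hf - \calE_\hf\calF_\hf\calE_\hf + \calF_\hf\calE_\hf^{(2)}$ maps to $\calF_\hf\calE_\hf^{(2)} - \calE_\hf\calF_\hf\calE_\hf + \calE_\hf^{(2)}\calF_\hf$, the same element) and the weight shift $1_\lambda\mapsto 1_{-\lambda-\varpi}$ leaves the coefficients on the right-hand side matching by the same arithmetic as before. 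In each case $\omega_\jmath^2=\mathrm{id}$ and $\sigma_\jmath^2=\mathrm{id}$ because the map on objects $\lambda\mapsto -\lambda-\varpi$ is an involution and the maps on generators are involutive.

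Finally, to see that all three preserve ${}_{\calA}{\bdUj}$: this $\calA$-subcategory is by definition generated over $\calA$ by the $\calE_i^{(a)}1_\lambda$ and $\calF_i^{(a)}1_\lambda$. Each of $\omega_\jmath,\sigma_\jmath,\psi_\jmath$ sends such a divided power to $\pm$ a product/reordering of divided powers at a shifted weight (using that $[a]!$ is bar-invariant for $\psi_\jmath$, and that $\sigma_\jmath$ of a divided power is again a divided power since it fixes $\calE_i,\calF_i$ and hence $\calE_i^a$, $\calF_i^a$), so the generating set is preserved and hence so is its $\calA$-span. I expect the main obstacle — really the only non-formal point — to be getting the weight shifts by $\varpi$ to mesh precisely with the asymmetric coefficients $q^{\lambda_\hf+2}+q^{-\lambda_\hf-2}$ versus $q^{\lambda_\hf-1}+q^{-\lambda_\hf+1}$ in the two $\jmath$-Serre relations; everything else is the routine observation that the relations are built to be symmetric under the swap $\calE\leftrightarrow\calF$ and under reversal of monomials. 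Alternatively, one can avoid rechecking relations entirely by deducing these idempotented statements from the known symmetries of $\bfU^\jmath$ in \cite[Lemma~6.1]{BW13} together with the standard dictionary between $\bfU^\jmath$ and its idempotented form $\dot\bfU^\jmath$, tracking the action on the idempotents $1_\lambda$ through that dictionary.
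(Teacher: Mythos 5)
Your proposal is correct and follows the same route as the paper, whose entire proof is the one line ``follows by inspection of the defining relations \eqref{eq:cartano}--\eqref{eq:jserre2}''; you have simply carried out that inspection explicitly, and the key arithmetic (e.g.\ matching \eqref{eq:jserre1} at $\lambda$ with \eqref{eq:jserre2} at $-\lambda-\varpi$ via $\varpi_\hf=1$) checks out. The only small imprecision is in the $\sigma_\jmath$ case, where reversing a monomial sends \eqref{eq:jserre1} at $\lambda$ to \eqref{eq:jserre1} at $-\lambda-\alpha_\hf-\varpi$ (not at $-\lambda-\varpi$), but since $\langle\thal_\hf^\vee,\alpha_\hf\rangle=3$ the coefficient $-(q^{\nu_\hf+2}+q^{-\nu_\hf-2})$ with $\nu_\hf=-\lambda_\hf-4$ still equals $-(q^{\lambda_\hf+2}+q^{-\lambda_\hf-2})$, so the conclusion stands.
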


\begin{proof}
Follows by inspection of the defining relations for $\dot{\bfU}^\jmath$ in \eqref{eq:cartano}--\eqref{eq:jserre2}. 
\end{proof}


\section{Coideal $2$-categories }
\label{sec:coideal-2-cat}

In this section, we introduce the main object of this paper, the
$2$-category $\frakU^\jmath$ (see Definition~
\ref{df:Uj}). 
This is an analogue of the category $\frakU$ introduced in
\cite[Section~5.1]{KLIII} (denoted by $\calU$ therein).  Recall that $ \frakU$ is a strict graded additive $\bfk$-linear $2$-category
with object set $\X$. Its $1$-morphisms are generated by 
\begin{align*}
E_i:\lambda \rightarrow \lambda+\alpha_i,\quad F_i:\lambda \rightarrow \lambda-\alpha_i \qquad (i \in \bbI),
\end{align*}
with generating $2$-morphisms given by diagrams modulo
certain diagrammatic relations.  We will not reproduce these here,
since they appear in the relations  (\ref{adj}--\ref{mixedup}) below  (with  $\thal^\vee_i$ everywhere replaced by $\alpha_i^\vee$)
which do not involve $i=\hf$.

We start by introducing a weak version $\widetilde{\frakU}^\jmath$ of
this 2-category, which lacks the most difficult relation to verify.
This notion is useful since we will show later that a representation of
$\widetilde{\frakU}^\jmath$ satisfying certain conditions on its
Grothendieck group descends to an action of $\frakU^\jmath$ (Proposition~\ref{df-prop}).

\subsection{The $2$-category $\widetilde{\frakU}^\jmath$}

Fix a commutative ring $\bfk$ with identity such that $2$ is a unit. 
For $i,j\in\bbI^\jmath$ we set 
$$t_{ij}=\begin{cases} 
-1, &\text{ if } j=i-1,\\
1, &\text{ otherwise. }
\end{cases}
$$

\begin{df}\label{df:weakfrakUj}
The {\em weak coideal $2$-category} $\widetilde{\frakU}^\jmath$
is the strict graded additive $\bfk$-linear $2$-category
with objects $\lambda$ for all $\lambda\in\X_\jmath$.
The $1$-morphisms are generated by 
\begin{align*}
\calE_i:\lambda \rightarrow \lambda+\alpha_i,\quad\calF_i:\lambda \rightarrow \lambda-\alpha_i, 
\qquad
\text{for }i \in \bbI^\jmath.
\end{align*}
Here ``generated" means taking all the direct sums of compositions of shifts of these $1$-morphisms.
The generating $2$-morphisms are presented by diagrams:
denote the identity $2$-morphisms
of $\calE_i 1_\lambda$ and $\calF_i 1_\lambda$ by
${\scriptstyle\substack{\lambda+\alpha_i\\\phantom{-}}}\substack{{\color{darkred}{\displaystyle\uparrow}} \\
  {\scriptscriptstyle i}}{\scriptstyle\substack{\lambda\\\phantom{-}}}
$ and ${\scriptstyle\substack{\lambda-\alpha_i
\\\phantom{-}}}\substack{{\color{darkred}{\displaystyle\downarrow}} \\
  {\scriptscriptstyle
    i}}{\scriptstyle\substack{\lambda\\\phantom{-}}}$,  and the other generators are
\begin{alignat*}{3}
&x 
= 
\mathord{

\end{center}
\renewcommand{\arraystretch}{1} 

The grading on $\widetilde{\frakU}^\jmath$ is well defined, \emph{i.e.}, all the relations in the definition are homogeneous. 
This can be checked in the same way as in \cite{KLIII}. 
Note that the main difference here is that 
$$
 -1+\langle\thal^\vee_i,\lambda+\alpha_i\rangle=
 \begin{cases}
1+\lambda_i, & \text{ if } i\neq\hf,
\\
2+\lambda_i, & \text{ if } i= \hf.
\end{cases}
$$ 
It follows that the diagrams $\chi$, $\chi'$ in \eqref{sigrel} have degree zero unless $i=j=\hf$, and in that case they both have degree $1$.

Note that subcategory of $\widetilde{\frakU}^\jmath$ generated by
$\calE_i,\calF_i$ with $i>\hf$ satisfy the relations of
Khovanov-Lauda's $2$-category $\frakU_{r-1}$ of rank $r-1$.  This
yields a functor $\frakU_{r-1}\to \widetilde{\frakU}^\jmath$. 
We will show in a future work that this functor is faithful.

\subsection{The coideal $2$-category $\frakU^\jmath$}
For the sake of simplicity, we make the following convention.

\noindent {\em We will omit the index $\hf$
from the diagrams, that is, all the strands without labels should be viewed as labeled by $\hf$. }

\begin{df}\label{df:Uj}
The {\em coideal $2$-category} $\frakU^\jmath$
is the quotient of the $2$-category $\widetilde{\frakU}^\jmath$ by the
relation
\begin{align}\label{Pi=1}
\mathord{
\begin{tikzpicture}[baseline = 0]
	\draw[->,thick,darkred] (0.45,.6) to (0.45,-.6);
	\draw[->,thick,darkred] (-0.45,.6) to (-0.45,-.6);
        \draw[->,thick,darkred] (0,-.6) to (0,0.6);
   \node at (.6,.05) {$\scriptstyle{\lambda}$};
\end{tikzpicture}
}=
\mathord{
\begin{tikzpicture}[baseline = 0]
	\draw[->,thick,darkred] (0.45,.6) to (-0.45,-.6);
	\draw[<-,thick,darkred] (0.45,-.6) to (-0.45,.6);
        \draw[-,thick,darkred] (0,-.6) to[out=90,in=-90] (-.45,0);
        \draw[->,thick,darkred] (-0.45,0) to[out=90,in=-90] (0,0.6);
   \node at (.5,.05) {$\scriptstyle{\lambda}$};
\end{tikzpicture}
}
-\mathord{
\begin{tikzpicture}[baseline = 0]
	\draw[->,thick,darkred] (0.45,.6) to (-0.45,-.6);
	\draw[<-,thick,darkred] (0.45,-.6) to (-0.45,.6);
        \draw[-,thick,darkred] (0,-.6) to[out=90,in=-90] (.45,0);
        \draw[->,thick,darkred] (0.45,0) to[out=90,in=-90] (0,0.6);
   \node at (.6,.05) {$\scriptstyle{\lambda}$};
\end{tikzpicture}
}
-
\mathord{
\begin{tikzpicture}[baseline = 0]
	\draw[->,thick,darkred] (0.45,.6) to (-0.45,-.6);
        \draw[-,thick,darkred] (0,-.6) to[out=90,in=180] (.25,-.2);
        \draw[->,thick,darkred] (0.25,-.2) to[out=0,in=90] (.45,-0.6);
                \draw[-,thick,darkred] (-.45,.6) to[out=-90,in=180] (-.2,0.2);
        \draw[->,thick,darkred] (-.2,0.2) to[out=0,in=-90] (0,0.6);
   \node at (.6,.05) {$\scriptstyle{\lambda}$};
\end{tikzpicture}
}-
\mathord{
\begin{tikzpicture}[baseline = 0]
	\draw[->,thick,darkred] (-0.45,.6) to (0.45,-.6);
        \draw[<-,thick,darkred] (0,.6) to[out=-90,in=180] (.25,0.2);
        \draw[-,thick,darkred] (0.25,0.2) to[out=0,in=-90] (.45,0.6);
          \draw[<-,thick,darkred] (-.45,-.6) to[out=90,in=180] (-.2,-.2);
        \draw[-,thick,darkred] (-.2,-.2) to[out=0,in=90] (0,-0.6);
   \node at (.5,.05) {$\scriptstyle{\lambda}$};
\end{tikzpicture}
}+
\sum_{\substack{ s+t+u+v\\=-3}}\!\!\!
\mathord{
\begin{tikzpicture}[baseline=0]
	\draw[<-,thick,darkred] (0.4,-0.6) to[out=90, in=0] (.2,-0.3);
	\draw[-,thick,darkred] (.2,-0.3) to[out = 180, in = 90] (0,-.6);
   \node at (.35,-0.35) {$\color{darkred}\bullet$};
   \node at (0.5,-.35) {$\color{darkred}\scriptstyle{t}$};
  \draw[-,thick,darkred] (.2,0.2) to[out=180,in=90] (0,0);
  \draw[->,thick,darkred] (0.4,0) to[out=90,in=0] (.2,.2);
 \draw[-,thick,darkred] (0,0) to[out=-90,in=180] (.2,-0.2);
  \draw[-,thick,darkred] (.2,-0.2) to[out=0,in=-90] (0.4,0);
  \draw[->,thick,darkred](-.4,.6) to (-.4,-.6);
     \node at (-.4,0.1) {$\color{darkred}\bullet$};
        \node at (-.6,0.1) {$\color{darkred}\scriptstyle{u}$};
   \node at (1,0.05) {$\scriptstyle{\lambda}$};
   \node at (0,0) {$\color{darkred}\bullet$};
   \node at (-0.2,0) {$\color{darkred}\scriptstyle{s}$};
	\draw[-,thick,darkred] (0.4,.6) to[out=-90, in=0] (.2,0.3);
	\draw[->,thick,darkred] (.2,0.3) to[out = -180, in = -90] (0,.6);
   \node at (0.4,0.45) {$\color{darkred}\bullet$};
   \node at (0.6,0.45) {$\color{darkred}\scriptstyle{v}$};
\end{tikzpicture}
}
+\sum_{\substack{s+t+u+v\\=-3}}\!\!\!
\mathord{
\begin{tikzpicture}[baseline=0]
	\draw[-,thick,darkred] (0,-0.6) to[out=90, in=0] (-.2,-0.3);
	\draw[->,thick,darkred] (-.2,-0.3) to[out = 180, in = 90] (-0.4,-.6);
   \node at (-.05,-0.35) {$\color{darkred}\bullet$};
   \node at (0.1,-.35) {$\color{darkred}\scriptstyle{t}$};
  \draw[<-,thick,darkred] (-.2,0.2) to[out=180,in=90] (-.4,0);
  \draw[-,thick,darkred] (0,0) to[out=90,in=0] (-.2,.2);
 \draw[-,thick,darkred] (-.4,0) to[out=-90,in=180] (-.2,-0.2);
  \draw[-,thick,darkred] (-.2,-0.2) to[out=0,in=-90] (0,0);
  \draw[->,thick,darkred](.4,.6) to (.4,-.6);
     \node at (.4,0.1) {$\color{darkred}\bullet$};
        \node at (.6,0.1) {$\color{darkred}\scriptstyle{u}$};
   \node at (0.6,-0.2) {$\scriptstyle{\lambda}$};
   \node at (-0.4,0) {$\color{darkred}\bullet$};
   \node at (-0.6,0) {$\color{darkred}\scriptstyle{s}$};
	\draw[<-,thick,darkred] (0,.6) to[out=-90, in=0] (-.2,0.3);
	\draw[-,thick,darkred] (-.2,0.3) to[out = -180, in = -90] (-0.4,.6);
   \node at (-.4,0.45) {$\color{darkred}\bullet$};
   \node at (-.6,0.45) {$\color{darkred}\scriptstyle{v}$};
\end{tikzpicture}
}
\end{align}
\end{df}

This $2$-category $\frakU^\jmath$ admits an induced grading  from $\widetilde{\frakU}^\jmath$ as the relation \eqref{Pi=1}
is homogeneous of degree 0. 

We let $\dot\frakUj$ denote the idempotent completion of $\frakUj$. Note that since the quiver Hecke
relation holds in $\frakUj$, the divided powers $\calE^{(a)}_i$,
$\calF^{(a)}_i$ are well defined in $\dot\frakUj$, see for example \cite[Section~
3.5]{KLIII}.
 We may also write $\calE_{-i}=\calF_i$ for $i\in\bbI^\jmath$ whenever this is convenient.

\begin{rk}
Note that we have automatically the relation
\begin{align*}
\mathord{
\begin{tikzpicture}[baseline = 0]
	\draw[<-,thick,darkred] (0.45,.6) to (0.45,-.6);
	\draw[<-,thick,darkred] (-0.45,.6) to (-0.45,-.6);
        \draw[<-,thick,darkred] (0,-.6) to (0,0.6);
   \node at (.6,.05) {$\scriptstyle{\lambda}$};
\end{tikzpicture}
}=
-\ \mathord{
\begin{tikzpicture}[baseline = 0]
	\draw[<-,thick,darkred] (0.45,.6) to (-0.45,-.6);
	\draw[->,thick,darkred] (0.45,-.6) to (-0.45,.6);
        \draw[<-,thick,darkred] (0,-.6) to[out=90,in=-90] (-.45,0);
        \draw[-,thick,darkred] (-0.45,0) to[out=90,in=-90] (0,0.6);
   \node at (.5,.05) {$\scriptstyle{\lambda}$};
\end{tikzpicture}
}
+\mathord{
\begin{tikzpicture}[baseline = 0]
	\draw[<-,thick,darkred] (0.45,.6) to (-0.45,-.6);
	\draw[->,thick,darkred] (0.45,-.6) to (-0.45,.6);
        \draw[<-,thick,darkred] (0,-.6) to[out=90,in=-90] (.45,0);
        \draw[-,thick,darkred] (0.45,0) to[out=90,in=-90] (0,0.6);
   \node at (.6,.05) {$\scriptstyle{\lambda}$};
\end{tikzpicture}
}
-
\mathord{
\begin{tikzpicture}[baseline = 0]
	\draw[<-,thick,darkred] (0.45,.6) to (-0.45,-.6);
        \draw[<-,thick,darkred] (0,-.6) to[out=90,in=180] (.25,-.2);
        \draw[-,thick,darkred] (0.25,-.2) to[out=0,in=90] (.45,-0.6);
                \draw[<-,thick,darkred] (-.45,.6) to[out=-90,in=180] (-.2,0.2);
        \draw[-,thick,darkred] (-.2,0.2) to[out=0,in=-90] (0,0.6);
   \node at (.6,.05) {$\scriptstyle{\lambda}$};
\end{tikzpicture}
}-
\mathord{
\begin{tikzpicture}[baseline = 0]
	\draw[<-,thick,darkred] (-0.45,.6) to (0.45,-.6);
        \draw[-,thick,darkred] (0,.6) to[out=-90,in=180] (.25,0.2);
        \draw[->,thick,darkred] (0.25,0.2) to[out=0,in=-90] (.45,0.6);
          \draw[-,thick,darkred] (-.45,-.6) to[out=90,in=180] (-.2,-.2);
        \draw[->,thick,darkred] (-.2,-.2) to[out=0,in=90] (0,-0.6);
   \node at (.5,.05) {$\scriptstyle{\lambda}$};
\end{tikzpicture}
}+
\sum_{\substack{ s+t+u+v\\=-3}}\!\!\!
\mathord{
\begin{tikzpicture}[baseline=0]
	\draw[<-,thick,darkred] (0.4,-0.6) to[out=90, in=0] (.2,-0.3);
	\draw[-,thick,darkred] (.2,-0.3) to[out = 180, in = 90] (0,-.6);
   \node at (.35,-0.35) {$\color{darkred}\bullet$};
   \node at (0.5,-.35) {$\color{darkred}\scriptstyle{t}$};
  \draw[-,thick,darkred] (.2,0.2) to[out=180,in=90] (0,0);
  \draw[->,thick,darkred] (0.4,0) to[out=90,in=0] (.2,.2);
 \draw[-,thick,darkred] (0,0) to[out=-90,in=180] (.2,-0.2);
  \draw[-,thick,darkred] (.2,-0.2) to[out=0,in=-90] (0.4,0);
  \draw[<-,thick,darkred](.8,.6) to (.8,-.6);
     \node at (.8,0.1) {$\color{darkred}\bullet$};
        \node at (1.0,0.1) {$\color{darkred}\scriptstyle{u}$};
   \node at (1.2,0.05) {$\scriptstyle{\lambda}$};
   \node at (0,0) {$\color{darkred}\bullet$};
   \node at (-0.2,0) {$\color{darkred}\scriptstyle{s}$};
	\draw[-,thick,darkred] (0.4,.6) to[out=-90, in=0] (.2,0.3);
	\draw[->,thick,darkred] (.2,0.3) to[out = -180, in = -90] (0,.6);
   \node at (0.4,0.45) {$\color{darkred}\bullet$};
   \node at (0.6,0.45) {$\color{darkred}\scriptstyle{v}$};
\end{tikzpicture}
}
+\sum_{\substack{s+t+u+v\\=-3}}\!\!\!
\mathord{
\begin{tikzpicture}[baseline=0]
	\draw[-,thick,darkred] (0,-0.6) to[out=90, in=0] (-.2,-0.3);
	\draw[->,thick,darkred] (-.2,-0.3) to[out = 180, in = 90] (-0.4,-.6);
   \node at (-.05,-0.35) {$\color{darkred}\bullet$};
   \node at (0.1,-.35) {$\color{darkred}\scriptstyle{t}$};
  \draw[<-,thick,darkred] (-.2,0.2) to[out=180,in=90] (-.4,0);
  \draw[-,thick,darkred] (0,0) to[out=90,in=0] (-.2,.2);
 \draw[-,thick,darkred] (-.4,0) to[out=-90,in=180] (-.2,-0.2);
  \draw[-,thick,darkred] (-.2,-0.2) to[out=0,in=-90] (0,0);
  \draw[<-,thick,darkred](-.8,.6) to (-.8,-.6);
     \node at (-.8,0.1) {$\color{darkred}\bullet$};
        \node at (-1.0,0.1) {$\color{darkred}\scriptstyle{u}$};
   \node at (0.2,-0.2) {$\scriptstyle{\lambda}$};
   \node at (-0.4,0) {$\color{darkred}\bullet$};
   \node at (-0.6,0) {$\color{darkred}\scriptstyle{s}$};
	\draw[<-,thick,darkred] (0,.6) to[out=-90, in=0] (-.2,0.3);
	\draw[-,thick,darkred] (-.2,0.3) to[out = -180, in = -90] (-0.4,.6);
   \node at (-.4,0.45) {$\color{darkred}\bullet$};
   \node at (-.6,0.45) {$\color{darkred}\scriptstyle{v}$};
\end{tikzpicture}
}
\end{align*}
in $\frakU^\jmath$, since it is the image of \eqref{Pi=1} under the isomorphism $\End(\calF_\hf\calE_\hf\calF_\hf)\cong \End(\calE_\hf\calF_\hf\calE_\hf)$ given by adjunction.
\end{rk}

\vspace{3mm}
\subsection{Symmetries of $\frakUj$}
 \label{sec:symmetries}
 
Recall \cite[3.3.3]{KLIII} that the KLR 2-category $\frakU$ admits several symmetries which categorify the standard
(anti-)involutions for the quantum groups. We will describe the analogues of such symmetries for $\frakU^\jmath$,
which categorify the three (anti-)involutions in
Lemma~\ref{lem:involutions}. We will use the notation of
\cite[3.3.3]{KLIII}  and let $(\frakU^\jmath)^{\operatorname{op}},(\frakU^\jmath)^{\operatorname{co}},(\frakU^\jmath)^{\operatorname{coop}}$ to
denote the 2-category $\frakU^\jmath$ with the composition of
1-morphisms, 2-morphisms and both 1- and 2-morphisms reversed, respectively.


\begin{df}\hfill
  \begin{enumerate}\renewcommand{\theenumi}{\alph{enumi}}
  \item
   Consider the functor
    $$
    \psi_\jmath: \frakU^\jmath\longrightarrow (\frakU^\jmath)^{\operatorname{co}}
    $$
which sends
$$\lambda \mapsto  \lambda,
\qquad 1_{\mu} \calE_{i_1} \cdots \calE_{i_\ell} 1_{\lambda}
\{t\} \mapsto 1_{\mu } \calE_{i_1} \cdots \calE_{i_\ell}
1_{\lam} \{- t\},
 $$ 
 for $\lambda, \mu \in \X_\jmath$, $t \in \bbZ$, and
 $i_1, \ldots, i_\ell \in \bbI^\jmath$.  It acts on $2$-morphisms by
 reflection through the $x$-axis and reversing orientation of all
 strands.  Note that this action on $2$-morphisms preserves all
 defining relations on $2$-morphisms in $\frakU^\jmath$.  It induces a
 duality on $\dot{\frakUj}$ which will be again denoted by
 $\psi_\jmath$.  We have $\psi_\jmath^2=\text{id}$ and in particular
 $\psi_\jmath$ is invertible.

\item
  Consider the functor
 \begin{equation}\label{eq:def:omegajmath}
   \omega_\jmath:  \frakU^\jmath \longrightarrow \frakU^\jmath
 \end{equation}
 which sends (recall we write $\calE_{-i} = \calF_{i}$ for $i \in  \bbI^\jmath$)
 $$
 \lambda \mapsto -\lambda -\varpi, \qquad 1_{\mu} \calE_{i_1}
 \cdots \calE_{i_\ell} 1_{\lambda} \{t\} \mapsto 1_{-\lambda
   -\varpi} \calE_{- i_1} \cdots \calE_{- i_\ell} 1_{-\mu -\varpi}
 \{t\}.
 $$ 
 It acts on $2$-morphisms by
 \begin{align*}
   x&\mapsto x',&x'&\mapsto x & \tau&\mapsto -\tau',& \tau'&\mapsto
                                                             -\tau,\\
   \eta&\mapsto
       2^{-\delta_{\hf, i}}\eta' ,& \eta' &\mapsto \eta, & \epsilon &\mapsto
                                                                               2^{\delta_{\hf, i}}\epsilon',& \epsilon'&\mapsto
                                                                                                \epsilon.
 \end{align*}
 Thus, each diagram is sent to the diagram with reversed orientation,
 times certain unit in $\bfk$. The functor $\omega_\jmath$ is an equivalence. 
An inverse is provided by the functor which coincides with $\omega_\jmath$ on objects and $1$-morphisms, and acts on $2$-morphisms
 \begin{align*}
   x&\mapsto x',&x'&\mapsto x & \tau&\mapsto -\tau',& \tau'&\mapsto
                                                             -\tau,\\
   \eta&\mapsto
       \eta' ,& \eta' &\mapsto 2^{\delta_{\hf, i}}\eta, & \epsilon &\mapsto
                                                                               \epsilon',& \epsilon'&\mapsto
                                                                                                2^{-\delta_{\hf, i}}\epsilon.
 \end{align*}


\item
 Consider the functor
 \begin{equation}\label{eq:def:taujmath}
   \sigma_\jmath:  \frakU^\jmath \longrightarrow (\frakU^\jmath)^{\operatorname{op}}
 \end{equation}
which sends
 $$
 \lambda \mapsto -\lambda -\varpi, \qquad 1_{\mu} \calE_{i_1}
 \cdots \calE_{i_\ell} 1_{\lambda} \{t\} \mapsto 1_{-\lambda
   -\varpi} \calE_{i_\ell} \cdots \calE_{i_1} 1_{-\mu -\varpi}
 \{t\}.
  $$ 
  It acts on $2$-morphisms by 
  \begin{align*} x&\mapsto x,&x'&\mapsto
    x' & \tau&\mapsto -\tau,& \tau'&\mapsto
                                     -\tau',\\
                                \eta&\mapsto 2^{-\delta_{i, \hf}}
                                      \eta',& \eta' &\mapsto \eta, &
                                                                     \epsilon
             &\mapsto 2^{\delta_{i, \hf}} \epsilon'&
                                                      \epsilon'&\mapsto
                                                                 \epsilon
\end{align*}
Thus, every diagram is sent to its reflection through the $y$-axis, times certain unit in $\bfk$.
The functor $\sigma_\jmath$ is an equivalence. An inverse is provided by the functor which coincides with $\sigma_\jmath$ on objects and 1-morphisms and acts on $2$-morphisms as 
\begin{align*} 
x&\mapsto
                                x,&x'&\mapsto x' & \tau&\mapsto
                                -\tau,& \tau'&\mapsto
                                               -\tau',\\
                                   \eta&\mapsto \eta',& \eta' &\mapsto
                                                                2^{\delta_{i,
                                                                \hf}}
                                                                \eta,
                                                 & \epsilon &\mapsto
                                                              \epsilon'&
                                                                         \epsilon'&\mapsto
                                                                                    2^{-\delta_{i,
                                                                                    \hf}}
                                                                                    \epsilon.
\end{align*} 

      \end{enumerate}
 \end{df}



\begin{rk}
The presence of the factors of 2 above reflects an asymmetry in the
definition of $\frakUj$, specifically in the relations
\eqref{bubble1}--\eqref{bubble2}. 
We can rescale the generators $\eta'$ and $\epsilon'$ so that the bubbles in \eqref{bubble1}--\eqref{bubble2}
 evaluate to any pair of scalars whose product is $2^{\delta_{i, \hf}}
 $.  One obvious choice is to switch the role of clockwise and
 counterclockwise bubbles; we denote the resulting (isomorphic) 2-category with this
 presentation ${}' \frakU^\jmath$.  Alternatively, we could give a more symmetric definition by choosing
 a square root $\sqrt{2}$, and using $\sqrt 2^{\delta_{i, \hf}}$ in both
 \eqref{bubble1} and \eqref{bubble2}.  In this case, the definitions
 of $\sigma_\jmath$ and $\omega_\jmath$ would not require the 
 factors of $2$.

One perspective on how these different forms arise is that this 2-category is connected to geometry and
representation theory in types $\B$ and $\C$.  Many aspects of these constructions, such as the cohomology rings of the partial flag varieties
and the category of Soergel bimodules only depend on the Weyl group and its reflection representation, and thus are
identical in types $\B/\C$.  Others, such as the Demazure operators discussed in Section \ref{ss:demazure}, and thus the formulas for
integration on flag varieties do depend on the root systems, and thus differ by powers of 2 between types $\B$ and $\C$.  
The category  $\frakUj$ naturally arises from the computations with type $\B$ flag varieties (which we choose to use), 
while ${}' \frakU^\jmath$ would arise most naturally with type $\C$ geometry.
\end{rk}
 
\section{The Grothendieck group of $\dot{\frakUj}$}
\label{sec:Grothendieck}

Throughout the rest of the paper, we assume that $\bfk$ is a complete local
ring and continue to assume that $2$ is a unit; equivalently, we
require that the residue field of $\bfk$ is not of characteristic 2.  A particularly
important case is when $\bfk$ is itself a field of characteristic
$\neq 2$. 

In this section, we provide a categorification of the $\jmath$-Serre relations in the framework of $2$-category $\dot\frakUj$.
This allows us to define a surjective map to the Grothendieck group of $\dot\frakUj$ from ${}_\calA\dot{\bfU}^\jmath$. 

\subsection{Reformulation of $\jmath$-Serre relations}	

First let us rewrite the $\jmath$-Serre relation \eqref{eq:jserre2}
in the form \eqref{eq:jserre2:1}--\eqref{eq:jserre2:3}
and the $\jmath$-Serre relation \eqref{eq:jserre1} in the form  \eqref{eq:jserre:1}--\eqref{eq:jserre:3}, respectively:
\begin{align}
\calF_{\hf} \calE_{\hf} \calF_{\hf}  1_{\lambda} 
&= \calF^{(2)}_{\hf} \calE_{\hf}  1_{\lambda} + ( \calE_{\hf}  \calF^{(2)}_{\hf}- [\lambda_\hf - 2] \calF) 1_{\lambda} 
+ { [\lambda_\hf]} \cdot\calF_{\hf} 1_{\lambda} , 
&\text{ if } \lambda_\hf \ge 2; 
 \label{eq:jserre2:1}\\
\calF_{\hf}  \calE_{\hf} \calF_{\hf}  1_{\lambda}  
& =  \calF^{(2)}_{\hf} \calE_{\hf} 1_{\lambda} + \calE_{\hf}  \calF^{(2)}_{\hf}1_{\lambda} + 2\cdot \calF_{\hf} 1_{\lambda} , 
& \text{ if } \lambda_\hf = 1;
 \label{eq:jserre2:2}\\
\calF_{\hf} \calE_{\hf} \calF_{\hf} 1_{\lambda} 
&= \calE_{\hf} \calF^{(2)}_{\hf} 1_{\lambda} + (\calF^{(2)}_{\hf} \calE_{\hf} - [-\lambda_\hf] \calF) 1_{\lambda} 
	+ {[-\lambda_\hf+2]} \cdot \calF_{\hf} 1_{\lambda} , 
&\text{ if } \lambda_\hf \le 0; 
 \label{eq:jserre2:3}  \\
\calE_{\hf} \calF_{\hf} \calE_{\hf} 1_{\lambda} 
&= \calE^{(2)}_{\hf} \calF_{\hf} 1_{\lambda}  + (\calF_{\hf} \calE^{(2)}_{\hf} - [-3 - \lambda_\hf]\calE_{\hf})1_{\lambda} 
   + {[-1 -\lambda_\hf]} \cdot \calE_{\hf}1_{\lambda} , 
&\text{ if } \lambda_\hf \le -3; 
 \label{eq:jserre:1} \\
\calE_{\hf}\calF_{\hf} \calE_{\hf}  1_{\lambda} 
&=  \calF_{\hf} \calE^{(2)}_{\hf} 1_{\lambda} + \calE^{(2)}_{\hf} \calF_{\hf} 1_{\lambda} + 2\cdot  \calE_{\hf} 1_{\lambda},
&\text{ if } \lambda_\hf= -2; 
 \label{eq:jserre:2} \\
 \calE_{\hf} \calF_{\hf}  \calE_{\hf} 1_{\lambda} 
 &= \calF_{\hf} \calE^{(2)}_{\hf} 1_{\lambda} + (\calE^{(2)}_{\hf} \calF_{\hf} - [1+ \lambda_\hf] \calE_{\hf}) 1_{\lambda}
     +{[\lambda_\hf+3]} \cdot \calE_{\hf} 1_{\lambda}, 
 &\text{ if } \lambda_\hf \ge -1. 
  \label{eq:jserre:3}
\end{align}
In each of the formulas \eqref{eq:jserre2:1}--\eqref{eq:jserre:3} above  
the right-hand side 
is the expansion of the left hand side in terms of canonical basis elements of ${}_\calA\dot{\bfU}^\jmath$. 
Knowing these expansions facilitates understanding the structure of
the corresponding 1-morphisms in $\frakUj$, but we will not use explicitly the fact
that this is the canonical basis expansion.  It can be proved 
in several ways,  but we will leave aside the proof.
In fact, we can also derive this from Theorem~\ref{thm:maindecat} below
by showing that the factors on the right-hand sides correspond to indecomposable self-dual $1$-morphisms in the $2$-category $\dot\frakUj$
(see Remark~\ref{rem:iCB} below).

\subsection{Categorification of $\jmath$-Serre \eqref{eq:jserre2:1} } 
\label{subsec:jserre:1}

We denote by $\calE_{\hf} \calF^{(2)}_{\hf}  1_{\lambda}$ the idempotent  
	$\,\,	\mathord{
		\begin{tikzpicture}[baseline=0,scale=1]
			\draw[->,thick,darkred] (-0.56, -0.3) -- (-0.56, 0.4);
			\draw[<-,thick,darkred] (0.28,-.3) --node[label={[shift={(0.5,0.1)}]$\color{black}\scriptstyle{\lambda}$}] {} (-0.28,.4);
			\draw[<-,thick,darkred] (-0.28,-.3) --node[near end] {$\color{darkred}\bullet$} (0.28,.4);
		\end{tikzpicture}
			}
	$
in $\End(\calE_\hf \calF^2_\hf 1_\lambda)$ as a $1$-morphism in $\dot\frakUj$. 

\begin{lemma} \label{lem:dec:eff}
Let  $\lambda \in \X_\jmath$ such that $\lambda_{\hf} = \langle \thal_\hf^\vee, \lambda \rangle \ge 2$. 
\begin{enumerate}\renewcommand{\theenumi}{\alph{enumi}}
	\item
		The element \[\kappa= 
	\mathord{
		\begin{tikzpicture}[baseline=0,scale=1.5]
			\draw[->,thick,darkred] (-0.56, -0.3) -- (-0.56, 0.4);
			\draw[<-,thick,darkred] (0.28,-.3) --node[label={[shift={(0.6,0.2)}]$\color{black}\scriptstyle{\lambda}$}] {} (-0.28,.4);
			\draw[<-,thick,darkred] (-0.28,-.3) --node[near end] {$\color{darkred}\bullet$} (0.28,.4);
		\end{tikzpicture}
			}
 +  {\frac 12} \sum_{a+b + c=-2}
	\mathord{
		\begin{tikzpicture}[baseline = 0pt]
   		\draw[->,thick,darkred] (0.9, 0.2) -- (0.9,-0.7);
 		\draw[thick,darkred] (0.9,-0.7) to[out=-90, in=90] (0.3, -1.3);
  		\draw[->,thick,darkred] (0.2,0) to[out=90,in=0] (0,0.2);
  		\draw[-,thick,darkred] (0,0.2) to[out=180,in=90] (-.2,0);
		\draw[-,thick,darkred] (-.2,0) to[out=-90,in=180] (0,-0.2);
  		\draw[-,thick,darkred] (0,-0.2) to[out=0,in=-90] (0.2,0);
   		\node[label={[shift={(0.2,-0.5)}]$\color{darkred}\scriptstyle{a}$}] at (0.2,0) {$\color{darkred}\bullet$};
		\draw[<-,thick,darkred] (0.3,-.7) to[out=90, in=0] (0,-0.3);
		\draw[-,thick,darkred] (0.3,-.7)  to[out=-90, in=90] (0.9,-1.3) ;
		\draw[-,thick,darkred] (0,-0.3) to[out = 180, in = 90] (-0.3,-.7);
		\draw[-,thick, darkred] (-0.3,-.7) -- (-0.3, -1.3);
   		\node[label={[shift={(-0.2,-0.5)}]$\color{darkred}\scriptstyle{b}$}] at (-0.25,-0.5) {$\color{darkred}\bullet$};
   		\draw[-, thick, darkred] (-0.3, 0.9) -- (-0.3, 1.6);
		\draw[<-, thick, darkred] (0.9, 0.9) to[out=90, in=-90] (0.3, 1.6);
		\draw[->, thick, darkred] (0.9, 1.6) to[out=-90, in=90] node[near start] {$\color{darkred}\bullet$}  (0.3, 0.9);
		\draw[-, thick, darkred] (0.9, 0.9) to[out=-90, in=90] (0.9, 0.5);
		\draw[-,thick, darkred] (0.9, 0.5) -- (0.9, 0.2); 
		\draw[-,thick,darkred] (0.3,0.9) to[out=-90, in=0] (0,0.5);
		\draw[->,thick,darkred] (0,0.5) to[out = 180, in = -90] (-0.3,0.9);
      	\node[label={[shift={(-.3,-0.5)}] $\color{darkred}\scriptstyle{c}$}] at (0,0.5) {$\color{darkred}\bullet$};
        \node at (1.2,1.5) {$\scriptstyle{\lambda}$};
		\end{tikzpicture}
			}-\;
{\frac 12} \sum_{a+b +c = -3}
	\mathord{
		\begin{tikzpicture}[baseline = 0pt]
   			\draw[->,thick,darkred] (0.9, 0.2) -- (0.9,-0.7);
 			\draw[thick,darkred] (0.9,-0.7) to[out=-90, in=90] (0.3, -1.3);
  			\draw[->,thick,darkred] (0.2,0) to[out=90,in=0] (0,0.2);
  			\draw[-,thick,darkred] (0,0.2) to[out=180,in=90] (-.2,0);
			\draw[-,thick,darkred] (-.2,0) to[out=-90,in=180] (0,-0.2);
  			\draw[-,thick,darkred] (0,-0.2) to[out=0,in=-90] (0.2,0);
   			\node[label={[shift={(0.2,-0.5)}]$\color{darkred}\scriptstyle{a}$}] at (0.2,0) {$\color{darkred}\bullet$};
			\draw[<-,thick,darkred] (0.3,-.7) to[out=90, in=0] (0,-0.3);
			\draw[-,thick,darkred] (0.3,-.7)  to[out=-90, in=90] (0.9,-1.3) ;
			\draw[-,thick,darkred] (0,-0.3) to[out = 180, in = 90] (-0.3,-.7);
			\draw[-,thick, darkred] (-0.3,-.7) -- (-0.3, -1.3);
   			\node[label={[shift={(-0.2,-0.5)}]$\color{darkred}\scriptstyle{b}$}] at (-0.25,-0.5) {$\color{darkred}\bullet$};
   			\draw[-, thick, darkred] (-0.3, 0.9) -- (-0.3, 1.6);
			\draw[<-, thick, darkred] (0.9, 0.9) to[out=90, in=-90] (0.3, 1.6);
			\draw[->, thick, darkred] (0.9, 1.6) to[out=-90, in=90] node[near start] {$\color{darkred}\bullet$}  (0.3, 0.9);
			\draw[-, thick, darkred] (0.9, 0.9) to[out=-90, in=90] (0.9, 0.5);
			\draw[-, thick, darkred] (0.9, 0.5) -- node {$\color{darkred}\bullet$} (0.9, 0.2); 
			\draw[-,thick,darkred] (0.3,0.9) to[out=-90, in=0] (0,0.5);
			\draw[->,thick,darkred] (0,0.5) to[out = 180, in = -90] (-0.3,0.9);
     		\node[label={[shift={(-0.3,-0.5)}] $\color{darkred}\scriptstyle{c}$}] at (0,0.5) {$\color{darkred}\bullet$};
        		\node at (1.2,1.5) {$\scriptstyle{\lambda}$};
		\end{tikzpicture}
			} \in \End(\calE_\hf \calF^2_\hf 1_\lambda)
\]
is an idempotent. We denote the image of this element in $\calE_{\hf}  \calF^{2}_{\hf}$ by $(\calE_{\hf}  \calF^{(2)}_{\hf}  - [\lambda_{\hf} - 2]\calF_{\hf})1_{\lambda}$ when considered as a $1$-morphism in the idempotent completion of $\widetilde{\frakU}^\jmath$.

	\item
		 We have the following isomorphism of $1$-morphisms in the idempotent completion of $\widetilde{\frakU}^\jmath$,
\begin{align*}
	\calE_{\hf} \calF^{(2)}_{\hf}  1_{\lambda}  &\cong (\calE_{\hf}  \calF^{(2)}_{\hf}  - [\lambda_{\hf} - 2]\calF_{\hf})1_{\lambda} \oplus \calF_{\hf} 1_{\lambda}^{\oplus [\lambda_{\hf}-2]}.
\end{align*}
\end{enumerate}
\end{lemma}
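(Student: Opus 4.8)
The plan is to adapt the argument by which Khovanov--Lauda \cite{KLIII} and Lauda \cite{L} split $EF1_\lambda$ in a categorical $\mathfrak{sl}_2$-action, the new ingredients being the $\jmath$-form \eqref{j-bicross down} of the bicross relation in place of \eqref{bicross down} and the normalization constant $2^{\delta_{i,\hf}}=2$ appearing in \eqref{bubble1}--\eqref{bubble2}. Everything takes place in the idempotent completion of $\widetilde{\frakU}^\jmath$, so only the relations (1)--(7) of Definition~\ref{df:weakfrakUj} may be used, never \eqref{Pi=1}.

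For part~(a): the first summand of $\varrho$ is (placed beside the straight $\calE_\hf$ strand) the nilHecke idempotent $e$ that realizes $\calF^{(2)}_\hf$ as a direct summand of $\calF_\hf\calF_\hf$, so $\varrho=e-\beta$ with $\beta$ the combination of the two cup--cap--bubble sums. I would verify $\varrho^2=\varrho$ by expanding $\varrho^2=e-e\beta-\beta e+\beta^2$ and checking $e\beta=\beta e=\beta$ and $\beta^2=\beta$. The first two identities are immediate once the dot and crossing of $e$ are absorbed into the downward strands via the quiver Hecke relations \eqref{qha}. For $\beta^2=\beta$ one stacks the two copies of $\beta$: the resulting internal double crossing of the $\calE_\hf$ strand past two $\calF_\hf$ strands is reduced using \eqref{j-bicross down} (together with the nodal relations \eqref{nodal} for the curls this produces), and the internal bubbles that appear are evaluated with \eqref{bubble1}--\eqref{bubble3}. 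Here the counterclockwise bubble at the relevant weight contributes the factor $2$ that cancels one of the explicit $\tfrac12$'s, while the two ``extra'' dotted-identity terms in \eqref{j-bicross down} that are absent from \eqref{bicross down} are precisely what makes the remaining terms collapse back to $\beta$. I expect this sign-and-bubble bookkeeping to be the main obstacle; since no faithful $2$-representation is yet available (the $2$-functor to $\frakF_{r,m}$ and its local fullness come only in Sections~\ref{sec:Schur}--\ref{sec:Schur2}), $\varrho^2=\varrho$ must be deduced purely from the defining relations. This is the computation carried out in Appendix~\ref{sec:jserre}.

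For part~(b), granting~(a), one runs the standard matrix-unit argument. For $0\le k\le\lambda_\hf-3$ introduce degree-homogeneous $2$-morphisms $\iota_k\colon\calF_\hf 1_\lambda\to\calE_\hf\calF^{(2)}_\hf 1_\lambda$, each built from the unit $\eta'$ (creating an $\calE_\hf\calF_\hf$ cap to the left of the surviving $\calF_\hf$ strand), $k$ dots on the $\calE_\hf$ leg, and the projection $\calF_\hf\calF_\hf\to\calF^{(2)}_\hf$; and dual maps $\pi_k\colon\calE_\hf\calF^{(2)}_\hf 1_\lambda\to\calF_\hf 1_\lambda$ built from the counit $\epsilon'$, a dotted cup and the inclusion of $\calF^{(2)}_\hf$. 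Using \eqref{bubble1}--\eqref{bubble3} one computes the matrix $(\pi_k\circ\iota_\ell)$ and finds it triangular with invertible diagonal, so after a triangular change of basis one may take $\pi_k\circ\iota_\ell=\delta_{k\ell}\,1_{\calF_\hf 1_\lambda}$. It remains to verify $\pi_k\circ\varrho=0=\varrho\circ\iota_k$ and that $\varrho+\sum_k\iota_k\circ\pi_k$ equals the idempotent defining $\calE_\hf\calF^{(2)}_\hf 1_\lambda$; these are diagrammatic identities of the same flavour as in~(a), again using \eqref{j-bicross down}, \eqref{nodal} and the bubble relations. Then $\varrho$ together with the $f_k:=\iota_k\circ\pi_k$ form a complete family of orthogonal idempotents, giving
\[
\calE_\hf\calF^{(2)}_\hf 1_\lambda\;\cong\;\operatorname{im}(\varrho)\ \oplus\ \bigoplus_{k=0}^{\lambda_\hf-3}\calF_\hf 1_\lambda\{d_k\}\;=\;(\calE_\hf\calF^{(2)}_\hf-[\lambda_\hf-2]\calF_\hf)1_\lambda\ \oplus\ \calF_\hf 1_\lambda^{\oplus[\lambda_\hf-2]},
\]
where the shifts $d_k$ are read off from the degrees of $\eta'$, $\epsilon'$ and the $k$ dots and assemble into the quantum integer $[\lambda_\hf-2]$; this uses the value $-1+\langle\thal^\vee_\hf,\lambda+\alpha_\hf\rangle=2+\lambda_\hf$ recorded after Definition~\ref{df:weakfrakUj}.
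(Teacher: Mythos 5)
Your plan for part~(b) is essentially the paper's proof: one introduces maps $\pi_s\colon \calE_\hf\calF^{(2)}_\hf 1_\lambda\to\calF_\hf 1_\lambda\{\lambda_\hf-3-2s\}$ and $\iota_s$ in the other direction for $0\le s\le\lambda_\hf-3$ (built exactly from $\eta'$, $\epsilon'$, dots, the nilHecke projection, and — in $\pi_s$ — explicit sums of dotted bubbles), computes $\pi_s\iota_{s'}$ by closing cups against caps and invoking \eqref{bubble1}--\eqref{bubble3} (the clockwise degree-$(-1)$ bubble contributing the factor $2$ that cancels the $\tfrac12$), and concludes that the $\iota_s\pi_s$ are mutually orthogonal idempotents whose complement in $\id_{\calE_\hf\calF^{(2)}_\hf 1_\lambda}$ is, after expanding diagrammatically, precisely the displayed $\varrho$; the grading shifts assemble into $[\lambda_\hf-2]$ exactly as you say. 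Where you diverge is part~(a): the paper never verifies $\varrho^2=\varrho$ by direct diagrammatic expansion. Instead, idempotency is automatic once one knows $\pi_s\iota_{s'}=\delta_{s,s'}$, since $\varrho$ is then by construction the identity idempotent minus a sum of pairwise orthogonal idempotents; your proposed head-on computation of $\beta^2=\beta$ via \eqref{j-bicross down} would presumably succeed but is both harder and redundant, because your own step in~(b) establishing $\varrho+\sum_k\iota_k\pi_k=\id_{\calE_\hf\calF^{(2)}_\hf 1_\lambda}$ already yields~(a). Two small corrections: the lengthy computations in Appendix~\ref{sec:jserre} concern the identities \eqref{eq:prop:jserre1}--\eqref{eq:prop:jserre2} for Proposition~\ref{prop:jserre}, not this lemma, whose proof is self-contained and uses only the bubble relations; and your explicit triangular correction of the Gram matrix $(\pi_k\iota_\ell)$ is a sound (indeed slightly more careful) way to arrive at the orthogonality the paper asserts.
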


\begin{proof}
We shall prove the two statements together. 
We first have the following system of $2$-morphisms
\[\mathord{
\begin{tikzpicture}[baseline=0,scale=1.5]
	\node (effabove) at (0,0) {$\scriptstyle{\calE_{\hf} \calF^{(2)}_{\hf} 1_{\lambda}}$};
	\node (1) at (-3, -1.5) {$\scriptstyle{\calF_{\hf} 1_{\lambda} \{\lambda_{\hf} - 3\}}$};
	\node at (-2, -1.5) {$\scriptstyle{\oplus}$};
	\node (2) at (-1, -1.5) {$\scriptstyle{\calF_{\hf} 1_{\lambda} \{\lambda_{\hf} - 5\}}$};
	\node at (0,-1.5) {$\scriptstyle{\dots}$};
	\node (3) at (1, -1.5) {$\scriptstyle{\calF_{\hf} 1_{\lambda} \{5- \lambda_{\hf}\}}$};
	\node at (2, -1.5) {$\scriptstyle{\oplus}$};
	\node (4) at (3, -1.5) {$\scriptstyle{\calF_{\hf} 1_{\lambda} \{3- \lambda_{\hf}\}}$};
	\node (effbelow) at (0,-3) {$\scriptstyle{\calE_{\hf} \calF^{(2)}_{\hf} 1_{\lambda}}$};
	\draw[->] (effbelow) --node[left, below] {$\scriptstyle{\pi_0}$} (1); 
	\draw[->] (effbelow) --node[left] {$\scriptstyle{\pi_1}$} (2); 
	\draw[->] (effbelow) --node[right] {$\scriptstyle{\pi_{\lambda_{\hf}-4}}$} (3); 
	\draw[->] (effbelow) --node[right, below] {$\scriptstyle{\pi_{\lambda_{\hf}-3}}$} (4); 
	\draw[->] (1) --node[left, above] {$\scriptstyle{\iota_0}$} (effabove); 
	\draw[->] (2) --node[left] {$\scriptstyle{\iota_1}$} (effabove); 
	\draw[->] (3) --node[right] {$\scriptstyle{\iota_{\lambda_{\hf}-4}}$} (effabove); 
	\draw[->] (4) --node[right, above] {$\scriptstyle{\iota_{\lambda_{\hf}-3}}$} (effabove);
\end{tikzpicture}
}\]
where the $2$-morphisms $\iota_s$ and $\pi_s$ are defined as follows ($ 0 \le s \le \lambda_{\hf} -3$)
\[
\pi_s := {\frac12} \sum_{a+b = s- \lambda_{\hf} +1}
	\mathord{
		\begin{tikzpicture}[baseline = -14pt]
   			\node at (1,0.02) {$\scriptstyle{\lambda}$};
   			\draw[->,thick,darkred] (0.7, 0.2) -- (0.7,-0.7);
 			\draw[thick,darkred] (0.7,-0.7) to[out=-90, in=90] (0.3, -1.1);
  			\draw[->,thick,darkred] (0.2,0) to[out=90,in=0] (0,0.2);
  			\draw[-,thick,darkred] (0,0.2) to[out=180,in=90] (-.2,0);
			\draw[-,thick,darkred] (-.2,0) to[out=-90,in=180] (0,-0.2);
  			\draw[-,thick,darkred] (0,-0.2) to[out=0,in=-90] (0.2,0);
   			\node[label={[shift={(0.2,-0.5)}]$\color{darkred}\scriptstyle{a}$}] at (0.2,0) {$\color{darkred}\bullet$};
			\draw[<-,thick,darkred] (0.3,-.7) to[out=90, in=0] (0,-0.3);
			\draw[-,thick,darkred] (0.3,-.7)  to[out=-90, in=90] (0.7,-1.1) ;
			\draw[-,thick,darkred] (0,-0.3) to[out = 180, in = 90] (-0.3,-.7);
			\draw[-,thick, darkred] (-0.3,-.7) -- (-0.3, -1.1);
  			\node[label={[shift={(-0.2,-0.5)}]$\color{darkred}\scriptstyle{b}$}] at (-0.25,-0.5) {$\color{darkred}\bullet$};
		\end{tikzpicture}
			} - \;
{\frac12} \sum_{a+b = s- \lambda_{\hf} }
	\mathord{
		\begin{tikzpicture}[baseline = -14pt]
   			\node at (1,0.02) {$\scriptstyle{\lambda}$};
   			\draw[->,thick,darkred] (0.7, 0.2) -- node {$\color{darkred}\bullet$} (0.7,-0.7);
 			\draw[thick,darkred] (0.7,-0.7) to[out=-90, in=90] (0.3, -1.1);
  			\draw[->,thick,darkred] (0.2,0) to[out=90,in=0] (0,0.2);
  			\draw[-,thick,darkred] (0,0.2) to[out=180,in=90] (-.2,0);
			\draw[-,thick,darkred] (-.2,0) to[out=-90,in=180] (0,-0.2);
  			\draw[-,thick,darkred] (0,-0.2) to[out=0,in=-90] (0.2,0);
   			\node[label={[shift={(0.2,-0.5)}]$\color{darkred}\scriptstyle{a}$}] at (0.2,0) {$\color{darkred}\bullet$};
			\draw[<-,thick,darkred] (0.3,-.7) to[out=90, in=0] (0,-0.3);
			\draw[-,thick,darkred] (0.3,-.7)  to[out=-90, in=90] (0.7,-1.1) ;
			\draw[-,thick,darkred] (0,-0.3) to[out = 180, in = 90] (-0.3,-.7);
			\draw[-,thick, darkred] (-0.3,-.7) -- (-0.3, -1.1);
   			\node[label={[shift={(-0.2,-0.5)}]$\color{darkred}\scriptstyle{b}$}] at (-0.25,-0.5) {$\color{darkred}\bullet$};
		\end{tikzpicture}
				}
\qquad \text{ and } \qquad
\iota_s := -
	\mathord{
		\begin{tikzpicture}[baseline = 16pt]
			\draw[-, thick, darkred] (-0.3, 0.7) -- (-0.3, 1.4);
			\draw[<-, thick, darkred] (0.9, 0.7) to[out=90, in=-90] (0.3, 1.4);
			\draw[->, thick, darkred] (0.9, 1.4) to[out=-90, in=90] node[near start] {$\color{darkred}\bullet$}  (0.3, 0.7);
			\draw[-, thick, darkred] (0.9, 0.7) to[out=-90, in=90] (0.9, 0.3);
			\draw[-,thick,darkred] (0.3,0.7) to[out=-90, in=0] (0,0.3);
			\draw[->,thick,darkred] (0,0.3) to[out = 180, in = -90] (-0.3,0.7);
      		\node[label=below: $\color{darkred}\scriptstyle{\lambda_{\hf}-3-s}$] at (0,0.3) {$\color{darkred}\bullet$};
          	\node at (1.2,1.3) {$\scriptstyle{\lambda}$};
		\end{tikzpicture}
			}.
\]

We have ($0 \le s, s' \le \lambda_{\hf} -3$)
\begin{align*}
\pi_s \cdot \iota_{s'} 
&= -  {\frac 12} \sum_{a+b = s- \lambda_{\hf} +1}
	\mathord{
		\begin{tikzpicture}[baseline = -35pt]
   			\draw[->,thick,darkred] (0.9, 0.2) -- (0.9,-0.7);
 			\draw[thick,darkred] (0.9,-0.7) to[out=-90, in=90] (0.3, -1.3);
  			\draw[->,thick,darkred] (0.2,0) to[out=90,in=0] (0,0.2);
  			\draw[-,thick,darkred] (0,0.2) to[out=180,in=90] (-.2,0);
			\draw[-,thick,darkred] (-.2,0) to[out=-90,in=180] (0,-0.2);
  			\draw[-,thick,darkred] (0,-0.2) to[out=0,in=-90] (0.2,0);
   			\node[label={[shift={(0.2,-0.5)}]$\color{darkred}\scriptstyle{a}$}] at (0.2,0) {$\color{darkred}\bullet$};
			\draw[<-,thick,darkred] (0.3,-.7) to[out=90, in=0] (0,-0.3);
			\draw[-,thick,darkred] (0.3,-.7)  to[out=-90, in=90] (0.9,-1.3) ;
			\draw[-,thick,darkred] (0,-0.3) to[out = 180, in = 90] (-0.3,-.7);
			\draw[-,thick, darkred] (-0.3,-.7) -- (-0.3, -1.3);
   			\node[label={[shift={(-0.2,-0.5)}]$\color{darkred}\scriptstyle{b}$}] at (-0.25,-0.5) {$\color{darkred}\bullet$};
   			\draw[-, thick, darkred] (-0.3, -2) -- (-0.3, -1.3);
			\draw[<-, thick, darkred] (0.9, -2) to[out=90, in=-90] (0.3, -1.3);
			\draw[->, thick, darkred] (0.9, -1.3) to[out=-90, in=90] node[near start] {$\color{darkred}\bullet$}  (0.3, -2);
			\draw[-, thick, darkred] (0.9, -2) to[out=-90, in=90] (0.9, -2.4);
			\draw[-,thick, darkred] (0.9, -2.4) -- (0.9, -2.7); 
			\draw[-,thick,darkred] (0.3, -2) to[out=-90, in=0] (0, -2.4);
			\draw[->,thick,darkred] (0, -2.4) to[out = 180, in = -90] (-0.3, -2);
      		\node[label={[shift={(-0.8,-0.5)}] $\color{darkred}\scriptstyle{\lambda_{\hf}-3-s'}$}] at (0,-2.4) {$\color{darkred}\bullet$};
         	\node at (1.2, 0) {$\scriptstyle{\lambda}$};
		\end{tikzpicture}
			}+\;
{\frac 12} \sum_{a+b = s- \lambda_{\hf}}
	\mathord{
		\begin{tikzpicture}[baseline = -35pt]
   			\draw[->,thick,darkred] (0.9, 0.2) -- node {$\color{darkred}\bullet$} (0.9,-0.7);
 			\draw[thick,darkred] (0.9,-0.7) to[out=-90, in=90] (0.3, -1.3);
  			\draw[->,thick,darkred] (0.2,0) to[out=90,in=0] (0,0.2);
  			\draw[-,thick,darkred] (0,0.2) to[out=180,in=90] (-.2,0);
			\draw[-,thick,darkred] (-.2,0) to[out=-90,in=180] (0,-0.2);
  			\draw[-,thick,darkred] (0,-0.2) to[out=0,in=-90] (0.2,0);
   			\node[label={[shift={(0.2,-0.5)}]$\color{darkred}\scriptstyle{a}$}] at (0.2,0) {$\color{darkred}\bullet$};
			\draw[<-,thick,darkred] (0.3,-.7) to[out=90, in=0] (0,-0.3);
			\draw[-,thick,darkred] (0.3,-.7)  to[out=-90, in=90] (0.9,-1.3) ;
			\draw[-,thick,darkred] (0,-0.3) to[out = 180, in = 90] (-0.3,-.7);
			\draw[-,thick, darkred] (-0.3,-.7) -- (-0.3, -1.3);
   			\node[label={[shift={(-0.2,-0.5)}]$\color{darkred}\scriptstyle{b}$}] at (-0.25,-0.5) {$\color{darkred}\bullet$};
     		\draw[-, thick, darkred] (-0.3, -2) -- (-0.3, -1.3);
			\draw[<-, thick, darkred] (0.9, -2) to[out=90, in=-90]  (0.3, -1.3);
			\draw[->, thick, darkred] (0.9, -1.3) to[out=-90, in=90] node[near start] {$\color{darkred}\bullet$}  (0.3, -2);
			\draw[-, thick, darkred] (0.9, -2) to[out=-90, in=90] (0.9, -2.4);
			\draw[-,thick, darkred] (0.9, -2.4) -- (0.9, -2.7); 
			\draw[-,thick,darkred] (0.3, -2) to[out=-90, in=0] (0, -2.4);
			\draw[->,thick,darkred] (0, -2.4) to[out = 180, in = -90] (-0.3, -2);
      		\node[label={[shift={(-0.8,-0.5)}] $\color{darkred}\scriptstyle{\lambda_{\hf}-3-s'}$}] at (0,-2.4) {$\color{darkred}\bullet$};
         	\node at (1.2, 0) {$\scriptstyle{\lambda}$};
		\end{tikzpicture}.
				}
\end{align*}
Then we have 
\begin{align*}
&-  {\frac 12} \sum_{a+b = s- \lambda_{\hf} +1} \!\!\!\!
	\mathord{
		\begin{tikzpicture}[baseline = -35pt]
  			 \draw[->,thick,darkred] (0.9, 0.2) -- (0.9,-0.7);
 			 \draw[thick,darkred] (0.9,-0.7) to[out=-90, in=90] (0.3, -1.3);
  			 \draw[->,thick,darkred] (0.2,0) to[out=90,in=0] (0,0.2);
 			 \draw[-,thick,darkred] (0,0.2) to[out=180,in=90] (-.2,0);
			 \draw[-,thick,darkred] (-.2,0) to[out=-90,in=180] (0,-0.2);
  			 \draw[-,thick,darkred] (0,-0.2) to[out=0,in=-90] (0.2,0);
  			 \node[label={[shift={(0.2,-0.5)}]$\color{darkred}\scriptstyle{a}$}] at (0.2,0) {$\color{darkred}\bullet$};
		   	 \draw[<-,thick,darkred] (0.3,-.7) to[out=90, in=0] (0,-0.3);
			 \draw[-,thick,darkred] (0.3,-.7)  to[out=-90, in=90] (0.9,-1.3) ;
			 \draw[-,thick,darkred] (0,-0.3) to[out = 180, in = 90] (-0.3,-.7);
			 \draw[-,thick, darkred] (-0.3,-.7) -- (-0.3, -1.3);
   			 \node[label={[shift={(-0.2,-0.5)}]$\color{darkred}\scriptstyle{b}$}] at (-0.25,-0.5) {$\color{darkred}\bullet$};
   			\draw[-, thick, darkred] (-0.3, -2) -- (-0.3, -1.3);
			\draw[<-, thick, darkred] (0.9, -2) to[out=90, in=-90] (0.3, -1.3);
			\draw[->, thick, darkred] (0.9, -1.3) to[out=-90, in=90] node[near start] {$\color{darkred}\bullet$}  (0.3, -2);
			\draw[-, thick, darkred] (0.9, -2) to[out=-90, in=90] (0.9, -2.4);
			\draw[-,thick, darkred] (0.9, -2.4) -- (0.9, -2.7); 
			\draw[-,thick,darkred] (0.3, -2) to[out=-90, in=0] (0, -2.4);
			\draw[->,thick,darkred] (0, -2.4) to[out = 180, in = -90] (-0.3, -2);
     	 	\node[label={[shift={(-0.8,-0.5)}] $\color{darkred}\scriptstyle{\lambda_{\hf}-3-s'}$}] at (0,-2.4) {$\color{darkred}\bullet$};
         	\node at (1.2, 0) {$\scriptstyle{\lambda}$};
		\end{tikzpicture}
			} = \; 
-  {\frac 12} \sum_{a+b = s - \lambda_{\hf} +1}\!\!\!\!
	\mathord{
		\begin{tikzpicture}[baseline = -35pt]
  			 \draw[->,thick,darkred] (0.9, 0.2) -- (0.9,-0.7);
 			 \draw[thick,darkred] (0.9,-0.7) to[out=-90, in=90] (0.3, -1.3);
  			 \draw[->,thick,darkred] (0.2,0) to[out=90,in=0] (0,0.2);
  		 	 \draw[-,thick,darkred] (0,0.2) to[out=180,in=90] (-.2,0);
			 \draw[-,thick,darkred] (-.2,0) to[out=-90,in=180] (0,-0.2);
  			 \draw[-,thick,darkred] (0,-0.2) to[out=0,in=-90] (0.2,0);
   			 \node[label={[shift={(0.2,-0.5)}]$\color{darkred}\scriptstyle{a}$}] at (0.2,0) {$\color{darkred}\bullet$};
		     \draw[<-,thick,darkred] (0.3,-.7) to[out=90, in=0] (0,-0.3);
			 \draw[-,thick,darkred] (0.3,-.7)  to[out=-90, in=90] (0.9,-1.3) ;
			 \draw[-,thick,darkred] (0,-0.3) to[out = 180, in = 90] (-0.3,-.7);
			 \draw[-,thick, darkred] (-0.3,-.7) -- (-0.3, -1.3);
   			 \node[label={[shift={(-0.9,-0.5)}]$\color{darkred}\scriptstyle{b + \lambda_{\hf} - 3 - s'}$}] at (-0.25,-0.5) {$\color{darkred}\bullet$};
    			 \node at (1.2, 0) {$\scriptstyle{\lambda}$};     
			 \draw[-, thick, darkred] (-0.3, -1.3) to[out=-90, in =180] (0, -1.7) to[out=0, in= -90] (0.3, -1.3);
			 \draw[->, thick, darkred] (0.9, -1.3) -- (0.9, -1.7);
			 \draw[-, thick, darkred] (0.9, -1.7)  -- (0.9, -2);
		\end{tikzpicture}
			}\\
= &\, {\frac 12} \sum_{a+d+c = s- s' -3}
	\mathord{
		\begin{tikzpicture}[baseline = -15pt]
 	  		\draw[->,thick,darkred] (0.7, 0.4) -- node[label={[shift={(0.2,-0.5)}]$\color{darkred}\scriptstyle{c}$}] {$\color{darkred}\bullet$} (0.7,-1.3);
  			\draw[->,thick,darkred] (0.2,0) to[out=90,in=0] (0,0.2);
 			\draw[-,thick,darkred] (0,0.2) to[out=180,in=90] (-.2,0);
			\draw[-,thick,darkred] (-.2,0) to[out=-90,in=180] (0,-0.2);
  			\draw[-,thick,darkred] (0,-0.2) to[out=0,in=-90] (0.2,0);
  			\node[label={[shift={(0.2,-0.5)}]$\color{darkred}\scriptstyle{a}$}] at (0.2,0) {$\color{darkred}\bullet$};  
 			\draw[-,thick,darkred] (0.2,-1) to[out=90,in=0] (0,-0.8);
  			\draw[<-,thick,darkred] (0,-0.8) to[out=180,in=90] (-.2,-1);
			\draw[-,thick,darkred] (-.2,-1) to[out=-90,in=180] (0,-1.2);
 			\draw[-,thick,darkred] (0,-1.2) to[out=0,in=-90] (0.2,-1);
  			\node[label={[shift={(0.2,-0.5)}]$\color{darkred}\scriptstyle{d}$}] at (0.2,-1) {$\color{darkred}\bullet$};
  			\node at (1, 0.2) {$\scriptstyle{\lambda}$} ;
		\end{tikzpicture}
			}
=\; 
	\begin{cases}
		\mathord{
		\begin{tikzpicture}[baseline=0pt]
		\draw[->, thick, darkred] (0, 0.5) -- node[label=right:$\color{darkred}		\scriptstyle{s - s'}$] {$\color{darkred}\bullet$} (0,-0.5);
		\end{tikzpicture}
				}, &\text{if } s - s' \ge 0;\\
		0, &\text{if } s -s' <0. 
	\end{cases}
\end{align*}

Therefore $\pi_s \cdot \iota_{s'} = \delta_{s, s'}$. Hence we know $\iota_s \pi_s$ ($0 \le s \le \lambda_{\hf} -3$) are mutually orthogonal idempotents in $\calU^{\jmath}(\calE_{\hf} \calF^{2}_{\hf} 1_{\lambda}, \calE_{\hf} \calF^{2}_{\hf} 1_{\lambda})$. The idempotent 
\[\kappa= 
\id_{\calE_{\hf} \calF^{(2)}_{\hf} 1_{\lambda}} - \sum_{0 \le s \le \lambda_{\hf}-3} \iota_s \pi_s
\]
can then be expressed diagrammatically as in the lemma.
\end{proof}

\begin{rk}
 \label{rem:iCB}
  One can show that $(\calE_{\hf}  \calF^{(2)}_{\hf}  -
  [\lambda_{\hf} - 2]\calF_{\hf})1_{\lambda}$ is indecomposable in $\dot{\frakU}^\jmath$.
  In fact, since $\pi_{t} \kappa=\kappa
  \iota_s=0$, a direct diagrammatic computation implies that 
  the algebra
  $\End(\calE_\hf\calF_\hf^{(2)}1_{\lambda})$ is spanned by $\kappa$ and
  $\iota_{s+t}x^s \pi_{t}$ for $0\leq s\leq s+t\leq \lam_{\hf}-3$.
  In particular, the algebra
  $\kappa\End(\calE_\hf\calF_\hf^{(2)}1_{\lambda})\kappa$ is a rank one $\bfk$-module spanned by $\kappa$.  It is a local ring, hence
  $(\calE_{\hf}  \calF^{(2)}_{\hf}  -
  [\lambda_{\hf} - 2]\calF_{\hf})1_{\lambda}$ is indecomposable. 
  Similar arguments can be used to show that all the individual terms appearing in the sums
 on the right-hand sides of \eqref{eq:jserre2:1}--\eqref{eq:jserre:3} correspond to
 indecomposable 1-morphisms. 
\end{rk}

We denote by $ \calF_\hf^{(2)}\calE_\hf 1_\lambda$ the $1$-morphism in the idempotent completion of $\mathfrak{\tilde U}^\jmath$ (or in $\dot\frakUj$) defined as the image of the idempotent
$
\mathord{
		\begin{tikzpicture}[baseline=0,scale=1]
			\draw[->,thick,darkred] (0.56, -0.3) --node[label={[shift={(0.4,0.1)}]$\color{black}\scriptstyle{\lambda}$}] {} (0.56, 0.4);
			\draw[<-,thick,darkred] (0.28,-.3) -- (-0.28,.4);
			\draw[<-,thick,darkred] (-0.28,-.3) --node[near end] {$\color{darkred}\bullet$} (0.28,.4);
		\end{tikzpicture}
			} \in  \End(\calF^2_\hf \calE_\hf  1_\lambda).
$

\begin{prop}\label{prop:jserre}
Let  $\lambda \in \X_\jmath$ with $\lambda_{\hf} = \langle
\thal_\hf^\vee, \lambda \rangle \ge 2$.  
Then,
\begin{enumerate}\renewcommand{\theenumi}{\alph{enumi}}
 	\item
		we have the following split surjection in the idempotent completion of $\mathfrak{\tilde U}^\jmath$ (and in $\dot\frakUj$),
\[\xymatrix{
 \calF_\hf^{(2)}\calE_\hf 1_\lambda \oplus (\calE_\hf\calF_\hf^{(2)} - [\lambda_\hf-2] \calF_{\hf})1_\lambda   
  \oplus \calF_\hf 1_\lambda^{\oplus [\lambda_\hf]}  \ar@{->>}[r] &\calF_\hf\calE_\hf\calF_\hf 1_\lambda.}
\]	
	\item
		we have the following isomorphism in $\dot\frakUj$
	\[
	 \calF_\hf\calE_\hf\calF_\hf 1_\lambda \cong \calF_\hf^{(2)}\calE_\hf 1_\lambda \oplus (\calE_\hf\calF_\hf^{(2)} - [\lambda_\hf-2] \calF_{\hf})1_\lambda   
	 \oplus \calF_\hf 1_\lambda^{\oplus [\lambda_\hf]}.
	\]
\end{enumerate}
\end{prop}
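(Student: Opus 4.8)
The plan is to deduce Proposition~\ref{prop:jserre} from Lemma~\ref{lem:dec:eff} together with the $\fraksl_2$-type calculus already available inside $\widetilde{\frakU}^\jmath$. The strategy mirrors the standard argument for the commutator relation $\calE\calF \cong \calF\calE \oplus \bigoplus 1_\lambda$ in the Khovanov--Lauda setting, but with $\calF_\hf^{(2)}\calE_\hf$ and the indecomposable summand $(\calE_\hf\calF_\hf^{(2)} - [\lambda_\hf-2]\calF_\hf)$ playing the roles of the two ``big'' pieces.

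\textbf{Step 1: the $\fraksl_2$ decomposition of $\calF_\hf\calE_\hf\calF_\hf$.} First I would apply the $i=\hf$ bicross relation \eqref{j-bicross down} (equivalently \eqref{j-bicross up}) to the composite $\calF_\hf\calE_\hf$ sitting inside $\calF_\hf\calE_\hf\calF_\hf 1_\lambda$. Since $\lambda_\hf\ge 2$, the relevant bubble degrees are non-negative and the curl/bubble terms on the right-hand side of \eqref{j-bicross down} organize (via the adjunction \eqref{adj}, cyclicity \eqref{cyc-x}--\eqref{cyc-tau}, and the bubble relations \eqref{bubble1}--\eqref{bubble3}) into a system of mutually orthogonal idempotents. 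This produces a split injection $\calF_\hf 1_\lambda^{\oplus[\lambda_\hf]} \hookrightarrow \calE_\hf\calF_\hf\cdot\calF_\hf 1_\lambda$ with complementary idempotent whose image is $\calF_\hf\cdot\calE_\hf\calF_\hf 1_\lambda$; composing with the crossing $\tau$ (which by \eqref{now} for $i=j=\hf$ is invertible up to lower-order dot terms, so gives an isomorphism $\calF_\hf\calE_\hf\calF_\hf \cong \calE_\hf\calF_\hf\calF_\hf$ onto the appropriate summand) yields
\[
\calF_\hf\calE_\hf\calF_\hf 1_\lambda \;\cong\; \calF_\hf^{(2)}\calE_\hf 1_\lambda \;\oplus\; \calE_\hf\calF_\hf^{(2)} 1_\lambda \;\oplus\; \text{(correction terms)}.
\]
Here I must be careful: the decomposition $\calF_\hf\calF_\hf \cong \calF_\hf^{(2)} 1_\lambda^{\oplus[2]}$ from the quiver Hecke relation \eqref{qha} is what splits $\calE_\hf\calF_\hf\calF_\hf$ into $\calE_\hf\calF_\hf^{(2)}$-pieces and a $\calF_\hf$-piece; tracking the shifts against the degree table is the bookkeeping part.

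\textbf{Step 2: substitute Lemma~\ref{lem:dec:eff}.} Now I would replace $\calE_\hf\calF_\hf^{(2)} 1_\lambda$ by the right-hand side of Lemma~\ref{lem:dec:eff}(b), namely $(\calE_\hf\calF_\hf^{(2)} - [\lambda_\hf-2]\calF_\hf)1_\lambda \oplus \calF_\hf 1_\lambda^{\oplus[\lambda_\hf-2]}$. Combining the $[\lambda_\hf]$ or $[\lambda_\hf-2]$ copies of $\calF_\hf 1_\lambda$ coming from Step~1 with the $[\lambda_\hf-2]$ coming from the Lemma, and matching the graded multiplicities against the identity $[\lambda_\hf] = [\lambda_\hf-2] + \big(q^{\lambda_\hf-1}+q^{-\lambda_\hf+1}\big)$ in $\calA$ (and its counterpart for the other jumping pattern), gives exactly $\calF_\hf 1_\lambda^{\oplus[\lambda_\hf]}$ as the total $\calF_\hf$-isotypic part. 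This establishes part~(b) as an isomorphism in $\dot\frakUj$, and reading off the inclusion of the summands gives the split surjection of part~(a) — or one can first prove part~(a) directly by writing down the explicit $\iota$'s and $\pi$'s (cap/cup diagrams decorated with dots and bubbles, as in the proof of Lemma~\ref{lem:dec:eff}) showing the map is split, then invoke the Krull--Schmidt property of $\dot\frakUj$ (valid since $\bfk$ is complete local, so endomorphism rings are semiperfect) to upgrade the split surjection to an isomorphism once the Grothendieck-group classes are matched.

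\textbf{Main obstacle.} The delicate point is not the abstract bookkeeping but verifying that the idempotents extracted from \eqref{j-bicross down} really are orthogonal and that the correction terms assemble correctly — this is where the $\jmath$-specific factor $2^{\delta_{i,\hf}}$ in \eqref{bubble1} and the extra dot terms in \eqref{j-bicross down} (compared to the ordinary \eqref{bicross down}) intervene, and one must check the degree shift $-1+\langle\thal^\vee_\hf,\lambda+\alpha_\hf\rangle = 2+\lambda_\hf$ is the one producing the correct range of bubble labels. I expect the cleanest route is to do all the diagrammatic manipulation at the level of $\widetilde{\frakU}^\jmath$ (note Proposition~\ref{prop:jserre} is stated there too, not just in $\dot\frakUj$, so relation \eqref{Pi=1} is not needed), reducing everything to the bubble calculus and the quiver Hecke relations for a single color, which are formally identical to the KLR case except for the one factor of $2$; that factor is harmless since $2$ is a unit in $\bfk$.
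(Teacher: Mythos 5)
Your proposal has two genuine gaps, and the second one is fatal to the stated strategy. First, Step 1 rests on an $\fraksl_2$-type decomposition of the length-two composite: you want \eqref{j-bicross down} to split $\calF_\hf\calE_\hf 1_\lambda$ (inside $\calF_\hf\calE_\hf\calF_\hf1_\lambda$) into $\calE_\hf\calF_\hf$ plus identity summands, with the double crossing acting as ``identity minus orthogonal projections.'' For $i=\hf$ this is exactly what fails: the right-hand side of \eqref{j-bicross down} ends in $-\,(\text{dot on left strand})-(\text{dot on right strand})$ rather than $-\,\mathrm{id}$, so the double crossing is not unipotent modulo the cap--cup ideal, and no isomorphism $\calE_\hf\calF_\hf1_\lambda\cong\calF_\hf\calE_\hf1_\lambda\oplus(\cdots)$ can exist — decategorified it would produce a length-$2$ relation between $\calE_\hf$ and $\calF_\hf$ in $\dot\bfU^\jmath$, which the paper emphasizes does not hold. (Also, \eqref{now} for $i=j$ says the double crossing of two like-colored strands is \emph{zero}, so ``$\tau$ is invertible up to lower-order dot terms'' is not available.) Second, your closing claim that the whole argument, including part (b), can be run in $\widetilde{\frakU}^\jmath$ without \eqref{Pi=1} contradicts Proposition~\ref{df-prop}: the isomorphism in (b) is \emph{equivalent} to the triple-point relation \eqref{Pi=1}, which is why only the split surjection (a) is asserted in $\widetilde{\frakU}^\jmath$ while (b) is asserted only in $\dot\frakUj$. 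Your fallback — upgrade (a) to (b) by ``matching Grothendieck-group classes'' — is circular, since the $\jmath$-Serre relation in $K_0$ is precisely what this proposition is used to establish (Lemma~\ref{lem: gp}).

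The paper's proof avoids any length-two decomposition: it writes down explicit length-three $2$-morphisms $B_1$, $\varrho\cdot B_2$, $P_0',\dots,P_{\lambda_\hf-1}$ and $C_1$, $C_2\cdot\varrho$, $I_0,\dots,I'_{\lambda_\hf-1}$ (using the idempotent $\varrho$ of Lemma~\ref{lem:dec:eff}), verifies in Appendix~\ref{sec:jserre} that one composite is the identity matrix in $\widetilde{\frakU}^\jmath$ — that is \eqref{eq:prop:jserre1}, giving (a) — and that the other composite equals the right-hand side of \eqref{Pi=1}, hence equals the identity on $\calF_\hf\calE_\hf\calF_\hf1_\lambda$ only after imposing \eqref{Pi=1} — that is \eqref{eq:prop:jserre2}, giving (b). If you want to repair your argument, you must work at length three from the start and isolate exactly where \eqref{Pi=1} enters.
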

\begin{proof}
We introduce the following diagrams ($0 \le k \le \lambda_\hf -1$): 
\begin{equation}
\label{diag:BBP}
	B_1 = -\mathord{
		\begin{tikzpicture}[baseline=0,scale=0.5]
			\draw[->,thick,darkred] (-1,1) -- (1, -1);
			\draw[->,thick,darkred] (0, 1) --node[pos=0.125] {$\color{darkred}\bullet$} (-1, -1);
			\draw[->,thick, darkred] (0, -1) --node[label={[shift={(0.5,0.2)}]$\color{black}\scriptstyle{\lambda}$}] {} (1,1);
		\end{tikzpicture}
					},	
\qquad
 B_2 = \mathord{
			\begin{tikzpicture}[baseline=0,scale=0.5]
				\draw[<-,thick,darkred] (-1,1) -- (0, -1);
				\draw[<-,thick,darkred] (1, -1) -- (0, 1);
				\draw[->,thick, darkred] (1, 1) --node[pos=0.125] {$\color{darkred} \bullet$} (-1,-1);
				\node at (1.4, 0.8) {$\scriptstyle\lambda$};
			\end{tikzpicture}
				},
\qquad 
P_k = \mathord{
			\begin{tikzpicture}[baseline=0,scale=0.5]
				\draw[->, thick, darkred] (-1, 1) -- (-1, -1);
				\draw[->, thick, darkred] (0, -1) to[out=90, in=180] (0.5, 0);
				\draw[-, thick, darkred] (0.5, 0) to[out=0, in=90] node[label={[shift={(0.3, -0.05)}]$\scriptstyle  \lambda_{\hf} - 1 - k$}] {$\color{darkred} \bullet$} (1, -1);
				\node at (1.8, 0) {$\scriptstyle \lambda$};
			\end{tikzpicture}
				},
\end{equation}
\begin{equation}
\label{diag:CCI}
C_1= 
		\mathord{
			\begin{tikzpicture}[baseline=0,scale=0.5]
				\draw[->,thick,darkred] (-1,1) -- (0, -1);
				\draw[->,thick,darkred] (1, -1) -- (0, 1);
				\draw[->,thick, darkred] (1, 1) -- (-1,-1);
				\node at (1.4, 0.8) {$\scriptstyle\lambda$};
			\end{tikzpicture}
				},
\qquad 
C_2 = \mathord{
		\begin{tikzpicture}[baseline=-8, scale=0.5]
			\draw[->,thick,darkred] (-1,1) -- (1, -1);
			\draw[<-,thick,darkred] (0, 1) -- (-1, -1);
			\draw[<-,thick, darkred] (0, -1) -- (1,1);
			\node at (1.4, 0.8) {$\scriptstyle\lambda$};
		\end{tikzpicture}
				},
\qquad 
I_k = {\frac 12} \sum_{s+t = k}\,
	\mathord{
		\begin{tikzpicture}[baseline=0]
  			\draw[-,thick,darkred] (.2,0.2) to[out=180,in=90] (0,0);
 			\draw[-,thick,darkred] (0,0) to[out=-90,in=180] (.2,-0.2);
  			\draw[-,thick,darkred] (.2,-0.2) to[out=0,in=-90]  node[label={[shift={(0.2,-0.75)}]$\color{darkred}\scriptstyle{-\lambda_\hf-2+t}$}] {$\color{darkred}\bullet$} (0.4,0);
			 \draw[->,thick,darkred] (0.4,0) to[out=90,in=0] (.2,.2);			
  			 \node at (0.8,0.05) {$\scriptstyle{\lambda}$};
			\draw[-,thick,darkred] (0.4,.6) to[out=-90, in=0] (.2,0.3);
			\draw[->,thick,darkred] (.2,0.3) to[out = -180, in = -90] (0,.6);
   			\node at (0.4,0.45) {$\color{darkred}\bullet$};
   			\node at (0.6,0.45) {$\color{darkred}\scriptstyle{s}$};
			\draw[->,thick,darkred](-.4,.6) to (-.4,-.6);
 		\end{tikzpicture}
			}.
\end{equation}
Following \eqref{eq:I'}--\eqref{eq:P'}, we introduce modifications of $I_{\lambda_{\hf}-1}$ and $P_0$ as follows:
\begin{align}
I_{\lambda_{\hf}-1}'  =   {\frac 12}\sum_{s+t = \lambda_{\hf} -1 }
	\mathord{
		\begin{tikzpicture}[baseline=0]
  			\draw[-,thick,darkred] (.2,0.2) to[out=180,in=90] (0,0);
 			\draw[-,thick,darkred] (0,0) to[out=-90,in=180] (.2,-0.2);
  			\draw[-,thick,darkred] (.2,-0.2) to[out=0,in=-90]  node[label={[shift={(0.2,-0.75)}]$\color{darkred}\scriptstyle{-\lambda_{\hf}-2+t}$}] {$\color{darkred}\bullet$} (0.4,0);
			 \draw[->,thick,darkred] (0.4,0) to[out=90,in=0] (.2,.2);			
  			 \node at (0.8,0.05) {$\scriptstyle{\lambda}$};
			\draw[-,thick,darkred] (0.4,.6) to[out=-90, in=0] (.2,0.3);
			\draw[->,thick,darkred] (.2,0.3) to[out = -180, in = -90] (0,.6);
   			\node at (0.4,0.45) {$\color{darkred}\bullet$};
   			\node at (0.6,0.45) {$\color{darkred}\scriptstyle{s}$};
			\draw[->,thick,darkred](-.4,.6) to (-.4,-.6);
 		\end{tikzpicture}
			}
		-
	\mathord{
		\begin{tikzpicture}[baseline=0, scale=0.4]
			\draw[-, thick, darkred] (-1, 1) to[out=-90, in =180] (-0.5, 0);
			\draw[->, thick, darkred] (-0.5, 0) to[out=0, in=-90] (0,1);
			\draw[->, thick, darkred] (1,1) -- (1,-1);
			\node at (1.6, 1) {$\scriptstyle{\color{black} \lambda}$};
		\end{tikzpicture}
			}
		+ {\frac 12}\sum_{s+t +u  = \lambda_{\hf} -1 }
	\mathord{
		\begin{tikzpicture}[baseline=0]
  			\draw[-,thick,darkred] (.2,0.2) to[out=180,in=90] (0,0);
 			\draw[-,thick,darkred] (0,0) to[out=-90,in=180] (.2,-0.2);
  			\draw[-,thick,darkred] (.2,-0.2) to[out=0,in=-90]  node[label={[shift={(0.2,-0.75)}]$\color{darkred}\scriptstyle{-\lambda_{\hf}-2+t}$}] {$\color{darkred}\bullet$} (0.4,0);
			 \draw[->,thick,darkred] (0.4,0) to[out=90,in=0] (.2,.2);			
  			 \node at (0.8,0.05) {$\scriptstyle{\lambda}$};
			\draw[-,thick,darkred] (0.4,.6) to[out=-90, in=0] (.2,0.3);
			\draw[->,thick,darkred] (.2,0.3) to[out = -180, in = -90] (0,.6);
   			\node at (0.4,0.45) {$\color{darkred}\bullet$};
   			\node at (0.6,0.45) {$\color{darkred}\scriptstyle{s}$};
			\draw[->,thick,darkred](-.4,.6) --node[label={[shift={(-0.2, -0.45)}]$\scriptstyle u$}] {$\bullet$} (-.4,-.6);
 		\end{tikzpicture}
			},\\
P_{0}' 
= \mathord{
		\begin{tikzpicture}[baseline=0,scale=0.4]
			\draw[->, thick, darkred] (-1, 1) -- (-1, -1);
			\draw[->, thick, darkred] (0, -1) to[out=90, in=180] (0.5, 0);
			\draw[-, thick, darkred] (0.5, 0) to[out=0, in=90] node[label={[shift={(0.1, -0.05)}]$\scriptstyle\lambda_{\hf} - 1$}] {$\color{darkred} \bullet$} (1, -1);
			\node at (1.8, 0) {$\scriptstyle \lambda$};
		\end{tikzpicture}
			}
		- 2 \,
		\mathord{
			\begin{tikzpicture}[baseline=0,scale=0.4]
				\draw[<-, thick, darkred] (-1, -1) to[out=90, in=180] (-0.5, 0);
				\draw[-, thick, darkred] (-0.5,0) to[out=0, in=90] (0, -1);
				\draw[->, thick, darkred] (1, 1) --node[label={[shift={(0.3,0)}]$\color{black}\scriptstyle \lambda$}] {} (1, -1);
			\end{tikzpicture}
				}
		+ \sum_{s+t+u= -3} 
	\mathord{
		\begin{tikzpicture}[baseline=-4]
			\draw[<-,thick,darkred] (0.4,-0.6) to[out=90, in=0]  node[label={[shift={(0.2, -0.45)}]$\color{darkred}\scriptstyle{t}$}] {$\color{darkred}\bullet$} (.2,-0.3);
			\draw[-,thick,darkred] (.2,-0.3) to[out = 180, in = 90] (0,-.6);
  			\draw[-,thick,darkred] (.2,0.2) to[out=180,in=90] (0,0);
 			 \draw[->,thick,darkred] (0.4,0) to[out=90,in=0] (.2,.2);
 			\draw[-,thick,darkred] (0,0) to[out=-90,in=180] (.2,-0.2);
  			\draw[-,thick,darkred] (.2,-0.2) to[out=0,in=-90] (0.4,0);			
  			 \node at (0.8,0.05) {$\scriptstyle{\lambda}$};
   			\node at (0,0) {$\color{darkred}\bullet$};
   			\node at (-0.2,0) {$\color{darkred}\scriptstyle{s}$};
			\draw[->,thick,darkred](-.4,.4) --node[label={[shift={(-0.2, -0.35)}]$\scriptstyle u$}] {$\color{darkred}\bullet$} (-.4,-.6);
		\end{tikzpicture}
			}
		+
	\mathord{
		\begin{tikzpicture}[baseline=-4]
			\draw[-,thick,darkred] (.2,0.2) to[out=180,in=90] (0,0);
 			\draw[-,thick,darkred] (0,0) to[out=-90,in=180] (.2,-0.2);
  			\draw[-,thick,darkred] (.2,-0.2) to[out=0,in=-90] node[label={[shift={(0, -0.75)}]$\scriptstyle {-1}$}] {$\color{darkred}\bullet$} (0.4,0);
			 \draw[->,thick,darkred] (0.4,0) to[out=90,in=0] (.2,.2);			
  			 \node at (1.1,0.05) {$\scriptstyle{\lambda}$};
			\draw[->, thick, darkred] (0.8,.4) --node {$\color{darkred}\bullet$} (0.8,-.6);
			\draw[<-,thick,darkred] (1.6,-0.6) to[out=90, in=0]  (1.4,-0.3);
			\draw[-,thick,darkred] (1.4,-0.3) to[out = 180, in = 90] (1.2,-.6);
		\end{tikzpicture}
			}
		- \,
			\mathord{
		\begin{tikzpicture}[baseline=-4]
			\draw[-,thick,darkred] (.2,0.2) to[out=180,in=90] (0,0);
 			\draw[-,thick,darkred] (0,0) to[out=-90,in=180] (.2,-0.2);
  			\draw[-,thick,darkred] (.2,-0.2) to[out=0,in=-90] (0.4,0);
			 \draw[->,thick,darkred] (0.4,0) to[out=90,in=0] (.2,.2);			
  			 \node at (1.1,0.05) {$\scriptstyle{\lambda}$};
			\draw[->, thick, darkred] (0.8,.4) -- (0.8,-.6);
			\draw[<-,thick,darkred] (1.6,-0.6) to[out=90, in=0]  (1.4,-0.3);
			\draw[-,thick,darkred] (1.4,-0.3) to[out = 180, in = 90] (1.2,-.6);
		\end{tikzpicture}
		}.
\end{align}
Recall the $2$-morphism $\kappa \in \End(\calE_\hf \calF^2_\hf 1_\lambda)$ in Lemma~\ref{lem:dec:eff}. We define the $2$-morphisms: 
\begin{align*}
\begin{bmatrix}
B_1 \\ \kappa \cdot B_2 \\ P'_0 \\ P_1 \\ \vdots \\ P_{\lambda_{\hf}-1}
\end{bmatrix}
:
 \calF_\hf\calE_\hf\calF_\hf 1_\lambda \longrightarrow \calF_\hf^{(2)}\calE_\hf 1_\lambda \oplus (\calE_\hf\calF_\hf^{(2)} - [\lambda_\hf-2] \calF_{\hf})1_\lambda   
  \oplus \calF_\hf 1_\lambda^{\oplus [\lambda_\hf]},\\
 \begin{bmatrix}
C_1 & C_2 \cdot \kappa &I_0 & \cdots &  I'_{\lambda_{\hf}-1} 
\end{bmatrix}
:
 \calF_\hf^{(2)}\calE_\hf 1_\lambda \oplus (\calE_\hf\calF_\hf^{(2)} - [\lambda_\hf-2] \calF_{\hf})1_\lambda 
   \oplus \calF_\hf 1_\lambda^{\oplus [\lambda_\hf]}
   \longrightarrow \calF_\hf\calE_\hf\calF_\hf 1_\lambda.
\end{align*}
The proof of the proposition is reduced to verifying the identities
\eqref{eq:prop:jserre1}--\eqref{eq:prop:jserre2} below:
\begin{align}\label{eq:prop:jserre1}
\begin{bmatrix}
B_1 \\ \kappa \cdot B_2 \\ P'_0 \\ P_1 \\ \vdots \\ P_{\lambda_{\hf}-1}
\end{bmatrix}
\cdot
\begin{bmatrix}
C_1 & C_2 \cdot \kappa &I_0 & \cdots & I_{\lambda_{\hf}-2} & I'_{\lambda_{\hf}-1}  
\end{bmatrix}
=\id_{(\lambda_{\hf}+2) \times (\lambda_{\hf}+2)} \qquad  \text{ in } \mathfrak{\tilde U}^\jmath,
\end{align}
and 
\begin{align}\label{eq:prop:jserre2}
\begin{bmatrix}
C_1 & C_2 \cdot \kappa &I_0 & \cdots & I'_{\lambda_{\hf}-1} 
\end{bmatrix}
\cdot
\begin{bmatrix}
B_1 \\ \kappa \cdot B_2 \\ P'_0 \\ \vdots \\ P_{\lambda_{\hf}-1}
\end{bmatrix}
=\id_{\calF_{\hf} \calE_{\hf} \calF_{\hf} 1_{\lambda}} \qquad \text{ in } \mathfrak{\dot U}^\jmath.
\end{align}
The proofs of the identities \eqref{eq:prop:jserre1}--\eqref{eq:prop:jserre2}
via   elementary but lengthy diagrammatic computations are given in the Appendix~\ref{sec:jserre} (see
Proposition~\ref{p:left-inv'} and Proposition~\ref{prop:invR:2}). 
\end{proof}

\subsection{Categorification of $\jmath$-Serre \eqref{eq:jserre2:2} }  
\label{subsec:jserre:2}

In this subsection we shall assume $\lambda \in \X_\jmath$ with $\lambda_{\hf} = \langle \thal_\hf^\vee, \lambda \rangle = 1$, and recall
in this case the expression of the $\jmath$-Serre relation in \eqref{eq:jserre2:2}.
We denote by $ \calF_\hf^{(2)}\calE_\hf 1_\lambda$ the image of the idempotent
$
\mathord{
		\begin{tikzpicture}[baseline=0,scale=1]
			\draw[->,thick,darkred] (0.56, -0.3) --node[label={[shift={(0.4,0.1)}]$\color{black}\scriptstyle{\lambda}$}] {} (0.56, 0.4);
			\draw[<-,thick,darkred] (0.28,-.3) -- (-0.28,.4);
			\draw[<-,thick,darkred] (-0.28,-.3) --node[near end] {$\color{darkred}\bullet$} (0.28,.4);
		\end{tikzpicture}
			} \in  \End(\calF^2_\hf \calE_\hf  1_\lambda)
$
and by $\calE_\hf  \calF_\hf^{(2)}1_\lambda$ the image of the idempotent \,\,
$
\mathord{
		\begin{tikzpicture}[baseline=0,scale=1]
			\draw[->,thick,darkred] (0.56, -0.3) --node[label={[shift={(0.4,0.1)}]$\color{black}\scriptstyle{\lambda}$}] {} (0.56, 0.4);
			\draw[<-,thick,darkred] (0.28,-.3) -- (-0.28,.4);
			\draw[<-,thick,darkred] (-0.28,-.3) --node[near end] {$\color{darkred}\bullet$} (0.28,.4);
		\end{tikzpicture}
			} \in  \End(\calF^2_\hf \calE_\hf  1_\lambda)
$
in the idempotent completion of $\frak{\tilde U}^\jmath$ (or $\dot\frakUj$).

\begin{prop}
Let  $\lambda \in \X_\jmath$ with $\lambda_{\hf} = \langle \thal_\hf^\vee, \lambda \rangle =1$.  Then,
\begin{enumerate}\renewcommand{\theenumi}{\alph{enumi}}
 	\item
		we have the following split surjection in the idempotent completion of $\mathfrak{\tilde U}^\jmath$ (and in $\mathfrak{\dot U}^\jmath$): 
\[\xymatrix{
 \calF_\hf^{(2)}\calE_\hf 1_\lambda \oplus \calE_\hf\calF_\hf^{(2)}1_\lambda   \oplus \calF_\hf 1_\lambda  \oplus \calF_\hf 1_\lambda  \ar@{->>}[r] &\calF_\hf\calE_\hf\calF_\hf 1_\lambda;}
\]	
	\item
		we have the following isomorphism in $\mathfrak{\dot U}^\jmath$:
	\[
	 \calF_\hf\calE_\hf\calF_\hf 1_\lambda \cong  \calF_\hf^{(2)}\calE_\hf 1_\lambda \oplus \calE_\hf\calF_\hf^{(2)}1_\lambda   \oplus \calF_\hf 1_\lambda  \oplus \calF_\hf 1_\lambda.
	\]
\end{enumerate}
\end{prop}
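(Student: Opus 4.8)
The plan is to mimic the structure of the proof of Proposition~\ref{prop:jserre} (the $\lambda_\hf \ge 2$ case), but now specialized to $\lambda_\hf = 1$, where the $\jmath$-Serre relation \eqref{eq:jserre2:2} becomes the homogeneous identity $\calF_\hf \calE_\hf \calF_\hf 1_\lambda = \calF_\hf^{(2)}\calE_\hf 1_\lambda \oplus \calE_\hf\calF_\hf^{(2)} 1_\lambda \oplus \calF_\hf 1_\lambda^{\oplus 2}$, with no ``extra'' summand of the form $(\calE_\hf\calF_\hf^{(2)} - [\lambda_\hf-2]\calF_\hf)1_\lambda$ since $[\lambda_\hf - 2] = [-1] = -[1] = -1$ forces a cancellation: when $\lambda_\hf = 1$, the decomposition simplifies because $\calE_\hf\calF_\hf^{(2)}1_\lambda$ is itself indecomposable (by a Remark~\ref{rem:iCB}-type argument, since $\lambda_\hf - 3 = -2 < 0$, so the sum $\sum_{0 \le s \le \lambda_\hf-3}\iota_s\pi_s$ is empty and $\varrho = \id$). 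Part (b) follows formally from part (a): a split surjection onto $\calF_\hf\calE_\hf\calF_\hf 1_\lambda$ whose source is a direct sum whose class in $K_0$ matches (via the already-established surjection $\dot\bfU^\jmath \to K_0(\dot\frakUj)$, or rather, via the weak action on flag varieties) forces the surjection to be an isomorphism in the idempotent-complete, Krull--Schmidt category $\dot\frakUj$.

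First I would construct explicit $2$-morphisms. For the split surjection in part (a), I would write down maps
\[
\bigl[C_1 \mid C_2 \mid I \mid J\bigr] \colon \calF_\hf^{(2)}\calE_\hf 1_\lambda \oplus \calE_\hf\calF_\hf^{(2)}1_\lambda \oplus \calF_\hf 1_\lambda \oplus \calF_\hf 1_\lambda \longrightarrow \calF_\hf\calE_\hf\calF_\hf 1_\lambda
\]
and a candidate section
\[
\begin{bmatrix} B_1 \\ B_2 \\ P \\ Q \end{bmatrix} \colon \calF_\hf\calE_\hf\calF_\hf 1_\lambda \longrightarrow \calF_\hf^{(2)}\calE_\hf 1_\lambda \oplus \calE_\hf\calF_\hf^{(2)}1_\lambda \oplus \calF_\hf 1_\lambda \oplus \calF_\hf 1_\lambda,
\]
obtained from the diagrams $B_1, B_2, C_1, C_2, P_k, I_k$ of \eqref{diag:BBP}--\eqref{diag:CCI} by setting $\lambda_\hf = 1$ and discarding the now-vacuous ``bubble'' correction terms. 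Here $P$ and $Q$ correspond to the two copies of $\calF_\hf 1_\lambda$, coming from the two bubble insertions (clockwise on the $\calE$-strand vs. on an $\calF$-strand) that for $\lambda_\hf = 1$ are both nonzero and inequivalent. The degree bookkeeping using the table of $2$-morphism degrees should confirm all these maps are homogeneous of degree $0$.

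The main work, and the main obstacle, is verifying the two matrix identities
\[
\begin{bmatrix} B_1 \\ B_2 \\ P \\ Q \end{bmatrix} \cdot \bigl[C_1 \mid C_2 \mid I \mid J\bigr] = \id_{4\times 4} \quad\text{in }\widetilde{\frakU}^\jmath
\qquad\text{and}\qquad
\bigl[C_1 \mid C_2 \mid I \mid J\bigr] \cdot \begin{bmatrix} B_1 \\ B_2 \\ P \\ Q \end{bmatrix} = \id_{\calF_\hf\calE_\hf\calF_\hf 1_\lambda} \quad\text{in }\frakU^\jmath.
\]
The first is a collection of sixteen diagrammatic identities provable within the weak $2$-category $\widetilde{\frakU}^\jmath$ using only adjunction \eqref{adj}, cyclicity \eqref{cyc-x}--\eqref{cyc-tau}, the bubble relations \eqref{bubble1}--\eqref{bubble3}, the nodal relations \eqref{nodal}, and the bicross relations \eqref{j-bicross down}--\eqref{j-bicross up}; the off-diagonal vanishing uses the same degree-counting / bubble-slide cancellations as in Lemma~\ref{lem:dec:eff}, and the diagonal entries reduce to the bubble evaluations $2^{\delta_{i,\hf}}1_{1_\lambda}$ and $1_{1_\lambda}$ of \eqref{bubble1}--\eqref{bubble2} (the factor of $2$ and the invertibility of $2$ in $\bfk$ are exactly what makes $P$ and $Q$ split off with unit coefficients). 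The second identity is where the triple-point relation \eqref{Pi=1} genuinely enters: the composite $\calF_\hf\calE_\hf\calF_\hf 1_\lambda \to \calF_\hf\calE_\hf\calF_\hf 1_\lambda$ is a sum of diagrams containing a triple crossing of $\hf$-strands, and one rewrites it using \eqref{Pi=1} (together with its adjoint variant from the Remark following Definition~\ref{df:Uj}) into a sum of terms that, after applying the nodal and bubble relations, telescopes to the identity. I expect this second identity to be the genuine obstacle: it is the $\lambda_\hf=1$ specialization of Proposition~\ref{prop:invR:2} in Appendix~\ref{sec:jserre}, so in practice I would either cite that appendix computation specialized to $\lambda_\hf = 1$, or — cleaner — derive the whole statement as a corollary of Proposition~\ref{prop:jserre}'s method with the observation that at $\lambda_\hf = 1$ the summand $(\calE_\hf\calF_\hf^{(2)} - [-1]\calF_\hf)1_\lambda = (\calE_\hf\calF_\hf^{(2)} + \calF_\hf)1_\lambda$ is, via Lemma~\ref{lem:dec:eff}(b) with $\lambda_\hf$ formally pushed down, already $\calE_\hf\calF_\hf^{(2)}1_\lambda$ itself since $[\lambda_\hf - 2]_{\lambda_\hf = 1} \le 0$ makes the ``$\oplus\,\calF_\hf 1_\lambda^{\oplus[\lambda_\hf-2]}$'' term absent; the leftover $\calF_\hf 1_\lambda$ then recombines with the single $\calF_\hf 1_\lambda^{\oplus[\lambda_\hf]} = \calF_\hf 1_\lambda$ to give $\calF_\hf 1_\lambda^{\oplus 2}$, which is precisely the claimed decomposition. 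Part (b) is then immediate once part (a) is known, by Krull--Schmidt in $\dot\frakUj$ and the equality of $K_0$-classes already recorded in \eqref{eq:jserre2:2}.
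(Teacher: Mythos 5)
Your proposal matches the paper's proof in all essentials: the paper likewise writes down explicit degree-zero diagrams $B_1,B_2,P_0,P_1$ and $C_1,C_2,I_0,I_1$ (with no idempotent $\varrho$ needed, exactly because the sum $\sum_{0\le s\le\lambda_\hf-3}\iota_s\pi_s$ is empty at $\lambda_\hf=1$), reduces the claim to the two matrix identities \eqref{eq:prop:jserre3}--\eqref{eq:prop:jserre4} — the first in $\widetilde{\frakU}^\jmath$, the second in $\dot\frakU^\jmath$ using the simplified form \eqref{Pi=1-simpler} of the triple-point relation — and observes that these are easier analogues of the appendix computations for $\lambda_\hf\ge 2$. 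The only cosmetic difference is that the paper records the simplification of \eqref{Pi=1} explicitly and leaves the sixteen-entry verification to the reader rather than attempting to specialize Proposition~\ref{prop:jserre} (which, as you note, is stated only for $\lambda_\hf\ge 2$), but your primary route is the one taken.
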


\begin{proof}
In this case, the relation~\eqref{Pi=1} simplifies to the form: 
\begin{align}\label{Pi=1-simpler}
\mathord{
\begin{tikzpicture}[baseline = 0]
	\draw[->,thick,darkred] (0.45,.6) to (0.45,-.6);
	\draw[->,thick,darkred] (-0.45,.6) to (-0.45,-.6);
        \draw[->,thick,darkred] (0,-.6) to (0,0.6);
   \node at (.6,.05) {$\scriptstyle{\lambda}$};
\end{tikzpicture}
}=
\mathord{
\begin{tikzpicture}[baseline = 0]
	\draw[->,thick,darkred] (0.45,.6) to (-0.45,-.6);
	\draw[<-,thick,darkred] (0.45,-.6) to (-0.45,.6);
        \draw[-,thick,darkred] (0,-.6) to[out=90,in=-90] (-.45,0);
        \draw[->,thick,darkred] (-0.45,0) to[out=90,in=-90] (0,0.6);
   \node at (.5,.05) {$\scriptstyle{\lambda}$};
\end{tikzpicture}
}
-\mathord{
\begin{tikzpicture}[baseline = 0]
	\draw[->,thick,darkred] (0.45,.6) to (-0.45,-.6);
	\draw[<-,thick,darkred] (0.45,-.6) to (-0.45,.6);
        \draw[-,thick,darkred] (0,-.6) to[out=90,in=-90] (.45,0);
        \draw[->,thick,darkred] (0.45,0) to[out=90,in=-90] (0,0.6);
   \node at (.6,.05) {$\scriptstyle{\lambda}$};
\end{tikzpicture}
}
-
\mathord{
\begin{tikzpicture}[baseline = 0]
	\draw[->,thick,darkred] (0.45,.6) to (-0.45,-.6);
        \draw[-,thick,darkred] (0,-.6) to[out=90,in=180] (.25,-.2);
        \draw[->,thick,darkred] (0.25,-.2) to[out=0,in=90] (.45,-0.6);
                \draw[-,thick,darkred] (-.45,.6) to[out=-90,in=180] (-.2,0.2);
        \draw[->,thick,darkred] (-.2,0.2) to[out=0,in=-90] (0,0.6);
   \node at (.6,.05) {$\scriptstyle{\lambda}$};
\end{tikzpicture}
}-
\mathord{
\begin{tikzpicture}[baseline = 0]
	\draw[->,thick,darkred] (-0.45,.6) to (0.45,-.6);
        \draw[<-,thick,darkred] (0,.6) to[out=-90,in=180] (.25,0.2);
        \draw[-,thick,darkred] (0.25,0.2) to[out=0,in=-90] (.45,0.6);
          \draw[<-,thick,darkred] (-.45,-.6) to[out=90,in=180] (-.2,-.2);
        \draw[-,thick,darkred] (-.2,-.2) to[out=0,in=90] (0,-0.6);
   \node at (.5,.05) {$\scriptstyle{\lambda}$};
\end{tikzpicture}
}+
\mathord{
\begin{tikzpicture}[baseline=0]
	\draw[<-,thick,darkred] (0.4,-0.6) to[out=90, in=0] (.2,-0.3);
	\draw[-,thick,darkred] (.2,-0.3) to[out = 180, in = 90] (0,-.6);
  \draw[->,thick,darkred](-.4,.6) to (-.4,-.6);
   \node at (0.5,0.05) {$\scriptstyle{\lambda}$};
	\draw[-,thick,darkred] (0.4,.6) to[out=-90, in=0] (.2,0.3);
	\draw[->,thick,darkred] (.2,0.3) to[out = -180, in = -90] (0,.6);
\end{tikzpicture}
}
+ 2 \,\,\,
\mathord{
\begin{tikzpicture}[baseline=0]
	\draw[-,thick,darkred] (0,-0.6) to[out=90, in=0] (-.2,-0.3);
	\draw[->,thick,darkred] (-.2,-0.3) to[out = 180, in = 90] (-0.4,-.6);
  \draw[->,thick,darkred](.4,.6) to (.4,-.6);
   \node at (0.6,0.05) {$\scriptstyle{\lambda}$};
	\draw[<-,thick,darkred] (0,.6) to[out=-90, in=0] (-.2,0.3);
	\draw[-,thick,darkred] (-.2,0.3) to[out = -180, in = -90] (-0.4,.6);
\end{tikzpicture}
}.\end{align}
We introduce the following diagrams
\[
	B_1 = -\mathord{
		\begin{tikzpicture}[baseline=0,scale=0.5]
			\draw[->,thick,darkred] (-1,1) -- (1, -1);
			\draw[->,thick,darkred] (0, 1) --node[pos=0.125] {$\color{darkred}\bullet$} (-1, -1);
			\draw[->,thick, darkred] (0, -1) --node[label={[shift={(0.5,0.2)}]$\color{black}\scriptstyle{\lambda}$}] {} (1,1);
		\end{tikzpicture}
					},	
\qquad
 B_2 = \mathord{
			\begin{tikzpicture}[baseline=0,scale=0.5]
				\draw[<-,thick,darkred] (-1,1) -- (0, -1);
				\draw[<-,thick,darkred] (1, -1) -- (0, 1);
				\draw[->,thick, darkred] (1, 1) --node[pos=0.125] {$\color{darkred} \bullet$} (-1,-1);
				\node at (1.4, 0.8) {$\scriptstyle\lambda$};
			\end{tikzpicture}
				},
\qquad 
P_0 = \mathord{
			\begin{tikzpicture}[baseline=0,scale=0.5]
				\draw[->, thick, darkred] (-1, 1) -- (-1, -1);
				\draw[->, thick, darkred] (0, -1) to[out=90, in=180] (0.5, 0);
				\draw[-, thick, darkred] (0.5, 0) to[out=0, in=90] (1, -1);
				\node at (1, 1) {$\scriptstyle \lambda$};
			\end{tikzpicture}
				}
			-
		 \mathord{
			\begin{tikzpicture}[baseline=0,scale=0.5]
				\draw[->, thick, darkred] (2, 1) -- (2, -1);
				\draw[<-, thick, darkred] (0, -1) to[out=90, in=180] (0.5, 0);
				\draw[-, thick, darkred] (0.5, 0) to[out=0, in=90] (1, -1);
				\node at (2.5, 1) {$\scriptstyle \lambda$};
			\end{tikzpicture}
				},				
	\qquad 
P_1=  \mathord{
			\begin{tikzpicture}[baseline=0,scale=0.5]
				\draw[->, thick, darkred] (2, 1) -- (2, -1);
				\draw[<-, thick, darkred] (0, -1) to[out=90, in=180] (0.5, 0);
				\draw[-, thick, darkred] (0.5, 0) to[out=0, in=90] (1, -1);
				\node at (2.5, 1) {$\scriptstyle \lambda$};
			\end{tikzpicture}
				},	
\]
\\
\[
C_1= 
		\mathord{
			\begin{tikzpicture}[baseline=0,scale=0.5]
				\draw[->,thick,darkred] (-1,1) -- (0, -1);
				\draw[->,thick,darkred] (1, -1) -- (0, 1);
				\draw[->,thick, darkred] (1, 1) -- (-1,-1);
				\node at (1.4, 0.8) {$\scriptstyle\lambda$};
			\end{tikzpicture}
				},
\qquad 
C_2 = \mathord{
		\begin{tikzpicture}[baseline=0, scale=0.5]
			\draw[->,thick,darkred] (-1,1) -- (1, -1);
			\draw[<-,thick,darkred] (0, 1) -- (-1, -1);
			\draw[<-,thick, darkred] (0, -1) -- (1,1);
			\node at (1.4, 0.8) {$\scriptstyle\lambda$};
		\end{tikzpicture}
				},
\qquad 
I_0 =  \mathord{
			\begin{tikzpicture}[baseline=0,scale=0.5]
				\draw[->, thick, darkred] (2, 1) -- (2, -1);
				\draw[<-, thick, darkred] (0, 1) to[out=-90, in=180] (0.5, 0);
				\draw[-, thick, darkred] (0.5, 0) to[out=0, in=-90] (1, 1);
				\node at (2.5, 1) {$\scriptstyle \lambda$};
			\end{tikzpicture}
				}
				- \,\,\,
				 \mathord{
			\begin{tikzpicture}[baseline=0,scale=0.5]
				\draw[->, thick, darkred] (-1, 1) -- (-1, -1);
				\draw[->, thick, darkred] (0, 1) to[out=-90, in=180] (0.5, 0);
				\draw[-, thick, darkred] (0.5, 0) to[out=0, in=-90] (1, 1);
				\node at (1.3, 1) {$\scriptstyle \lambda$};
			\end{tikzpicture}
				},
\qquad
 I_1 = \mathord{
			\begin{tikzpicture}[baseline=0,scale=0.5]
				\draw[->, thick, darkred] (2, 1) -- (2, -1);
				\draw[->, thick, darkred] (0, 1) to[out=-90, in=180] (0.5, 0);
				\draw[-, thick, darkred] (0.5, 0) to[out=0, in=-90] (1, 1);
				\node at (2.5, 1) {$\scriptstyle \lambda$};
			\end{tikzpicture}
				}.
\]
We consider the following $2$-morphisms: 
\begin{align*}
\begin{bmatrix}
B_1 \\   B_2 \\ P_0  \\ P_1
\end{bmatrix}
:
 \calF_\hf\calE_\hf\calF_\hf 1_\lambda \longrightarrow \calF_\hf^{(2)}\calE_\hf 1_\lambda \oplus (\calE_\hf\calF_\hf^{(2)} - [\lambda_\hf-2] \calF_{\hf})1_\lambda 
   \oplus \calF_\hf 1_\lambda^{\oplus [\lambda_\hf]},\\
 \begin{bmatrix}
C_1 & C_2  & I_0 & I_{1} 
\end{bmatrix}
:
 \calF_\hf^{(2)}\calE_\hf 1_\lambda \oplus (\calE_\hf\calF_\hf^{(2)} - [\lambda_\hf-2] \calF_{\hf})1_\lambda   \oplus \calF_\hf 1_\lambda^{\oplus [\lambda_\hf]}
 \longrightarrow \calF_\hf\calE_\hf\calF_\hf 1_\lambda.
\end{align*}

The proof of the proposition is reduced to verifying the two  identities 
\begin{align}\label{eq:prop:jserre3}
\begin{bmatrix}
B_1 \\  B_2 \\ P_0 \\  P_{1}
\end{bmatrix}
\cdot
\begin{bmatrix}
C_1 & C_2  &I_0 & I_1 
\end{bmatrix}
=\id_{4 \times 4} \qquad  \text{ in } \mathfrak{\widetilde U}^\jmath,
\end{align}
and 
\begin{align}\label{eq:prop:jserre4}
\begin{bmatrix}
C_1 & C_2 &I_0 & I_{1} 
\end{bmatrix}
\cdot
\begin{bmatrix}
B_1 \\ B_2 \\ P_0  \\ P_{1}
\end{bmatrix}
=\id_{\calF_{\hf} \calE_{\hf} \calF_{\hf} 1_{\lambda}} \qquad \text{ in } \mathfrak{\dot U}^\jmath.
\end{align}

The identities \eqref{eq:prop:jserre3} and \eqref{eq:prop:jserre4} admit
proofs which are analogous (and much easier) to those of the identities
\eqref{eq:prop:jserre1} and \eqref{eq:prop:jserre2} given in the
Appendix~\ref{sec:jserre}. We will leave the details of these
computations to the reader.
\end{proof}

%
%
\subsection{Categorification of  $\jmath$-Serre \eqref{eq:jserre2:3}--\eqref{eq:jserre:3}} 

In this subsection, we complete the categorification of the $\jmath$-Serre relations \eqref{eq:jserre1} and  \eqref{eq:jserre2} in the remaining cases  \eqref{eq:jserre2:3}--\eqref{eq:jserre:3} by using results from Sections~\ref{subsec:jserre:1} and \ref{subsec:jserre:2} and the equivalences \eqref{eq:def:omegajmath} and \eqref{eq:def:taujmath}. 

Indeed the categorification of the $\jmath$-Serre relation \eqref{eq:jserre2:1} implies the categorification of the $\jmath$-Serre relation \eqref{eq:jserre2:3} under the equivalence \eqref{eq:def:taujmath}. Then entirely similarly the categorification of the $\jmath$-Serre relations \eqref{eq:jserre2:1}--\eqref{eq:jserre2:3} implies the categorification of the $\jmath$-Serre relations \eqref{eq:jserre:1} -- \eqref{eq:jserre:3} under the equivalence \eqref{eq:def:omegajmath}. We shall only prove the categorification of the $\jmath$-Serre relation \eqref{eq:jserre2:3} here and leave the other cases to the reader. 


\begin{prop}\label{prop:jserre:3}
\renewcommand{\theenumi}{\alph{enumi}}
Let $\lambda \in X_\jmath$ with $\lambda_{\hf} = \langle \thal_\hf^\vee, \lambda \rangle \le 0$.  
\begin{enumerate}
\item
 We have the following isomorphism of $1$-morphisms in $\dot\frakUj$:
\begin{equation*}
	\calF^{(2)}_{\hf}  \calE_{\hf}  1_{\lambda}  \cong ( \calF^{(2)}_{\hf} \calE_{\hf}   - [-\lambda_{\hf} ]\calF_{\hf})1_{\lambda} \oplus \calF_{\hf} 1_{\lambda}^{\oplus [-\lambda_{\hf}]},
\end{equation*}
where $( \calF^{(2)}_{\hf} \calE_{\hf}   - [-\lambda_{\hf} ]\calF_{\hf})1_{\lambda} $ is the image of the idempotent $\kappa$ in Lemma~\ref{lem:dec:eff} under the equivalence $\sigma_\jmath$ in \eqref{eq:def:taujmath}. 

 	\item
		We have the following split surjection in the idempotent completion of $\mathfrak{\tilde U}^\jmath$ (and in $\dot\frakUj$): 
\[
\xymatrix{
 \calE_\hf  \calF_\hf^{(2)}1_\lambda \oplus (\calF_\hf^{(2)} \calE_\hf - [-\lambda_\hf] \calF_{\hf})1_\lambda   
	 \oplus \calF_\hf 1_\lambda^{\oplus [-\lambda_\hf+2]}    \ar@{->>}[r] &\calF_\hf\calE_\hf\calF_\hf 1_\lambda.}
\]	
	\item
		We have the following isomorphism in $\dot\frakUj$:
	\[
	 \calF_\hf\calE_\hf\calF_\hf 1_\lambda \cong \calE_\hf  \calF_\hf^{(2)}1_\lambda \oplus (\calF_\hf^{(2)} \calE_\hf - [-\lambda_\hf] \calF_{\hf})1_\lambda   
	 \oplus \calF_\hf 1_\lambda^{\oplus [-\lambda_\hf+2]}.
	\]
\end{enumerate}
\end{prop}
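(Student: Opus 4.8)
The plan is to reduce Proposition~\ref{prop:jserre:3} to Proposition~\ref{prop:jserre} via the anti-involution $\sigma_\jmath$ of \eqref{eq:def:taujmath}. The key observation is that $\sigma_\jmath$ sends the object $\lambda$ to $-\lambda-\varpi$, and that a weight $\lambda$ with $\lambda_\hf\le 0$ is carried to a weight $\mu=-\lambda-\varpi$ with $\mu_\hf = -\lambda_\hf - 1 \ge -1$; so we are \emph{not} directly in the range $\mu_\hf\ge 2$ covered by Proposition~\ref{prop:jserre}. Thus a naive invocation of $\sigma_\jmath$ alone will not suffice, and one must combine it with $\omega_\jmath$ from \eqref{eq:def:omegajmath}.

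First I would set up the bookkeeping carefully. Recall from Section~\ref{sec:symmetries} that $\omega_\jmath\colon \frakU^\jmath\to\frakU^\jmath$ sends $\lambda\mapsto -\lambda-\varpi$, $\calE_i\mapsto\calF_i$, $\calF_i\mapsto\calE_i$; hence it sends the weight $\lambda$ with $\lambda_\hf\le 0$ to $\lambda' := -\lambda-\varpi$ with $\lambda'_\hf = -\lambda_\hf-1 \ge -1$. That is still not in the range $\ge 2$. The correct approach is therefore the following two-step reduction. The $\jmath$-Serre relation \eqref{eq:jserre2:3} (in the range $\lambda_\hf\le 0$, which we are categorifying) is obtained from \eqref{eq:jserre2:1} (range $\mu_\hf\ge 2$) precisely by applying the anti-automorphism $\sigma_\jmath$ at the decategorified level: $\sigma_\jmath$ fixes $\calE_\hf,\calF_\hf$, reverses products, sends $1_\mu\mapsto 1_{-\mu-\varpi}$, and one checks on \eqref{eq:jserre2:1}--\eqref{eq:jserre2:3} that $\sigma_\jmath$ carries the relation at $\mu$ with $\mu_\hf\ge 2$ to the relation at $\lambda=-\mu-\varpi$ with $\lambda_\hf = -\mu_\hf \le -2$. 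For $\lambda_\hf \in\{-1,0\}$ one instead applies $\sigma_\jmath$ to \eqref{eq:jserre2:2} (the case $\mu_\hf = 1$) and to \eqref{eq:jserre2:1} with $\mu_\hf = 2$, which reaches $\lambda_\hf = -1, -2$ respectively; so actually all $\lambda_\hf\le 0$ is covered by $\sigma_\jmath$ applied to all of \eqref{eq:jserre2:1}--\eqref{eq:jserre2:2}, matching exactly the assertion in the paragraph preceding the Proposition.

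Concretely, for part (a): apply the equivalence $\sigma_\jmath$ to the isomorphism of Lemma~\ref{lem:dec:eff}(b), which reads $\calE_\hf\calF_\hf^{(2)}1_\mu \cong (\calE_\hf\calF_\hf^{(2)} - [\mu_\hf-2]\calF_\hf)1_\mu \oplus \calF_\hf1_\mu^{\oplus[\mu_\hf-2]}$ for $\mu_\hf\ge 2$. Since $\sigma_\jmath$ reverses 1-morphism composition and fixes $\calE_\hf,\calF_\hf$, it sends $\calE_\hf\calF_\hf^{(2)}1_\mu$ to $\calF_\hf^{(2)}\calE_\hf 1_{-\mu-\varpi}$; writing $\lambda = -\mu-\varpi$ so that $\lambda_\hf = -\mu_\hf \le -2$ and $\mu_\hf - 2 = -\lambda_\hf - 2$... here a careful recomputation of the shift is needed: in fact $[\mu_\hf-2] = [-\lambda_\hf-2]$ does not match the claimed $[-\lambda_\hf]$, so the correct source is Lemma~\ref{lem:dec:eff} applied at $\mu$ with $\mu_\hf = -\lambda_\hf$, i.e. one uses that $\sigma_\jmath$ of the $1$-morphism $(\calE_\hf\calF_\hf^{(2)} - [\mu_\hf-2]\calF_\hf)1_\mu$ is \emph{defined} to be $(\calF_\hf^{(2)}\calE_\hf - [\mu_\hf-2]\calF_\hf)1_{-\mu-\varpi}$, and then one re-indexes so that the weight appearing is $\lambda$ with $-\lambda_\hf = \mu_\hf$. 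This gives exactly the displayed isomorphism in (a) with multiplicity $[-\lambda_\hf]$, once one tracks that $\varpi_\hf = 1$ forces the shift in $\sigma_\jmath$ on $1_\mu$. Parts (b) and (c) follow the same template: apply $\sigma_\jmath$ to Proposition~\ref{prop:jserre}(a),(b), using that $\sigma_\jmath$ is an equivalence (so preserves split surjections, direct sums, and isomorphisms in $\dot\frakUj$), that $\sigma_\jmath(\calF_\hf\calE_\hf\calF_\hf 1_\mu) = \calF_\hf\calE_\hf\calF_\hf 1_{-\mu-\varpi}$, and that $\sigma_\jmath$ carries $\calF_\hf^{(2)}\calE_\hf1_\mu$ to $\calE_\hf\calF_\hf^{(2)}1_{-\mu-\varpi}$ and the idempotent $\varrho$ to its $\sigma_\jmath$-image.

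The main obstacle I anticipate is purely bookkeeping: getting the weight shifts and the Laurent-polynomial multiplicities (the $[-\lambda_\hf]$ versus $[-\lambda_\hf+2]$ in (b),(c), versus $[\mu_\hf]$ in Prop.~\ref{prop:jserre}, and $[-\lambda_\hf]$ in (a)) to line up correctly under $\lambda\leftrightarrow -\mu-\varpi$, since $\sigma_\jmath$ acts on $1_\mu$ with an extra $\{t\}$-shift coming from the degree formulas in the table of Section~\ref{sec:coideal-2-cat} — the degrees of $\eta,\eta',\epsilon,\epsilon'$ are not symmetric in $\lambda\mapsto-\lambda-\varpi$ because $\thal_\hf^\vee$ appears, and this is exactly what produces the shift between $[-\lambda_\hf]$ and $[-\lambda_\hf+2]$. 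Once the correspondence $\mu_\hf = -\lambda_\hf$ is fixed and the grading shift of $\sigma_\jmath$ is computed once and for all (it is $\{\lambda_\hf^{\mu} - \text{something}\}$-type, determined by comparing the degree of $\eta$ at $\mu$ with that of $\epsilon'$ at $-\mu-\varpi$), everything else is formal transport of structure along the equivalence $\sigma_\jmath$, with no new diagrammatic computation required. I would state this reduction explicitly and then simply say that the remaining verification is the routine check that $\sigma_\jmath$ matches the two sides, referring back to Section~\ref{sec:symmetries} for the action of $\sigma_\jmath$ on generators.
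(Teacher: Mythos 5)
You have the right tool --- transport of Lemma~\ref{lem:dec:eff} and Proposition~\ref{prop:jserre} along the equivalence $\sigma_\jmath$ --- but the one nontrivial step, the choice of auxiliary weight, is not carried out correctly, and as a result your multiplicities do not match and your case analysis has a hole. The point you miss is that $\sigma_\jmath$ sends $1_\mu X 1_{\lambda'}$ to $1_{-\lambda'-\varpi}X^{\mathrm{rev}}1_{-\mu-\varpi}$: the \emph{source} of the image is the image of the \emph{target} of the original $1$-morphism, not of its source. Since $\calF_\hf\calE_\hf\calF_\hf 1_{\lambda'}$ has target $\lambda'-\alpha_\hf$, its $\sigma_\jmath$-image is $\calF_\hf\calE_\hf\calF_\hf 1_{-(\lambda'-\alpha_\hf)-\varpi}$, so the correct auxiliary weight is $\lambda'=-\lambda+\alpha_\hf-\varpi$, for which
\[
\lambda'_\hf \;=\; -\lambda_\hf+\langle\thal_\hf^\vee,\alpha_\hf\rangle-\varpi_\hf \;=\; -\lambda_\hf+3-1 \;=\; -\lambda_\hf+2\;\ge\;2
\]
whenever $\lambda_\hf\le 0$. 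With this choice everything lines up on the nose: $[\lambda'_\hf-2]=[-\lambda_\hf]$, $[\lambda'_\hf]=[-\lambda_\hf+2]$, the summand $\calF_\hf^{(2)}\calE_\hf 1_{\lambda'}$ is carried to $\calE_\hf\calF_\hf^{(2)}1_\lambda$, and the \emph{entire} range $\lambda_\hf\le 0$ is reached from the single case $\lambda'_\hf\ge 2$ of Proposition~\ref{prop:jserre}. In particular neither $\omega_\jmath$ nor the case \eqref{eq:jserre2:2} is needed here; $\omega_\jmath$ enters only when deducing the $\calE_\hf\calF_\hf\calE_\hf$ relations \eqref{eq:jserre:1}--\eqref{eq:jserre:3}, which is a different statement.

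Your correspondences $\lambda_\hf=-\mu_\hf$ (and, earlier, $-\mu_\hf-1$) are both incorrect, and the symptoms you yourself observe are real: with them $[\mu_\hf-2]$ becomes $[-\lambda_\hf-2]$ rather than the required $[-\lambda_\hf]$, and the case $\lambda_\hf=0$ is not covered by $\mu_\hf\ge 1$. You flag the discrepancy but defer it to ``a careful recomputation of the shift'' and to tracking $\varpi_\hf=1$, which by itself does not resolve it: the missing $2$ comes from $\langle\thal_\hf^\vee,\alpha_\hf\rangle=3$ together with $\varpi_\hf=1$, i.e.\ from translating by $\alpha_\hf$ as well as by $-\varpi$. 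Since the entire content of this proof is precisely this bookkeeping, the argument as written is incomplete. Once you replace your weight correspondence by $\lambda'=-\lambda+\alpha_\hf-\varpi$, the rest of what you say --- that $\sigma_\jmath$ is an equivalence and hence preserves split surjections, direct sums and images of idempotents --- is correct and finishes the proof exactly as the paper does.
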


\begin{proof}
Let $\lambda'= -\lambda  +\alpha_\hf -\varpi$. We have $\lambda'_\hf=
- \lambda_\hf + 2 \geq 2$ by assumption. Thus, the desired maps are just the image under  the equivalence
$\sigma_\jmath$ of the split surjection and the isomorphism in
Lemma~\ref{lem:dec:eff} and Propositions~\ref{prop:jserre}--\ref{prop:jserre:3} applied to
$\calF_\hf\calE_\hf\calF_\hf 1_{\lambda'}$. 
\end{proof}

\subsection{The Grothendieck group}\hfill

\begin{lemma}\label{lem: gp}
The assignment $\lambda\mapsto\lambda$, $\calE^{(a)}_i 1_\lambda \mapsto [\calE^{(a)}_i 1_\lambda]$, 
$\calF^{(a)}_i1_\lambda \mapsto [\calF^{(a)}_i1_\lambda]$ for all $\lambda\in\X_\jmath$, $i\in\bbIj$, $a\in\bbN$
defines an $\calA$-linear functor $\aleph: {}_\calA\bdUj\to K_0(\dot\frakUj)$. 
Furthermore, we have $\aleph\circ\psi_\jmath =\psi_\jmath\circ\aleph$.
\end{lemma}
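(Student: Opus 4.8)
The plan is to produce the functor $\aleph$ by checking that the defining relations \eqref{eq:cartano}--\eqref{eq:jserre2} of ${}_\calA\bdUj$ are realized as isomorphisms of $1$-morphisms in $\dot\frakUj$, so that the assignment on generators extends to a well-defined $\calA$-linear functor on the whole category. First I would recall that the divided powers $\calE_i^{(a)}1_\lambda$, $\calF_i^{(a)}1_\lambda$ make sense as $1$-morphisms in $\dot\frakUj$ because the quiver Hecke relations hold, exactly as in \cite[Section~3.5]{KLIII}; this makes the assignment on objects and generating morphisms meaningful. The subtlety is that ${}_\calA\bdUj$ is presented as a category (an idempotented algebra), so ``functor'' here means: a map sending each object $\lambda$ to $\lambda$, and each generating morphism to the class of the corresponding $1$-morphism, compatible with composition and the stated relations.

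Next I would dispose of the relations not involving the index $\hf$: for $i,j\neq\hf$ and for $i\neq j$, the relevant $1$-morphisms in $\dot\frakUj$ satisfy the same decompositions as in Khovanov--Lauda's $\frakU$, because the subcategory of $\widetilde{\frakU}^\jmath$ generated by the strands away from $\hf$ maps from $\frakU_{r-1}$ and the bicross/mixed relations \eqref{bicross down}--\eqref{mixedup} are literally the KLR ones; hence the $\sfl_r$-type Serre relation \eqref{eq:serre} and the commutator relations \eqref{eq:cartano}, \eqref{eq:cartan} (for $i\neq\hf$) are categorified by the standard arguments of \cite{L, KLIII} transplanted verbatim. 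It remains to handle the relations where $i=\hf$: the commutator-type relation \eqref{eq:cartano} with one index equal to $\hf$ (which follows from \eqref{mixedup}), and crucially the two $\jmath$-Serre relations \eqref{eq:jserre1}--\eqref{eq:jserre2}. But these are exactly what Proposition~\ref{prop:jserre}, the Proposition in Section~\ref{subsec:jserre:2}, and Proposition~\ref{prop:jserre:3} establish: in $\dot\frakUj$ one has the $1$-morphism isomorphisms corresponding to \eqref{eq:jserre2:1}--\eqref{eq:jserre:3}, which are precisely the canonical-basis expansions of \eqref{eq:jserre1}--\eqref{eq:jserre2}. Passing to $K_0$, each such isomorphism gives the required identity of classes, so the relations of ${}_\calA\bdUj$ hold in $K_0(\dot\frakUj)$. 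This yields the $\calA$-linear functor $\aleph$ (one must also note $\aleph$ is $\calA$-linear because the grading shift $\{1\}$ on $\dot\frakUj$ induces multiplication by $q$ on $K_0$, matching the $\calA$-module structure on ${}_\calA\bdUj$).

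For the compatibility $\aleph\circ\psi_\jmath=\psi_\jmath\circ\aleph$: the bar involution $\psi_\jmath$ on ${}_\calA\bdUj$ fixes $1_\lambda$, $\calE_i$, $\calF_i$ and sends $q\mapsto q^{-1}$ (Lemma~\ref{lem:involutions}(c)), while on $\dot\frakUj$ the duality $\psi_\jmath$ fixes the $1$-morphisms $\calE_i1_\lambda$, $\calF_i1_\lambda$ and induces $[\calE_i1_\lambda\{t\}]\mapsto[\calE_i1_\lambda\{-t\}]$ on $K_0$, i.e. $q\mapsto q^{-1}$ on classes. Since the divided powers are built from the generators by the idempotent splitting that $\psi_\jmath$ preserves (it fixes $x$ and the quiver Hecke data up to sign in the way that leaves the symmetrizing idempotents fixed), we get $\psi_\jmath[\calE_i^{(a)}1_\lambda]=[\calE_i^{(a)}1_\lambda]$ and likewise for $\calF$. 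Therefore both sides of $\aleph\circ\psi_\jmath=\psi_\jmath\circ\aleph$ agree on the generators $\calE_i^{(a)}1_\lambda$, $\calF_i^{(a)}1_\lambda$ and on scalars in $\calA$, and hence agree on all of ${}_\calA\bdUj$.

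\textbf{Main obstacle.} The genuinely nontrivial input is the categorified $\jmath$-Serre relations, i.e. Proposition~\ref{prop:jserre} and its companions, whose proofs are the lengthy diagrammatic computations deferred to Appendix~\ref{sec:jserre}; everything else is bookkeeping. The one point of care beyond that is making sure the divided-power $1$-morphisms are genuinely $\psi_\jmath$-stable despite the factors of $2$ appearing in the definition of $\psi_\jmath$ on $\eta,\epsilon$ versus $\eta',\epsilon'$ — but since $\psi_\jmath$ (unlike $\omega_\jmath,\sigma_\jmath$) acts on $2$-morphisms purely by reflection and reversing orientation with no scalar factors, this causes no trouble, and the stability of the symmetrizer idempotents is immediate.
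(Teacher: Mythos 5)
Your proposal is correct and follows essentially the same route as the paper's own proof: reduce to checking that the defining relations \eqref{eq:cartano}--\eqref{eq:jserre2} hold in $K_0(\dot\frakUj)$, handle all relations except the $\jmath$-Serre ones by the argument of \cite[Proposition~3.27]{KLIII}, and invoke Proposition~\ref{prop:jserre} and its companions for \eqref{eq:jserre1}--\eqref{eq:jserre2}. Your additional verification of the $\psi_\jmath$-stability of the divided-power idempotents is a reasonable filling-in of a detail the paper leaves implicit.
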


\begin{proof}
It suffices to prove that the assignment in the lemma defines a $\bbQ(q)$-linear functor $\bdUj\to\bbQ(q)\otimes_{\calA}K_0(\dot\frakUj)$. To this end,
we must check that all defining relations \eqref{eq:cartano}--\eqref{eq:jserre2} in $\bdUj$ are satisfied in $K_0(\dot\frakUj)$.
For \eqref{eq:cartano}--\eqref{eq:serre} this follows from the same argument as in  \cite[Proposition~3.27]{KLIII}.
The $\jmath$-Serre relations \eqref{eq:jserre1}--\eqref{eq:jserre2} are proven in Proposition~ \ref{prop:jserre}.
\end{proof}

Our goal in this subsection is to show that the functor $\aleph:
{}_\calA\bdUj\to K_0(\dot\frakUj)$ is full (see
Proposition~\ref{prop:gammasuj}). We achieve this by showing that the graded category associated with a filtration on $\frakU^\jmath$ is equivalent to the KLR categorification of the positive half of ${}_\calA\dot{\bfU}$. Let us explain this in details.

Fix $\lambda\in\X_\jmath$. Recall from \eqref{eq:I} and \eqref{eq:Ij} that $\bbI=\bbIj\cup-\bbIj$, and that we write 
$\calE_{-i}:=\calF_i$ for $i\in\bbI^\jmath$.
Given a sequence $\bfi=(i_1,i_2,\dots, i_m)\in \bbI^m$ we write
$\calE_\bfi= \calE_{i_1} \calE_{i_2}\cdots \calE_{i_m}$. 
Consider the category 
$$
\calC=\bigoplus_{\mu\in\X_\jmath}\dot\frakUj(\lambda,\mu).
$$
We will always view $\calE_{\bfi}$ as the object $\calE_{\bfi}1_\lambda$ in $\calC$, hence omit $1_\lambda$ from the notation.
Since $\aleph$ is the identity on objects, it is enough to prove that $K_0(\calC)$ is in the image of $\aleph$.
The category $\calC$ is Krull-Schmidt, hence its
Grothendieck group is a free $\calA$-module generated by indecomposable objects. 
Up to a grading shift, any indecomposable object $P\in\calC$ is a direct summand of $\calE_\bfi$ for some sequence $\bfi$. 
We define the {\em width} of $P$ to be the minimum length of such a
sequence $\bfi$.  We define the {\em width} of a morphism
$\beta\colon u\to v$ in $\calC$ to be the
minimal $m$ such that $\beta$ factors through a sum of objects of
width $\leq m$.   
Let $\calC_{\leqslant m}$ (respectively, $\calC_{< m}$) be the full subcategory of $\calC$ generated by indecomposable objects of width 
$\leqslant m$ (respectively of width $<m$). The quotient category $\gr^m\calC=\calC_{\leqslant m}/\calC_{<m}$
is the additive category with the same objects as in $\calC_{\leqslant
  m}$, and $\Hom$-space given by the quotient of the $\Hom$-space in 
$\calC_{\leqslant m}$ by the 2-sided ideal of morphisms of width $<m$. The indecomposable objects in $\gr^m\calC$ are in bijection with those in $\calC$ of length $m$. Hence we have $K_0(\calC)=K_0(\gr\calC)$,
where $\gr\calC=\bigoplus_{m\geqslant 0}\gr^m\calC$.

Let $\calC^+$ be the monoidal category (under induction) of graded projective
modules over the quiver Hecke algebra of type $A_{2r}$ (with $\bbI$ identified with the
Dynkin diagram of this type).  This is a graded $\bfk$-linear monoidal category with objects generated by $E_i$, $i\in\bbI$, and the morphisms generated by 
\begin{align*}
x 
= 
\mathord{
\begin{tikzpicture}[baseline = 0]
	\draw[->,thick,darkred] (0.08,-.3) to (0.08,.4);
      \node at (0.08,0.05) {$\color{darkred}\bullet$};
   \node at (0.08,-.4) {$\scriptstyle{i}$};
\end{tikzpicture}
}
: E_i\to E_i\{ -2\}, 
\qquad
\tau
= 
\mathord{
\begin{tikzpicture}[baseline = 0]
	\draw[->,thick,darkred] (0.28,-.3) to (-0.28,.4);
	\draw[->,thick,darkred] (-0.28,-.3) to (0.28,.4);
   \node at (-0.28,-.4) {$\scriptstyle{i}$};
   \node at (0.28,-.4) {$\scriptstyle{j}$};
\end{tikzpicture}
}: E_iE_j\to E_jE_i\{\langle\alpha^\vee_i,\alpha_j\rangle\}
\qquad
\end{align*}
subject to quiver Hecke relations \eqref{qha}-\eqref{qhalast}.   By \cite[Proposition~3.18]{KLI}, $K_0(\calC^+)$ is isomorphic to ${}_\calA\bfU^+$, the positive part of ${}_\calA\dot\bfU$ generated by $E_i^{(a)}$, $i\in\bbI$. 
Let $\Pi_{\lambda}$ be the polynomial ring in the commuting variables (for $i \in \mathbb{I}^\jmath$)
\begin{align*}
\mathord{
\begin{tikzpicture}[baseline = 0]
  \draw[<-,thick,darkred] (0,0.4) to[out=180,in=90] (-.2,0.2);
  \draw[-,thick,darkred] (0.2,0.2) to[out=90,in=0] (0,.4);
 \draw[-,thick,darkred] (-.2,0.2) to[out=-90,in=180] (0,0);
  \draw[-,thick,darkred] (0,0) to[out=0,in=-90] (0.2,0.2);
 \node at (0,-.1) {$\scriptstyle{i}$};
   \node at (0.3,0.2) {$\scriptstyle{\lambda}$};
   \node at (-0.2,0.2) {$\color{darkred}\bullet$};
   \node at (-0.4,0.2) {$\color{darkred}\scriptstyle{s}$};
\end{tikzpicture}
} &: 1_{\lambda} \rightarrow 1_{\lambda} \{-2(s+ 1-  \langle \thal^\vee_i,\lambda\rangle) \}, \text{ for $s \ge \langle \thal^\vee_i,\lambda\rangle-1$},
\end{align*}
Then $K_0(\Pi_\lambda)=\calA$ and we have $K_0(\calC^+ \otimes \Pi_{\lambda})\simeq K_0( \calC^+)$; see \cite[Proposition~3.35]{KLIII} for more details.
\begin{lemma}
  There is a full functor
  $\zeta: \calC^+ \otimes \Pi_{\lambda}\to \gr\calC$ which sends
  $E_\bfi$ to $\calE_\bfi$, for all sequences $\bfi$ and a diagram $D$
  to $(-1)^d$ times that diagram with the
  orientation on the $i$-th strands reversed for all $i<0$. 
  Here $d$ is the number of dots on strands whose orientation was reversed.
\end{lemma}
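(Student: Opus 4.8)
The plan is to check that $\zeta$ is well-defined, then verify fullness by a filtration/comparison argument. First I would address well-definedness: the source category $\calC^+\otimes\Pi_\lambda$ is generated by $E_i$ (for $i\in\bbI$) and $1_\lambda$-bubbles, with relations coming from the quiver Hecke relations \eqref{qha}--\eqref{qhalast} and the polynomiality of $\Pi_\lambda$. I would define $\zeta$ on generators by the recipe in the statement (replace $E_i$ with $\calE_i$ when $i>\hf$, with $\calF_{-i}$ when $i<\hf$, using the convention $\calE_{-i}=\calF_i$), sending $x$ on a downward strand to $-x'$ and $\tau$-crossings to the corresponding $\tau$, $\tau'$, $\chi$, or $\chi'$ diagrams as dictated by orientations, and the bubbles in $\Pi_\lambda$ to the positive bubbles in $\frakU^\jmath$. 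The sign $(-1)^d$ is forced by the relation \eqref{cyc-x}: reversing orientation of a strand while moving a dot through a cap/cup introduces a sign, and more fundamentally by the discrepancy between the KLR dot relation on $E_i$ and the action of $x'$ on $\calF_i$ (note $\omega_\jmath$ sends $x\mapsto x'$ with no sign but $\tau\mapsto-\tau'$). One must then check the quiver Hecke relations \eqref{qha}--\eqref{qhalast} are respected in $\gr\calC$: the relations among purely-upward or purely-downward strands hold already in $\widetilde\frakU^\jmath$ by \eqref{qha}--\eqref{qhalast}, and relations mixing upward and downward strands (which in $\calC^+$ are still quiver Hecke relations since all of $\bbI$ is ``positive'' there) hold \emph{in the associated graded} $\gr\calC$ precisely because the bicross relations \eqref{bicross down}--\eqref{bicross up}, \eqref{j-bicross down}--\eqref{j-bicross up}, the mixed relations \eqref{mixedup}, and the triple-point relation \eqref{Pi=1} all have the ``straightening'' terms (caps-cups, identities, lower-width diagrams) on the right-hand side, which become zero or lower-width in $\gr^m\calC$. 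This is where the grading by width is essential: the error terms distinguishing $\calC^+$-relations from $\frakU^\jmath$-relations are all of strictly smaller width.

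Second, for fullness I would argue as follows. Fix sequences $\bfi,\bfj\in\bbI^m$ of the same length $m$ (equal length is needed for the same weight, up to the bubble shifts). I want to show every morphism $\calE_\bfi\to\calE_\bfj$ in $\gr^m\calC$ lies in the image of $\zeta$. By the adjunction relations \eqref{adj} and cyclicity \eqref{cyc-x}--\eqref{cyc-tau}, any $2$-morphism in $\frakU^\jmath$ between $\calE_\bfi$ and $\calE_\bfj$ can be written as a linear combination of diagrams; using the bubble relations \eqref{bubble1}--\eqref{bubble3} and the bicross/triple-point relations to push all cups, caps, and higher crossings to the boundary or to express them via lower-width diagrams plus ``planar'' diagrams, one reduces modulo width-$<m$ morphisms to diagrams with no closed components other than bubbles at the far left/right (as in $\Pi_\lambda$) and no cups or caps in the interior — i.e. a product of a ``reduced'' permutation diagram decorated with dots, times a polynomial in the boundary bubbles. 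Precisely such diagrams are in the image of $\zeta$ by construction, since they are exactly the images of the spanning diagrams of $\calC^+\otimes\Pi_\lambda$ (by \cite[Proposition~3.18]{KLI} for the $\calC^+$ part and \cite[Proposition~3.35]{KLIII} for the $\Pi_\lambda$ part). The key mechanism is again that in $\gr^m\calC$ we are allowed to discard any diagram that factors through width $<m$, and the defining relations of $\frakU^\jmath$ furnish exactly the moves that let us remove interior cups/caps at the cost of such lower-width terms.

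I would organize the reduction using the basis-type results already available: the relations of $\widetilde\frakU^\jmath$ restricted to upward strands are literally the quiver Hecke relations, so the subcategory of $\gr\calC$ on upward-only $1$-morphisms is a quotient of $\calC^+$ (with $\bbI^\jmath$ in place of all of $\bbI$); the full statement then needs the ``biadjoint'' generators to be expressible, which is where the $\gr$ construction and the bicross relations do their work, turning $\calE_i\calF_i$ and $\calF_i\calE_i$ into, modulo lower width, a direct sum of identity-type diagrams with bubbles — matching how $E_iE_{-i}$ and $E_{-i}E_i$ behave in $\calC^+$ where $i$ and $-i$ are non-adjacent or adjacent nodes of the $A_{2r}$ diagram. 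For $i=\hf$ the special node, I would use \eqref{Pi=1} and the $\jmath$-bicross relations \eqref{j-bicross down}--\eqref{j-bicross up}: their straightening terms all have width strictly less than the diagram on the left, so in $\gr\calC$ they agree with the honest quiver Hecke relations for the $A_2$ sub-diagram $\{-\hf,\hf\}$ (this is the ``combining KLR relations for $\{E_\hf,E_{-\hf}\}$ in $\bfU^+(\fraksl_3)$'' remark from the introduction). The main obstacle, and the part requiring genuine care, is the bookkeeping in the fullness step: showing that every interior cup/cap can be removed modulo width $<m$ without getting stuck — one must ensure the straightening process terminates and never reintroduces higher-width configurations, which I would handle by an induction on the number of cups/caps plus crossings in the interior of the diagram, peeling them off one at a time using \eqref{adj}, \eqref{cyc-x}, \eqref{nodal}, \eqref{bicross down}--\eqref{j-bicross up}, and \eqref{Pi=1}.
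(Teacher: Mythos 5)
Your proposal is correct and follows essentially the same two-step strategy as the paper: well-definedness is reduced to checking the quiver Hecke relations for $\zeta(x)$ and $\zeta(\tau)$, which are immediate for indices of the same sign and follow for mixed signs from \eqref{bicross down}--\eqref{j-bicross up}, \eqref{mixedup} and \eqref{Pi=1} because their extra cup--cap terms have strictly smaller width and hence vanish in $\gr^m\calC$; and fullness is obtained by reducing, modulo width $<m$, to cup/cap-free diagrams over $\Pi_\lambda$, which are in the image by construction. The only difference is that for fullness the paper avoids your explicit straightening induction by directly invoking the standard spanning set of $\End_\calC(\bigoplus_{|\bfi|=m}\calE_\bfi 1_\lambda)$ (diagrams in which no two strands cross twice and all bubbles sit at the far left) and observing that any such diagram containing a cup or cap automatically factors through width $<m$, so $S_m$ is spanned over $\Pi_\lambda$ by cup/cap-free diagrams.
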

\begin{proof}
Let us first show that $\zeta$ is well defined. It is enough to show that
the assignment in the lemma defines an algebra homomorphism from $R_m=\End_{\calC^+\otimes\Pi_\lambda}(\bigoplus_{|\bfi|=m}E_\bfi1_\lambda)$ to 
$S_m=\End_{\gr^m\calC}(\bigoplus_{|\bfi|=m}\calE_\bfi1_\lambda)$.
In other words $\zeta(x)$ and $\zeta(\tau)$ satisfy the quiver Hecke relations for all $i,j\in\bbI$. This is obvious if $i$ and $j$ have the same sign.
Next, note that indecomposable summands of $\calE_m=\bigoplus_{|\bfi|=m}\calE_\bfi$ are in bijection with primitive idempotents in $A=\End_{\calC}(\calE_m)$.
By definition $S_m$ is the quotient of $A$ by the $2$-sided
ideal $I$ of morphisms of width $<m$.
Now, relations \eqref{j-bicross down},
\eqref{j-bicross up}, \eqref{Pi=1} in $\frakU^\jmath$ modulo morphisms of
width $<2$ give the quiver Hecke relations for $i,j$ of different sign. We deduce that 
$\zeta: R_m\to A/I=S_m$ is well defined.

It remains to show that $\zeta: R_m\to S_m$ is surjective.  
To this end, note that $\End_{\calC}(\bigoplus_{|\bfi|=m}\calE_\bfi1_\lambda)$ is spanned by diagrams where no pair of strands cross twice and all bubbles are at far left.  Every such diagram either has no cups or caps (except in bubbles) or has width $<m$.  Thus  $S_m$ is spanned over $\Pi_\lam$ by
diagrams without cups or caps. 
The image of $\zeta$ contains all
diagrams of this type, so the surjectivity follows.  
\end{proof}

\begin{prop}\label{prop:gammasuj}
The functor $\aleph: {}_\calA\bdUj\to K_0(\dot\frakUj)$ is full, \emph{i.e.}, for any $\lambda,\mu\in\X_\jmath$, it defines a surjective map 
$${}_\calA\bdUj(\lambda,\mu)\twoheadrightarrow K_0(\dot\frakUj(\lambda,\mu)).$$
\end{prop}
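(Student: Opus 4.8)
The plan is to deduce fullness of $\aleph$ from the fullness of the functor $\zeta\colon \calC^+\otimes\Pi_\lambda\to\gr\calC$ established in the preceding lemma, combined with the known identification $K_0(\calC^+\otimes\Pi_\lambda)\simeq K_0(\calC^+)\simeq {}_\calA\bfU^+$ from \cite{KLI, KLIII}. Fix $\lambda\in\X_\jmath$ and work inside the category $\calC=\bigoplus_{\mu}\dot\frakUj(\lambda,\mu)$ as in the discussion above. The key point is that $\aleph$ is the identity on objects of $\X_\jmath$, so it suffices to show that every element of $K_0(\calC)$ lies in the image of $\aleph$; equivalently, that every indecomposable object of $\calC$ occurs (up to shift) as a summand of some $\calE_\bfi 1_\lambda$ whose class is already in the image.

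First I would reduce to $\gr\calC$ using the identification $K_0(\calC)=K_0(\gr\calC)=\bigoplus_m K_0(\gr^m\calC)$ noted before the lemma, which holds because $\calC$ is Krull--Schmidt and the filtration by width refines into the associated graded without losing indecomposables. Next, since $\zeta\colon \calC^+\otimes\Pi_\lambda\to\gr\calC$ is full and essentially surjective on objects (it sends $E_\bfi\mapsto\calE_\bfi$, and these generate), a standard argument shows that the induced map $K_0(\zeta)\colon K_0(\calC^+\otimes\Pi_\lambda)\to K_0(\gr\calC)$ is surjective: a full, essentially-surjective $\bfk$-linear functor between Krull--Schmidt categories sends every indecomposable summand of an object in the image to an indecomposable summand (or zero) of its image, because primitive idempotents lift along surjective algebra maps (here $\bfk$ is a complete local ring, so idempotents lift), and hence every indecomposable of the target is a summand of $\zeta(E_\bfi)=\calE_\bfi$ for some $\bfi$. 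Therefore $[\calE_\bfi 1_\lambda]$, $\bfi\in\bbI^m$, span $K_0(\gr^m\calC)$ over $\calA$.

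Now I would unwind what $\calE_\bfi 1_\lambda$ means: since $\calE_{-i}=\calF_i$ by convention, $\calE_\bfi 1_\lambda$ is a composition of the generating $1$-morphisms $\calE_i,\calF_i$ of $\dot\frakUj$, hence $[\calE_\bfi 1_\lambda]=\aleph(E_{i_1}\cdots E_{i_m}1_\lambda)$ where on the right $E_{-i}$ is read as $F_i$ and the product is taken in ${}_\calA\bdUj(\lambda,\mu)$. (One must check the bookkeeping of which $\mu$ each lands in, but this is forced by the target weight of the composition.) Since such monomials certainly lie in ${}_\calA\bdUj(\lambda,\mu)$ and their $\aleph$-images span $K_0(\calC)=\bigoplus_\mu K_0(\dot\frakUj(\lambda,\mu))$, we conclude $\aleph\colon{}_\calA\bdUj(\lambda,\mu)\to K_0(\dot\frakUj(\lambda,\mu))$ is surjective for all $\mu$, which is the claim. (That $\aleph$ is a well-defined $\calA$-linear functor is Lemma~\ref{lem: gp}, so only surjectivity is at issue here.)

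The main obstacle is the passage $K_0(\zeta)$ surjective $\Rightarrow$ every indecomposable of $\gr\calC$ is a summand of some $\calE_\bfi$: one must argue carefully that fullness of $\zeta$ at the level of endomorphism algebras $R_m\twoheadrightarrow S_m$ (which the previous lemma gives) transports primitive idempotents, using that $\bfk$ is complete local so that idempotents in $S_m$ lift to $R_m$ and conversely primitive idempotents in $R_m$ map to idempotents in $S_m$ that decompose into primitives each of which is a summand; this is the point where completeness of $\bfk$, assumed at the start of Section~\ref{sec:Grothendieck}, is genuinely used. The rest is formal manipulation of Grothendieck groups and the translation of the convention $\calE_{-i}=\calF_i$.
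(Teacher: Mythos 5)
Your reduction to $\gr\calC$ and your argument that $K_0(\zeta)$ is surjective are both correct and match the paper's route (the paper cites \cite[Section~3.8.3]{KLIII} for exactly the idempotent-transport argument you spell out). The gap is in the very last step: from "every indecomposable of $\gr^m\calC$ is a summand of some $\calE_\bfi$" you conclude that the classes $[\calE_\bfi 1_\lambda]$, $\bfi\in\bbI^m$, span $K_0(\gr^m\calC)$ over $\calA$. That implication is false. Knowing that an indecomposable $P$ is a summand of $\calE_\bfi$ only tells you that $[P]$ occurs in $[\calE_\bfi]$ with some coefficient in $\bbN[q,q^{-1}]$; it does not let you recover $[P]$ as an $\calA$-combination of the $[\calE_\bfi]$. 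Already for a single $i$ one has $[\calE_i\calE_i 1_\lambda]=[2]\,[\calE_i^{(2)}1_\lambda]$, so the $\calA$-span of the undivided monomial classes is the proper sublattice $[2]\cdot\calA[\calE_i^{(2)}1_\lambda]$ inside $K_0$. Your argument therefore only yields surjectivity after base change to $\bbQ(q)$, not integrally, which is what fullness of $\aleph$ over $\calA$ requires.

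What surjectivity of $K_0(\zeta)$ actually gives you is that $K_0(\gr\calC)$ is spanned by the image of $K_0(\calC^+\otimes\Pi_\lambda)\simeq{}_\calA\bfU^+$, i.e.\ by classes of indecomposable projectives of the quiver Hecke algebra — not by the $[E_\bfi]$. The paper closes the gap by invoking Lusztig's result \cite[7.8]{Lu90} that ${}_\calA\bfU^+$ has an $\calA$-basis $\bfM$ of monomials in \emph{divided powers} $E_i^{(a)}$. The images of these under $K_0(\zeta)$ are monomials in $\calE_i^{(a)}$ (the divided powers are well-defined $1$-morphisms in $\dot\frakUj$ since the quiver Hecke relations hold), they span $K_0(\gr\calC)=K_0(\calC)$ over $\calA$, and they lie in the image of $\aleph$ precisely because ${}_\calA\bdUj$ is by definition generated by the divided powers $\calE_i^{(a)}1_\lambda$, $\calF_i^{(a)}1_\lambda$ and $\aleph$ sends these to $[\calE_i^{(a)}1_\lambda]$, $[\calF_i^{(a)}1_\lambda]$. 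Replacing your final spanning claim with this divided-power argument repairs the proof; everything before that point is sound.
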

It will be proved in Theorem \ref{thm:maindecat} that $\aleph: {}_\calA\bdUj\to K_0(\dot\frakUj)$ is in fact an equivalence.

\begin{proof}
The same argument as in \cite[Section~3.8.3]{KLIII} implies that
the map
\begin{equation}
  K_0(\zeta): {}_\calA\bfU^+\simeq K_0(\calC^+ \otimes \Pi_\lambda)  \longrightarrow  K_0(\gr\calC).
\end{equation}
is surjective.

By \cite[7.8]{Lu90}, the algebra ${}_\calA\bfU^+$ has an $\calA$-basis $\bfM$ whose elements are monomials in divided powers $E_i^{(a)}$.
Hence $K_0(\zeta)$ maps them to a spanning set of the $\calA$-module $K_0(\gr\calC)$ which are monomials in $\calE_i^{(a)}$ for $i\in \bbI$.
By the isomorphism $K_0(\calC)=K_0(\gr\calC)$, we deduce that $K_0(\calC)$ is also spanned by these monomials, which are contained in the image of $\aleph$. Hence $\aleph$ is full.
\end{proof}

Let $\bfM$ be a monomial basis of ${}_{\calA}\bfU^+$ in the sense of \cite[7.8]{Lu90}. Let 
$$
{}_{\calA}\bdUj(\lambda,-)=\bigoplus_{\mu\in\X_\jmath}{}_{\calA}\bdUj(\lambda,\mu). 
$$
For $m=E_{i_1}^{(c_1)}E_{i_2}^{(c_2)} \cdots$, we write $m^\jmath=\calE_{i_1}^{(c_1)}\calE_{i_2}^{(c_2)} \cdots$ accordingly.
The corollary below was known only for some special choice of the monomial bases (see the proof of \cite[Theorem~4.7]{BKLW}). 

\begin{cor}\label{lem:U+Uj2}
For any $\lambda\in\X_\jmath$,
the assignment $m\mapsto m^\jmath$ for all $m\in\bfM$ yields an isomorphism of $\calA$-modules 
$$\varsigma: {}_{\calA}\dot\bfU^+ \simto {}_{\calA}\bdUj(\lambda,-).$$
\end{cor}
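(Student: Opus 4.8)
\textbf{Proof proposal for Corollary \ref{lem:U+Uj2}.}

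The plan is to deduce the corollary from the two surjectivity results already established, namely the surjectivity of $\aleph\colon {}_\calA\dot\bfU^\jmath\to K_0(\dot\frakUj)$ (Proposition \ref{prop:gammasuj}) together with the surjectivity of $K_0(\zeta)\colon {}_\calA\bfU^+\to K_0(\gr\calC)\cong K_0(\calC)$, combined with a rank count. First I would set ${}_\calA\dot\bfU^\jmath(\lambda,-)$ aside for a moment and observe that, by definition of $\calC=\bigoplus_\mu\dot\frakUj(\lambda,\mu)$, the image of $\aleph$ restricted to $\bigoplus_\mu{}_\calA\dot\bfU^\jmath(\lambda,\mu)$ lands in $K_0(\calC)$, and by Proposition \ref{prop:gammasuj} this restriction is itself surjective onto $K_0(\calC)$ (one needs only monomials in the $\calE_i^{(a)}$ with source object $\lambda$). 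Next I would invoke the isomorphism $K_0(\calC)=K_0(\gr\calC)$ and the fact, extracted from the proof of Proposition \ref{prop:gammasuj}, that $K_0(\zeta)$ carries the monomial basis $\bfM$ of ${}_\calA\bfU^+$ onto a spanning set of $K_0(\calC)$; concretely, $[m^\jmath 1_\lambda]$, $m\in\bfM$, spans $K_0(\calC)$ over $\calA$. Since $\aleph$ identifies $[m^\jmath 1_\lambda]$ with the class of the corresponding element of ${}_\calA\dot\bfU^\jmath(\lambda,-)$, it follows that $\{m^\jmath 1_\lambda : m\in\bfM\}$ spans ${}_\calA\dot\bfU^\jmath(\lambda,-)$ over $\calA$: indeed any element $y$ of the latter has $[y]$ expressible as an $\calA$-combination of the $[m^\jmath 1_\lambda]$, and since $\aleph$ is the identity on the underlying weight-graded pieces and ${}_\calA\dot\bfU^\jmath$ is a free $\calA$-module by \cite{BKLW, LW15}, the relation lifts (after checking it holds already in ${}_\calA\dot\bfU^\jmath$, using that the $m^\jmath 1_\lambda$ and $y$ all lie in the free module and $\aleph$ is injective on classes of basis monomials — or more cleanly, by an induction on width showing $\varsigma$ is surjective directly).

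Having surjectivity of $\varsigma\colon {}_\calA\dot\bfU^+\to {}_\calA\dot\bfU^\jmath(\lambda,-)$, $m\mapsto m^\jmath$, it remains to prove injectivity, equivalently that the $m^\jmath 1_\lambda$ for $m\in\bfM$ are $\calA$-linearly independent. Here I would use the known structure of ${}_\calA\dot\bfU^\jmath$ from \cite{BKLW, LW15}: it is a free $\calA$-module, and $\bbQ(q)\otimes_\calA{}_\calA\dot\bfU^\jmath=\dot\bfU^\jmath$. By the defining presentation \eqref{eq:cartano}--\eqref{eq:jserre2}, the relations all move $\calF_i$ past $\calE_i$ at the cost of lower-order terms, but on the subspace spanned by monomials in the $\calE_i$'s only (no $\calF_i$, $i>\hf$, and controlling the $\calF_\hf$ via the $\jmath$-Serre relations), one expects the analogue of Lusztig's PBW-type argument. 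The cleanest route is a dimension/rank comparison after base change to $\bbQ(q)$: it is shown in \cite{BKLW} (see the cited Theorem~4.7 and its proof) that $\dot\bfU^\jmath(\lambda,-)$ has the same graded dimension over $\bbQ(q)$ as $\bfU^+$, for instance by comparing with the geometric realization or with the PBW basis; granting this, a surjective $\bbQ(q)$-linear map between spaces of equal (finite in each degree) dimension is an isomorphism, and then freeness of both $\calA$-forms upgrades this to an $\calA$-module isomorphism. Alternatively, one can cite directly that the monomial basis statement was established in \cite{BKLW} for particular monomial bases and note that the argument there only used surjectivity plus the known $\calA$-rank, so our stronger surjectivity (valid for \emph{all} monomial bases $\bfM$, thanks to Proposition \ref{prop:gammasuj} and \cite[7.8]{Lu90}) immediately yields the general statement.

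The main obstacle I anticipate is not the surjectivity — that is handed to us by Proposition \ref{prop:gammasuj} and the functor $\zeta$ — but rather cleanly justifying that surjectivity \emph{plus} the a priori knowledge that both sides are free $\calA$-modules of the same rank forces bijectivity, without circularity. The subtlety is that at this point in the paper the isomorphism $\aleph$ is only known to be full (not yet faithful — that comes in Theorem \ref{thm:maindecat}), so one must take the freeness and the rank of ${}_\calA\dot\bfU^\jmath(\lambda,-)$ purely from \cite{BKLW, LW15}, where it is proved by geometric (Schur-algebra) methods independent of the $2$-category. Once that input is in hand, the corollary is formal: $\varsigma$ is a surjection of free $\calA$-modules which are degreewise finitely generated of equal rank, hence an isomorphism. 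I would therefore structure the write-up as: (i) $\varsigma$ is well defined since $m^\jmath 1_\lambda\in{}_\calA\dot\bfU^\jmath(\lambda,-)$; (ii) $\varsigma$ is surjective, by transporting the spanning statement for $K_0(\calC)$ through $\aleph$ and using $K_0(\calC)=K_0(\gr\calC)$, $K_0(\zeta)$ surjective; (iii) $\varsigma$ is injective, by the rank equality from \cite{BKLW} and freeness from \cite{BKLW, LW15}. This keeps the proof to a few lines and isolates the one external input.
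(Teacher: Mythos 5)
Your surjectivity step is essentially the paper's: the paper disposes of it by saying it "follows from the proof of Proposition~\ref{prop:gammasuj}", and your worry about transporting a spanning statement from $K_0(\calC)$ back to ${}_{\calA}\bdUj(\lambda,-)$ while $\aleph$ is not yet known to be faithful is legitimate. The correct reading, which you reach in your parenthetical, is that the width filtration underlying that proof (the functor $\zeta$ and the rewriting of an arbitrary monomial as an $\calA$-combination of the $m^\jmath$ plus terms of smaller width) is run directly in the algebra, not merely in $K_0$.

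The injectivity step is where your proposal genuinely diverges from the paper and where it has a gap. You propose a rank count: "$\dot\bfU^\jmath(\lambda,-)$ has the same graded dimension over $\bbQ(q)$ as $\bfU^+$ \dots a surjection between spaces of equal (finite in each degree) dimension is an isomorphism." But there is no grading on $\bdUj(\lambda,-)$ for which the pieces are finite-dimensional and $\varsigma$ is graded. The natural decomposition of the target is ${}_{\calA}\bdUj(\lambda,-)=\bigoplus_{\mu}{}_{\calA}\bdUj(\lambda,\mu)$, and a monomial $m$ of $\bbI$-weight $\nu=\sum_{j\in\bbI}c_j\alpha_j$ is sent by $\varsigma$ to a morphism landing at $\mu=\lambda+\bar\nu$ with $\bar\nu=\sum_{i\in\bbI^\jmath}(c_i-c_{-i})\alpha_i$ (since $\alpha_{-i}=-\alpha_i$ in $\X_\jmath$); thus infinitely many weight spaces of $\bfU^+$ (e.g.\ all $\nu$ with $c_i=c_{-i}=k$) land in the same summand ${}_{\calA}\bdUj(\lambda,\mu)$, which is therefore not of finite rank. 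Comparing associated graded pieces for the length filtration instead would require knowing that $\gr\bdUj(\lambda,-)$ is all of $\bfU^+$ rather than a proper quotient — which is precisely the linear independence you are trying to establish. Your fallback of citing the special cases in \cite{BKLW} also does not obviously transfer to an arbitrary monomial basis $\bfM$, since $\varsigma$ depends on the chosen basis and the change of basis in ${}_\calA\bfU^+$ is not a priori compatible with the corresponding expressions in $\bdUj$. The paper's route is different: it adapts \cite[Proposition~6.2]{Ko14} to the idempotented setting to conclude that $\varsigma$ becomes an isomorphism after base change to $\bbQ(q)$, and then deduces injectivity over $\calA$ from the freeness of ${}_\calA\dot\bfU^+$ (so that it embeds into $\bbQ(q)\otimes_\calA{}_\calA\dot\bfU^+$). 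You should replace the dimension count by this external quantum-symmetric-pair input or an equivalent PBW-type statement.
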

\begin{proof}
The proof of \cite[Proposition~6.2]{Ko14} adapted to the idempotented algebra case implies that $\varsigma$ is an isomorphism 
after base change to $\bbQ(q)$. Since ${}_{\calA}\dot\bfU^+$ is free over $\calA$, we get injectivity of $\varsigma$. 
The surjectivity follows from the proof of Proposition~\ref{prop:gammasuj}.
\end{proof}

\subsection
{Control from the Grothendieck group}

\begin{prop}\label{df-prop}
Let $\frakC$ be any idempotent complete $2$-category. 
A $2$-functor $\phi: \widetilde{\frakU}^\jmath\to\frakC$ induces a $2$-functor $\dot\frakUj\to\frakC$ if and only if the classes of $\phi(\calE_\hf1_\lambda)$ and $\phi(\calF_\hf1_\lambda)$ satisfy the $\jmath$-Serre relations \eqref{eq:jserre1}--\eqref{eq:jserre2} in $K_0(\frakC)$, for any $\lambda \in \X_\jmath$.
\end{prop}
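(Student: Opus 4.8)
The statement is a two-sided implication, and one direction is immediate: if $\phi$ descends to a $2$-functor $\dot\frakUj\to\frakC$, then in particular the relation \eqref{Pi=1} holds in $\frakC$ up to the image of $\phi$, and passing to Grothendieck groups the $\jmath$-Serre relations \eqref{eq:jserre1}--\eqref{eq:jserre2} are satisfied by $[\phi(\calE_\hf 1_\lambda)]$ and $[\phi(\calF_\hf 1_\lambda)]$, since $K_0(\dot\frakUj)$ receives a surjection from ${}_\calA\dot\bfU^\jmath$ by Lemma~\ref{lem: gp} (indeed Proposition~\ref{prop:gammasuj}) and those relations hold there by definition. So the content is the converse: given that the classes satisfy the $\jmath$-Serre relations in $K_0(\frakC)$, one must show that $\phi$ kills the defining relation \eqref{Pi=1}, i.e.\ that the image under $\phi$ of the left-hand side of \eqref{Pi=1} equals the image of the right-hand side as $2$-morphisms in $\frakC(\lambda,\lambda+\alpha_\hf)$ (both sides being endomorphisms of $\phi(\calE_\hf\calF_\hf\calE_\hf 1_\lambda)$).

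The strategy is to run the argument of Proposition~\ref{prop:jserre} (and Proposition~\ref{prop:jserre:3}, and the $\lambda_\hf=1$ case) \emph{inside $\widetilde{\frakU}^\jmath$ rather than inside $\frakU^\jmath$}, and track precisely where \eqref{Pi=1} is used. More precisely: the $2$-morphisms $B_i,C_i,P_k,I_k,\ldots$ built in the proofs of those propositions are all defined already in $\widetilde{\frakU}^\jmath$ (they are composites of the generators and the bubbles), and the proof of \eqref{eq:prop:jserre1} — that the column vector times the row vector is the identity matrix — is proved \emph{in $\widetilde{\frakU}^\jmath$} (see Proposition~\ref{p:left-inv'}), while only \eqref{eq:prop:jserre2} — the other composite being $\id_{\calF_\hf\calE_\hf\calF_\hf 1_\lambda}$ — requires the quotient relation \eqref{Pi=1}. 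So in $\widetilde{\frakU}^\jmath$ we obtain a canonical split injection $\calF_\hf\calE_\hf\calF_\hf 1_\lambda \hookrightarrow \calF_\hf^{(2)}\calE_\hf 1_\lambda\oplus(\calE_\hf\calF_\hf^{(2)}-[\lambda_\hf-2]\calF_\hf)1_\lambda\oplus\calF_\hf 1_\lambda^{\oplus[\lambda_\hf]}$ together with a retraction, witnessing $\calF_\hf\calE_\hf\calF_\hf 1_\lambda$ as a summand of the right-hand object up to one correction term; after applying $\phi$, the difference between the two sides of \eqref{Pi=1} is thus a $2$-endomorphism $D_\lambda$ of $\phi(\calF_\hf\calE_\hf\calF_\hf 1_\lambda)$ that is \emph{built functorially} and whose vanishing is equivalent to \eqref{eq:prop:jserre2} holding after $\phi$. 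The key point is then: $\phi$ kills \eqref{Pi=1} for the weight $\lambda$ if and only if the split surjection in Proposition~\ref{prop:jserre}(a) becomes, after applying $\phi$, an isomorphism — equivalently, if and only if $[\phi(\calF_\hf\calE_\hf\calF_\hf 1_\lambda)]=[\phi(\calF_\hf^{(2)}\calE_\hf 1_\lambda)]+[\phi((\calE_\hf\calF_\hf^{(2)}-[\lambda_\hf-2]\calF_\hf)1_\lambda)]+[\lambda_\hf]\,[\phi(\calF_\hf 1_\lambda)]$ in $K_0(\frakC)$. (A split surjection of objects in a Krull--Schmidt category is an isomorphism precisely when source and target have equal classes in $K_0$.)

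Now the hypothesis enters: the displayed $K_0$ identity is exactly the $\jmath$-Serre relation \eqref{eq:jserre2:1} with every generator replaced by its $\phi$-image, because — as noted in the paragraph following \eqref{eq:jserre:3} — the right-hand side of \eqref{eq:jserre2:1} is the expansion of $\calF_\hf\calE_\hf\calF_\hf 1_\lambda$ on the canonical basis, and each summand there was shown (Remark~\ref{rem:iCB}) to correspond to an indecomposable self-dual $1$-morphism; the classes $[\phi(-)]$ of those $1$-morphisms are precisely the canonical basis monomials appearing, so the $K_0$-identity is equivalent to \eqref{eq:jserre1} holding in $K_0(\frakC)$ for the image classes. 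Hence the hypothesis forces the split surjection to be an isomorphism after $\phi$, hence forces $D_\lambda=0$, i.e.\ $\phi$ respects \eqref{Pi=1} for that $\lambda$. Running this for every $\lambda\in\X_\jmath$ — using \eqref{eq:jserre2:1} when $\lambda_\hf\ge 2$, \eqref{eq:jserre2:2} when $\lambda_\hf=1$, \eqref{eq:jserre2:3} (via $\sigma_\jmath$, Proposition~\ref{prop:jserre:3}) when $\lambda_\hf\le 0$, and symmetrically for \eqref{eq:jserre:1}--\eqref{eq:jserre:3} which follow from the case of \eqref{Pi=1} for the $\calF\calE\calF$ diagram discussed in the Remark after Definition~\ref{df:Uj} — shows $\phi$ annihilates all instances of \eqref{Pi=1}, hence factors through $\frakU^\jmath$, and since $\frakC$ is idempotent complete, through $\dot\frakUj$.

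The main obstacle is making rigorous the claim "split epi of objects with equal $K_0$-classes is iso, and this is exactly what is needed to upgrade \eqref{eq:prop:jserre1} to \eqref{eq:prop:jserre2}". Concretely one has a split surjection $q\colon X\twoheadrightarrow Y$ in the Karoubian $\bfk$-linear category $\frakC(\lambda,\lambda+\alpha_\hf)$ with a chosen section $s$, so $X\cong Y\oplus Z$ with $Z=\ker q=\operatorname{im}(1-sq)$; then $[X]=[Y]$ in $K_0$ forces $[Z]=0$, and since $\frakC$ is (assumed, via the standing hypothesis that $\bfk$ is a complete local ring) Krull--Schmidt, $[Z]=0$ implies $Z=0$, whence $q$ is an isomorphism with inverse $s$. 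One then has to check that this isomorphism, unwound through the explicit matrices of Proposition~\ref{prop:jserre}, is literally $\phi$ applied to the would-be identity \eqref{eq:prop:jserre2}, i.e.\ that no ambiguity of section intervenes — this is where one must use that the relevant endomorphism algebras $\End(\varrho\calE_\hf\calF_\hf^{(2)}1_\lambda\varrho)$ etc.\ are local rank-one $\bfk$-modules (Remark~\ref{rem:iCB}), so the section is unique up to scalar and the correction term $D_\lambda$ is forced to be zero once its class vanishes. I would treat the $\lambda_\hf\ge 2$ case in full and indicate that the other ranges are obtained by applying the equivalences $\sigma_\jmath$ and $\omega_\jmath$ of Section~\ref{sec:symmetries}, which preserve $\widetilde{\frakU}^\jmath$, carry \eqref{Pi=1} to its variants, and induce the corresponding symmetries on $K_0$ matching the involutions of Lemma~\ref{lem:involutions}.
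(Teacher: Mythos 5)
Your proposal is correct and follows essentially the same route as the paper: both rest on the fact that the splitting datum $\Gamma\circ\Gamma'=\mathrm{id}$ of Proposition~\ref{prop:jserre} holds already in $\widetilde{\frakU}^\jmath$ while $\Gamma'\circ\Gamma$ equals the right-hand side $\Xi$ of \eqref{Pi=1} there (Proposition~\ref{prop:invR:2}), so that $\phi(\Xi)=1$ iff $\phi(\Gamma)$ is invertible iff the relevant $K_0$-classes agree, which is exactly the $\jmath$-Serre relation. The paper's write-up is marginally leaner at the end --- since $\Xi=\Gamma'\Gamma$ is itself an idempotent with $\Gamma\Gamma'=\mathrm{id}$, invertibility of $\phi(\Gamma)$ immediately forces $\phi(\Gamma')=\phi(\Gamma)^{-1}$, so no appeal to locality of endomorphism rings or uniqueness of sections is needed --- but the substance is identical.
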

\begin{proof}
Let $\Xi$ denote the right hand side of \eqref{Pi=1}.
Since $\frakC$ is idempotent complete, the $2$-functor $\phi$ factors through $\dot\frakUj$ if and only if 
$\phi(\Xi)$ is identity for all $\lambda$.
We may assume $\lambda_\hf\geqslant 2$, as the proof for the other cases are similar. By Proposition \ref{prop:jserre}, we have a split surjective map
$$
\Gamma: \calF_\hf\calE_\hf\calF_\hf1_\lambda
\longrightarrow 
\calF^{(2)}_\hf\calE_\hf\oplus \left(\calE_\hf\calF_\hf^{(2)}
-(\lambda_\hf-2)\calF_\hf\right)1_\lambda\oplus \calF_\hf 1_\lambda^{\oplus [\lambda_\hf]}
$$
in the idempotent completion of $\widetilde{\frakU}^\jmath$ given by $\Gamma= B_1\oplus (\kappa\cdot B_2)\oplus P_0'\oplus...\oplus P_{\lambda_\hf-1}$. 
A splitting is provided by $\Gamma'=C_1+(C_2\cdot\kappa)+I_0+...+I_{\lambda_\hf-1}$, that is,
$\Gamma \circ\Gamma' =\text{id}$; see the proof of Proposition \ref{prop:jserre} for the notation. 
Furthermore, we have $\Xi=\Gamma'\circ\Gamma$ by Proposition~\ref{prop:invR:2}. It follows that $\Xi$ is an idempotent in $\widetilde{\frakU}^\jmath$.
Hence $\phi(\Xi)=1$ if and only if $\phi(\Gamma)$ is an isomorphism, if and only if the $\jmath$-Serre relation holds
in $K_0(\frakC)$. 
\end{proof}

\section{The Schur $2$-category}
\label{sec:Schur}


%
%
\subsection{Soergel bimodules}\label{ss:isoflag}


We start by reviewing some standard facts about singular Soergel bimodules from \cite{WilSSB} that will be used below.

Let $\W$ be a Weyl group with the set of simple reflections $\S$, and let $\frakt$ be a faithful $\bfk$-linear reflection representation of $\W$. Consider the graded ring of symmetric product $R=S(\frakt)$ with elements in $\frakt$ in degree $2$. Then the $\W$-action on $\frakt$ induces a homogeneous action on $R$.

Given $I\subset \S$, let $\W_I$ be the parabolic subgroup generated by $I$. We write $R^I=R^{\W_I}$ for the invariants in $R$ under $\W_I$. 
Given a triple $I\supset K\subset J$, we view $R^K$ as an $(R^I, R^J)$-bimodule via the canonical embeddings $R^I\hookrightarrow R^K\hookleftarrow R^J$.

Let $\mathfrak{B}im_\W$ denote the $2$-category with objects consisting of subsets in $\S$, and $\frakB{im}_\W(J,I)=R^I\bgmod R^J$, 
with composition given by tensor product of bimodules.
The category of \emph{singular Soergel bimodules} $\mathbf{S}\frakB{im}_\W$ is the full $2$-subcategory of $\frakB{im}_\W$ 
generated by the same objects and $1$-morphisms $R^I\hookrightarrow R^K\hookleftarrow R^J$ for all triples $I\supset K\subset J$. 
In particular, indecomposable objects in $\mathbf{S}\frakB{im}_\W(J,I)$ are direct summands of 
$$
R^{I_1}\otimes_{R^{J_1}}R^{I_2}\otimes_{R^{J_2}}...\otimes_{R^{J_{n-1}}}R^{I_n}
$$
for certain sequence $I=I_1\subset J_1\supset I_2\subset J_2\supset...\subset J_{n-1}\supset I_n=J$.
Let $\psi: \mathbf{S}\frakB{im}_\W\to \mathbf{S}\frakB{im}_\W$ be the duality given by taking graded duals.
In particular $\psi\circ\{1\}=\{-1\}\circ\psi$. 

Consider the \emph{Schur algebroid} $\bfS_\W$ associated with $\W$. This is an $\calA$-linear category with objects finitary $I\subset S$, 
and morphisms from $I$ to $J$ given by intersections of parabolic modules over the Hecke algebra $\bfH_\W$, 
see \cite[Definition~2.7]{WilSSB}. Note that $\bfS_\W(\emptyset,\emptyset)=\bfH_\W$.

\renewcommand{\theenumi}{\alph{enumi}}
\begin{thm}\cite{Soe07, WilSSB}\label{thm:SBim}
\mbox{}
  \begin{enumerate}
  \item 
 There is an equivalence of categories $$\kappa:
K_0(\bfS\frakB{im}_\W)\simto \bfS_\W$$ which is the identity on objects, and is given by the character map on morphisms.
  \item 
 The equivalence $\kappa$ intertwines $\psi$ with the bar
involution on $\bfS_\W$.
  \item 
 For each element $w$ in $\W_I\backslash\W/\W_J$, there is a unique
self-dual indecomposable object $\calB_w$ in $\bfS\frakB{im}_\W(I,J)$
characterized by a support condition. These objects form a complete
and irredundant set of indecomposable objects in $\bfS\frakB{im}_\W(I,J)$ up to
isomorphism and grading shift.
  \item 
 If the residue field of $\bfk$ has characteristic zero, then
$\kappa$ sends $[\calB_w]$ to the element in the canonical basis of
$\bfS_\W(I,J)$ indexed by $w$.
\end{enumerate}
\end{thm}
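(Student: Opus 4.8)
Since this statement simply reproduces known results of Soergel and Williamson, the plan is to quote the relevant theorems while recalling the shape of each argument and isolating the one genuinely hard input. For part (a), the functor $\kappa$ will be induced by the graded character map $\mathrm{ch}$, sending an $(R^I,R^J)$-bimodule to the element of the intersection of parabolic $\bfH_\W$-modules $\bfS_\W(I,J)$ that records its graded composition multiplicities. First one checks that $\mathrm{ch}$ is monoidal: the singular Bott--Samelson bimodules $R^{I_1}\otimes_{R^{J_1}}\cdots\otimes_{R^{J_{n-1}}}R^{I_n}$ carry a ``standard'' ($\Delta$-)filtration indexed by double cosets whose subquotients satisfy a base-change property, so that tensoring multiplies characters. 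Second, one exhibits a common basis: a light-leaves-type basis of the morphism spaces of $\bfS\frakB{im}_\W$ built from these generators, matched by $\mathrm{ch}$ with the standard basis of $\bfS_\W$ coming from the intersections of parabolic modules. Together these give that $\kappa$ is an equivalence which is the identity on objects; this is the categorification theorem of \cite{WilSSB} (going back, in the non-singular case, to \cite{Soe07}).

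Part (b) is a compatibility check on generators. The graded dual of $R^K$, viewed as an $(R^I,R^J)$-bimodule, is again $R^K$ up to the grading shift fixed by the conventions, and duality exchanges the left and right module structures; on characters this is exactly the bar involution of $\bfS_\W$, i.e.\ the $\calA$-semilinear anti-involution fixing the standard generators appropriately. Since $\psi$, the bar involution and $\kappa$ are all (anti)multiplicative, it suffices to verify $\kappa\circ\psi=\mathrm{bar}\circ\kappa$ on the generating $1$-morphisms $R^I\hookrightarrow R^K\hookleftarrow R^J$, which is immediate. For part (c) I would invoke the classification theorem of \cite{WilSSB}: every singular Soergel bimodule carries a canonical filtration by supports whose layers are indexed by the closure poset of double cosets in $\W_I\backslash\W/\W_J$. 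For a singular Bott--Samelson object $B$ built along a reduced expression for a double coset $w$, the quotient of $B$ by the sub-bimodule supported on cosets $<w$ is, after a grading shift, a rank-one graded bimodule concentrated in a single degree; by Krull--Schmidt (valid since $\bfk$ is complete local) exactly one indecomposable summand of $B$ has this ``top'', and $\calB_w$ is defined to be that summand, normalized by part (b) to be self-dual. Independence of the chosen reduced expression, and that the $\calB_w$ exhaust the indecomposables up to shift, is again \cite{WilSSB}.

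Finally, part (d) is the identification with the Kazhdan--Lusztig basis in residue characteristic zero, and is where geometry enters. When $\frakt$ is the reflection representation of the Weyl group $\W$ of a reductive group $\G$ and $\mathrm{char}\,\bfk=0$, Soergel's functor realizes $\bfS\frakB{im}_\W(I,J)$ as a full subcategory of $\G$-equivariant semisimple complexes on a partial flag variety, taking $\calB_w$ to a shift of the intersection cohomology complex of the Schubert variety $\overline{X_w}$; by the decomposition theorem the character of the latter is the parabolic Kazhdan--Lusztig basis element indexed by $w$. This is \cite{Soe07} for full flags and \cite{WilSSB} in general. The main obstacle throughout --- and the only ingredient I would not expect to reprove by a short argument --- is the Soergel-type Hom formula for singular Soergel bimodules, namely that $\Hom$-spaces between them are graded-free of the rank predicted by $\bfS_\W$; this is what forces $\mathrm{ch}$ to be injective, and in the generality needed it rests on the geometry of partial flag varieties. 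For this we will simply cite \cite{Soe07, WilSSB}.
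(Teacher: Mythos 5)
The paper gives no proof of this theorem at all — it is quoted directly from \cite{Soe07, WilSSB} — and your proposal likewise defers to those references for every substantive input (the Hom formula, the classification of indecomposables by support, and the decomposition-theorem identification with the Kazhdan--Lusztig basis), while accurately sketching the standard arguments. This matches the paper's treatment, and your outline contains no errors.
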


We have a geometric interpretation of $\bfS\frakB{im}_W$ as follows. 
Let $\G$ be a connected reductive group with Weyl group $\W$.
Fix a Borel subgroup $\B$ and a maximal torus $\T$. Take $\frakt$ to be the $\bfk$-module spanned by the characters of $\T$. 
For $I\subset \S$ let $\B\subset\P_I\subset\G$ be the parabolic subgroup corresponding to $I$. 
The diagonal $\G$-orbits in $\G/\P_I\times\G/\P_J$ are parametrized by $\W_I\backslash\W/\W_J$.
Then $\bfS_\W(I,J)$ is the generic algebra of $\calA$-valued functions on
$\G\backslash(\G/\P_I\times\G/\P_J)$  
arising from the convolution product of functions on
$\P_I(\bbF_{\bfq^{2}})\backslash\G(\bbF_{\bfq^{2}})/\P_J(\bbF_{\bfq^{2}})$ over finite fields $\bbF_{\bfq^{2}}$.

\begin{prop}
  The category $\bfS\frakB{im}_W(I,J)$ is equivalent to the category
  of $\G$-equivariant parity complexes on $\G/\P_I\times\G/\P_J$, with
  $\calB_w$ corresponding to the unique parity sheaf $\calE_w$ whose
  support is the closure of the orbit $O_w$.
\end{prop}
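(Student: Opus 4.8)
The plan is to realize the equivalence through the $\G$-equivariant hypercohomology functor and to reduce its full faithfulness to the ``Struktursatz'' for singular Soergel bimodules of \cite{Soe07, WilSSB}. Throughout I use the parity sheaf formalism in the sense of Juteau, Mautner and Williamson, which is available because $\bfk$ is a complete local ring in which $2$ is invertible. Since the $\G$-orbits $O_w$ on $\G/\P_I\times\G/\P_J$, indexed by $w\in\W_I\backslash\W/\W_J$, are smooth and affinely paved with even cohomology, the category $\mathrm{Parity}_\G(\G/\P_I\times\G/\P_J)$ is Krull--Schmidt, and for each $w$ there is a unique indecomposable $\G$-equivariant parity complex $\calE_w$ (up to shift) with support $\overline{O_w}$; every object is a direct sum of shifts of the $\calE_w$.

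First I would isolate the \emph{elementary} parity complexes: for a triple $I\supset K\subset J$ the two projections embed $\G/\P_K$ as the closure of a single (smooth) $\G$-orbit in $\G/\P_I\times\G/\P_J$, so $\underline\bfk_{\G/\P_K}[\dim\G/\P_K]$ is an even, self-dual parity complex. Since proper pushforward and smooth pullback preserve parity complexes, convolutions of elementary parity complexes are again parity complexes, and a support-and-rank induction exactly as in \cite[Section~4]{WilSSB} shows that such convolutions, together with shifts, direct sums and summands, exhaust $\mathrm{Parity}_\G(\G/\P_I\times\G/\P_J)$. This is the geometric counterpart of the defining generation of $\bfS\frakB{im}_\W$ by the bimodules $R^K$. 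Next, define $\bbH\colon\mathrm{Parity}_\G(\G/\P_I\times\G/\P_J)\to R^I\bgmod R^J$ by $\G$-equivariant hypercohomology, the bimodule structure coming from the two projections and the identifications $\H^\bullet_\G(\G/\P_I)\cong R^I$, $\H^\bullet_\G(\G/\P_J)\cong R^J$. Using proper base change and equivariant formality of parity complexes, one checks (as in \cite{Soe07, WilSSB}) that $\bbH$ is monoidal for convolution, $\bbH(\scrF\ast\scrG)\cong\bbH(\scrF)\otimes_{R^J}\bbH(\scrG)$, that it sends the elementary parity complex attached to $I\supset K\subset J$ to $R^K$ with its standard $(R^I,R^J)$-bimodule structure, and that it intertwines Verdier duality with $\psi$. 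Hence $\bbH$ carries generators to generators, factors through $\bfS\frakB{im}_\W(I,J)$ (after the evident transpose of the two flag factors), and is essentially surjective there.

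The main obstacle is full faithfulness of $\bbH$. This is precisely the content of the singular ``Struktursatz'' of \cite{WilSSB}, built on \cite{Soe07}; I would either invoke it directly or reprove it in two steps. Step (i): compare graded ranks of $\Hom$-spaces between convolutions of elementary objects. On the geometric side these are computed by restriction of equivariant cohomology (or by traces of Frobenius over finite fields) and are governed by the Schur algebroid $\bfS_\W(I,J)$ --- this is exactly the point-counting description of $\bfS_\W(I,J)$ recalled above; on the algebraic side they are governed by $\bfS_\W(I,J)$ via Theorem~\ref{thm:SBim}(a). Step (ii): prove injectivity of $\bbH$ on $\Hom$-spaces by restricting to the $\T$-fixed points, where equivariant cohomology of a parity complex injects into that of its restriction, exactly as in the regular case; surjectivity then follows from the rank comparison in (i).

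A clean alternative for step (ii)--(iii) is to descend from the regular case $I=J=\emptyset$, which is \cite{Soe07, WilSSB}, along the push--pull biadjunction between $\mathrm{Parity}_\G(\G/\B\times\G/\B)$ and $\mathrm{Parity}_\G(\G/\P_I\times\G/\P_J)$ induced by $\G/\B\to\G/\P_I$ and $\G/\B\to\G/\P_J$; under $\bbH$ this matches induction and restriction of invariants along the maps $R^I\hookrightarrow R\hookleftarrow R^J$, and fully faithfulness descends because the partial flag varieties are direct summands (after idempotent completion) of the full flag bundle picture. Once $\bbH$ is an equivalence it matches self-dual indecomposables with self-dual indecomposables; since $\calE_w$ and $\calB_w$ are both the unique self-dual indecomposable object supported on $\overline{O_w}$ (characterized as in Theorem~\ref{thm:SBim}(c)), we conclude $\bbH(\calE_w)\cong\calB_w$, which proves the proposition.
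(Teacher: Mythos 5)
The paper offers no written proof of this proposition: it only remarks that ``all the machinery needed to prove this result is given in \cite[4.1]{JMW}'' and that the type $\A$ case is \cite[Theorem~6]{Webcomparison}, whose proof carries over verbatim. Your sketch is precisely the argument those references supply --- existence and uniqueness of the $\calE_w$ from the even, affinely-paved orbit stratification, closure of parity complexes under convolution, generation by ``elementary'' objects mirroring the generation of $\bfS\frakB{im}_\W(I,J)$ by the bimodules $R^K$, and full faithfulness of equivariant hypercohomology via the singular Struktursatz (or by descent from the regular case) --- so in substance you are reconstructing the cited proof rather than giving a different one.

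One step as literally written is false and should be repaired. For a triple $I\supset K\subset J$ the map $\G/\P_K\to\G/\P_I\times\G/\P_J$ is a closed embedding only when $\P_K=\P_I\cap\P_J$; in general it is a proper, smooth surjection onto the unique closed orbit $\G/(\P_I\cap\P_J)$ with fibre the partial flag variety $(\P_I\cap\P_J)/\P_K$. So $\underline{\bfk}_{\G/\P_K}[\dim\G/\P_K]$ does not literally live on the product; the correct elementary object is the proper pushforward of the shifted constant sheaf along this map, which is a parity complex because the map is even, and whose equivariant hypercohomology is still $R^K$ with its $(R^I,R^J)$-bimodule structure. With that correction the rest of your argument (closure under convolution, the support-and-rank induction showing these generate, monoidality and full faithfulness of $\bbH$, and the matching of self-dual indecomposables $\calE_w\leftrightarrow\calB_w$) goes through as intended. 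You might also note explicitly where the hypothesis that $2$ is invertible in $\bfk$ enters: the orbits in types $\B/\C$ are affine bundles over type $\B/\C$ partial flag varieties, and one needs their cohomology to be free over $\bfk$ for the JMW formalism, which is where the restriction on torsion primes is used.
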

All the machinery needed
to prove this result is given in \cite[4.1]{JMW} but it is not stated
there.  In type A, this is discussed in greater detail in
\cite[Theorem 6]{Webcomparison}, and the proof is identical in other
types. 
When the residue field of $\bfk$ has characteristic zero, the parity
sheaf $\calE_w$ is an intersection cohomology complex.

\medskip

\subsection{Schur category in type $\B/\C$}
\label{sec:type-b/c}

From now on let $\W=\W_m$ be the Weyl group of type $B_m/C_m$ with
simple reflections $\S=\{s_0,s_1,...,s_{m-1}\}$. Take
$\frakt=\oplus_{i=1}^m\bfk t_i$ with $s_i$ acting by permuting $t_i$
and $t_{i+1}$, and $s_0(t_i)=(-1)^{\delta_{i,1}}t_i$.
We have 
$$
R=S(\frakt)=\bfk[t_1,...,t_m].
$$ 
Let $\G=\SO(\V)$ with $\V=\bbC^{2m+1}$ equipped with a standard nondegenerate symmetric bilinear form.

\smallskip

Recall $\bbI_r^\jmath$ from \eqref{eq:Ij}. 
Let $\Sigma_{r,m}$ be the set of (weakly) increasing maps from $\bbI^\jmath_r\to [0,m]$,
and we write an element $\bfa\in \Sigma_{r,m}$ as an increasing sequence $\bfa=(a_\hf, a_{\hf+1},...,a_{\hf+r-1})$. 
To $\bfa\in\Sigma_{r,m}$ we associate the subset $I_\bfa$ of $\S$ with $s_{a_p}$ removed for all $p$ such that $a_p<m$. 
Note that if $r\geqslant m$, then every subset of $\S$ is of this form.
Given $\bfa\in\Sigma_{r,m}$ and $i\in\bbIj_r$ let
\begin{alignat*}{3}
&\bfa^{+i}=(\ldots, a_{i-1},a_{i}, a_{i}+1, a_{i+1},\ldots), \qquad 
&&\bfa^{-i}=(\ldots, a_{i-1},a_{i}-1, a_{i}, a_{i+1}, \ldots),\\ 
&{}_{+i}\bfa=(\ldots, a_{i-1},a_{i}+1, a_{i+1}, \ldots),\qquad
&&{}_{-i}\bfa=(\ldots, a_{i-1}, a_{i}-1, a_{i+1}, \ldots).
\end{alignat*}
For any sequence $\bfa$ of $r$ integers, we write
$$R^\bfa=\begin{cases} R^{I_\bfa}, &\text{if } \bfa\in\Sigma_{r,m},\\
0,  &\text{if } \bfa\not\in \Sigma_{r,m}.\end{cases}$$ 
Then $R^{\bfa^{\pm i}}$ is naturally an $(R^{{}_{\pm i}\bfa}, R^\bfa)$-bimodule.
Indeed $\bfa^{\pm i}$ is a sequence of $r+1$ integers. If it belongs to $\Sigma_{r+1,m}$, then ${}_{\pm i}\bfa$ also belongs to $\Sigma_{r,m}$, moreover $I_{{}_{\pm i}\bfa}\supset I_{\bfa^{\pm i}}\subset I_\bfa$, hence $R^{\bfa^{\pm i}}$ is an $(R^{{}_{\pm i}\bfa}, R^\bfa)$-bimodule.
If $\bfa^{\pm i}\not\in \Sigma_{r+1,m}$, then $R^{\bfa^{\pm i}}=0$, the statement is trivial.
The partial flag variety $\iGr_\bfa:=\G/\P_{I_\bfa}$ is the variety of isotropic flags
$$0=F_{-r-\hf}\subset ...\subset F_{-\hf}\subset F_{\hf}\subset...\subset F_{\hf+r}=\V$$
such that $F_p=F_{-p}^\perp$, $\dim F_{-p}=m-a_p$ (and so $\dim F_{p}=m +1 +a_p$) for all $p \in \bbI^\jmath$.
\smallskip

Consider the following 1-morphisms in $\bfS\frakB{im}_\W$ for all $i\in\bbI^\jmath_r$
\begin{align*}
1_\bfa:=R^\bfa\in R^\bfa\bgmod R^\bfa,
\\ 
\scrE_i1_\bfa:=R^{\bfa^{+i}}\{1+a_i-a_{i+1}\}\in R^{{}_{+ i}\bfa}\bgmod R^\bfa,
\\
\scrF_i1_\bfa:=R^{\bfa^{-i}}\{1+a_{i-1}-a_{i}\}\in R^{{}_{-i}\bfa}\bgmod R^\bfa.
\end{align*}
Here and below, we use the convention
$$
a_{-\hf}=-a_{\hf},\qquad a_{\hf+r}=m.
$$

We define the {\em Schur $2$-category} $\frakF_{r,m}$ as the locally fully faithful monoidal $2$-subcategory of $\bfS\frakB{im}_\W$ with object set $\Sigma_{r,m}$ and with $1$-morphisms given by direct summands of direct sums of products of $1_\bfa$, $\scrE_i1_\bfa$, $\scrF_i1_\bfa$, for $\bfa\in \Sigma_{r,m}$, $i\in\bbI^\jmath_r$.
Recall that the locally fully faithfulness here means that we impose for any $1$-morphisms $M$, $N$ in $\frakF_{r,m}$ that $\Hom_{\frakF_{r,m}}(M,N)=\Hom_{\bfS\frakB{im}_\W}(M,N)$.

Recall that the {\em Schur category} $\bfS^\jmath_{r,m}$ (also called the {\em $\jmath$Schur
algebra}) is the category with object set
$\Sigma_{r,m}$ and
$\bfS^\jmath_{r,m}(\bfa,\bfb):=\bfS_W(I_\bfa,I_\bfb)$, for $\bfa, \bfb \in\Sigma_{r,m}$.  Similarly,
we have a fully faithful functor 
\begin{math}
 \bfS^\jmath_{r,m}\rightarrow  \bfS_W.
\end{math}
For $\bfa\in\Sigma_{r,m}$, let $\lambda(\bfa) \in \X_\jmath$ be
such that  
\begin{equation}\label{eq:alam}
\langle\thal_i,\lambda(\bfa)\rangle=-a_{i-1}+2a_i-a_{i+1},
\qquad \forall i\in\bbI^\jmath_r. 
\end{equation}
In this way $\Sigma_{r,m}$ can be viewed as a subset of $\X_\jmath$. It is easy to see that $\lambda(\bfa) = a_{\hf} \varepsilon_0 + \sum_{i = \hf +1}^{r+\hf} (a_{i} - a_{i-1}) \varepsilon_{i -\hf} \in X_\jmath$.

By \cite[Proposition~ 3.1, Corollary~3.13]{BKLW}, there is a well-defined functor
\begin{equation}
 \label{eq:ga}
\gamma: \dot\bfU^\jmath\longrightarrow \bfS^\jmath_{r,m}
\end{equation}
such that for $\lambda\in\X_\jmath$ we have $\gamma(\lambda)=\bfa$ if there exists $\bfa\in\Sigma_{r,m}$ (which must be unique by \eqref{eq:alam}) such that $\lambda=\lambda(\bfa)$, and $\gamma(\lambda)=0$ otherwise. 
The images $\bfe_i=\gamma(\calE_i)$, $\bff_i=\gamma(\calF_i)$, for $i\in \bbI^\jmath_r$, generate $\bfS^\jmath_{r,m}\otimes_{\calA}\bbQ(q)$.

 \smallskip

\begin{prop}\label{prop:iFl-Grothendieck}
\mbox{}
\begin{enumerate}
\item For $\bfa$, $\bfb\in\Sigma_{r,m}$, we have $\frakF_{r,m}(\bfa,\bfb)=\bfS\frakB{im}_\W(\bfa,\bfb)$.
\item  There is an equivalence
  $\kappa: K_0(\frakF_{r,m})\simto \bfS^\jmath_{r,m}$ such that
$$
\kappa(\bfa)= \bfa, \quad\kappa([\scrE_i1_\bfa])=\bfe_i1_\bfa,\quad \kappa([\scrF_i1_\bfa])=\bff_i1_\bfa.
$$ 
It intertwines the duality $\psi$ on $\frakF_{r,m}$ and the bar involution.
\item  If the residue field of $\bfk$ has characteristic zero, then
$\kappa$ sends self-dual indecomposable objects in $\frakF_{r,m}$ to
canonical basis in $\bfS^\jmath_{r,m}$.
\end{enumerate}
\end{prop}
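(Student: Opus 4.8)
The plan is to derive all three assertions from the singular Soergel categorification theorem (Theorem~\ref{thm:SBim}) together with the presentation of the $\jmath$Schur algebra in \cite{BKLW}; the only input that is not purely formal is a character computation identifying the elementary bimodules $\scrE_i1_\bfa$ and $\scrF_i1_\bfa$ with the standard generators $\bfe_i1_\bfa$ and $\bff_i1_\bfa$ of $\bfS^\jmath_{r,m}$.

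First I would observe that $\frakF_{r,m}$ is an honest locally full, summand-closed monoidal $2$-subcategory of $\bfS\frakB{im}_\W$ sitting on the objects $\{I_\bfa:\bfa\in\Sigma_{r,m}\}$: since $\bfa^{\pm i}$ is obtained from $\bfa$ and from ${}_{\pm i}\bfa$ by deleting one entry, $I_{\bfa^{\pm i}}$ differs from both $I_{{}_{\pm i}\bfa}$ and $I_\bfa$ by a single simple reflection, so $R^{\bfa^{\pm i}}$ is a one-step elementary singular Soergel bimodule. The heart of the argument is then to compute the images of $[\scrE_i1_\bfa]$ and $[\scrF_i1_\bfa]$ under the equivalence $\kappa_0\colon K_0(\bfS\frakB{im}_\W)\simto\bfS_\W$ of Theorem~\ref{thm:SBim}(a). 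Using the description recalled at the end of Section~\ref{ss:isoflag}, that $\bfS_\W(I,J)$ is the generic convolution algebra of $\G$-invariant $\calA$-valued functions on $\G/\P_I\times\G/\P_J$, the class of $\scrE_i1_\bfa=R^{\bfa^{+i}}\{1+a_i-a_{i+1}\}$ is, up to the indicated grading shift, the structure function of the image of the correspondence $\iGr_{{}_{+i}\bfa}\leftarrow\iGr_{\bfa^{+i}}\rightarrow\iGr_\bfa$, which is exactly the correspondence used in \cite{BKLW} to define $\bfe_i1_\bfa=\gamma(\calE_i)1_\bfa$ via \eqref{eq:ga}; the normalization $\{1+a_i-a_{i+1}\}$ matches the relative dimension of that correspondence. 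Hence $\kappa_0([\scrE_i1_\bfa])=\bfe_i1_\bfa$ and, symmetrically, $\kappa_0([\scrF_i1_\bfa])=\bff_i1_\bfa$.

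Granting this, part (a) follows by a Krull--Schmidt count. By \cite[Proposition~3.1, Corollary~3.13]{BKLW} the elements $\bfe_i,\bff_i$ generate $\bfS^\jmath_{r,m}\otimes_\calA\bbQ(q)=\bfS_\W(I_\bfa,I_\bfb)\otimes_\calA\bbQ(q)$ as $\bfa,\bfb$ vary, so for each $w\in\W_{I_\bfa}\backslash\W/\W_{I_\bfb}$ the class $[\calB_w]$ is a $\bbQ(q)$-linear combination of classes $[\scrE_{\bfi}1_\bfa]$ of Bott--Samelson-type objects $\scrE_{\bfi}1_\bfa$ of $\frakF_{r,m}$. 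Decomposing each $\scrE_{\bfi}1_\bfa$ into its indecomposable summands $\calB_{w'}$ (Theorem~\ref{thm:SBim}(c)) and comparing the coefficient of $[\calB_w]$ in $K_0(\bfS\frakB{im}_\W(I_\bfa,I_\bfb))$, which is a free $\calA$-module on the $[\calB_{w'}]$, forces $\calB_w$ to occur, up to shift, as a summand of some $\scrE_{\bfi}1_\bfa$, hence $\calB_w\in\frakF_{r,m}$. Since the $\calB_w$ exhaust the indecomposable $1$-morphisms of $\bfS\frakB{im}_\W(I_\bfa,I_\bfb)$ and $\frakF_{r,m}$ is locally full and summand-closed, this gives $\frakF_{r,m}(\bfa,\bfb)=\bfS\frakB{im}_\W(\bfa,\bfb)$. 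Part~(b) is then immediate: restricting $\kappa_0$ to the full subcategory on $\{I_\bfa\}$, which by part~(a) is $\frakF_{r,m}$, yields an equivalence $\kappa\colon K_0(\frakF_{r,m})\simto\bfS^\jmath_{r,m}$ with $\kappa(\bfa)=\bfa$, $\kappa([\scrE_i1_\bfa])=\bfe_i1_\bfa$, $\kappa([\scrF_i1_\bfa])=\bff_i1_\bfa$, and it intertwines $\psi$ with the bar involution by Theorem~\ref{thm:SBim}(b). For part~(c), part~(a) identifies the self-dual indecomposable $1$-morphisms of $\frakF_{r,m}(\bfa,\bfb)$ with those of $\bfS\frakB{im}_\W(I_\bfa,I_\bfb)$, namely the $\calB_w$; when the residue field of $\bfk$ has characteristic zero, Theorem~\ref{thm:SBim}(d) sends $[\calB_w]$ to the canonical basis element of $\bfS_\W(I_\bfa,I_\bfb)$ indexed by $w$, and since $\bfS^\jmath_{r,m}(\bfa,\bfb)=\bfS_\W(I_\bfa,I_\bfb)$ as based $\calA$-modules this is precisely the canonical basis of $\bfS^\jmath_{r,m}$.

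The main obstacle will be the character computation of the second paragraph: pinning down the standard- and canonical-basis meaning of the bimodules $\scrE_i1_\bfa$ and $\scrF_i1_\bfa$ with the correct grading normalization requires a careful matching of the conventions of \cite{WilSSB} underlying Theorem~\ref{thm:SBim} with those of \cite{BKLW} for the geometric $\jmath$Schur algebra. Everything else --- the Krull--Schmidt bookkeeping and the passage to self-dual $1$-morphisms --- is formal once Theorem~\ref{thm:SBim} is available.
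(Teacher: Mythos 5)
Your proposal is correct and follows essentially the same route as the paper: part (a) is reduced to showing that the locally fully faithful embedding of $\frakF_{r,m}$ into the full subcategory of $\bfS\frakB{im}_\W$ on the objects $I_\bfa$ induces an isomorphism on Grothendieck groups, which holds because both sides are free $\calA$-modules on classes of indecomposables and the generators $\bfe_i1_\bfa$, $\bff_i1_\bfa$ of $\bfS^\jmath_{r,m}\otimes_\calA\bbQ(q)$ lie in the image; parts (b) and (c) then follow from Theorem~\ref{thm:SBim}. The only difference is that you make explicit the character identification $\kappa_0([\scrE_i1_\bfa])=\bfe_i1_\bfa$, which the paper treats as built into the definitions of \eqref{eq:ga} and the geometric realization from \cite{BKLW}; your coefficient-comparison phrasing of the Krull--Schmidt step is equivalent to the paper's split-injection argument.
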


\begin{proof}
The nontrivial statement here is part (a). The rest follows automatically from Theorem~ \ref{thm:SBim} and (a). To prove (a), let $\frakF'_{r,m}$ be the $2$-subcategory of $\bfS\frakB{im}_\W$ with object set $\Sigma_{r,m}$ and such that $\frakF'_{r,m}(\bfa,\bfb)=\bfS\frakB{im}_\W(\bfa,\bfb)$ for any $\bfa$, $\bfb\in \Sigma_{r,m}$. 
By definition, we have a locally fully faithful embedding $\iota: \frakF_{r,m}\to \frakF'_{r,m}$, which 
sends indecomposable $1$-morphisms to indecomposable ones. 
We must show that $\iota$ is full, that is all indecomposable $1$-morphisms in $\frakF'_{r,m}$ are in the image. This is true if and only if the fully faithful functor $[\iota]: K_0(\frakF_{r,m})\hookrightarrow K_0(\frakF'_{r,m})$ induced by $\iota$ is an equivalence.
Note that the morphism spaces of both Grothendieck groups are free $\calA$-modules with bases given by classes of indecomposable $1$-morphisms, hence $[\iota]$ is a split injection. Hence it is an equivalence if and only if it is so after base exchange from $\calA$ to $\bbQ(q)$. Now, by Theorem~ \ref{thm:SBim}, we have $K_0(\frakF'_{r,m})\cong\bfS^\jmath_{r,m}$.
Therefore $K_0(\frakF'_{r,m})\otimes_\calA\bbQ(q)$ is generated by $1_\bfa$, $\bfe_i1_\bfa$, $\bff_i1_\bfa$, $i\in\bbI^\jmath_r$. By the definition of $\frakF_{r,m}$, all these generators lie in the image of $[\iota]$. Therefore $[\iota]$ is an equivalence after base change to $\bbQ(q)$. Part (a) is proved.
\end{proof}

\begin{rk}
We write $\frakF^\fraka_{r,m}$ for the $2$-category defined with the same $R =S(\frakt)$ as for  $\frakF_{r,m}$
but with $\W_m$ replaced by the symmetric group $\frakS_m = \langle s_1, s_2, \dots, s_{m-1} \rangle$. 
\end{rk}

\begin{rk}\label{rk:SUj}
We will also consider a quotient $\bfS\dot\frakUj$ of the $2$-category $\dot\frakUj$ by setting to 0 all objects of weight which are not of the form $\lambda(\bfa)$.  The functor $\Gamma: \dot\frakUj\to \frakF_{r,m}$ factors through this quotient. We claim that 
\begin{equation}\label{eq:SUj}
K_0(\bfS\dot\frakUj)\cong \bfS^\jmath_{r,m}.
\end{equation}
Indeed, consider the following composition of functors $${}_\calA\bdUj\to K_0(\dot\frakUj)\to K_0(\bfS\dot\frakUj)\to K_0(\frakF_{r,m})\simto \bfS^\jmath_{r,m}.$$  Since the map ${}_\calA\bdUj\to \bfS^\jmath_{r,m}$ is surjective, with kernel generated by the objects not of the form $1_{\lambda(\bfa)}$, see \cite[Prop~4.11, Lemma~A.20, Thm A.21]{BKLW},
 we must have $K_0(\bfS\dot\frakUj)\cong \bfS^\jmath_{r,m}$.
\end{rk}

\medskip

\subsection{Frobenius forms and Demazure operators}
\label{ss:demazure}

Let $I$ be a subset of $[1,m]$. Let $\Lambda_I$ be the ring of symmetric functions in $\{t_i;\,i\in I\}$. For $p\geqslant 0$, let $e_{p,I}=e_p(t_i; i\in I)$ be the $p$-th elementary symmetric polynomial, and let 
 $h_{p,I}=h_p(t_i; i\in I)$ be the $p$-th complete symmetric polynomial. They are defined by the following generating functions
 \begin{align*}
&\sum_{p\geqslant 0} e_{p,I}z^p=\prod_{i\in I}(1+t_iz),\quad \sum_{p\geqslant 0}h_{p,I}z^p=\prod_{i\in I}(1-t_iz)^{-1},
\end{align*}
where $z$ is a formal variable.
Note that
\begin{align}
\sum_{p+s=k}(-1)^pe_{p, I}h_{s, I}=\delta_{k,0}.\label{sym1}
\end{align}
We also write $\Lambda_I^\jmath$ for the ring of symmetric functions in $\{t^2_i;\,i\in I\}$, and 
$e^\jmath_{p,I}=e_p(t^2_i; i\in I)$, $h^\jmath_{p,I}=h_p(t^2_i; i\in I)$. For future use, we introduce the convention that $h^\jmath_{p,I}=0$ if $p\not\in\bbN$.


Let $B$ be a $\bfk$-algebra, and $A$ a finitely generated $B$-algebra which is free as a $B$-module. 
A $(B,B)$-linear map $\phi: A\to B$ is a \emph{Frobenius form} if $A\to\Hom_B(A,B),$ $a\mapsto (b\mapsto \phi(ab))$ is an isomorphism. 
The \emph{Casimir element} associated with $\phi$ is an element $\pi\in (A\otimes_{B} A)^A$ such that $(\phi\otimes 1)(\pi)=(1\otimes\phi)(\pi)=1$. 
The restriction from $A$ to $B$ is left adjoint to $A\otimes_B-$ with counit given by $\phi$ and unit given by $A\mapsto  A\otimes_B A$, $1\mapsto\pi$. 
See for example \cite[Section 2.3]{R} for more details.

We recall some basic facts on Demazure operators. Let $\alpha_{s_0}^\vee=-2t_1$, and $\alpha_{s_p}^\vee=t_p-t_{p+1}$ for $p>0$. 
For any reflection $\tau \in \W$, let $\alpha_\tau^\vee$ denote the corresponding positive coroot.
For $s\in\S$ the \emph{Demazure operator} $\partial_s: R\to R$ is given by
$$
\partial_s(f)=\frac{f-s(f)}{\alpha_s^\vee},\quad \forall\ f\in R.
$$
For $w\in \W$ let $\partial_w=\partial_{s_{i_1}}\partial_{s_{i_2}}...$ for a reduced expression $s_{i_1}s_{i_2}...$ of $w$. 
It is a well defined operator of degree $-2\ell(w)$, where $\ell(w)$ is the length of $w$.
For $I\subset\S$, let $w_I$ be the longest element in $W_I$. Let $d_I$ be the product of $\alpha_\tau^\vee$ for all reflections $\tau$ in $\W_I$. Then 
$$
\partial_{w_I}(f)=\frac{1}{|\W_I|}\sum_{w\in\W_I}w(d_I^{-1}f), \quad \forall\ f\in R.
$$
It yields a Frobenius form $\partial_{w_I}:R\{- 2 \ell(w_I)\}\to R^{I}$, see e.g. \cite[Section~3.1]{Wilthesis}.
\begin{rk}
  As mentioned before, everything we have done up to this point only
  depends on the Weyl group, not on the underlying root system.  However, $d_I$ does depend on the choice of a root system; if
  we let $d_I'$ be the corresponding product of coroots in type C rather
  than type B,  $d_I$ and $d_I'$ differ by a power of $2$.  
\end{rk}

\smallskip

For $I'\subset I\subset \S$, let $w_{I,I'}=w_Iw_{I'}^{-1}$, $d_{I,I'}=d_I/d_{I'}$. Let $W^{I,{I'}}$ be the set of minimal coset representatives of $W_I/W_{I'}$. Then 
\begin{equation}\label{eq:demazure}
\partial_{w_{I,{I'}}}: R^{I'}\{-2\ell(w_{I'}w_I^{-1})\}\longrightarrow R^I,\quad f\mapsto \frac{1}{|\W_I/\W_{I'}|}\sum_{w\in\W^{I,{I'}}}w(d_{I,{I'}}^{-1}f)
\end{equation}
is also a Frobenius form by \cite[Lemma~2.12]{R}.
Let us compute this form in some useful examples.

\begin{ex} \label{ex:demazure}
Let $1\leqslant a<b\leqslant m$. 

\begin{itemize}
\item[(a)] 
Consider $I'=\{s_p;\,p\in [a+1,b-1]\}\subset I=\{s_p;\,p\in [a,b-1]\}$. 
Then $w_{I,I'}=s_{b-1}...s_a$. We will write $\partial_{[b,a]}=\partial_{w_{I,I'}}$.
The Frobenius form \eqref{eq:demazure} in this case becomes 
\begin{align*}
\partial_{[b,a]}: \bfk[t_a]\otimes\Lambda_{[a+1,b]}\{2(a-b)\}\longrightarrow\Lambda_{[a,b]}, \quad f\mapsto \sum_{p=a}^{b}s_{(a,p)}\Big(f\prod_{u=a+1}^{b}(t_a-t_u)^{-1}\Big).
\end{align*}
Here we have ignored the variables $t_i$ for $i\not\in [a,b]$, since $\partial_{[a,b]}$ acts trivially on them.
An easy computation shows that $\partial_{[b,a]}(t_a^k)=h_{k-b+a,[a,b]}$ for all $k$.
Therefore $\{t_a^k\}$ and $\{(-1)^re_{r,[a+1,b]}\}$ are dual bases with respect to the Frobenius form $\partial_{[b,a]}$.
Hence the Casimir element is given by 
\begin{equation*}
\pi_{[b,a]}=\sum_{r=0}^{b-a}t_a^{b-a-r}\otimes (-1)^re_{r,[a+1,b]}.
\end{equation*} 

\item[(b)] 
Consider $I'=\{s_p;\,p\in [a,b-2]\}\subset I=\{s_p;\,p\in [a,b-1]\}$. 
Then $w_{I,I'}=s_a...s_{b-1}$. We will write $\partial_{[a,b]}=(-1)^{b-a}\partial_{w_{I,I'}}$.
The Frobenius form \eqref{eq:demazure} in this case becomes 
\begin{align*}
\partial_{[a,b]}: \Lambda_{[a,b-1]}\otimes\bfk[t_b]\{2(a-b)\}\longrightarrow\Lambda_{[a,b]}, \quad f\mapsto \sum_{p=a}^{b}s_{(p,b)}\Big(f\prod_{u=a}^{b-1}(t_b-t_u)^{-1}\Big).
\end{align*}
We have $\partial_{[a,b]}(t_b^k)=h_{k-b+a,[a,b]}$ for all $k$, and the Casimir element is
\begin{equation*}
\pi_{[a,b]}=\sum_{r=0}^{b-a}(t_b)^{b-a-r}\otimes (-1)^re_{r,[a,b-1]}.
\end{equation*}

\item[(c)] 
Consider $I'=\{s_p;\,p\in [0,a-2]\}\subset I=\{s_p;\,p\in [0,a-1]\}$. We have $w_{I,I'}=s_{a-1}...s_1s_0s_1...s_{a-1}$. Call this element $\gamma_a$. Note that $\gamma_a(t_i)=(-1)^{\delta_{i,a}}t_i$.
Write $\tilde{\partial}_{[1,a]}=(-1)^a\partial_{w_{I,I'}}$.
The Frobenius form \eqref{eq:demazure} in this case becomes 
\begin{align*} 
\tilde{\partial}_{[1,a]}&: \Lambda^\jmath_{[1,a-1]}\otimes\bfk[t_a]\{2(1-2a)\}\longrightarrow\Lambda^\jmath_{[1,a]}, \\
&f\mapsto\sum_{p=1}^{a}s_{(p,a)}(1+\gamma_a)\left(f \prod_{u=1}^{a-1}(t_a^2-t^2_u)^{-1} (2t_a)^{-1}\right).
\end{align*}
Let us compute $\tilde{\partial}_{[1,a]}(t_a^k)$.
Note that $(1+\gamma_a)(t_a^k)=2t_a^{k}$ if $k$ is even, and zero otherwise.
Hence $\tilde{\partial}_{[1,a]}(t_a^k)$ is nontrivial only when $k$ is odd, and in this case by Example (b)
\begin{align*}
\tilde{\partial}_{[1,a]}(t_a^k)
&=\sum_{p=1}^{a}s_{(p,a)}\left(t_a^{k-1}\prod_{u=1}^{a-1}(t_a^2-t^2_u)^{-1}\right)
\\
&=h^\jmath_{(k-1)/2-a+1,[1,a]}.
\end{align*}
The Casimir element is given by 
\begin{equation*}
\tilde\pi_{[1,a]}=(t_a\otimes 1+1\otimes t_a)\sum_{r=0}^{b-a}t_a^{2(a-1-r)}\otimes (-1)^re^\jmath_{r,[1,a-1]}.
\end{equation*}
\end{itemize}
\end{ex}

\subsection{Action of $\frakU^\jmath $ on the Schur $2$-category}

Recall Khovanov and Lauda defined a $2$-functor $\Gamma^\fraka: \frakU\to\frakF ^\fraka$ in \cite{KLIII},
giving a $2$-representation in terms of (equivariant)  cohomology rings of partial flag varieties of type $\A$. 
We now define an analogous $2$-functor 
\begin{equation}
\label{eq:Gamma}
\Gamma: \frakU^\jmath  \longrightarrow  \frakF_{r,m}.
\end{equation}
On objects $\Gamma$ is given by
$$
\X_\jmath\ni\lambda \mapsto \begin{cases} I_\bfa &\si {\lambda=\lambda(\bfa)},\\ 0 &\sinon.
\end{cases}
$$
On the generating $1$-morphisms $\Gamma$ sends
\begin{align*}
1_\lambda\{s\} &\mapsto \begin{cases} R^\bfa\{ s\} &\si {\lambda=\lambda(\bfa)},\\ 0 &\sinon,\end{cases}\\
\calE_i1_\lambda\{ s\} &\mapsto 
\begin{cases} R^{\bfa^{+i}}\{ s+1+a_i-a_{i+1}\} &\si {\lambda=\lambda(\bfa)},\\ 0 &\sinon,
\end{cases}\\
\calF_i1_\lambda\{ s\} &\mapsto 
\begin{cases} R^{\bfa^{-i}}\{ s+1+a_{i-1}-a_i\} &\si {\lambda=\lambda(\bfa)},\\ 0 &\sinon.
\end{cases}
\end{align*}
On the generating $2$-morphisms $\Gamma$ is given as follows:
\begin{itemize} 
\item If $\calM$ is a monomial in only $\calE_i$'s or only $\calF_i$'s, then the multiplication map identifies the bimodules $\Gamma(\calM)$ with a subring of $R$. 
Thus we will write the image of $\Gamma$ on diagrams consisting of only upward arrows or only downward arrows in terms of endomorphism of $R$ 
which preserves the corresponding subrings.  
Then
\begin{alignat}{3}
\Gamma(\mathord{
\begin{tikzpicture}[baseline = 0]
	\draw[->,thick,darkred] (0.08,-.3) to (0.08,.4);
      \node at (0.08,0.05) {\color{darkred}$\bullet$};
         \node at (0.08,-.4) {$\scriptstyle{i}$};
\end{tikzpicture}
}
{\scriptstyle \lambda}\: )=t_{a_i+1},
&
\qquad \qquad 
\Gamma(\mathord{
\begin{tikzpicture}[baseline = 0]
	\draw[<-,thick,darkred] (0.08,-.3) to (0.08,.4);
       \node at (0.08,0.05) {\color{darkred}$\bullet$};
             \node at (0.08,-.4) {$\scriptstyle{i}$};
\end{tikzpicture}
}
{\scriptstyle \lambda}\: )=t_{a_i},
\\
\Gamma(\mathord{
\begin{tikzpicture}[baseline = 0]
	\draw[->,thick,darkred] (0.28,-.3) to (-0.28,.4);
	\draw[->,thick,darkred] (-0.28,-.3) to (0.28,.4);
   \node at (.4,.05) {$\scriptstyle{\lambda}$};
      \node at (-0.28,-.4) {$\scriptstyle{i}$};
   \node at (0.28,-.4) {$\scriptstyle{j}$};
\end{tikzpicture}
}\:)
=\begin{cases}
\partial_{a_i+1}&\si i=j\\
(t_{a_j+1}-t_{a_i+1}) &\si i=j+1\\
1 &\sinon\,,
\end{cases}
& \quad
%
\Gamma(\mathord{
\begin{tikzpicture}[baseline = 0]
	\draw[<-,thick,darkred] (0.28,-.3) to (-0.28,.4);
	\draw[<-,thick,darkred] (-0.28,-.3) to (0.28,.4);
   \node at (.4,.05) {$\scriptstyle{\lambda}$};
      \node at (-0.28,-.4) {$\scriptstyle{i}$};
   \node at (0.28,-.4) {$\scriptstyle{j}$};
\end{tikzpicture}
}\:)
=\begin{cases}
\partial_{a_i-1}  &\si i=j\\
(t_{a_i}-t_{a_j}) &\si j=i+1\\
1 &\sinon.
\end{cases}
\end{alignat}

\item 
The adjunction maps are given by
\begin{align}\label{label:ep'-Gamma}
\Gamma(\mathord{
\begin{tikzpicture}[baseline = 0]
	\draw[-,thick,darkred] (0.4,-0.1) to[out=90, in=0] (0.1,0.3);
	\draw[->,thick,darkred] (0.1,0.3) to[out = 180, in = 90] (-0.2,-0.1);
    \node at (-0.2,-.2){};   
    \node at (0.4,-.2) {$\scriptstyle{i}$};
    \node at (0.3,0.4) {$\scriptstyle{\lambda}$};
\end{tikzpicture}
}): \quad&R^{\bfa^{+i}}\otimes_{R^{{}_{+i}\bfa}}R^{\bfa^{+i}}\{1-\delta_{i,\hf}+a_{i-1}-a_{i+1}\}\longrightarrow  R^\bfa\{1-\l\thal_i^\vee,\lambda+\alpha_i\r\},\\ \displaybreak[0]
\notag
 &f\otimes g \mapsto \partial_{[a_{i+1}, a_i+1]}(fg),\\
 \label{label:eta'-Gamma} 
\Gamma(\mathord{
\begin{tikzpicture}[baseline = 0]
	\draw[-,thick,darkred] (0.4,0.3) to[out=-90, in=0] (0.1,-0.1);
	\draw[->,thick,darkred] (0.1,-0.1) to[out = 180, in = -90] (-0.2,0.3);
	    \node at (0.4,.4) {$\scriptstyle{i}$};
  \node at (0.3,-0.15) {$\scriptstyle{\lambda}$};
\end{tikzpicture}
}\:): \quad&R^\bfa\longrightarrow  R^{\bfa^{-i}}\otimes_{R^{{}_{-i}\bfa}}R^{\bfa^{-i}}\{a_{i-1}-a_{i+1}+\l\thal_i^\vee,\lambda\r\},\\
\notag
 &1\mapsto\pi_{[a_{i+1},a_i]}
 \\
\label{label:ep-Gamma} 
\Gamma(\mathord{
\begin{tikzpicture}[baseline = 0]
	\draw[<-,thick,darkred] (0.4,-0.1) to[out=90, in=0] (0.1,0.3);
	\draw[-,thick,darkred] (0.1,0.3) to[out = 180, in = 90] (-0.2,-0.1);
	    \node at (-0.2,-.2) {$\scriptstyle{i}$};
  \node at (0.3,0.4) {$\scriptstyle{\lambda}$};
\end{tikzpicture}
}): \quad&R^{\bfa^{-i}}\otimes_{R^{{}_{-i}\bfa}}R^{\bfa^{-i}}\{1+a_{i-1}-a_{i+1}\}\longrightarrow  R^\bfa\{-1+\l\thal_i^\vee,\lambda\r\},\\ \displaybreak[0]
\notag
&f\otimes g \mapsto \begin{cases}
\tilde\partial_{[1,a_i]}(fg), &\si i=\hf,\\
\partial_{[a_{i-1}+1,a_i]}(fg), &\sinon,\end{cases}\\
\label{label:eta-Gamma}
\Gamma(\mathord{
\begin{tikzpicture}[baseline = 0]
	\draw[<-,thick,darkred] (0.4,0.3) to[out=-90, in=0] (0.1,-0.1);
	\draw[-,thick,darkred] (0.1,-0.1) to[out = 180, in = -90] (-0.2,0.3);
	    \node at (-.2,.4) {$\scriptstyle{i}$};
  \node at (0.3,-0.15) {$\scriptstyle{\lambda}$};
\end{tikzpicture}
}\:): \quad&R^\bfa\longrightarrow  R^{\bfa^{+i}}\otimes_{R^{{}_{+i}\bfa}}R^{\bfa^{+i}}\{2-\delta_{i,\hf}+a_{i-1}-a_{i+1}-\l\thal^\vee_i,\lambda+\alpha_i\r\},\\  \displaybreak[0]
\notag
 &1\mapsto\begin{cases}  
\tilde\pi_{[1,a_i+1]} 
&\si i=\hf,\\
 \pi_{[a_{i-1}+1,a_i+1]}
 &\sinon.
 \end{cases}
\end{align}
\end{itemize}

\begin{thm}\label{thm:2repflagj}
The above assignments define a locally essentially surjective $2$-functor
$$\Gamma: \frakU^\jmath \longrightarrow  \frakF_{r,m}.
$$
\end{thm}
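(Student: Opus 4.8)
The plan is to verify that the assignments above respect the defining relations of $\frakU^\jmath$, i.e., relations (1)--(7) of Definition~\ref{df:weakfrakUj} together with the triple-point relation \eqref{Pi=1}, and then establish local essential surjectivity as a consequence of Proposition~\ref{prop:iFl-Grothendieck}(a). For the bulk of the generators and relations not involving the index $\hf$, the situation is essentially parallel to Khovanov-Lauda's $2$-functor $\Gamma^\fraka: \frakU\to\frakF^\fraka$ in \cite{KLIII}: the formulas for $\Gamma$ on $x,x',\tau,\tau'$ and on the adjunction maps for $i>\hf$ are the type-$\A$ formulas, and the bimodules $R^{\bfa^{\pm i}}$ for $i>\hf$ only involve the ``symmetric-group part'' of $\W_m$. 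So the first step is to observe that $\Gamma$ restricted to the image of $\frakU_{r-1}\to\widetilde\frakU^\jmath$ factors through $\Gamma^\fraka$ (applied to the appropriate flag varieties), which immediately gives the adjunction, cyclicity, quiver-Hecke, bubble, nodal, and bicross relations \eqref{adj}--\eqref{bicross up} as well as the mixed relations \eqref{mixedup} for all $i,j\neq\hf$, and also the mixed relations when exactly one of $i,j$ equals $\hf$ (since these are ``off-diagonal'' and again reduce to a type-$\A$ computation on a larger flag variety, noting $a_{-\hf}=-a_\hf$).

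Second, I would carry out the genuinely new computations, all of which take place near the node $s_0$ of the $B_m$ Dynkin diagram and use the Demazure operator $\tilde\partial_{[1,a]}$ and the Casimir element $\tilde\pi_{[1,a]}$ from Example~\ref{ex:demazure}(c). Concretely: (i) check the degrees of $\Gamma(\eta),\Gamma(\eta'),\Gamma(\epsilon),\Gamma(\epsilon')$ match the table, using $-1+\langle\thal^\vee_\hf,\lambda+\alpha_\hf\rangle=2+\lambda_\hf$; (ii) verify the adjunction relation \eqref{adj} for $i=\hf$, i.e., that $(\tilde\partial_{[1,a_\hf]}\otimes 1)\circ(1\otimes\tilde\pi_{[1,a_\hf]})=\mathrm{id}$ and its mirror, which is precisely the Casimir/Frobenius-form identity $(\phi\otimes1)(\pi)=(1\otimes\phi)(\pi)=1$ for the Frobenius form $\tilde\partial_{[1,a_\hf]}:\Lambda^\jmath_{[1,a_\hf-1]}\otimes\bfk[t_{a_\hf}]\to\Lambda^\jmath_{[1,a_\hf]}$ of Example~\ref{ex:demazure}(c); (iii) verify the bubble relations \eqref{bubble1}--\eqref{bubble3} for $i=\hf$, the point being that the clockwise and counterclockwise circles of $\scrE_\hf\scrF_\hf$ and $\scrF_\hf\scrE_\hf$ evaluate under $\Gamma$ to $\tilde\partial_{[1,a_\hf+1]}(t_{a_\hf+1}^{s}\cdot(\text{stuff}))$-type expressions, and the factor $2^{\delta_{i,\hf}}$ in \eqref{bubble1} arises exactly because $(1+\gamma_{a})(t_a^k)=2t_a^k$ for $k$ even and the leading bubble coefficient picks up this $2$ (this explains, and forces, the asymmetry noted in the Remark after Definition~\ref{df:weakfrakUj}); (iv) verify the $i=\hf$ quiver-Hecke-type relations \eqref{qha}--\eqref{qhalast} and the $\hf$-bicross relations \eqref{j-bicross down}--\eqref{j-bicross up} and the nodal relations \eqref{nodal} for $i=\hf$ by direct manipulation of $\partial_{a_\hf+1}$, $\partial_{a_\hf-1}$ and the two Casimir elements, as in \cite[Section~3.4]{KLIII} but with the type-$B$ integration formula.

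Third, and this is the step I expect to be the main obstacle, I would verify the triple-point relation \eqref{Pi=1} under $\Gamma$. There are two routes. The direct route is a brute-force identity in $\mathrm{End}_{R^{\bfb}}\!\big(R^{\bfa^{+\hf\,+\hf\,+\hf}}\text{-type bimodule}\big)$ comparing six terms built from $\partial_{a_\hf+1}$, caps/cups via $\tilde\pi$ and $\pi$, and the summation over $s+t+u+v=-3$; this is lengthy but mechanical and parallels the proof that \eqref{Pi=1} holds, plus the combination of \cite[Proposition~5.8]{L} with \eqref{qhalast} mentioned in the introduction. The cleaner route, which I would prefer to present, is to invoke Proposition~\ref{df-prop}: since we are mapping into the idempotent complete $2$-category $\dot\frakF_{r,m}$, it suffices to check that the classes $[\Gamma(\calE_\hf1_\lambda)]$ and $[\Gamma(\calF_\hf1_\lambda)]$ satisfy the $\jmath$-Serre relations \eqref{eq:jserre1}--\eqref{eq:jserre2} in $K_0(\frakF_{r,m})\cong\bfS^\jmath_{r,m}$; but under the isomorphism $\kappa$ of Proposition~\ref{prop:iFl-Grothendieck}(b) these classes go to $\bfe_\hf1_\bfa$ and $\bff_\hf1_\bfa$, which satisfy those relations by \cite[Proposition~3.1, Corollary~3.13]{BKLW} (equivalently, because the functor $\gamma$ of \eqref{eq:ga} is well-defined). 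Thus $\widetilde\frakU^\jmath\to\frakF_{r,m}$ descends to $\frakU^\jmath\to\frakF_{r,m}$, provided one has already checked all the relations of $\widetilde\frakU^\jmath$ (steps one and two). Finally, local essential surjectivity: every object of $\frakF_{r,m}$ is some $I_\bfa$, hit by $\lambda(\bfa)$; and every $1$-morphism of $\frakF_{r,m}$ is by definition generated by $1_\bfa,\scrE_i1_\bfa,\scrF_i1_\bfa$, each of which is $\Gamma$ of a generating $1$-morphism of $\frakU^\jmath$ up to shift, so $\Gamma$ is even essentially surjective on $1$-morphisms (hence locally essentially surjective on each $\Hom$-category after passing to direct sums and summands in the target, which is how $\frakF_{r,m}$ is built).
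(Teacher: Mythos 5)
Your proposal is correct and follows essentially the same route as the paper: reduce the non-$\hf$ relations to Khovanov--Lauda's type $\A$ computation, verify the $i=\hf$ adjunction via the Frobenius property of $\tilde\partial_{[1,a]}$, check the $\hf$-bubble and $\hf$-bicross relations directly (the paper does the bubbles by a generating-function identity and the bicross relations by evaluating on $1\otimes 1$ plus a commutator argument), and then obtain \eqref{Pi=1} not by direct computation but by invoking Proposition~\ref{df-prop} together with the fact that the $\jmath$-Serre relations hold in $K_0(\frakF_{r,m})\cong\bfS^\jmath_{r,m}$ via $\gamma$ — exactly your preferred "cleaner route." One minor caveat: the mixed relations \eqref{mixedup} with exactly one index equal to $\hf$ still involve the $\hf$-cups and caps (hence $\tilde\partial$ and $\tilde\pi$) through $\chi,\chi'$, so they are not literally a type-$\A$ computation, though they remain routine and the paper likewise omits them.
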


\begin{proof}
Obviously $\Gamma$ induces $\kappa^{-1}\circ\gamma$ on the Grothendieck group;
 see Proposition~\ref{prop:iFl-Grothendieck} and the paragraph above it for the notation. Hence it is enough to check it is well defined on $2$-morphisms.

First, let us check the compatibility with the grading. This is obvious on the generators $x$ and $\tau$. For $\mathord{
\begin{tikzpicture}[baseline = 0]
	\draw[-,thick,darkred] (0.4,-0.1) to[out=90, in=0] (0.1,0.3);
	\draw[->,thick,darkred] (0.1,0.3) to[out = 180, in = 90] (-0.2,-0.1);
    \node at (-0.2,-.2){};   
        \node at (0.3,0.4) {$\scriptstyle{\lambda}$};
\end{tikzpicture}
}$, 
note that the coefficient $a_{i-1}$ for ${}_{+i}\bfa$ is the same as the one for $\bfa$ if $i\neq\hf$, but it differs from the one for $\bfa$ by $-1$ for $i=\hf$. 
This is why $-\delta_{i,0}$ appears in the degree shift on the left. Recall that $\lambda_i=-a_{i-1}+2a_i-a_{i+1}$ and $\l\thal_i^\vee,\al_i\r=2+\delta_{i,\hf}$.
Hence \[1-\delta_{i,\hf}+a_{i-1}-a_{i+1}-1+\l\thal_i^\vee,\lambda+\alpha_i\r=2+2a_i-2a_{i+1}=\deg(\partial_{[a_{i+1},a_i+1]}).\]
Using similar computations, we can verify that the other three adjunction maps are compatible with grading.

Next, let us check the relations in $\wU$. 
By definition, the $2$-morphism generators which do not involve $i=\hf$ act in the same way as in \cite{KLIII}, 
hence all the relations involving only these generators are satisfied.
The computations for relations 
\eqref{cyc-x}--\eqref{qhalast}, \eqref{nodal}, \eqref{mixedup}, even with $i=\hf$ involved, are also entirely similar to loc.~cit.~, and we omit the details. 
The adjunction \eqref{adj} for $i=\hf$ follows from the fact that $\tilde\partial_{[1,a_i]}$ is a Frobenius form, see Example~\ref{ex:demazure}(c). 

To check the bubble relations and the bicross relations for $i=\hf$, let us write $a=a_\hf$, $b=a_{\hf+1}$.
By Example~\ref{ex:demazure}, we have
\begin{align}
\Gamma\left(\mathord{
\begin{tikzpicture}[baseline = 0]
  \draw[<-,thick,darkred] (0,0.4) to[out=180,in=90] (-.2,0.2);
  \draw[-,thick,darkred] (0.2,0.2) to[out=90,in=0] (0,.4);
 \draw[-,thick,darkred] (-.2,0.2) to[out=-90,in=180] (0,0);
  \draw[-,thick,darkred] (0,0) to[out=0,in=-90] (0.2,0.2);
   \node at (0.35,0.2) {$\scriptstyle{\lambda}$};
   \node at (0,0) {$\color{darkred}\bullet$};
   \node at (0,-.2) {$\color{darkred}\scriptstyle{s}$};
\end{tikzpicture}
}\right)&=2\sum_{p=0}^{a}(-1)^pe^\jmath_{p,[1,a]}h_{-\lambda_\hf+1-2p+s,[a+1,b]}
,\label{eq:bubble1}\\
\Gamma\left(\mathord{
\begin{tikzpicture}[baseline = 0]
  \draw[->,thick,darkred] (0,0.4) to[out=180,in=90] (-.2,0.2);
  \draw[-,thick,darkred] (0.2,0.2) to[out=90,in=0] (0,.4);
 \draw[-,thick,darkred] (-.2,0.2) to[out=-90,in=180] (0,0);
  \draw[-,thick,darkred] (0,0) to[out=0,in=-90] (0.2,0.2);
   \node at (0.35,0.2) {$\scriptstyle{\lambda}$};
   \node at (0,0) {$\color{darkred}\bullet$};
   \node at (0,-.2) {$\color{darkred}\scriptstyle{s}$};
\end{tikzpicture}
} \right)&=\sum_{p=0}^{b-a}(-1)^pe_{p,[a+1,b]}h^\jmath_{\frac{\lambda_\hf+2-p+s}{2},[1,a]}.\label{eq:bubble2}
\end{align}
Let $z$ be a formal variable. Then we have
\begin{align}
&\sum_{s\in\bbZ}\Gamma\left(\mathord{
\begin{tikzpicture}[baseline = 0]
  \draw[<-,thick,darkred] (0,0.4) to[out=180,in=90] (-.2,0.2);
  \draw[-,thick,darkred] (0.2,0.2) to[out=90,in=0] (0,.4);
 \draw[-,thick,darkred] (-.2,0.2) to[out=-90,in=180] (0,0);
  \draw[-,thick,darkred] (0,0) to[out=0,in=-90] (0.2,0.2);
   \node at (0.35,0.2) {$\scriptstyle{\lambda}$};
   \node at (0,0) {$\color{darkred}\bullet$};
   \node at (0,-.2) {$\color{darkred}\scriptstyle{\lambda_\hf-1+s}$};
\end{tikzpicture}
}\right)z^s
=2\sum_{s\in\bbZ}\sum_{p\geqslant 0}(-1)^pe^\jmath_{p,[1,a]}z^{2p}h_{s-2p,[a+1,b]}z^{s-2p}
=\frac{2\prod_{i=1}^{a}(1-t_i^2z^2)}{\prod_{i=a+1}^{b}(1-t_iz)},
\label{eq:buble1} \\
&\sum_{s\in\bbZ}\Gamma\left(\mathord{
\begin{tikzpicture}[baseline = 0]
  \draw[->,thick,darkred] (0,0.4) to[out=180,in=90] (-.2,0.2);
  \draw[-,thick,darkred] (0.2,0.2) to[out=90,in=0] (0,.4);
 \draw[-,thick,darkred] (-.2,0.2) to[out=-90,in=180] (0,0);
  \draw[-,thick,darkred] (0,0) to[out=0,in=-90] (0.2,0.2);
   \node at (0.35,0.2) {$\scriptstyle{\lambda}$};
   \node at (0,0) {$\color{darkred}\bullet$};
   \node at (0,-.2) {$\color{darkred}\scriptstyle{-\lambda_\hf-2+s}$};
\end{tikzpicture}
} \right)z^s
=\sum_{s\in\bbZ}\sum_{p\geqslant 0}(-1)^pe_{p,[a+1,b]}z^ph^\jmath_{\frac{s-p}{2},[1,a]}z^{s-p}
=\frac{\prod_{i=a+1}^{b}(1-t_iz)}{\prod_{i=1}^{a}(1-t_i^2z^2)}.
\label{eq:buble2} 
\end{align}
All bubble relations for $i=\hf$ now follow 
from that 
$$\text{LHS } \eqref{eq:buble1} \cdot \text{LHS } \eqref{eq:buble2} =\text{RHS } \eqref{eq:buble1} \cdot \text{RHS } \eqref{eq:buble2} =2.
$$

To check the bicross relations \eqref{j-bicross down}--\eqref{j-bicross up}, note that we have
\begin{align*}
\Gamma\left(
\mathord{
\begin{tikzpicture}[baseline = 0]
	\draw[->,thick,darkred] (0.28,-.3) to (-0.28,.4);
	\draw[<-,thick,darkred] (-0.28,-.3) to (0.28,.4);
\end{tikzpicture}
}
\right)
\cdot(1\otimes 1)&=
\Gamma\left(
\mathord{
\begin{tikzpicture}[baseline = 0]
	\draw[<-,thick,darkred] (0,-.3) to[out=90,in=180] (.2,.1) to[out=0,in=135] (.56,-.3);
	\draw[->,thick,darkred] (0.28,-.3) to[out=45,in=-45] (0.28,.4);
	\draw[<-,thick,darkred] (0.84,-.3) to (0.84,.4);
\end{tikzpicture}
}
\right)
\cdot\left(\sum_{s=0}^{b-a}1\otimes 1\otimes t_{a}^{b-a-s}\otimes (-1)^se_{s,[a+1,b]}\right) \\
&=\Gamma\left(
 \mathord{\begin{tikzpicture}[baseline = 0]
	\draw[<-,thick,darkred] (0,-.3) to[out=90,in=180] (.14,.1) to[out=0,in=90] (.28,-.3);
	\draw[->,thick,darkred] (0.56,-.3) to (0.56,.4);
	\draw[<-,thick,darkred] (0.84,-.3) to (0.84,.4);
\end{tikzpicture}}
\right)
\cdot\left(\sum_{s=0}^{b-a-1}\sum_{k=0}^{b-a-s-1}1\otimes t_{a+1}^{b-a-s-k-1}\otimes t_{a}^{k}\otimes (-1)^{s}e_{s,[a+1,b]}\right) \\
&=1\otimes 1,
\end{align*}
\begin{align*}
\Gamma\left(
 \mathord{
\begin{tikzpicture}[baseline = 0]
	\draw[<-,thick,darkred] (0.28,-.3) to (-0.28,.4);
	\draw[->,thick,darkred] (-0.28,-.3) to (0.28,.4);
\end{tikzpicture}
}
\right)
\cdot(1\otimes 1)
&= \Gamma\left(
 \mathord{
\begin{tikzpicture}[baseline = 0]
	\draw[<-,thick,darkred] (0,-.3) to[out=90,in=0] (-.2,.1) to[out=180,in=45] (-.56,-.3);
	\draw[->,thick,darkred] (-0.28,-.3) to[out=135,in=-135] (-0.28,.4);
	\draw[<-,thick,darkred] (-0.84,-.3) to (-0.84,.4);
\end{tikzpicture}}
\right)
 \cdot\left( \sum_{s=0}^a(-1)^{a-s}(t_{a+1}e^\jmath_{s,[1,a]}\otimes t_{a+1}^{2(a-s)}+e^\jmath_{s,[1,a]}\otimes t_{a+1}^{2(a-s)+1})\otimes 1\otimes 1\right)\\
&= \Gamma\left(
 \mathord{\begin{tikzpicture}[baseline = 0]
	\draw[<-,thick,darkred] (0,-.3) to[out=90,in=0] (-.14,.1) to[out=180,in=90] (-.28,-.3);
	\draw[->,thick,darkred] (-0.56,-.3) to (-0.56,.4);
	\draw[<-,thick,darkred] (-0.84,-.3) to (-0.84,.4);
\end{tikzpicture}}
\right)
 \cdot\left(\sum_{s=0}^a (-1)^{a-s-1}
\Big(\sum_{k=0}^{2(a-s)-1} t_{a+1}e^\jmath_{s,[1,a]}\otimes t_{a+1}^{2(a-s)-k-1}\otimes t_a^k
\right. 
\\
&\qquad \qquad \qquad \qquad \qquad \qquad \qquad  
\left. 
+\sum_{k=0}^{2(a-s)} e^\jmath_{s,[1,a]}\otimes t_{a+1}^{2(a-s)-k}\otimes t_a^k \Big)\otimes 1  \right)\\
&=
-t_{a+1}\otimes 1-1\otimes t_{a+1}\label{eq:1}.
\end{align*}
The last equality follows by noting that only the terms with $k=2a-1,s=0$ contribute.  
This shows that the relations \eqref{j-bicross down} and \eqref{j-bicross up} are correct on $1\otimes 1$.  
On the other hand, a direct diagram computation shows that both sides of each of these relations have the same commutator 
with multiplication by $x\otimes 1$ or $1\otimes x$, so the relations are correct on all elements of the form
$(x^p\otimes x^q)\cdot (1\otimes 1)=t_{a+1}^p\otimes t_{a+1}^q$.  

This completes the verification of
all relations of generating $2$-morphisms under $\Gamma$, and so $\Gamma$ is well defined on $\wU$.

Finally, by Proposition~\ref{prop:iFl-Grothendieck} and  \eqref{eq:ga}, the $\jmath$-Serre relations hold in the
Grothendieck group, so Proposition \ref{df-prop} implies
that this representation of $\wU$ factors through $\frakU^\jmath$. We are done.
\end{proof}

\begin{rk}
A cyclic version of $\frakU$ was introduced in \cite{BHLW}. 
There is also an analogue of the cyclic version for $\frakU^\jmath$, and let us denote it by $\frakU^{\jmath,\cyc}$. 
In the definition of $\Gamma$, if we use adjunctions defined by Demazure operators $\partial_{w_{I,I'}}$ in Example~\ref{ex:demazure} instead of $\partial_{[a,b]}$, 
then the same formulas define a $2$-functor $\frakU^{\jmath,\cyc}\to\frakF_{r,m}$;   we will not use this version in this paper.
\end{rk}

\subsection{Decategorification}

A $2$-\emph{representation} of a $2$-category $\frakC$ is a $2$-functor from $\frakC$ to the 
2-category of $\bfk$-linear categories. That is to each object of $\frakC$, we associate a graded $\bfk$-linear category, to each $1$-morphisms a functor between corresponding categories, and $2$-morphisms are sent to natural transformation of functors.
The $2$-category $\frakF_{r,m}$ has a $2$-representation given by
sending $\bfa$ to  $R^\bfa\proj$ for $\bfa\in\Sigma_{r,m}$ and the
$1$-morphisms sent to the functors 
$$\scrE_i=R^{\bfa^{+i}}\otimes_{R^\bfa}-: R^\bfa\proj\longrightarrow  R^{{}_{+i}\bfa}\proj,\qquad
\scrF_i=R^{\bfa^{-i}}\otimes_{R^\bfa}-: R^\bfa\proj\longrightarrow  R^{{}_{-i}\bfa}\proj.$$
Combining it with $\Gamma$ defines a $2$-representation of $\frakU^\jmath$ on $\bigoplus_{\bfa\in\Sigma_{r,m}}R^\bfa\proj$.

Since  $R^\bfa$ is a polynomial ring, all projective modules
over it are free, so $K_0(R^\bfa\proj)\cong \mathcal{A}$ for all
$\bfa$.     We will identify this with the constant $
\mathcal{A}$-valued functions on the set of the $\mathbb{F}_{\bfq^{2}}$ points of  
$\G/\P_{\bfa}$; these have a natural action of the $\jmath$Schur algebra $\mathbf{S}^\jmath_{r,m}$ 
via convolution (and hence of $\bfU^\jmath$ by the pullback of $\gamma$ \eqref{eq:ga}).  Let ${}_\calA\bfL(m)$ denote
$\oplus_{\bfa\in \Sigma_{r,m}}K_0(R^\bfa\proj)$, which we have identified with locally
constant $\mathcal{A}$-valued functions on $\cup_{\bfa}\G/\P_{\bfa}$.

Under the duality of \cite{BKLW}, if we identify Schubert constant
functions on $\cup_{\bfa}\G/\P_{\bfa}$ as the tensor product
$\bfV^{\otimes m}$, then this space has mutually commuting
actions of $\bfUj$ and the Hecke algebra $\bfH_W$.  The subspace
${}_\calA\bfL(m)\subset \bfV^{\otimes m}$ is precisely the
invariants of $\bfH_W$; if we set $q=1$, then
$\bfL(m)|_{q=1}=\operatorname{Sym}^m(\bfL(1))$.

\section{Categorification and canonical bases}
 \label{sec:Schur2}
 
 In this section we will show that a slightly extended version of the
 $2$-functor $\Gamma: \frakU^\jmath  \longrightarrow  \frakF_{r,m}$ is
 locally full. This leads to the completion of the proof of the main
 theorem that the $2$-category $\dot\frakU^\jmath$ categorifies
 ${}_\calA\bfU^\jmath$. Moreover, the natural projection from
 ${}_\calA\bfU^\jmath$ to the $\jmath$Schur algebra sends canonical
 basis elements to canonical basis elements or to 0. 

\subsection{Local fullness of $\Gamma$}
\label{sec:local-fullness}

Following the idea of \cite[Theorem~9, Proof \#2]{Webcomparison}, we prove the theorem below.

\begin{thm}
  \label{thm:full1}
If $r\geqslant m$,
the $2$-functor $\Gamma: \frakU^{\jmath}_r  \longrightarrow
\frakF_{r,m}$ is locally full.
\end{thm}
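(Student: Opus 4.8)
The plan is to reduce local fullness to a statement about the positive part, by the same filtration argument used in Section~\ref{sec:Grothendieck}, and then to import the known faithfulness/fullness results for the KLR $2$-category from \cite{KLIII, Webcomparison}. First I would fix $\bfa,\bfb\in\Sigma_{r,m}$ and consider the Hom space $\frakUj(\lambda(\bfa),\lambda(\bfb))$; by Proposition~\ref{prop:gammasuj} together with Corollary~\ref{lem:U+Uj2}, this space is spanned over $\bfk$ (after passing to $\dot\frakUj$) by the images of the monomial morphisms $\calE_\bfi 1_{\lambda(\bfa)}$, so it suffices to show that $\Gamma$ is surjective on $2$-morphisms $\calE_\bfi 1_{\lambda(\bfa)}\to\calE_\bfj 1_{\lambda(\bfa)}$ for all sequences $\bfi,\bfj$ of the appropriate total weight. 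Equivalently, writing $\calE_\bfm=\bigoplus_{|\bfi|=m}\calE_\bfi 1_{\lambda(\bfa)}$, I want $\Gamma\colon\End_{\frakUj}(\calE_\bfm)\to\End_{\frakF_{r,m}}(\Gamma(\calE_\bfm))$ to be surjective for each $m$.

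The key structural input is the width filtration on $\calC=\bigoplus_\mu\dot\frakUj(\lambda(\bfa),\mu)$ introduced before Proposition~\ref{prop:gammasuj}: the associated graded $\gr^m\calC$ receives a full functor $\zeta\colon\calC^+\otimes\Pi_{\lambda(\bfa)}\to\gr^m\calC$ from the KLR category of type $A_{2r}$ tensored with a polynomial ring, and by the surjectivity argument in the proof of that proposition, $\End_\calC(\calE_\bfm)$ is spanned modulo lower-width terms by $\zeta$ of diagrams with no cups or caps. On the geometric side, the analogous filtration of $\frakF_{r,m}$ by support of $\G$-orbits (or, what amounts to the same, the natural filtration on singular Soergel bimodules by the Bruhat order) has associated graded described by the ``thick calculus'' of Soergel bimodules, which in type $A$-restricted-to-a-parabolic is exactly the image of the KLR $2$-category under $\Gamma^\fraka$; here the hypothesis $r\geqslant m$ is crucial, since it guarantees every subset of $\S$ arises as some $I_\bfa$, so that the geometry really is that of a type $A_{2r}$ flag variety with sufficiently many steps and no ``boundary'' degeneration interferes. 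Thus I would set up a commutative square of $2$-functors: $\calC^+\otimes\Pi\xrightarrow{\zeta}\gr\calC$, $\calC^+\otimes\Pi\xrightarrow{\Gamma^\fraka}\gr\frakF_{r,m}$, and $\gr\calC\xrightarrow{\gr\Gamma}\gr\frakF_{r,m}$, and observe $\gr\Gamma\circ\zeta=\Gamma^\fraka$ on the nose from the explicit formulas in Theorem~\ref{thm:2repflagj} (the Demazure operators $\partial_{[a,b]}$ are precisely the ones Khovanov--Lauda use).

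Now the surjectivity of $\gr\Gamma$ follows: $\Gamma^\fraka$ is full onto the KLR-generated part of $\gr\frakF_{r,m}$ by \cite[Theorem~9]{Webcomparison} (this is the ``small modification'' alluded to in the introduction, allowing the endomorphism algebra of partial flag varieties to be matched with the extended KLR quotient), $\zeta$ is full by the lemma preceding Proposition~\ref{prop:gammasuj}, and $\gr\frakF_{r,m}$ is spanned in each width by the KLR-type diagrams by the parity-sheaf decomposition of Proposition~\ref{prop:iFl-Grothendieck} combined with the support filtration. Hence $\gr\Gamma$ is surjective in every width, and a downward induction on width (the base case $m=0$ being the isomorphism $\bfk\cong\End(1_\bfa)$ on both sides, as $R^\bfa$ is a polynomial ring) lifts this to surjectivity of $\Gamma$ on $\End_\calC(\calE_\bfm)$ itself: given $\phi\in\End_{\frakF_{r,m}}(\Gamma(\calE_\bfm))$, choose a preimage in $\gr^m$, lift it to a morphism of width $\leqslant m$ in $\calC$, subtract, and recurse on the width-$<m$ remainder. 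Since every Hom space in $\frakF_{r,m}$ is exhausted this way, $\Gamma$ is locally full.

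\textbf{Main obstacle.} The delicate point I expect to consume the most effort is verifying that the associated graded of $\frakF_{r,m}$ with respect to the support/width filtration is genuinely identified with (a quotient of) the KLR $2$-category for type $A_{2r}$ \emph{with the same $t_{ij}$ and sign conventions} that $\Gamma$ uses, so that the square above truly commutes and $\Gamma^\fraka$ can be invoked as a black box. This requires matching the Demazure-operator adjunctions of Theorem~\ref{thm:2repflagj} against Khovanov--Lauda's normalization, tracking the powers of $2$ and the signs $(-1)^d$ from the functor $\zeta$, and confirming that the nodal and bicross relations \eqref{nodal}, \eqref{bicross down}--\eqref{bicross up} degenerate correctly; the $i=\hf$ strand plays no role in $\gr\calC$ since its relations \eqref{j-bicross down}--\eqref{j-bicross up} and \eqref{Pi=1} all become the ordinary quiver-Hecke relations modulo lower width, exactly as exploited in the proof of the $\zeta$-lemma, so the half-integer index is not actually an obstruction here — it is the bookkeeping of the type-$A$ comparison that is the real work.
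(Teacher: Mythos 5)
Your proposal takes a genuinely different route from the paper, and it contains a gap that is not bookkeeping but the actual mathematical content of the theorem. The width-filtration machinery of Section~\ref{sec:Grothendieck} (the functor $\zeta\colon\calC^+\otimes\Pi_\lambda\to\gr\calC$) controls the \emph{source} category $\dot\frakUj$; it says nothing about the target. Your argument hinges on identifying the associated graded of $\frakF_{r,m}$, with respect to a support/Bruhat filtration, with the image of the type $A_{2r}$ KLR $2$-category, so that fullness of $\Gamma^\fraka$ can be quoted as a black box. But $\frakF_{r,m}$ is built from singular Soergel bimodules over the Weyl group of type $\B_m$, and its morphism spaces are governed by type $\B$ Soergel calculus, not type $\A$. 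Concretely, by Proposition~\ref{prop:EW-generate} the morphisms between Bott--Samelson bimodules are locally generated by polynomials, Frobenius structure maps, and the degree-zero $2m_{st}$-valent vertices $\texttt{b}$; for the pair $s_0,s_1$ one has $m_{s_0s_1}=4$, and the corresponding $8$-valent vertex is a type-$\B_2$ dihedral generator with no type $\A$ counterpart. No amount of passing to associated gradeds makes it disappear, and your commutative square gives no mechanism for producing it. Producing exactly this morphism is where the paper spends its effort: it applies Elias's dihedral formula to the Frobenius square $R\supset R^{s_0},R^{s_1}\supset R^{s_0,s_1}$ and realizes the needed isomorphism $\calE_i\calE_j\calE_i\cong\calE_i^{(2)}\calE_j$ inside $\dot\frakUj$ via the quiver Hecke relation, yielding the explicit diagrams \eqref{eq:sp4}. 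Relatedly, your identification "$\gr\Gamma\circ\zeta=\Gamma^\fraka$" cannot hold on the nose: the adjunction for $i=\hf$ in Theorem~\ref{thm:2repflagj} uses the type-$\B$ Demazure operator $\tilde\partial_{[1,a]}$ (built from the reflection $t_a\mapsto -t_a$), which is not one of Khovanov--Lauda's type-$\A$ operators, and the bubbles at $i=\hf$ generate $\bfk[t_1^2]$ rather than $\bfk[t_1]$ except in the regular block.

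Two further structural differences from the paper's argument are worth noting. First, the paper does not filter the target at all: it reduces to the regular block $\bfa=\bfb=\bfo$ (where $I_\bfo=\emptyset$, available precisely because $r\geqslant m$) and then reduces the general singular case to it by exhibiting a $1$-morphism $P$ with $\Gamma(P)={}_\bfa R$ and $1_{\lambda(\bfa)}$ a degree-zero summand of $PQ$; this biadjoint splitting argument is entirely absent from your proposal, and it is the actual place where $r\geqslant m$ does its work (your remark that $r\geqslant m$ rules out "boundary degeneration" gestures at the right fact but is not used). Second, your opening reduction cites Proposition~\ref{prop:gammasuj} and Corollary~\ref{lem:U+Uj2}, which are statements about $K_0$, i.e.\ about $1$-morphisms up to isomorphism; they do not give a spanning set of $2$-morphisms. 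The correct (and standard) reduction to $\End(\bigoplus_{|\bfi|=m}\calE_\bfi1_{\lambda(\bfa)})$ is fine, but it needs no $K_0$ input.
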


\begin{proof}
We must prove that for any $\lambda$, $\mu\in\X$, the functor
$$\calH{om}_{\frakU^{\jmath}}(\lambda,\mu)\longrightarrow  \calH{om}_{\frakF_{r,m}}(\Gamma(\lambda),\Gamma(\mu))$$ is full, that is,
it is surjective on morphisms.
We may assume  $\lambda=\lambda(\bfa)$, $\mu=\lambda(\bfb)$ for some $\bfa$, $\bfb\in\Sigma_{r,m}$, otherwise the statement is trivial.
By definition, the right hand side is a full subcategory of the category of $R^\bfb\bgmod R^\bfa$. So it is enough to prove the following (here we recall
notation $\Hom^\bullet$ from Section~\ref{sec:notation}).

\smallskip
\noindent\textbf{Claim ($\star$).} 
For any $M,N\in\calH{om}_{\frakU^{\jmath}}(\lambda,\mu)$ monomials in $\calE_i$, $\calF_i$, the map 
$$\Gamma_{M,N}: \Hom^\bullet_{\frakU^{\jmath}}(M,N)\longrightarrow  \Hom^\bullet_{R^\bfb\bgmod R^\bfa}(\Gamma(M),\Gamma(N))$$
induced by $\Gamma$ is surjective. 

The proof will be carried out in two steps. Set
\begin{equation}
\bfo=(0,1,2,\ldots,m-1,m,m,\ldots)\in\Sigma_{r,m}. 
\end{equation}

\medskip
\noindent\emph{Step 1. The case $\bfa=\bfb=\bfo$.}
\smallskip

In this case, we are considering the category of (nonsingular) Soergel bimodules, since $I_\bfo=\emptyset$ and $R^\bfo=R$.  Let 
\[
\calB_{s_a}=\Gamma(\calF_{\hf+a}\calE_{\hf+a})1_\bfo=R\otimes_{R^{s_a}}R\{-1\}\text{ for }0\leqslant a \leqslant m-1.
\] 
Such bimodules are called
{\it Bott-Samelson bimodules}.
They are generators for the 1-morphisms $\bfo\to \bfo$, since they generate the tensor category of Soergel bimodules.  

Next, recall the $2$-category $\bfS\dot \frakUj$ from Remark \ref{rk:SUj}, by definition the $2$-functor $\dot \frakUj\to \bfS\dot \frakUj$ is full, and $\Gamma$ factorises through this quotient. Hence it is enough to show ($\star$) for $1$-morphisms $M$, $N$ in $\bfS\dot \frakUj$. But $K_0(\bfS\dot \frakUj(\lambda(\bfo),\lambda(\bfo)))$ is the same as $\bfS_W(\bfo,\bfo)$ by \eqref{eq:SUj}. Since the classes $[\calB_{s_a}]$ generate the algebra $\bfS_W(\bfo,\bfo)$, the $1$-morphisms $\calF_{\hf+a}\calE_{\hf+a}$ for $0\leqslant a\leqslant m-1$ also generate the 1-morphisms in $\bfS\dot \frakUj(\lambda(\bfo),\lambda(\bfo))$.  
Thus it suffices to show
($\star$) for all $M$, $N$
that are tensor products of $\calF_{\hf+a}\calE_{\hf+a}$'s.

The category of Bott-Samelson bimodules has a description due to
Elias and Williamson \cite{EW}, using Soergel calculus, via a set of
generators and relations for homomorphisms between them.  
We will not need the full power of this presentation, just that it
gives us a small set of generators for all morphisms; this was shown earlier by Libedinsky using his {\it light leaf basis} \cite{Lib}.  
Given a monoidal category $\calC$ and a collection $C$ of objects which is closed under tensor
product, we say that morphisms between these objects are {\it locally
  generated} by a finite collection $F$ of morphisms if there is no proper
subset of morphisms between the objects in $C$ which contains $F$, and the
identity on each object, and is closed under composition and tensor product.

\begin{prop}[\mbox{\cite[5.1]{Lib}, \cite[6.28]{EW}}]\label{prop:EW-generate}
  The morphisms between Bott-Samelson bimodules are
  locally generated by (for $s,t \in S$):
  
    \begin{enumerate}
  \item all polynomial multiplications on the left and right;
  \item the unit, counit, multiplication and comultiplication for the
    Frobenius extension $R\supset R^s$; 
\item the unique nonzero (up to scalar) degree 0 morphism 
$$
{\rm \texttt{b}} \colon \underbrace{\calB_s\otimes_R \calB_t\otimes_R \calB_s \cdots}_{m_{st}} \longrightarrow 
   \underbrace{\calB_t\otimes_R \calB_s\otimes_R \calB_t \cdots}_{m_{st}}
   $$ 
   where    $m_{st}$ denotes the order of $st$ in the Weyl group.
  \end{enumerate}
\end{prop}

Thus, in order to complete Step 1, we must find 2-morphisms in
$\dot\frakU^\jmath$ whose images are the morphisms  listed in Proposition~\ref{prop:EW-generate}.  Let us consider these in turn:
\begin{enumerate}
\item By the equations \eqref{eq:bubble1}--\eqref{eq:bubble2}, we see
  that the degree 1 bubbles with label $\diamond$ give $\pm t_1$.
  Those with label $a+\diamond$ give $\pm t_a \mp t_{a+1}$ by
  \cite[(6.23)]{KLIII}, for $1 \le a \le m-1$.  Together, these generate all elements of $R$
  acting on the identity 1-morphism of $\bfo$.  
 
\item The adjoints in $\frakU^\jmath$ are sent under $\Gamma$ to the adjoints
  induced by the standard Frobenius structure for $R\supset R^s$ by the definitions
  \eqref{label:ep'-Gamma}--\eqref{label:eta-Gamma}.
\item Fix $i,j\in \bbI^\jmath$ and let $a=i-\hf$ and $b=j-\hf$. Let $s=s_a$ and $t=s_b$.  
 If $|i-j| >1$, then we simply need an isomorphism $\calB_{s_a}\calB_{s_b}\cong \calB_{s_b}\calB_{s_a}$, 
which is supplied by the following mutual inverse diagrams:
\begin{equation}
B=\mathord{
		\begin{tikzpicture}[baseline=0,scale=0.5]
			\draw[->,thick,darkred] (-2,1) -- node [below, at end]{$j$} (1, -1);
			\draw[->,thick,darkred] (1, 1) --  node [below, at end]{$i$} (-2, -1);
			\draw[->,thick, darkred] (-1, -1) --node [below, at start]{$i$} (2,1);
\draw[->,thick, darkred] (2, -1) --node [below, at start]{$j$} (-1,1);
		\end{tikzpicture}
					}\qquad C= \mathord{
		\begin{tikzpicture}[baseline=0,scale=0.5]
			\draw[->,thick,darkred] (-2,1) -- node [below, at end]{$i$} (1, -1);
			\draw[->,thick,darkred] (1, 1) --  node [below, at end]{$j$} (-2, -1);
			\draw[->,thick, darkred] (-1, -1) --node [below, at start]{$j$} (2,1);
\draw[->,thick, darkred] (2, -1) --node [below, at start]{$i$} (-1,1);
		\end{tikzpicture}
					}\label{eq:sl2xsl2}
                                      \end{equation}
If $j=i+1$ with $i>\hf$, then $\texttt{b}$ (up to a scalar multiple) is given by the
diagrams
\begin{equation}
  \label{eq:sl3}
   B=\mathord{   \begin{tikzpicture}[baseline=0,thick,scale=0.15, xscale=2, darkred]
      \draw[<-] (-5,-5) to[out=90,in=180] (0,1) to[out=0,in=90] (5,-5);
      \draw[->] (-3,-5) to[out=90,in=-90] (1,1) to (1,5);
      \draw[<-] (3,-5) to[out=90,in=-90] (-1,1) to (-1,5);
\draw[<-] (-1,-5) to[out=90,in=-90] (-5,5);
\draw[->] (1,-5) to[out=90,in=-90] (5,5);
\draw[->] (3,5) to[out=-90,in=0] (0,2) to[out=180,in=-90] (-3,5);
      \node at (-3,-7) { $i$};
     \node at (1,-7) { $j$};
     \node at (5,-7) { $i$};
  \node at (-5,-7) { $i$};
     \node at (-1,-7) { $j$};
     \node at (3,-7) { $i$};
     \node at (-3,7) { $j$};
     \node at (1,7) { $i$};
     \node at (5,7) { $j$};
  \node at (-5,7) { $j$};
     \node at (-1,7) { $i$};
     \node at (3,7) { $j$};
    \end{tikzpicture} }\quad \quad 
     C=\mathord{  \begin{tikzpicture}[baseline=0,thick,scale=-0.15,xscale=2, darkred]
      \draw[<-] (-5,-5) to[out=90,in=180] (0,1) to[out=0,in=90] (5,-5);
      \draw[->] (-3,-5) to[out=90,in=-90] (1,1) to (1,5);
      \draw[<-] (3,-5) to[out=90,in=-90] (-1,1) to (-1,5);
\draw[<-] (-1,-5) to[out=90,in=-90] (-5,5);
\draw[->] (1,-5) to[out=90,in=-90] (5,5);
\draw[->] (3,5) to[out=-90,in=0] (0,2) to[out=180,in=-90] (-3,5);
      \node at (3,7) { $j$};
     \node at (-1,7) { $i$};
     \node at (-5,7) { $j$};
      \node at (5,7) { $j$};
     \node at (1,7) { $i$};
     \node at (-3,7) { $j$};
      \node at (3,-7) { $i$};
     \node at (-1,-7) { $j$};
     \node at (-5,-7) { $i$};
      \node at (5,-7) { $i$};
     \node at (1,-7) { $j$};
     \node at (-3,-7) { $i$};
    \end{tikzpicture}  }
\end{equation}
If $i=\hf$ and $j=\hf+1$, then $\texttt{b}$ (up to a scalar multiple)  is given by the
diagrams
\begin{equation}
  \label{eq:sp4}
       B=\mathord{      \begin{tikzpicture}[baseline=0,thick,scale=0.15, xscale=2, darkred]
      \draw[<-] (-7,-5) to[out=90,in=-90] (-7,-3) to[out=90,in=-90]  (5,5);
      \draw[->] (-5,-5) to[out=90,in=-90] (7,3) to[out=90,in=-90]
      (7,5);
      \draw[<-] (1,-5) to[out=90,in=-90]  (-3,5);
      \draw[->] (3,-5) to[out=90,in=-90] (-1,5);
\draw[<-] (-3,-5) to[out=90,in=-90] (-7,5);
\draw[->] (7,-5) to[out=90,in=-90] (3,5);
\draw[->] (1,5) to[out=-90,in=0] (-2,2) to[out=180,in=-90] (-5,5);
\draw[<-] (5,-5) to[out=90,in=0] (2,-2) to[out=180,in=90] (-1,-5);
      \node at (-3,-7) { $j$};
     \node at (1,-7) { $i$};
     \node at (5,-7) { $j$};
  \node at (-5,-7) { $i$};
     \node at (7,-7) { $j$};
  \node at (-7,-7) { $i$};
     \node at (-1,-7) { $j$};
     \node at (3,-7) { $i$};
     \node at (-3,7) { $i$};
     \node at (1,7) { $j$};
     \node at (5,7) { $i$};
  \node at (-5,7) { $j$};
     \node at (-1,7) { $i$};
     \node at (3,7) { $j$};
     \node at (7,7) { $i$};
  \node at (-7,7) { $j$};
    \end{tikzpicture} }\quad \quad 
       C=\mathord{ \begin{tikzpicture}[baseline=0,thick,scale=0.15, xscale=-2, darkred]
      \draw[->] (-7,-5) to[out=90,in=-90] (-7,-3) to[out=90,in=-90]  (5,5);
      \draw[<-] (-5,-5) to[out=90,in=-90] (7,3) to[out=90,in=-90]
      (7,5);
      \draw[->] (1,-5) to[out=90,in=-90]  (-3,5);
      \draw[<-] (3,-5) to[out=90,in=-90] (-1,5);
\draw[->] (-3,-5) to[out=90,in=-90] (-7,5);
\draw[<-] (7,-5) to[out=90,in=-90] (3,5);
\draw[<-] (1,5) to[out=-90,in=0] (-2,2) to[out=180,in=-90] (-5,5);
\draw[->] (5,-5) to[out=90,in=0] (2,-2) to[out=180,in=90] (-1,-5);
      \node at (-3,-7) { $j$};
     \node at (1,-7) { $i$};
     \node at (5,-7) { $j$};
  \node at (-5,-7) { $i$};
     \node at (7,-7) { $j$};
  \node at (-7,-7) { $i$};
     \node at (-1,-7) { $j$};
     \node at (3,-7) { $i$};
     \node at (-3,7) { $i$};
     \node at (1,7) { $j$};
     \node at (5,7) { $i$};
  \node at (-5,7) { $j$};
     \node at (-1,7) { $i$};
     \node at (3,7) { $j$};
     \node at (7,7) { $i$};
  \node at (-7,7) { $j$};
    \end{tikzpicture}}
\end{equation}
\end{enumerate}

The confirmation that the morphisms of \eqref{eq:sl2xsl2} act correctly
is straightforward from the definition.  The case of \eqref{eq:sl3}
is confirmed by \cite[6.8]{MSV}.  Of course, we could verify
\eqref{eq:sp4} by direct calculation, but in fact, we have arrived at
this formula systematically by applying the results of  Elias
\cite{Elias} on Soergel bimodules for dihedral groups.  This 
gives a formula \cite[6.4]{Elias} for writing the degree 0 morphism in terms of the Frobenius square $R\supset
R^s,R^t\supset R^{s,t}$.  This square gives four Frobenius extensions:
\begin{equation}
A=R,A'=R^s;\quad A=R,A'=R^t; \quad A=R^s,A'=R^{s,t}; \quad
A=R^t,A'=R^{s,t}. \label{eq:Frob-pair}
\end{equation} 
Elias's formula allows us to write the desired
morphism in terms of:  
\begin{enumerate}
\renewcommand{\theenumi}{\roman{enumi}}
  \item all polynomial multiplications on the left and right;
  \item the unit, counit, multiplication and comultiplication for the
    Frobenius extensions $A\supset A'$ listed in \eqref{eq:Frob-pair}; 
\item the obvious isomorphism of $(R^{s,t}, R)$-bimodules
  \begin{equation}
  {}_{R^{s,t}}R^{s,t}\otimes_{R^{s,t}}R^s\otimes_{R^s}R_{R}\cong {}_{R^{s,t}}R_{R}\cong
   {}_{R^{s,t}}R^{s,t}\otimes_{R^{s,t}}R^t\otimes_{R^{t}}R_{R}.\label{eq:obvious}
\end{equation}
  \end{enumerate}
  Here and below, the left and right subscripts indicate the bimodule structure. For example, ${}_{R^{s,t}}R_{R}$ stands for $R$ viewed as an $(R^{s,t}, R)$-bimodule.
In order to describe these 2-morphisms in terms of  the category $\frakU^\jmath$, we need to consider some auxiliary objects:
\begin{itemize}
\item $\bfo_a=(0,1,\dots, a-1,a+1,a+1,a+2,a+3\dots)$ satisfies $I_{\bfo_a}=\{s_a\}$
  and $ R^{\bfo_a}=R^{s_a}$.  
\item 
  $\bfo_{a,a+1}=(0,1,\dots, a-1,a+2,a+2,a+2,a+3,\dots)$ satisfies
  $I_{\bfo_a}=\{s_a,s_{a+1}\}$ and $ R^{\bfo_a}=R^{s_a,s_{a+1}}$.
\end{itemize}
Furthermore, recalling $a=i-\hf$ and $b=j-\hf$, we have bimodule isomorphisms (ignoring
grading shifts):
\[
1_{\bfo_a}\calE_{i}1_\bfo\cong {}_{R^{s_a}}R_R, \qquad 1_\bfo\calF_{i}1_{\bfo_a}\cong {}_RR{}_{R^{s_a}},
\]
\[ 1_{\bfo_{a,b}}\calE_{i}\calE_{j}1_{\bfo_a}\cong {}_{R^{s_a,s_{b}}}R^{s_a}{}_{R^{s_a}},
\qquad
 1_{\bfo_{a,b}}\calE_{i}^{(2)}1_{\bfo_{b}}\cong {}_{R^{s_a,s_{b}}}R^{s_{b}}{}_{R^{s_{b}}},
 \]
\[ 1_{\bfo_{a}}\calF_{j}\calF_{i}1_{\bfo_{a,b}}\cong {}_{R^{s_a}}R^{s_a}{}_{R^{s_a,s_{b}}},
\qquad
 1_{\bfo_{b}}\calF_{i}^{(2)}1_{\bfo_{a,b}}\cong {}_{R^{s_{b}}}R^{s_{b}}{}_{R^{s_a,s_{b}}}.
\]

In order to apply Elias's formula, we must find a 2-morphism which induces an isomorphism 
  $\calE_{i}\calE_{j}\calE_{i} 1_{\bfo}\cong
  \calE_{i}^{(2)}\calE_{j} 1_{\bfo}$, since this will give
  (iii).  
  Note that $\calE_{j} \calE_{i}^{(2)}1_{\bfo}=0$, since the flag
  variety corresponding to $(0,1,\dots, a+2,a+1,a+2,\dots)$ is empty.  The equation \eqref{now} shows that the 2-morphisms
\[
	B = -\mathord{
		\begin{tikzpicture}[baseline=0,xscale=1.3,yscale=0.6]
			\draw[<-,thick,darkred] (-1,1) -- node[below,at end,scale=1]{$i$} (1, -1);
			\draw[<-,thick,darkred] (0, 1) --node[pos=0.175] {$\color{darkred}\bullet$} node[below,at end,scale=1]{$i$} (-1, -1);
			\draw[->,thick, darkred] (0, -1) -- node[below,at start,scale=1]{$j$} (1,1);
		\end{tikzpicture}
					},	
\qquad
 C = \mathord{
			\begin{tikzpicture}[baseline=0,xscale=1.3,yscale=0.6]
				\draw[<-,thick,darkred] (-1,1) --  node[below,at end,scale=1]{$i$}  (0, -1);
				\draw[->,thick,darkred] (1, -1) -- node[below,at start,scale=1]{$j$} (0, 1);
				\draw[<-,thick, darkred] (1, 1) --  node[below,at end,scale=1]{$i$}  (-1,-1);
			\end{tikzpicture}
				},
\]
are inverse to each other and thus give the desired
isomorphism. Substituting these into \cite[Notation 6.4]{Elias} gives the
equations \eqref{eq:sl3} and \eqref{eq:sp4}. Hence Claim~($\star$) in the case of Step 1 is proved. 

\medskip
\noindent\emph{Step 2. The general case.}
\smallskip

Let us first assume $\bfb=\bfo$ and $\bfa$ is arbitrary. First, assume
that we have found an object $P\in\frakUj(\lambda(\bfo),\lambda(\bfa))$ such that $\Gamma(P)={}_{\bfa}R$ and such that for 
$Q\in\frakUj(\lambda(\bfa),\lambda(\bfo))$ the adjoint of $P$, the product $PQ$ has $1_{\lambda(\bfa)}$ as a direct factor  in degree zero. 
Let $\eta:1_{\lambda(\bfa)}\hookrightarrow PQ$ be a split embedding.
Then $\Gamma$ sends it to a split embedding $R^\bfa\cong
\Gamma(1_{\lambda(\bfa)})\to R\cong \Gamma(PQ)$, with the latter viewed as a $(R^\bfa,R^\bfa)$-bimodule. We have a commutative diagram
\[\xymatrix{
\Hom_{\frakUj}(MP,NP)\ar[d]^{\Gamma}\ar@{=}[r]
&\Hom_{\frakUj}(MPQ,N)\ar[d]^{\Gamma}\ar[r]^{(*)}
&\Hom_{\frakUj}(M,N)\ar[d]_{\Gamma} \\
\Hom_{\frakF_{r,m}}(\Gamma(MP),\Gamma(NP))\ar@{=}[r]
&\Hom_{\frakF_{r,m}}(\Gamma(M)\otimes_{R^\bfa}R,\Gamma(N))\ar[r]^{\quad\Gamma(*)}
&\Hom_{\frakF_{r,m}}(\Gamma(M),\Gamma(N)).}\]
Here $(*)$ is the transpose of the map $M\eta: M\to MPQ$. It is surjective, so is its image by $\Gamma$. 
Since $MP$ and $NP$ belong to $\frakUj(\bfo,\bfo)$, Step 1 implies
that the leftmost vertical map is surjective.  Hence, the rightmost
vertical map  is surjective too. 
The case for arbitrary $\bfb$ follows from the same argument by considering $\Hom_{\frakUj}(QMP, QNP)$ on the left top corner for the $Q$ defined for $\bfb$.

It remains to find $P$.  
Without loss of generality, we may assume 
\begin{equation}\label{eq:adom}
a_\hf<a_{\hf+1}<\cdots<a_{\hf+s}=a_{\hf+s+1}=\cdots=m,\qquad\text{for some }p.\end{equation}
Indeed, if there is any index $p$ such that
$a_{\hf+p-1} =a_{\hf+p}<a_{\hf+p+1}$, then 
let $\bfa'$ be the sequence with $a'_{\hf+p}=a_{\hf+p+1}$ and $a'_i=a_i$ for $i\neq \hf+p+1$.
We have a split surjection $\calF^{(a_{\hf+p+1}-a_{\hf+p})}_{\hf+p} 1_{\lambda(\bfa')} \, \calE^{(a_{\hf+p+1}-a_{\hf+p})}_{\hf+p} 1_{\lambda(\bfa)} \twoheadrightarrow 1_{\lambda(\bfa)}$, and hence we get the following commutative diagram 
\[
\xymatrix{
\Hom_{\frakUj}(M\calF^{(a_{\hf+p+1}-a_{\hf+p})}_{\hf+p}  ,N\calF^{(a_{\hf+p+1}-a_{\hf+p})}_{\hf+p}  )\ar[d]^{\Gamma}\ar@{->>}[r]
&\Hom_{\frakUj}(M,N)\ar[d]_{\Gamma} \\
\Hom_{\frakF_{r,m}}(\Gamma(M\calF^{(a_{\hf+p+1}-a_{\hf+p})}_{\hf+p} ),\Gamma(N\calF^{(a_{\hf+p+1}-a_{\hf+p})}_{\hf+p} ))\ar@{->>}[r]
&\Hom_{\frakF_{r,m}}(\Gamma(M),\Gamma(N)).}
\]
The surjectivity of the left vertical arrow implies the surjectivity of the right one.
Hence, up to replacing $\bfa$ by $\bfa'$ inductively, we are reduced to consider the sequence of
the form \eqref{eq:adom}. 

%

For such $\bfa$, we have $k\leqslant a_{\hf+k}$ for all $k$. We choose a sequence $\bfi=(i_1,i_2,...,i_n)$ such that the weight 
$\lambda^{(k)}=\lambda(\bfo)+\alpha_{i_1}+...+\alpha_{i_k}$ satisfies
$\lambda^{(n)}=\lambda(\bfa)$ and $\l\thal_{i_k}^\vee,\lambda^{(k)}\r>0$ for all $k>0$. 
To do so, we proceed inductively:  
set $\bfa^{(0)}=\bfo$ and define $\bfa^{(k)}={}_{+i_k}\bfa^{(k-1)}$ for all $k>0$.
Let $i_1$ be the largest element such that $i_1-\hf<a_{i_1}$. Given
$i_k$, we then  
\begin{itemize}
\item If the set $\{j <i_k\mid a^{(k)}_j< a_j\}$ is non-empty,
  then we let $i_{k+1}$ be the maximal element in this set.
\item Otherwise, we let $i_{k+1}$ be the largest element of $\{j \geq
  i_k\mid a^{(k)}_{j}< a_j\}$.
\end{itemize}
 Then for each $k$, we always have $a^{(k)}_{i_k+1}-a^{(k)}_{i_k}=1$ and 
$a^{(k)}_{i_k}-a^{(k)}_{i_k-1}\leqslant 2$. For $i_k=\hf$ this inequality follows from the fact that $a^{(k)}_{i_k}\geqslant 1$. 
Thus, we have that $\l\thal_{i_k}^\vee, \lambda^{(k)}\r\geqslant 1$ for all $k$.
We then define the mutually adjoint
1-morphisms: \[P=1_{\lambda(\bfa)}\calE_{i_n}...1_{\lambda^{(2)}}\calE_{i_2}1_{\lambda^{(1)}}\calE_{i_1}1_{\lambda(\bfo)}
\qquad 
Q=1_{\lambda(\bfo)}\calF_{i_1}1_{\lambda^{(1)}}\calF_{i_2}1_{\lambda^{(2)}}...\calF_{i_n}1_{\lambda(\bfa)}.\]
Since $\l\thal_{i_k}^\vee,\lambda^{(k)}\r\geqslant 1$ for all $k$, it follows from \eqref{bubble1} and \eqref{bubble3}
that 
$$\mathord{
\begin{tikzpicture}[baseline = 0]
	\draw[<-,thick,darkred] (0.4,-0.1) to[out=90, in=0] (0.1,0.3);
	\draw[-,thick,darkred] (0.1,0.3) to[out = 180, in = 90] (-0.2,-0.1);
    \node at (-0.2,-.2) {$\scriptstyle{i_k}$};
  \node at (0.3,0.5) {$\scriptstyle{\lambda}$};
\node at (0.27,0.25) {$\color{darkred}\bullet$};
   \node at (1.3,0.25) {$\color{darkred}\scriptstyle{\l\thal_{i_k}^\vee,\lambda^{(k)}\r-1}$};\end{tikzpicture}
}: \calE_i\calF_i1_\lambda\longrightarrow 1_\lambda$$
is split surjective. Hence $1_{\lambda^{(k)}}$  is a direct factor of $\calE_{i_k}\calF_{i_k}1_{\lambda^{(k)}}$ of degree zero. Applying this successively, we 
obtain that $1_{\lambda(\bfa)}$ is a direct factor of $PQ$ of degree zero. 

This completes the proof of Claim~($\star$) and hence the proof of 
Theorem~\ref{thm:full1}. 
\end{proof}

For an arbitrary $r$, there is an obvious problem with extending
Theorem~\ref{thm:full1}: the images of the bubbles under $\Gamma$
do not generate $R^{\bfa}$ unless $a_i=a_{i+1}$ for some $i$ or $a_{\hf}=0$.  We are
able to make the proof work for $r\geq m$, since one of these conditions will be
forced by the pigeonhole principle, but  for $r<m$, the obvious
extension of Theorem~\ref{thm:full1} fails.  
However, this failure to surject to $R^{\bfa}$ is easily fixed, and it
proves to be the only obstruction to fullness.

Fixing this issue requires us to add more 2-morphisms to
$\frakU^\jmath_r$, analogous to the extension of $\frakU$ discussed in \cite[Section 2.1]{Webcomparison}.
Let us write $\frakU^\jmath,\Gamma$ and $X_\jmath$ as $\frakU^\jmath_r,\Gamma^r$ and $X_{\jmath,r}$ to indicate the dependence on rank $r$. Note that for $r<s$, there is a natural embedding $\X_{\jmath,r}\to \X_{\jmath,s}$ given by sending the class of $\sum_{i=-r}^rk_i\vep_i$ to the class denoted by the same notation in $\X_{\jmath,s}$. We have a well-defined $2$-functor
$$\iota_{r,s}: \frakU^\jmath_r\longrightarrow \frakU^\jmath_s$$
which is given by the above embedding on objects, sending $\calE_i$, $\calF_i$ to themselves for $i\in\bbIj_r$, 
and sending the generators of $2$-morphisms in $\frakU^\jmath_r$ to the same diagram in $\frakU^\jmath_s$.
Now, for each $\lambda\in\X_{\jmath,r}$, consider the ring $Z_\lambda^{(s)}=\End_{\frakU^\jmath_s}(1_\lambda)$. 
Using arguments as in \cite[Proposition 3.6]{KLIII}, we can see that $Z_\lambda^{(s)}$ is generated by bubbles indexed by $i\in\bbIj_s$, 
and that for any pair of $1$-morphisms $M,N\in\frakU^\jmath_r$, the space 
$\Hom^\bullet_{\frakU^\jmath_s}(M,N)$ is generated by the image $\iota_{r,s} \big(\Hom^\bullet_{\frakU^\jmath_r}(M,N) \big)$ and $Z_\lambda^{(s)}$.
We define $\frakU^{\jmath,\ext}_r$ as the full $2$-subcategory of $\frakU^\jmath_{r+1}$ generated by objects and $1$-morphisms in the image of $\iota_{r,r+1}$.

Next, consider the embedding $j_{r,s}: \Sigma_{r,m}\hookrightarrow \Sigma_{s,m}$ which sends $\bfa=(a_1,\ldots,a_r)$ to $j_{r,s}(\bfa)=(a_1,\ldots ,a_r, m, m,\ldots)$.
This induces a tautological $2$-functor $\frakF_{r,m}\to\frakF_{s,m}$ which is fully faithful, since $R^{\bfa}=R^{j_{r,s}(\bfa)}$. 
Moreover $\Gamma$ intertwines $\iota_{r,s}$ with this $2$-functor. 
Hence we obtain a commutative diagram
\[\xymatrix{\Hom^\bullet_{\frakU^\jmath_r}(M,N)\ar[rr]^{\iota_{r,s}} \ar[rd]^{\Gamma_{M,N}^r}&&\Hom^\bullet_{\frakU^\jmath_s}(M,N)\ar[ld]_{\Gamma^s_{M,N}}\\
& \Hom^\bullet_{R^\bfb\bgmod R^\bfa}(\Gamma(M),\Gamma(N)).&}\]
Hence $\Gamma$ extends to a $2$-functor $\Gamma^{\ext}:\frakU^{\jmath,\ext}_r\to \frakF_{r,m}$, and we have a similar commutative diagram with $\frakU^\jmath_r$, $\Gamma_{M,N}^r$ on the left-downward arrow replaced by their extended version.
Now, if $s\geqslant m$, then the theorem implies that in the diagram above the map $\Gamma_{M,N}$ on the right is surjective. 
Further by formulas similar to \eqref{eq:bubble1}--\eqref{eq:bubble2}, 
$\Gamma$ sends positive degree bubbles labeled by $i$ to $0$ if $i>r+1$. 
Therefore we have established the following extension of Theorem~\ref{thm:full1}.

\begin{prop}
\label{prop:Fullgeneral}
The $2$-functor $\Gamma^{\ext}:\frakU^{\jmath,\ext}_r\to \frakF_{r,m}$ is locally full for any $r$ and $m$.
\end{prop}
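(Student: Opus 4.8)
The plan is to obtain the general-$r$ statement from Theorem~\ref{thm:full1} by a rank-stabilization argument, turning the discussion that precedes the statement into an actual proof. As in the proof of Theorem~\ref{thm:full1}, I would first reduce to the following: it suffices to show that for any $\lambda=\lambda(\bfa)$ and $\mu=\lambda(\bfb)$ in $\X_\jmath$ (if $\lambda$ or $\mu$ is not of this form there is nothing to prove) and any $1$-morphisms $M,N\colon\lambda\to\mu$ in $\frakU^{\jmath,\ext}_r$ which are monomials in the $\calE_i$, $\calF_i$, the induced map
\[
\Gamma^{\ext}_{M,N}\colon \Hom^\bullet_{\frakU^{\jmath,\ext}_r}(M,N)\longrightarrow \Hom^\bullet_{R^\bfb\bgmod R^\bfa}(\Gamma(M),\Gamma(N))
\]
is surjective; additivity of $\Hom$ then handles arbitrary $1$-morphisms. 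Here one uses that $\frakU^{\jmath,\ext}_r$ is, by definition, a \emph{full} $2$-subcategory of $\frakU^\jmath_{r+1}$, so $\Hom^\bullet_{\frakU^{\jmath,\ext}_r}(M,N)=\Hom^\bullet_{\frakU^\jmath_{r+1}}(M,N)$, and $\Gamma^{\ext}$ agrees with $\Gamma^{r+1}$ on this subcategory, its values landing in $\frakF_{r,m}\subseteq\frakF_{r+1,m}$ because $R^\bfa=R^{j_{r,r+1}(\bfa)}$.

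Next I would fix any $s\geqslant\max(m,r+1)$ and use the commutative triangle
\[
\xymatrix{
\Hom^\bullet_{\frakU^{\jmath,\ext}_r}(M,N)\ar[rr]^{\iota_{r+1,s}}\ar[rd]^{\Gamma^{\ext}_{M,N}}&&\Hom^\bullet_{\frakU^\jmath_s}(M,N)\ar[ld]_{\Gamma^s_{M,N}}\\
&\Hom^\bullet_{R^\bfb\bgmod R^\bfa}(\Gamma(M),\Gamma(N)),&
}
\]
which commutes because $\Gamma$ intertwines $\iota_{r+1,s}$ with the tautological embedding $\frakF_{r,m}\hookrightarrow\frakF_{s,m}$ (a fully faithful functor, as $R^\bfa=R^{j_{r,s}(\bfa)}$). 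Since $s\geqslant m$, Theorem~\ref{thm:full1} gives that $\Gamma^s_{M,N}$ is surjective, so the entire content of the argument is to show that the image of $\Gamma^s_{M,N}$ is contained in the image of $\Gamma^{\ext}_{M,N}$. For this I would invoke the generation statement obtained as in \cite[Proposition~3.6]{KLIII}: $\Hom^\bullet_{\frakU^\jmath_s}(M,N)$ is spanned by $\iota_{r+1,s}\big(\Hom^\bullet_{\frakU^\jmath_{r+1}}(M,N)\big)$ together with the ring $Z^{(s)}_\lambda=\End_{\frakU^\jmath_s}(1_\lambda)$ of bubbles, which is in turn generated by bubbles indexed by $i\in\bbI^\jmath_s$. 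Applying $\Gamma^s$: the image of $\iota_{r+1,s}(h)$ is $\Gamma^{\ext}(h)$ by the intertwining property; a bubble with label $i\leqslant r+1$ already lies in $\End_{\frakU^\jmath_{r+1}}(1_\lambda)\subseteq\frakU^{\jmath,\ext}_r$, so its image is in the image of $\Gamma^{\ext}$; and by the formulas \eqref{eq:bubble1}--\eqref{eq:bubble2} and their evident analogues for other labels, a positive-degree bubble labelled $i>r+1$ is sent to $0$, because for $j_{r,s}(\bfa)$ the coordinates governing that bubble are all equal to $m$. Hence the image of $\Gamma^s_{M,N}$ is contained in the image of $\Gamma^{\ext}_{M,N}$, and combined with surjectivity of $\Gamma^s_{M,N}$ this yields the proposition.

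The genuinely hard input is Theorem~\ref{thm:full1} itself, which I would take as given; the rest is essentially bookkeeping. Within the argument above, the step requiring the most care is the generation statement for $\Hom^\bullet_{\frakU^\jmath_s}(M,N)$: one must re-run the Khovanov--Lauda normal-form reduction of \cite[Proposition~3.6]{KLIII} in the presence of the $i=\hf$ relations---the $\jmath$-bicross relations \eqref{j-bicross down}--\eqref{j-bicross up} and the triple-point relation \eqref{Pi=1}---and check that every $2$-morphism can be rewritten so that all cups and caps occur only inside bubbles, all bubbles are pushed to the far left, and the surviving bubbles carry only labels from $\bbI^\jmath_s$. The remaining ingredients (fullness of $\frakF_{r,m}\hookrightarrow\frakF_{s,m}$, compatibility of $\Gamma$ with $\iota$, and the bubble-vanishing computation) are routine.
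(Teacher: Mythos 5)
Your proposal is correct and follows essentially the same route as the paper: reduce to $\operatorname{Hom}$-spaces between monomial $1$-morphisms, embed into $\frakU^\jmath_s$ for $s\geqslant m$ via $\iota$, invoke Theorem~\ref{thm:full1} for surjectivity of $\Gamma^s$, and observe that $\Hom^\bullet_{\frakU^\jmath_s}(M,N)$ is generated by the image of $\iota$ together with bubbles, of which those labelled $\leqslant r+1$ already live in $\frakU^{\jmath,\ext}_r$ and the positive-degree ones with larger label die under $\Gamma^s$. Your added remark that the generation statement requires rerunning the Khovanov--Lauda normal-form argument in the presence of the $i=\hf$ relations correctly identifies the one step the paper only sketches (by reference to \cite[Proposition~3.6]{KLIII}).
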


Note that $\frakU^{\jmath,\ext}_r$ only differs from $\frakU^{\jmath}_r$ by bubbles labeled by $\hf+r$, 
which live in strictly positive degree. 
Hence the canonical embedding $\dot\frakU^\jmath_r\to \dot\frakU^{\jmath,\ext}_r$ sends indecomposable $1$-morphisms to indecomposable ones, 
and induces an isomorphism on Grothendieck groups (as argued in
\cite[Proposition 3]{Webcomparison}).
Applying \cite[Lemma~10]{Webcomparison}, we have established the
following corollary.

\begin{cor}\label{cor:CB-match}
  The map $K_0(\dot\frakU^\jmath)\to K_0(\frakF_{r,m})$ induced by $\Gamma$ sends the classes of indecomposable 1-morphisms in $\dot\frakU^\jmath$ 
  which are not annihilated by $\Gamma$ bijectively to the classes of
  indecomposable 1-morphisms in $\frakF_{r,m}$.
\end{cor}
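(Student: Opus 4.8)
The plan is to combine Proposition~\ref{prop:Fullgeneral} with the standard Krull-Schmidt comparison technology for functors that are locally full and locally essentially surjective, following \cite[Lemma~10]{Webcomparison}. First I would note that $\Gamma$ (equivalently its extension $\Gamma^{\ext}$, which by the preceding discussion induces the same map on Grothendieck groups) is locally essentially surjective by Theorem~\ref{thm:2repflagj}, and locally full by Proposition~\ref{prop:Fullgeneral}. Given any $1$-morphisms $M,N$ in $\dot\frakU^\jmath(\lambda,\mu)$, the induced map $\Hom_{\dot\frakU^\jmath}^\bullet(M,N)\to\Hom_{\frakF_{r,m}}^\bullet(\Gamma M,\Gamma N)$ is surjective; in particular, since $\bfk$ is a complete local ring with residue characteristic $\neq 2$, the ring $\End_{\dot\frakU^\jmath}(P)$ for $P$ indecomposable is local, and $\Gamma$ surjects onto $\End_{\frakF_{r,m}}(\Gamma P)$. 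Either $\Gamma P=0$, or $\End_{\frakF_{r,m}}(\Gamma P)$ is a nonzero quotient of a local ring, hence local, so $\Gamma P$ is indecomposable.

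Next I would show the assignment $P\mapsto \Gamma P$ on indecomposables (those not killed by $\Gamma$) is injective up to isomorphism and grading shift: if $\Gamma P\cong \Gamma P'\{t\}$, then local fullness lets us lift a degree-$t$ isomorphism and its inverse to $2$-morphisms $f\colon P\to P'\{t\}$ and $g\colon P'\{t\}\to P$ whose composites $gf$, $fg$ are sent by $\Gamma$ to identities; since $\End(P)$, $\End(P')$ are local and $gf$, $fg$ are not in the respective radicals (their images are units), they are automorphisms, so $f$ is an isomorphism. Essential surjectivity handles surjectivity onto indecomposables of $\frakF_{r,m}$: every indecomposable $1$-morphism of $\frakF_{r,m}$ is a summand of some $\Gamma(\calE_\bfi 1_\lambda)$, and by lifting the corresponding idempotent along the surjection $\End_{\dot\frakU^\jmath}(\calE_\bfi 1_\lambda)\twoheadrightarrow\End_{\frakF_{r,m}}(\Gamma(\calE_\bfi 1_\lambda))$ (using that idempotents lift along surjections of algebras that are complete with respect to a suitable filtration, or simply that the kernel consists of elements in the radical after passing to appropriate finite-dimensional quotients) we obtain an indecomposable summand $P$ of $\calE_\bfi 1_\lambda$ with $\Gamma P$ isomorphic to the given one. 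Passing to $K_0$, the classes $[P]$ for $P$ indecomposable not killed by $\Gamma$ form part of the free $\calA$-basis of $K_0(\dot\frakU^\jmath)$, and they map bijectively to the free $\calA$-basis of $K_0(\frakF_{r,m})$ given by classes of indecomposables (Proposition~\ref{prop:iFl-Grothendieck}(b)).

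The one technical point requiring care — and the step I expect to be the main obstacle — is the idempotent lifting argument needed both for preservation of indecomposability and for essential surjectivity on indecomposables: one must ensure that the surjection of endomorphism algebras induced by $\Gamma$ has kernel contained in the Jacobson radical, or at least that idempotents lift. This is where completeness of $\bfk$ and the graded/filtered structure of the $\Hom$-spaces enter; the cleanest route is to invoke \cite[Lemma~10]{Webcomparison} directly, which packages exactly this argument for a locally full, locally essentially surjective $2$-functor between Krull-Schmidt $2$-categories over a complete local ring, as is done verbatim in \cite[Proposition~3, Corollary]{Webcomparison} in the type $\A$ setting. I would simply check that the hypotheses of that lemma are met here — namely that $\dot\frakU^\jmath$ and $\frakF_{r,m}$ are idempotent-complete graded additive $\bfk$-linear $2$-categories with finitely generated $\Hom$-spaces (which follows from the explicit spanning sets, e.g.\ Remark~\ref{rem:iCB} and the discussion in Section~\ref{sec:Grothendieck}, together with Proposition~\ref{prop:iFl-Grothendieck}) — and then the corollary follows immediately.
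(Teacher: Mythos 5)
Your proposal is correct and follows essentially the same route as the paper: the paper likewise observes that passing to $\Gamma^{\ext}$ changes neither the Grothendieck group nor the indecomposables (since $\frakU^{\jmath,\ext}_r$ differs only by virtually nilpotent bubbles) and then invokes \cite[Lemma~10]{Webcomparison} applied to the locally full, locally essentially surjective $2$-functor. Your unpacking of that lemma (locality of endomorphism rings, lifting of isomorphisms and idempotents over the complete local ring $\bfk$) is exactly the argument the citation encapsulates.
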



%
%

\subsection {Canonical basis} 

Recall the functor $\aleph: {}_\calA\bdUj\to K_0(\dot\frakUj)$ defined in Lemma \ref{lem: gp}, and the equivalence $\kappa$ in Proposition \ref{prop:iFl-Grothendieck}. By construction, we have a commutative diagram
 \begin{equation}\label{eq:comm}
 \begin{split}
 \xymatrix{
 {}_\calA\bdUj  \ar[r]^{\aleph}\ar[d]_{\gamma}
 &K_0(\dot\frakUj)\ar[d]^{\Gamma}
 \\
 \bfS^\jmath_{r,m} 
 &K_0(\frakF_{r,m}).\ar[l]^{\kappa}_{\sim} }
 \end{split}
 \end{equation}
Recall also the canonical basis $\bfB^\jmath$ for ${}_\calA\bdUj$ defined in \cite[Proposition~5.4, Theorem~5.5]{LW15}. 
It can be characterized as the unique basis such that $\gamma$ maps each element in it to an element in the canonical basis of $\bfS^\jmath_{r,m}$ for infinitely many $m$.

\begin{thm}\label{thm:maindecat}
\mbox{}
  \begin{enumerate}
  \item The functor $\aleph: {}_\calA\bdUj\to K_0(\dot\frakUj)$ is an
    equivalence of categories.

\item     Assume the residue field of $\bfk$ has characteristic zero. Then
    $\aleph$ sends canonical basis of $ {}_\calA\bdUj$ to the classes
    of self-dual indecomposable $1$-morphisms in $\dot\frakUj$.
  \end{enumerate}
\end{thm}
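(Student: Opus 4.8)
The plan is to deduce both parts from the commutative square \eqref{eq:comm}, from the local fullness of $\Gamma$ (Theorem~\ref{thm:full1}, Proposition~\ref{prop:Fullgeneral}) and its consequence Corollary~\ref{cor:CB-match}, and from the behaviour of the canonical bases of the $\jmath$Schur algebras $\bfS^\jmath_{r,m}$ under the maps $\gamma=\gamma_m$ of \eqref{eq:ga}, as established in \cite{BKLW, LW15}.

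\emph{Part (a).} By Proposition~\ref{prop:gammasuj} we already know $\aleph$ is full, and it is the identity on objects, so the only thing left is faithfulness. I would fix $\lambda,\mu\in\X_\jmath$, let $b_1,\dots,b_k$ be the finitely many canonical basis elements of ${}_\calA\bdUj(\lambda,\mu)$, and use the stabilization of the $\jmath$Schur algebras from \cite{BKLW, LW15} to choose $m$ large enough that $\lambda,\mu$ lie in the image of $\lambda(-)\colon\Sigma_{r,m}\to\X_\jmath$ and that $\gamma_m$ carries $b_1,\dots,b_k$ to pairwise distinct canonical basis elements of $\bfS^\jmath_{r,m}$. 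Then \eqref{eq:comm} gives $\Gamma(\aleph(b_i))=\kappa^{-1}(\gamma_m(b_i))$, and since the right-hand sides are $\calA$-linearly independent in $K_0(\frakF_{r,m})$, so are the $\aleph(b_i)$ in $K_0(\dot\frakUj)$; being the images of a basis, this forces $\aleph$ to be injective on ${}_\calA\bdUj(\lambda,\mu)$, hence an equivalence. As a by-product $K_0(\dot\frakUj(\lambda,\mu))$ is free over $\calA$ of rank $k$, so $\dot\frakUj(\lambda,\mu)$ has exactly $k$ self-dual indecomposable $1$-morphisms up to isomorphism, using that every indecomposable is a grading shift of a unique self-dual one (cf.\ \cite{KLIII}).

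\emph{Part (b).} Assume the residue field of $\bfk$ has characteristic zero, and keep $\lambda,\mu$ and $b_1,\dots,b_k$. I would enlarge $m$ so that, in addition to the conditions used in part (a), no indecomposable $1$-morphism occurring in any $\aleph(b_i)$ is killed by $\Gamma=\Gamma_m$; this is legitimate because each $\aleph(b_i)$ is a finite $\calA$-combination of indecomposables and, by part (a) together with \cite{BKLW, LW15}, every indecomposable of $\dot\frakUj(\lambda,\mu)$ survives $\Gamma_m$ for $m$ large. Fixing $i$: since $\gamma_m(b_i)$ is a canonical basis element of $\bfS^\jmath_{r,m}$, Proposition~\ref{prop:iFl-Grothendieck}(c) identifies $\kappa^{-1}(\gamma_m(b_i))$ with the class $[\calB_i]$ of a self-dual indecomposable $\calB_i$ of $\frakF_{r,m}$, so $\Gamma(\aleph(b_i))=[\calB_i]$. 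Expanding $\aleph(b_i)=\sum_P c_P[P]$ over indecomposables $P$ of $\dot\frakUj(\lambda,\mu)$ and applying $\Gamma$, Corollary~\ref{cor:CB-match} (which matches the non-killed indecomposables with the whole $\calA$-basis of $K_0(\frakF_{r,m})$) forces the identity $\sum_P c_P[\Gamma(P)]=[\calB_i]$ to have exactly one surviving term, with coefficient $q^{n}$ for some $n\in\bbZ$; after absorbing the shift, $\aleph(b_i)=[P_i]$ for a single indecomposable $P_i$. Bar-invariance of $b_i$ and $\aleph\circ\psi_\jmath=\psi_\jmath\circ\aleph$ (Lemma~\ref{lem: gp}) then give $[\psi_\jmath(P_i)]=\psi_\jmath([P_i])=[P_i]$, and since $\psi_\jmath$ preserves indecomposability and the class of an indecomposable pins down its isomorphism type, $P_i$ is self-dual. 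To finish, $b\mapsto P_b$ is injective (because $\aleph$ is) between $\{b_1,\dots,b_k\}$ and the set of self-dual indecomposables of $\dot\frakUj(\lambda,\mu)$, both of cardinality $k$ by part (a), hence bijective; letting $\lambda,\mu$ range over $\X_\jmath$ completes the proof.

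\emph{The crux.} The genuinely hard work is not in this argument --- which is bookkeeping with the two distinguished bases --- but in the inputs it consumes: the local fullness of $\Gamma$ (hence Corollary~\ref{cor:CB-match}) and the stabilization of the canonical bases of the $\bfS^\jmath_{r,m}$. Within the argument itself, the one delicate point is the selection of a \emph{single} rank $m$ that simultaneously separates the finitely many relevant canonical basis elements of $\bfS^\jmath_{r,m}$, realizes $\lambda$ and $\mu$ geometrically, and detects the finitely many indecomposables of $\dot\frakUj(\lambda,\mu)$ occurring in the $\aleph(b_i)$; all three requirements are underwritten by the stabilization results of \cite{BKLW, LW15}.
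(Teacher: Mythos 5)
Your overall strategy is the paper's: both parts are read off from the commutative square \eqref{eq:comm} together with the fullness of $\aleph$ (Proposition~\ref{prop:gammasuj}), Corollary~\ref{cor:CB-match}, and the BKLW/LW15 stabilization of the $\jmath$Schur algebras; your part~(b) is a legitimate fleshing-out of the paper's one-line deduction from the characterization of $\bfB^\jmath$ and Corollary~\ref{cor:CB-match}.

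However, there is a genuine error at the foundation of your bookkeeping: the Hom-spaces ${}_\calA\bdUj(\lambda,\mu)$ are \emph{not} finitely generated over $\calA$, so there are infinitely many canonical basis elements $b_1,b_2,\dots$ (already for $\fraksl_2$ one has the infinite family $E^{(a)}F^{(b)}1_\lambda$ with $a-b$ fixed; here, e.g., Corollary~\ref{lem:U+Uj2} identifies ${}_\calA\bdUj(\lambda,-)$ with ${}_\calA\dot\bfU^+$, whose $\X_\jmath$-graded pieces are infinite rank since $\alpha_{-i}\equiv-\alpha_i$ in $\X_\jmath$). This breaks the steps that rely on finiteness: for any fixed $m$ the target $\bfS^\jmath_{r,m}(\bfa,\bfb)$ has finite rank, so no single $m$ can carry all the $b_i$ to pairwise distinct nonzero canonical basis elements (indeed $\gamma_m$ must annihilate all but finitely many of them); likewise no single $m$ detects all indecomposables of $\dot\frakUj(\lambda,\mu)$, and the concluding cardinality argument ``both sets have cardinality $k$, hence the injection is a bijection'' is vacuous for infinite sets. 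The repair is standard and is what the paper does: for faithfulness, take a single $u\in\ker\aleph_{\lambda,\mu}$ (which has finite support in the canonical basis), note that \eqref{eq:comm} and invertibility of $\kappa$ give $\gamma_m(u)=0$ for \emph{all} $m$, and conclude $u=0$ from \cite[Theorem~4.7]{BKLW}; for part~(b), run your argument one $b_i$ at a time (each $\aleph(b_i)$ involves only finitely many indecomposables, so a suitable $m$ exists for that $b_i$), and replace the counting by the observation that $\aleph$ is an $\calA$-module isomorphism carrying one $\calA$-basis (the canonical basis) into another (the self-dual indecomposables), hence onto it.
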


\begin{proof}
To prove (a), note that $\aleph$ is identity on objects, and it is full by Proposition \ref{prop:gammasuj}. It remains to show it is faithful, i.e., the map $\aleph_{\lambda,\mu}: {}_\calA\bdUj(\lambda,\mu)\to K_0(\dot\frakUj(\lambda,\mu))$ is injective.
Note that if $u$ is in the kernel of $\aleph_{\lambda,\mu}$, by the commutativity of \eqref{eq:comm} and the fact that $\kappa$ is invertible, we deduce that $\gamma(u)=0$ for all $m\in\bbN$.
By \cite[Theorem~4.7]{BKLW}, this is possible only when $u=0$. We are done.
Part (b) follows from the characterization of 
$\bfB^\jmath$ and Corollary \ref{cor:CB-match}.
\end{proof}

As a corollary of Theorem~\ref{thm:maindecat} and Corollary \ref{cor:CB-match}, we obtain the following refinement of \cite[Proposition~5.11]{LW15}.
\begin{cor}
The map $\gamma:  {}_\calA\bdUj\to\bfS^\jmath_{r,m}$ in \eqref{eq:ga}  sends
$\bfB^\jmath\setminus (\bfB^\jmath\cap \ker \gamma)$ bijectively to the canonical basis of $\bfS^\jmath_{r,m}$ for all $m$.
\end{cor}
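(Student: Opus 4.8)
The plan is to deduce this final corollary directly from the two results it cites, namely Theorem~\ref{thm:maindecat} and Corollary~\ref{cor:CB-match}, together with the commutative diagram \eqref{eq:comm}. Since the hard analytic content (fullness, faithfulness, and the matching of indecomposables with canonical basis elements) has already been established, what remains is purely a bookkeeping argument tracking how bases are transported around \eqref{eq:comm}.

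First I would record the following setup: by Theorem~\ref{thm:maindecat}(a), $\aleph\colon {}_\calA\bdUj\to K_0(\dot\frakUj)$ is an equivalence of categories, and by Theorem~\ref{thm:maindecat}(b) (applied with a residue field of characteristic zero, which is harmless since the canonical basis $\bfB^\jmath$ of ${}_\calA\bdUj$ does not depend on $\bfk$) it carries $\bfB^\jmath$ bijectively onto the set of classes of self-dual indecomposable $1$-morphisms in $\dot\frakUj$. Next, I would invoke Corollary~\ref{cor:CB-match}: the map $\Gamma\colon K_0(\dot\frakUj)\to K_0(\frakF_{r,m})$ sends the classes of indecomposable $1$-morphisms that are not annihilated by $\Gamma$ bijectively onto the classes of indecomposable $1$-morphisms of $\frakF_{r,m}$, and annihilates the remaining ones. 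By Proposition~\ref{prop:iFl-Grothendieck}(c), for $\bfk$ with residue field of characteristic zero, $\kappa\colon K_0(\frakF_{r,m})\simto \bfS^\jmath_{r,m}$ sends self-dual indecomposable $1$-morphisms to canonical basis elements of $\bfS^\jmath_{r,m}$; since $\kappa$ also intertwines the dualities, it identifies the classes of \emph{self-dual} indecomposables with the canonical basis.

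Then I would chase the diagram. Commutativity of \eqref{eq:comm} gives $\gamma = \kappa\circ\Gamma\circ\aleph$. Take $b\in\bfB^\jmath$. Then $\aleph(b)$ is a self-dual indecomposable $1$-morphism in $\dot\frakUj$. If $\Gamma(\aleph(b))\neq 0$, then by Corollary~\ref{cor:CB-match} it is (up to shift, but self-duality pins down the shift) a self-dual indecomposable in $\frakF_{r,m}$, hence $\kappa\Gamma\aleph(b)=\gamma(b)$ is a canonical basis element of $\bfS^\jmath_{r,m}$; if $\Gamma(\aleph(b))=0$ then $\gamma(b)=0$, i.e.\ $b\in \bfB^\jmath\cap\ker\gamma$. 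Conversely, every canonical basis element of $\bfS^\jmath_{r,m}$ is $\kappa$ of a self-dual indecomposable of $\frakF_{r,m}$, which by Corollary~\ref{cor:CB-match} is $\Gamma$ of a unique (up to the above rigidity) self-dual indecomposable of $\dot\frakUj$, which by Theorem~\ref{thm:maindecat}(b) is $\aleph(b)$ for a unique $b\in\bfB^\jmath$; that $b$ is necessarily not in $\ker\gamma$. The only point needing a word of care is injectivity of the assignment $b\mapsto \gamma(b)$ on $\bfB^\jmath\setminus(\bfB^\jmath\cap\ker\gamma)$: if $\gamma(b)=\gamma(b')$ with both nonzero, then $\Gamma(\aleph(b))$ and $\Gamma(\aleph(b'))$ have equal class, so by the injectivity clause of Corollary~\ref{cor:CB-match} the classes $\aleph(b)$ and $\aleph(b')$ agree, whence $b=b'$ since $\aleph$ is a bijection on bases.

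I do not expect any genuine obstacle here; the statement is essentially a formal consequence of the cited results, and the main (minor) subtlety is just making sure the grading shifts are handled correctly, which is exactly why the hypotheses refer to \emph{self-dual} indecomposables so that the shift ambiguity in Corollary~\ref{cor:CB-match} disappears. One should also note explicitly that $\bfB^\jmath$ and $\ker\gamma$ are defined over $\calA$ independently of the choice of complete local $\bfk$, so passing to characteristic-zero residue field to apply Theorem~\ref{thm:maindecat}(b) costs nothing. Finally I would remark, as the corollary already hints, that this sharpens \cite[Proposition~5.11]{LW15}, which obtained the analogous statement only for infinitely many $m$ rather than for all $m$.
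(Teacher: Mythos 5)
Your argument is correct and is exactly the intended one: the paper gives no separate proof but simply declares the corollary a consequence of Theorem~\ref{thm:maindecat} and Corollary~\ref{cor:CB-match}, and your diagram chase through \eqref{eq:comm} (together with Proposition~\ref{prop:iFl-Grothendieck} to identify $\kappa$ of the self-dual indecomposables with the canonical basis of $\bfS^\jmath_{r,m}$) is precisely the bookkeeping that justifies that declaration. The attention you pay to the shift ambiguity and to the characteristic-zero hypothesis is appropriate but does not change the route.
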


\medskip

\section{Categorical action on category $\calO$}
\label{sec:relat-proj-funct}

In this section, we assume $\bfk=\bbC$.

\subsection{Reminders on Harish-Chandra bimodules}

Let $\frakg$ be a complex semisimple Lie algebra and $\frakt\subset\frakg$ a Cartan subalgebra. Let $U=U(\frakg)$ be the enveloping algebra of $\frakg$ and let $Z$ be the center of $U$. 
We view the ring $R=S(\frakt)$ as the coordinate ring of  $\frakt^\ast$.
Recall that the Harish-Chandra morphism $\xi: Z\to R$ is a ring
homomorphism such that an element $z\in Z$ acts on a Verma module of
highest weight $\lambda$ by the value of $\xi(z)$ at $\lambda$. 

Consider the dot action of the Weyl group $\W$ on $\frakt^\ast$ by $w\bullet\lambda=w(\lambda+\rho)-\rho$, where $\rho$ is the half sum of positive roots. Then $\xi$ sends $Z$ isomorphically onto $W$-invariant functions on $\frakt^\ast$ for the dot action. Equivalently, the central characters of $Z$ are in bijection with $W$-dot-orbits in $\frakt^\ast$. 
For $\lambda\in\frakt^\ast$, denote the corresponding central
character by $\chi_\lambda$ and write $I_\lambda=\ker\chi_\lambda$.

We say that $\chi_\lambda$ is {\it integral} if $\lambda$ is an integral weight of $\frakg$.
Let $\widehat{Z}_\lambda$ be the completion of $Z$ at $\lambda$. Let $\lambda^\#:R\to R$ be the pull back of the translation map $\frakt^\ast\to\frakt^\ast$, $x\mapsto x+\lambda$. Then $\lambda^\#\circ\xi: Z\to R$ induces an isomorphism $\widehat{Z}_\lambda\simeq \widehat{R}^{W_\lambda}$, where $\widehat{R}$ is the completion of $R$ at the ideal generated by $\frakt$, and $W_\lambda$ is the stabilizer of $\lambda$ under the dot action.
We abbreviate $\widehat{R}^\lambda=\widehat{R}^{W_\lambda}$ and denote by $\frakm_\lambda$ the maximal ideal of $\widehat{R}^{W_\lambda}$.

Recall that a {\it Harish-Chandra bimodule} over $\frakg$ is a
$(U,U)$-bimodule for which the adjoint action of $\frakg$ is locally finite \cite{BG80, Soe92}.
Typical examples of Harish-Chandra modules are $E\otimes U$, where $E$
is a finite dimensional left $U$-module.
The left $U$-action on $E\otimes U$ is induced by the $\frakg$-action
such that $u(x\otimes y)=(ux)\otimes y+x\otimes (uy)$ for $u\in
\frakg$, and the right action is the right action of $U$ on itself. In other words, it is the tensor product of $U$ with $E$ considered as a
bimodule with the usual left action and trivial right action.

Let $\calH$ be the category of finitely generated Harish-Chandra bimodules of finite length. It is a direct sum of subcategories of the form 
$${}_\lambda\calH_{\mu}=\{ M\in\calH\,|\, I_\lambda^nM=MI_\mu^n=0\text{ for }n\gg 0\}$$
for $\lambda,\mu\in\frakt^\ast$. 
Let ${}_\lambda\calH_{\mu}^n$ be the full subcategory of
${}_\lambda\calH_{\mu}$ consisting of modules $M$ such that
$M I_\mu^n=0$. 
Let $\widehat{U}_\lambda$ be the completion of $U$ at $I_\lambda$. Let $
{}_\lambda\widehat{\calH}_{\mu}$ denote the category of finitely
generated $(\widehat{U}_\lambda, \widehat{U}_\mu)$-bimodules.

\begin{thm}[\mbox{\cite{Soe92,Str04}}]\label{thm:HC}\hfill 
  \begin{enumerate}
\renewcommand{\theenumi}{\alph{enumi}}
  \item There is a unique exact functor
$$\bbV: {}_\lambda\calH_{\mu}\longrightarrow  \widehat{R}^{\lambda}\operatorname{-mod-}\widehat{R}^{\mu}$$
defined by the property that it sends the unique simple module of
maximal Gelfand-Kirillov dimension to the trivial module $\bbC$, and
all the other simple modules to zero. For each $n\geqslant 0$, it induces an exact functor
$$\bbV: {}_\lambda\calH^n_{\mu}\longrightarrow  \widehat{R}^{\lambda}\operatorname{-mod-}(\widehat{R}^{\mu}/\frakm_\mu^n).
$$

\item For any $n>0$, the category ${}_\lambda\calH_\mu^n$ has
  enough projective objects, given by direct sums of direct summands
  of modules of the form $E\otimes U/(UI_\mu^n)$, where $E$ is a finite
  dimensional $\frakg$-module.

\item For any $n>0$, the functor $\bbV$ restricts to a fully faithful functor on the
  full additive subcategory ${}_\lambda\calP_\mu^n$ of projective objects in
  ${}_\lambda\calH_\mu^n$.
\end{enumerate}

\end{thm}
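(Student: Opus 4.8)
\textbf{Plan for the proof of Theorem~\ref{thm:HC}.}

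The statement is essentially a package of results of Soergel and Stroppel, so the plan is to reduce each part to the corresponding statement in \cite{Soe92, Str04} and to assemble them in a way consistent with the completed setup we have fixed above. First I would set up notation carefully: fix $\lambda,\mu\in\frakt^\ast$ and recall that, by the Harish-Chandra isomorphism $\xi\colon Z\to R^\W$ together with the translation $\lambda^\#$, the completed center $\widehat Z_\lambda$ is identified with $\widehat R^{\W_\lambda}=\widehat R^\lambda$, with maximal ideal $\frakm_\lambda$. The key structural input I would invoke is \emph{Soergel's Struktursatz / Endomorphismensatz} for the principal block, in the form: the functor $\bbV=\Hom_{\frakg}(P,-)$ (with $P$ the big antidominant projective) identifies the endomorphism ring of $P$ with the coinvariant algebra, and is fully faithful on projectives. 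For Harish-Chandra bimodules one tensors this picture on both sides; this is exactly what Soergel does in \cite[Theorem~10]{Soe92}.

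For part (a), the plan is: the category ${}_\lambda\calH_\mu$ has a unique simple object of maximal Gelfand-Kirillov dimension (this is the bimodule corresponding to the ``diagonal'' $\W$-orbit, and its uniqueness is classical); one then defines $\bbV$ on ${}_\lambda\calH_\mu$ by the universal property of sending that simple to $\bbC$ and all other simples to $0$, and checks exactness. Concretely I would realise $\bbV$ as $M\mapsto e\cdot M\cdot e'$ (or $\Hom$-spaces into the relevant projective-injective objects) where $e,e'$ are the Soergel idempotents on the two sides, and exactness is inherited from exactness of $e\cdot(-)\cdot e'$. The truncated version, landing in $\widehat R^\lambda\text{-mod-}(\widehat R^\mu/\frakm_\mu^n)$, follows since $M I_\mu^n=0$ forces $\bbV(M)$ to be killed by $\frakm_\mu^n$ on the right under the identification $\widehat Z_\mu\cong\widehat R^\mu$. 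For part (b), the existence of enough projectives in ${}_\lambda\calH^n_\mu$ is standard: the bimodules $E\otimes U/(UI_\mu^n)$, for $E$ finite dimensional, are projective in ${}_\lambda\calH^n_\mu$ because tensoring with $E$ is exact and biadjoint to tensoring with $E^\ast$, and they generate; one then takes direct summands to get all indecomposable projectives. Here I would cite \cite[Section~3]{Str04} where exactly this is carried out, and note that the ``$n$'' truncation is what makes these bimodules finitely generated and of finite length. For part (c), I would observe that ${}_\lambda\calP^n_\mu$ is by definition the additive category generated by the $E\otimes U/(UI_\mu^n)$; on such objects $\bbV$ agrees (up to the standard identifications) with the singular Soergel bimodule functor, which is fully faithful by Soergel's Endomorphismensatz applied in families over $\widehat R^\mu/\frakm^n_\mu$ — this is \cite[Theorem~10]{Soe92} in the form refined by Stroppel.

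The main obstacle, and the step I would treat most carefully, is part (c): one must check that full faithfulness of $\bbV$ is preserved after passing to the $n$-fold truncation $\widehat R^\mu/\frakm_\mu^n$, i.e. that no morphisms are lost or created when one works with thickened, non-reduced central characters rather than with a single regular block. The clean way to handle this is a deformation / base-change argument: start from the fully faithfulness of $\bbV$ over the regular integral block (the classical statement), and then use flatness of the projective generators over $\widehat R^\mu$ together with the fact that $\Hom$-spaces between projectives are free $\widehat R^\mu$-modules, so that reducing mod $\frakm_\mu^n$ commutes with $\Hom$ on both sides of the functor. I would also need the analogous statement for non-integral or singular $\mu$, which is reduced to the integral regular case by translation functors (Jantzen's translation principle), exactly as in \cite{Soe90, Soe92}; the only subtlety is tracking that translation functors stay within the relevant truncated subcategories, which is immediate since they commute with the $Z$-action. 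Everything else — exactness, existence of the maximal-GK-dimension simple, the classification of projectives — is either classical or a direct citation, so the write-up would be short, essentially a guided tour through \cite{Soe92} and \cite{Str04} with the completions made explicit.
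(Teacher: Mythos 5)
Your proposal is correct and follows essentially the same route as the paper, whose entire proof consists of pointing to the literature: the definition of $\bbV$ is taken from \cite[Page~357]{Str04}, part (b) from \cite[Theorem~1.1]{Str04}, and part (c) from \cite[Theorem~4.1]{Str04}. Your additional detail (Soergel's Struktursatz, the adjunction argument for projectivity of $E\otimes U/(UI_\mu^n)$, and the base-change argument for the truncated full faithfulness) is a faithful unpacking of what those references actually prove, so nothing further is needed.
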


\begin{proof}
The definition of $\bbV$ is given in \cite[Page~357]{Str04},  Part (b) is proved in  \cite[Theorem~1.1]{Str04}, and  Part~ (c) follows from \cite[Theorem~4.1]{Str04}.
\end{proof}

Let $\calM$ be the category of (left) $U$-modules $M$ over which $Z$ acts locally finitely. It is a direct sum over $\mu\in\frakt^\ast$ of subcategories ${}_\mu \calM\subset\calM$ consisting of modules over which $I_\mu$ acts locally nilpotently. Denote by $\pr_\mu:\calM\to {}_\mu \calM$ the projection functor.
Let ${}^n_\mu\calM\subset{}_\mu\calM$ be the full subcategory consisting of modules over which
$I_\mu^n$ acts trivially.

Recall that a projective functor $\calM\to\calM$ is a direct summand of $E\otimes-$ for some finite dimensional $U$-module $E$. Direct sums and compositions of projective functors are again projective functors.
Given $E$ a finite dimensional representation, consider the projective functor
$$F: {}_\mu\calM\longrightarrow  {}_\lambda\calM,\quad M\mapsto \pr_\lambda(E\otimes M).$$
It is nontrivial if and only if $\lambda-\mu$ is a weight of $E$. 
The restriction of $F$ to the subcategory ${}^n_\mu\calM$ is represented by a direct factor ${}_\lambda\Phi(E)_\mu^n$ of $E\otimes U/U{I_\mu^n}$, which is an object of ${}_\lambda\calP_\mu^n$. 
For $m>n$, we have a projection ${}_\lambda\Phi(E)_\mu^m\twoheadrightarrow {}_\lambda\Phi(E)_\mu^n={}_\lambda\Phi(E)_\mu^m/I_\mu^n$.
Taking the limit yields an object ${}_\lambda\Phi(E)_\mu=\varprojlim{}_\lambda\Phi(E)_\mu^n$ in ${}_\lambda\widehat{\calH}_{\mu}$ which represents the functor $F$.
Let ${}_\lambda\widehat{\calP}_\mu$ be the full additive subcategory of ${}_\lambda\widehat{\calH}_{\mu}$ generated by direct summands of ${}_\lambda\Phi(E)_\mu$ for all $E$. 

\begin{cor}\label{cor:HC}
The functor $\bbV$ induces a fully faithful functor
$$\widehat{\bbV}: {}_\lambda\widehat{\calP}_\mu \longrightarrow \widehat{R}^{\lambda}\operatorname{-mod-}\widehat{R}^{\mu}.$$
\end{cor}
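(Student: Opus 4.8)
The plan is to deduce Corollary~\ref{cor:HC} from Theorem~\ref{thm:HC} by passing to the inverse limit over the nilpotence order $n$. First I would observe that each object of ${}_\lambda\widehat{\calP}_\mu$ is by definition a direct summand of some ${}_\lambda\Phi(E)_\mu = \varprojlim_n {}_\lambda\Phi(E)_\mu^n$, and that ${}_\lambda\widehat{\calP}_\mu$ is the idempotent completion of the full subcategory on the objects ${}_\lambda\Phi(E)_\mu$; since $\bbV$ is an exact functor compatible with the projections ${}_\lambda\Phi(E)_\mu^m \twoheadrightarrow {}_\lambda\Phi(E)_\mu^n$, it induces a compatible family and hence a well-defined functor on the limit. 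Concretely, one sets $\widehat{\bbV}\bigl({}_\lambda\Phi(E)_\mu\bigr) = \varprojlim_n \bbV\bigl({}_\lambda\Phi(E)_\mu^n\bigr)$, landing in $\widehat{R}^\lambda \operatorname{-mod-}\widehat{R}^\mu$ because $\bbV({}_\lambda\Phi(E)_\mu^n)$ is a module over $\widehat{R}^\lambda \operatorname{-mod-}(\widehat{R}^\mu/\frakm_\mu^n)$ by Theorem~\ref{thm:HC}(a), and the transition maps are surjective.

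Next I would check full faithfulness. The key point is that for projective objects $P, Q \in {}_\lambda\widehat{\calP}_\mu$ obtained as limits $P = \varprojlim P^n$, $Q = \varprojlim Q^n$ with $P^n, Q^n \in {}_\lambda\calP_\mu^n$, one has
\[
\Hom_{{}_\lambda\widehat{\calH}_\mu}(P,Q) \;=\; \varprojlim_n \Hom_{{}_\lambda\calH_\mu^n}(P^n, Q^n),
\]
because $Q$ is complete and $\frakm_\mu$-adically separated as a right module and $P$ is finitely generated. The same limit identity holds on the target side: $\Hom_{\widehat{R}^\lambda\operatorname{-mod-}\widehat{R}^\mu}(\bbV P, \bbV Q) = \varprojlim_n \Hom(\bbV P^n, \bbV Q^n)$, again using that $\bbV Q$ is $\frakm_\mu$-adically complete and $\bbV P$ is finitely generated (here one uses that $\bbV$ sends the projectives to finitely generated bimodules, which is immediate from $\bbV({}_\lambda\Phi(E)_\mu^n)$ being a summand of a free module of finite rank over $\widehat{R}^\lambda\otimes(\widehat{R}^\mu/\frakm_\mu^n)$, or some variant). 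Since Theorem~\ref{thm:HC}(c) gives that each $\bbV\colon {}_\lambda\calP_\mu^n \to \widehat{R}^\lambda\operatorname{-mod-}(\widehat{R}^\mu/\frakm_\mu^n)$ is fully faithful, the induced map on the inverse limits of $\Hom$-spaces is an isomorphism, so $\widehat{\bbV}$ is fully faithful. Finally, full faithfulness on the idempotent-complete category ${}_\lambda\widehat{\calP}_\mu$ follows formally from full faithfulness on the objects ${}_\lambda\Phi(E)_\mu$, since a fully faithful functor extends uniquely to a fully faithful functor on idempotent completions.

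The main obstacle I expect is the commutation of $\Hom$ with the inverse limit on both sides — that is, verifying the two displayed identities above rigorously. This requires knowing that the inverse systems $\{{}_\lambda\Phi(E)_\mu^n\}$ and their images under $\bbV$ satisfy a Mittag-Leffler condition (the transition maps are surjective, which holds since ${}_\lambda\Phi(E)_\mu^n = {}_\lambda\Phi(E)_\mu^m/I_\mu^n$), and that the relevant $\Hom$ groups are finitely generated over the appropriate completed base rings so that $\varprojlim^1$ terms vanish and completion commutes with $\Hom$. I would handle this by first establishing that $\bbV P$ is a finitely generated right $\widehat{R}^\mu$-module (hence $\frakm_\mu$-adically complete and separated, being finitely generated over a complete Noetherian local-type ring) and then invoking the standard fact that $\Hom_A(M, \varprojlim N_n) = \varprojlim \Hom_A(M, N_n)$ for finitely presented $M$. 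Everything else is bookkeeping with idempotent completions and the explicit description of projectives in Theorem~\ref{thm:HC}(b).
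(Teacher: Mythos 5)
Your proposal is correct and follows essentially the same route as the paper: define $\widehat{\bbV}$ on an object $M$ as $\varprojlim_n \bbV(M/MI_\mu^n)$ and deduce full faithfulness from Theorem~\ref{thm:HC}(c) at each finite level by identifying the Hom-spaces on both sides with inverse limits of the finite-level Hom-spaces. The one point worth flagging is that your Mittag--Leffler/$\varprojlim^1$ concern is a non-issue: $\Hom(M,\varprojlim N_n)=\varprojlim\Hom(M,N_n)$ holds unconditionally (it is the universal property of the limit, with no finite-presentation hypothesis on $M$), and the reduction $\Hom(M,N/NI_\mu^n)=\Hom(M/MI_\mu^n,N/NI_\mu^n)$ follows simply because $I_\mu^n\subset Z$ is central, which is exactly how the paper's proof sidesteps any argument about completion commuting with $\Hom$ in the first variable.
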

\begin{proof}
To see this, note that for any object $M$ in ${}_\lambda\widehat{\calP}_\mu$ we have $M=\varprojlim M/MI_\mu^n$. 
Applying $\bbV$ to the natural projection $M/MI_\mu^m\to M/MI_\mu^n$ yields a surjective map $\bbV(M/MI_\mu^m)\to \bbV(M/MI_\mu^n)$. 
Define $\widehat{\bbV}(M)$ as the limit of the projective system
$\{\bbV(M/MI_\mu^n) \}_{n\ge 0}$. It is still an object in $\widehat{R}^\lambda\operatorname{-mod-}\widehat{R}^\mu$. 
We have $\widehat{\bbV}(M)/\frakm_\mu^n=\bbV(M/MI_\mu^n)$.
Finally, for two objects $M$, $N$ in ${}_\lambda\widehat{\calP}_\mu$, we have
\begin{eqnarray*}
\Hom_{{}_\lambda\widehat{\calP}_\mu}(M,N)&=&\varprojlim\Hom_{\hat{\calH}}(M,N/NI_\mu^n)\\
&=&
\varprojlim\Hom_{{}_\lambda{\calH}_{\mu}^n}(M/MI_\mu^n,N/NI_\mu^n)\\
&=&
\varprojlim\Hom_{\widehat{R}^\lambda\otimes\widehat{R}^\mu}(\bbV(M/MI_\mu^n),\bbV(N/NI_\mu^n))\\
&=&
\varprojlim\Hom_{\widehat{R}^\lambda\otimes\widehat{R}^\mu}(\widehat{\bbV}(M)/\frakm_\mu^n,\widehat{\bbV}(N)/\frakm_\mu^n)\\
&=&\Hom_{\widehat{R}^\lambda\otimes\widehat{R}^\mu}(\widehat{\bbV}(M),\widehat{\bbV}(N)).
\end{eqnarray*}
where the first equality is the universal property of projective limit and the second one is because 
$I_\mu^n\subset Z$ is central, the third equality is given by Theorem \ref{thm:HC}(c).
\end{proof}

\begin{lemma}\label{lem:tensor}
For $M\in{}_\lambda\widehat{\calP}_\mu$ and $N\in{}_\mu\widehat{\calP}_\nu$
we have $M\otimes_{\widehat{U}_\mu}N\in{}_\lambda\widehat{\calP}_\nu.$
Moreover there is a natural isomorphism $\widehat{V}(M\otimes_{\widehat{U}_\mu}N)\cong\widehat{\bbV}(M)\otimes_{\widehat{R}^\mu}\widehat{\bbV}(N)$, 
which is functorial with respect to $M$ and $N$.
\end{lemma}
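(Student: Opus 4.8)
The plan is to derive both assertions from two inputs already available: the representability of projective functors (Theorem~\ref{thm:HC}(b)), which is what the definition of ${}_\lambda\widehat{\calP}_\mu$ is built on, and the full faithfulness of $\widehat{\bbV}$ on pro‑projective objects (Corollary~\ref{cor:HC}). For the first assertion, fix finite‑dimensional $\frakg$‑modules $E,E'$ with $M$ a direct summand of ${}_\lambda\Phi(E)_\mu$ and $N$ a direct summand of ${}_\mu\Phi(E')_\nu$. Tensoring bimodules computes composition of the associated projective functors: for $L\in{}^n_\nu\calM$ one has $(M\otimes_{\widehat{U}_\mu}N)\otimes_{\widehat{U}_\nu}L\cong M\otimes_{\widehat{U}_\mu}(N\otimes_{\widehat{U}_\nu}L)$, and the right‑hand side is, up to a direct summand, $\pr_\lambda\bigl(E\otimes\pr_\mu(E'\otimes L)\bigr)$, which is itself a direct summand of $\pr_\lambda\bigl((E\otimes E')\otimes L\bigr)$. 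Hence $M\otimes_{\widehat{U}_\mu}N$ represents a direct summand of the projective functor attached to $E\otimes E'$, so it is a direct summand of ${}_\lambda\Phi(E\otimes E')_\nu$ and lies in ${}_\lambda\widehat{\calP}_\nu$.

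For the isomorphism I would first pass to finite level. Since $(M\otimes_{\widehat{U}_\mu}N)I_\nu^n=M\otimes_{\widehat{U}_\mu}(NI_\nu^n)$ and $N/NI_\nu^n$, being a finitely generated object of ${}_\mu\calH_\nu^n$, is annihilated by $I_\mu^m$ for $m\gg0$, one obtains a canonical identification $(M\otimes_{\widehat{U}_\mu}N)/(M\otimes_{\widehat{U}_\mu}N)I_\nu^n\cong (M/MI_\mu^m)\otimes_U(N/NI_\nu^n)$; the same truncation argument shows that the $n$‑th $\frakm_\nu$‑adic quotient of $\widehat{\bbV}(M)\otimes_{\widehat{R}^\mu}\widehat{\bbV}(N)$ is $\bbV(M/MI_\mu^m)\otimes_{\widehat{R}^\mu}\bbV(N/NI_\nu^n)$. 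By Corollary~\ref{cor:HC} it is therefore enough to produce natural isomorphisms $\bbV(P\otimes_U Q)\cong\bbV(P)\otimes_{\widehat{R}^\mu}\bbV(Q)$ for $P\in{}_\lambda\calP_\mu^m$ and $Q\in{}_\mu\calP_\nu^n$, compatibly as $m,n$ grow; the lemma, including the functoriality in $M$ and $N$, then follows by passing to inverse limits, the compatibility of the gluing being forced by the universal property of the limit.

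At finite level, both sides are additive functors of $P$ (resp.\ of $Q$), and $\bbV$ is fully faithful on ${}_\lambda\calP_\mu^m$ (resp.\ ${}_\mu\calP_\nu^n$); since every projective there is a direct summand of a sum of the standard projectives ${}_\lambda\Phi(E)_\mu^m$, it suffices to treat $P={}_\lambda\Phi(E)_\mu^m$ and $Q={}_\mu\Phi(E')_\nu^n$. For these, $P\otimes_U Q$ is an explicit direct summand of $E\otimes E'\otimes(U/UI_\nu^n)$, while $\bbV(P)$ and $\bbV(Q)$ are the corresponding singular Soergel bimodules, and the comparison map $\bbV(P)\otimes_{\widehat{R}^\mu}\bbV(Q)\to\bbV(P\otimes_U Q)$ is the lax structure of $\bbV$, built from the (co)unit and (co)multiplication of the Frobenius extensions underlying $\bbV$ that were recorded in Section~\ref{ss:demazure}. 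That this map is an isomorphism on the standard projectives is precisely the assertion that $\bbV$ carries composition of projective functors to tensor product of singular Soergel bimodules, i.e.\ the monoidality of $\bbV$ in the sense of \cite{Soe92,Str04}.

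The main obstacle I anticipate is exactly this last intertwining: one must keep careful track of which Frobenius extensions occur in $\bbV(P)$ and $\bbV(Q)$, verify naturality of the comparison map on the standard projectives, and then check that the resulting finite‑level isomorphisms are compatible with the pro‑structure so as to glue to an isomorphism of the completed objects that is functorial in $M$ and $N$. Everything else in the argument is formal once full faithfulness of $\widehat{\bbV}$ and representability of projective functors are granted.
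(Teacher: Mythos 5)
Your proposal is correct and follows essentially the same route as the paper: the paper disposes of the first claim by noting that a composition of projective functors is again a projective functor (which is exactly your $E\otimes E'$ argument), and for the isomorphism it simply refers to the argument of Soergel's Proposition~13 in \cite{Soe92}, which is the finite-level monoidality-of-$\bbV$ statement you reduce to before passing to inverse limits. Your write-up is just the fleshed-out version of the details the paper leaves to the reader.
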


\begin{proof}
The first statement follows from the fact that composition of projective functors is again a projective functor. The proof of the second statement is similar to \cite[Proposition~13]{Soe92}, details are left to the reader.
\end{proof}

\medskip

\subsection{The case of types B and C}
Now, let us apply the above results to the type $\B/\C$ situation.
Let $\frakg=\mathfrak{so}_{2m+1}$ or $\frakg=\mathfrak{sp}_{2m}$. 
We choose $\bfa =(a_\hf, a_{1+\hf}, \ldots, a_{r-\hf}) \in
\Sigma_{r,m}$; by convention, we have $a_{r+\hf} =m$.  To this vector, 
we associate a weight $\mu\langle\bfa\rangle\in\frakt^\ast$ such that
$$\mu\langle\bfa\rangle+\rho=(0,...,0,-1,...,-1,....,-r,....,-r)$$
with $0$ appearing $a_\hf$ times, and 
$-k$ appearing $a_{k+\hf}-a_{k-\hf}$ times for $k>0$. 
By definition, we have that $W_{\mu\langle\bfa\rangle}$ is the subgroup of $W$
generated by the reflections in $I_\bfa\subset \S$ (see Section \ref{sec:type-b/c} for
notation). Thus, we have $R^{W_{\mu\langle\bfa\rangle}}=R^\bfa$.
Note that if $\frakg=\mathfrak{sp}_{2m}$ then every integral central character 
is of this form for $r$ sufficiently large.  
For $\frakg=\frak{so}_{2m+1}$, we have $\rho \in \sum_{a=1}^m
(\bbZ+\frac 12) \epsilon_a$, and so the highest weights for modules with
central character $\chi(\bfa)$ lie in $\sum_{a=1}^m (\bbZ+\frac 12)
\epsilon_a$; in particular, the block of the spin representation lies
in this image and the block of the trivial representation does not.

The weights of the natural representation $V$ are $\pm\epsilon_k$ for $k=1,\dots,m$ for $\frakg=\mathfrak{sp}_{2m}$
(and $0$ in addition for $\frakg=\frakso_{2m+1}$).
Hence we have ${}_{\lambda}\Phi(V)_{\mu\langle\bfa\rangle}$ is nontrivial
precisely when $\lambda=\mu\langle{}_{\pm i}\bfa\rangle$ for some $i\in\bbI^\jmath$  (for
${}_{\pm i}\bfa$ as in
Section~\ref{sec:type-b/c}), or $\lam=\mu\langle\bfa\rangle$ if $\frakg=\frakso_{2m+1}$.
Recall the bimodules $\scrE_i1_\bfa$, $\scrF_i1_\bfa$ from Section
\ref{sec:type-b/c}, and let $\widehat{\scrE}_i1_\bfa$,
$\widehat{\scrF}_i1_\bfa$ be their completions with respect to their grading.
\begin{lemma}
We have
\begin{equation}\label{eq:image}
\widehat{\bbV}({}_{\mu\langle{}_{+i}\bfa\rangle}\Phi(V)_{\mu\langle\bfa\rangle})=\widehat{\scrE}_i1_\bfa,\qquad
\widehat{\bbV}({}_{\mu\langle{}_{-i}\bfa\rangle}\Phi(V)_{\mu\langle\bfa\rangle})=\widehat{\scrF}_i1_\bfa.
\end{equation}
\end{lemma}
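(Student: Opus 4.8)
The plan is to identify the projective Harish-Chandra bimodule ${}_{\mu\langle{}_{+i}\bfa\rangle}\Phi(V)_{\mu\langle\bfa\rangle}$ as (the completion of) a known singular Soergel bimodule, and then feed this through Soergel's structure theorem via the functor $\widehat{\bbV}$. First I would recall that, by the construction of Section~\ref{sec:type-b/c}, $\scrE_i 1_\bfa = R^{\bfa^{+i}}\{1+a_i-a_{i+1}\}$ viewed as an $(R^{{}_{+i}\bfa},R^\bfa)$-bimodule, and that $R^{{}_{+i}\bfa} = R^{W_{\mu\langle{}_{+i}\bfa\rangle}}$, $R^\bfa = R^{W_{\mu\langle\bfa\rangle}}$, and $R^{\bfa^{+i}} = R^{W_{\mu\langle\bfa\rangle}\cap W_{\mu\langle{}_{+i}\bfa\rangle}}$ — i.e.\ this bimodule is exactly one of the elementary ``one-step'' singular Soergel bimodules $R^I\hookrightarrow R^K\hookleftarrow R^J$ appearing in the definition of $\bfS\frakB{im}_\W$. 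So the content of the lemma is that $\widehat{\bbV}$ takes the corresponding projective functor to this bimodule up to grading shift, which is the type $\B/\C$ avatar of Soergel's \cite{Soe92} identification of translation functors with tensoring by $R^K$.

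The key steps, in order: (1) By Theorem~\ref{thm:HC}(b) and the discussion of projective functors, ${}_{\mu\langle{}_{+i}\bfa\rangle}\Phi(V)_{\mu\langle\bfa\rangle}$ is the projective object in ${}_\lambda\widehat\calH_\mu$ (with $\lambda = \mu\langle{}_{+i}\bfa\rangle$, $\mu=\mu\langle\bfa\rangle$) representing the translation functor $M\mapsto \pr_\lambda(V\otimes M)$ on the block of $\mu\langle\bfa\rangle$. (2) By Corollary~\ref{cor:HC}, $\widehat\bbV$ is fully faithful on ${}_\lambda\widehat\calP_\mu$; so it suffices to identify the object $\widehat\bbV({}_\lambda\Phi(V)_\mu)$ in $\widehat R^\lambda\bimod\widehat R^\mu$ up to isomorphism. (3) Recall that on the other side, translation functors between singular blocks, transported through $\bbV$, act by tensoring over $\widehat R^\mu$ with the completed bimodule $\widehat R^K$ where $K = W_\lambda\cap W_\mu$ — this is Soergel's computation of $\bbV$ on wall-crossing/translation functors (\cite[Theorem~10, Proposition~13]{Soe92}, adapted to the singular setting in \cite{Str04}). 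In the case at hand, $W_\mu = W_{I_\bfa}$ and $W_\lambda = W_{I_{{}_{+i}\bfa}}$, and their intersection is $W_{I_{\bfa^{+i}}}$, whence $\widehat\bbV({}_\lambda\Phi(V)_\mu) \cong \widehat{R^{\bfa^{+i}}}$ as a bimodule. (4) Finally, match the grading shift: the grading on $\scrE_i1_\bfa$ is normalized (as in Section~\ref{sec:type-b/c}) so that the shift is $\{1+a_i-a_{i+1}\}$, which corresponds on the Harish-Chandra side to half the difference of the Gelfand--Kirillov dimensions of the two blocks (equivalently $\ell(w_{I_\bfa}) - \ell(w_{I_{\bfa^{+i}}})$ type corrections). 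Tracking this shift through the identification $\widehat\bbV$ and comparing with the definition of $\widehat\scrE_i1_\bfa$ as the completion of $\scrE_i1_\bfa$ gives the first equality of \eqref{eq:image}; the second is entirely parallel, using the weight $-\epsilon_k$ of $V$ in place of $+\epsilon_k$, which produces ${}_{-i}\bfa$ and the shift $\{1+a_{i-1}-a_i\}$.

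I would then remark that the only subtlety specific to types $\B/\C$ — the powers of $2$ flagged in the Remark after Example~\ref{ex:demazure} and in the Remark on $\sigma_\jmath,\omega_\jmath$ — does not actually affect this lemma, because $\widehat\bbV$ is defined intrinsically (sending the simple of maximal GK-dimension to $\bbC$) and depends only on the underlying Weyl group $\W_m$ and its reflection representation $\frakt$, not on the choice of root system; the discrepancy in the $d_I$ only enters when one writes down explicit Frobenius/Casimir data, not in the bimodule identification itself. The main obstacle is step (3): carefully transcribing Soergel's identification of $\bbV$ on projective functors from the regular-to-singular case in \cite{Soe92, Str04} to the general singular-to-singular case needed here, and in particular verifying that tensoring with $V$ and then projecting corresponds precisely to the ``elementary'' bimodule $R^{\bfa^{+i}}$ (and not some larger Bott--Samelson-type object) — this requires knowing that the relevant translation functor is a ``single wall'' translation, which follows from the fact that ${}_{+i}\bfa$ differs from $\bfa$ by moving a single index $a_i\mapsto a_i+1$, so that $W_\lambda$ and $W_\mu$ are parabolic subgroups whose intersection is again parabolic of the expected corank. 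Once that structural point is pinned down, the grading bookkeeping in step (4) is routine.
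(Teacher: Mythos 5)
Your outline is the ``direct Soergel identification'' route, and its step~(3) --- that $\widehat{\bbV}$ sends the translation bimodule to the elementary singular Soergel bimodule $\widehat{R}^{\bfa^{+i}}$ rather than to some larger object --- is the entire content of the lemma, and you do not actually establish it. The references you lean on do not cover the case needed here: Soergel's computation of $\bbV$ on projective functors in \cite{Soe92} is for the regular block, and its singular extension \cite[Lemma~2.3(2)]{Str04} only treats translations between \emph{nested} parabolics ($\P_\lambda\subset\P_\mu$ or vice versa), whereas here $W_{\mu\langle{}_{+i}\bfa\rangle}$ and $W_{\mu\langle\bfa\rangle}$ are two incomparable parabolic subgroups. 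You correctly flag this as ``the main obstacle,'' but flagging it is not closing it; as written, the proof assumes its conclusion at the decisive step. (Your peripheral remarks --- that the intersection of the two parabolics is $W_{I_{\bfa^{+i}}}$, and that the type $\B$ vs.\ $\C$ powers of $2$ are irrelevant because $\widehat{\bbV}$ depends only on the Weyl group --- are fine, but they do not substitute for the identification itself.)

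The paper closes exactly this gap by a rank argument instead of a direct citation. One knows a priori that $\widehat{\bbV}({}_{\mu\langle{}_{+i}\bfa\rangle}\Phi(V)_{\mu\langle\bfa\rangle})$ is \emph{some} completed singular Soergel bimodule, and one computes its rank as a free left $\widehat{R}^{{}_{+i}\bfa}$-module: it equals the number $\ell$ of weights $\nu$ of $V$ with $\mu\langle\bfa\rangle+\nu$ in the dot-orbit of $\mu\langle{}_{+i}\bfa\rangle$. By Williamson's classification of indecomposable singular Soergel bimodules by double cosets, together with the non-negativity and constant term $1$ of the parabolic Kazhdan--Lusztig polynomials $p^{\bfa}_{w,u}$, the indecomposable bimodule attached to a double coset $w$ has left rank at least the number of cosets $uW_\bfa\leq wW_\bfa$ in Bruhat order; this is $\ell$ for the identity double coset and strictly greater than $\ell$ for every other coset. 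Hence $\widehat{\scrE}_i1_\bfa$ is the \emph{unique} such bimodule of rank $\ell$, forcing the isomorphism. If you want to salvage your route, you would have to supply the missing singular-to-singular generalization yourself (e.g.\ by factoring the translation through an intermediate weight $\nu$ with $\P_\nu=\P_\lambda\cap\P_\mu$ and arguing indecomposability of the resulting bimodule), which amounts to comparable work; the rank count is the cleaner way to pin down the answer.
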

\begin{proof}
The image $\widehat{\bbV}({}_{\mu\langle{}_{+i}\bfa\rangle}\Phi(V)_{\mu\langle\bfa\rangle})$ is a
completed singular Soergel bimodule.  Its rank as a left module over
$\widehat{R}^{{}_{+i}\bfa}$ is the number $\ell$ of weights $\nu$ of $V$ such
that $\mu\langle\bfa\rangle+\nu$ is in the dot-orbit of $\mu\langle{}_{+i}\bfa\rangle$.
This is precisely $a_i-a_{i-1}$ if $i>\hf$ and $2a_{\hf}$ if
$i=\hf$.  

We claim that $\widehat{\scrE}_i1_\bfa$ is the only completed singular Soergel
bimodule with this property.  By \cite[Theorem 1]{WilSSB}, the
indecomposable singular Soergel bimodules are in bijection with the
cosets $W_{{}_{+i}\bfa}\backslash
W/W_\bfa$.  By \cite[Section 7.5]{WilSSB}, the rank 
of the bimodule associated to a longest double coset representative
$w$ as a free left $\widehat{R}^{{}_{+i}\bfa}$ module
is  the sum $\sum_{u\in
  W/W_{\bfa}}p^{\bfa}_{w,u}(1)$ where $p^{\bfa}_{w,u}(x)$ is the
parabolic Kazhdan-Lusztig polynomial. The constant term of $
p^{\bfa}_{w,u}(x)$ is $1$, and all its coefficients are non-negative by
\cite[3.11, 4.1]{Deodhar}, so $p^{\bfa}_{w,u}(1)\geq 1$ whenever
$u W_\bfa\leq w W_\bfa$ in Bruhat order.  Thus, the rank of an indecomposable
singular Soergel bimodule over $\widehat{R}^{{}_{+i}\bfa}$
corresponding to $w$ is at least the number of left cosets less than
$w W_\bfa$ in Bruhat order.  This number is $\ell$ for the double coset of the
identity, and thus $>\ell$ for any other coset.  

Thus, $\widehat{\scrE}_i1_\bfa$ is the unique singular Soergel
bimodule with rank $\ell$, which  shows the equality
$\widehat{\bbV}({}_{\mu\langle{}_{+i}\bfa\rangle}\Phi(V)_{\mu\langle\bfa\rangle})\cong
\widehat{\scrE}_i1_\bfa$.  The same argument with left and right hand actions reversed shows
that $\widehat{\bbV}({}_{\mu\langle{}_{-i}\bfa\rangle}\Phi(V)_{\mu\langle\bfa\rangle})\cong \widehat{\scrF}_i1_\bfa$.
\end{proof}

For $\mu=\mu\langle\bfa\rangle$, $\lambda=\mu\langle\bfb\rangle$, recall that by Proposition \ref{prop:iFl-Grothendieck}(a) the category
$\frakF(\bfa,\bfb)$ defined in Section \ref{sec:type-b/c} is a full subcategory in $R^\bfa\operatorname{-mod-}R^\bfb$, hence we have an obvious completion functor $\ \widehat{}: \frakF(\bfa,\bfb)\to \widehat{R}^\bfa\operatorname{-mod-}\widehat{R}^\bfb$.
\begin{lemma}
 There exists a unique functor $\bbU: \frakF(\bfa,\bfb)\to {}_\lambda\widehat{\calP}_\mu$ such that $\widehat{\bbV}\circ\bbU: \frakF(\bfa,\bfb)\to \widehat{R}^\bfa\operatorname{-mod-}\widehat{R}^\bfb$ is the completion functor.
\end{lemma}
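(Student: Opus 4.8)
The key point is that $\widehat{\bbV}$ is fully faithful on ${}_\lambda\widehat{\calP}_\mu$ by Corollary~\ref{cor:HC}, so it identifies ${}_\lambda\widehat{\calP}_\mu$ with a full subcategory of $\widehat{R}^\lambda\operatorname{-mod-}\widehat{R}^\mu$. The plan is therefore to show that the essential image of this full subcategory contains the completion of every object of $\frakF(\bfa,\bfb)$, and then to transport morphisms back along the equivalence $\widehat{\bbV}$ onto its image. Uniqueness of $\bbU$ is then immediate: if $\widehat{\bbV}\circ\bbU$ and $\widehat{\bbV}\circ\bbU'$ agree with the completion functor, then $\widehat{\bbV}\circ\bbU\cong\widehat{\bbV}\circ\bbU'$, and full faithfulness of $\widehat{\bbV}$ forces $\bbU\cong\bbU'$ (and likewise on morphisms). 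So the whole content is existence.

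For existence, first I would treat the generating $1$-morphisms. By definition, every object of $\frakF_{r,m}$ is a direct summand of a composition of the bimodules $\scrE_i 1_\bfc$, $\scrF_i 1_\bfc$, $1_\bfc$, and the previous lemma identifies $\widehat{\scrE}_i 1_\bfc = \widehat{\bbV}({}_{\mu\langle{}_{+i}\bfc\rangle}\Phi(V)_{\mu\langle\bfc\rangle})$ and $\widehat{\scrF}_i 1_\bfc = \widehat{\bbV}({}_{\mu\langle{}_{-i}\bfc\rangle}\Phi(V)_{\mu\langle\bfc\rangle})$, with $1_\bfc$ the image of the identity Harish-Chandra bimodule $\widehat{U}_{\mu\langle\bfc\rangle}$. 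Next I would use Lemma~\ref{lem:tensor}: tensor product over $\widehat{U}_\mu$ carries ${}_\lambda\widehat{\calP}_\mu \times {}_\mu\widehat{\calP}_\nu$ into ${}_\lambda\widehat{\calP}_\nu$, and $\widehat{\bbV}$ of a tensor product is the tensor product of the $\widehat{\bbV}$'s over $\widehat{R}^\mu$, naturally in both variables. Since the composition of $1$-morphisms in $\frakF_{r,m}$ is exactly the tensor product of bimodules, an induction on the length of a word in the $\scrE_i$, $\scrF_i$ shows that the completion of any Bott--Samelson-type composition lies in the essential image of $\widehat{\bbV}|_{{}_\lambda\widehat{\calP}_\mu}$, and that this identification is compatible with the (completed) bimodule multiplication maps. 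Because ${}_\lambda\widehat{\calP}_\mu$ is closed under direct summands and direct sums by construction, the same holds for all objects of $\frakF(\bfa,\bfb)$, which are by definition full subobjects cut out inside such compositions.

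Having produced, for each object $X$ of $\frakF(\bfa,\bfb)$, a (canonical up to unique isomorphism) object $\bbU(X)\in{}_\lambda\widehat{\calP}_\mu$ with $\widehat{\bbV}(\bbU(X))\cong\widehat{X}$, I would define $\bbU$ on a morphism $f\colon X\to Y$ in $\frakF(\bfa,\bfb)$ to be the unique morphism $\bbU(f)\colon\bbU(X)\to\bbU(Y)$ in ${}_\lambda\widehat{\calP}_\mu$ whose image under $\widehat{\bbV}$ is $\widehat{f}$; such a morphism exists and is unique because $\widehat{\bbV}$ is full and faithful on ${}_\lambda\widehat{\calP}_\mu$, and because $\frakF(\bfa,\bfb)\to\widehat{R}^\bfa\operatorname{-mod-}\widehat{R}^\bfb$ and $\widehat{\bbV}$ both land in the same hom-spaces. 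Functoriality of $\bbU$ (preservation of composition and identities) follows by applying the faithful functor $\widehat{\bbV}$ and using that the completion functor is itself a functor. The main obstacle is the bookkeeping in the inductive step: one must check that the isomorphisms $\widehat{\bbV}(\bbU(X))\cong\widehat{X}$ can be chosen coherently, i.e.\ that the natural isomorphism of Lemma~\ref{lem:tensor} is genuinely functorial and associative, so that the choices made at each stage of the induction glue; this is exactly the kind of compatibility Lemma~\ref{lem:tensor} is designed to provide, and the remaining verification (that the completion functor $\frakF(\bfa,\bfb)\to\widehat{R}^\bfa\operatorname{-mod-}\widehat{R}^\bfb$ actually factors through the essential image, with matching hom-spaces) is formal given Corollary~\ref{cor:HC}.
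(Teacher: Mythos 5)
Your proposal is correct and follows essentially the same route as the paper: Corollary~\ref{cor:HC} makes $\widehat{\bbV}$ an equivalence onto its image, so everything reduces to showing that the completion of each object of $\frakF(\bfa,\bfb)$ lies in $\widehat{\bbV}({}_\lambda\widehat{\calP}_\mu)$, which follows from the identification \eqref{eq:image} of the generators with images of translation Harish-Chandra bimodules together with Lemma~\ref{lem:tensor} for compositions and closure under direct summands. The paper's own proof is just a two-sentence version of exactly this argument; your extra discussion of coherence and functoriality is a reasonable elaboration of what the paper leaves implicit.
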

\begin{proof}
By Corollary \ref{cor:HC}, the functor $\widehat{\bbV}$ induces an equivalence
between ${}_\lambda\widehat{\calP}_\mu$ and its image. Hence to prove the lemma,
it is enough to prove that the image of $\frakF(\bfa,\bfb)$ under the
completion functor lands in $\bbV({}_\lambda\widehat{\calP}_\mu)$. This is a
consequence of \eqref{eq:image} and Lemma \ref{lem:tensor}.
\end{proof}

This shows that the 2-category $\frakF_{r,m}$ has $2$-representations given by
sending $\bfa$ to the category ${}_{\mu\langle\bfa\rangle}\mathcal{M}$, or more
generally, to any subcategory of ${}_{\mu\langle\bfa\rangle}\mathcal{M}$ closed
under the action of projective functors.  Composing with $\Gamma: \frakU^\jmath_r
\rightarrow  \frakF_{r,m}$ from \eqref{eq:Gamma}, we obtain the
following theorem. 

\begin{thm}
\label{thm:actionO}
  The category $\frakU^\jmath_r$ has a representation sending
  $\lambda$ in $\X_\jmath$ to the category ${}_{\mu\langle\bfa\rangle}\mathcal{M}$ if $\lambda=\lambda(\bfa)$ for $\bfa\in\Sigma_{r,m}$, and to zero otherwise.  This
  action descends to the intersection of ${}_{\mu\langle\bfa\rangle}\mathcal{M}$
  with any subcategory of $U\mod$ which is closed under the action of projective functors, including:
  \begin{itemize}
\item the subcategory of finite length $U$-modules,
  \item the subcategory of finite dimensional $U$-modules,
\item  the subcategory of modules locally finite for a subalgebra $\mathfrak{k}\subset \frakg$,
\item the BGG category $\calO$ and its parabolic generalizations $\calO^{\frakp}$,
\item the subcategory of projective-injective modules in these categories.
  \end{itemize}
\end{thm}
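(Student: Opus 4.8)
The strategy is to deduce everything from the $2$-functor $\Gamma\colon \frakU^\jmath_r \to \frakF_{r,m}$ of \eqref{eq:Gamma} together with the $2$-representation of $\frakF_{r,m}$ on completed Harish-Chandra bimodules constructed in the preceding two lemmas. First I would record the key identification: for $\bfa\in\Sigma_{r,m}$ we have $R^{\bfa} = R^{W_{\mu\langle\bfa\rangle}}$, and by the lemma just proved the category $\frakF_{r,m}(\bfa,\bfb)$ maps via $\bbU$ into ${}_{\mu\langle\bfb\rangle}\widehat\calP_{\mu\langle\bfa\rangle}$ compatibly with $\widehat\bbV$ and with tensor product (Lemma~\ref{lem:tensor}). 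Since the objects of ${}_{\mu\langle\bfa\rangle}\widehat\calP_{\mu\langle\bfa'\rangle}$ represent projective functors ${}_{\mu\langle\bfa\rangle}\calM \to {}_{\mu\langle\bfa'\rangle}\calM$ (via $M\mapsto {}_{\mu\langle\bfa'\rangle}\Phi(E)_{\mu\langle\bfa\rangle}\otimes_{\widehat U_{\mu\langle\bfa\rangle}}\widehat M$ on completions), and since composition of such functors corresponds to tensor product of the representing bimodules, the assignment $\bfa\mapsto {}_{\mu\langle\bfa\rangle}\calM$, $1_\bfa$-generated $1$-morphisms $\mapsto$ the corresponding projective functors, defines a $2$-representation of $\frakF_{r,m}$ in the $2$-category of $\bbC$-linear categories. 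Composing with $\Gamma$ gives the desired $2$-representation of $\frakU^\jmath_r$, sending $\lambda(\bfa)\mapsto {}_{\mu\langle\bfa\rangle}\calM$ and $\lambda\mapsto 0$ for $\lambda\notin\Sigma_{r,m}$, with $\calE_i$, $\calF_i$ acting by the translation (projective) functors ${}_{\mu\langle{}_{+i}\bfa\rangle}\Phi(V)_{\mu\langle\bfa\rangle}\otimes-$ and ${}_{\mu\langle{}_{-i}\bfa\rangle}\Phi(V)_{\mu\langle\bfa\rangle}\otimes-$, which are blocks of translation functors by \eqref{eq:image}.

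Next I would verify the ``descends to'' assertions. Each of the listed subcategories $\calD\subset U\mod$ is, by classical facts, closed under the action of any projective functor: for finite length modules and for $\calO$ and parabolic $\calO^{\frakp}$ this is Bernstein--Gelfand \cite{BG80} and the standard theory of projective functors; for finite dimensional modules and for $\mathfrak{k}$-locally finite modules it is immediate from the fact that tensoring with a finite dimensional module preserves finite dimensionality and $\mathfrak{k}$-local finiteness; for projective-injective objects one uses that a projective functor has both a left and a right adjoint which is again a projective functor, so it is exact and preserves both projectives and injectives in these categories. Intersecting ${}_{\mu\langle\bfa\rangle}\calM$ with $\calD$ and noting that all the $2$-morphisms in the image of $\Gamma$ act by natural transformations between (restrictions of) projective functors, the $2$-representation restricts to $\calD\cap{}_{\mu\langle\bfa\rangle}\calM$. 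Finally one identifies $\calO$ (resp.\ $\calO^{\frakp}$) as a direct sum over integral blocks, each of which is of the form $\calD\cap{}_{\mu\langle\bfa\rangle}\calM$ for a suitable $\bfa$ once $r$ is large enough (for $\frakg=\fraksp_{2m}$ every integral central character arises; for $\frakg=\frakso_{2m+1}$ the relevant blocks lie in $\sum_a(\bbZ+\hf)\epsilon_a$ as explained in the text), so that category $\calO$ itself carries the action.

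The main obstacle is purely bookkeeping rather than conceptual: one must check that the $2$-representation of $\frakF_{r,m}$ is \emph{well defined}, i.e.\ that every relation among $2$-morphisms in $\frakF_{r,m}\subset\bfS\frakB{im}_\W$ is respected by the assignment to natural transformations of projective functors. This is exactly the content of Corollary~\ref{cor:HC} and Lemma~\ref{lem:tensor}: the fully faithful functor $\widehat\bbV$ identifies ${}_\lambda\widehat\calP_\mu$ with a full subcategory of $\widehat R^\lambda\bimod\widehat R^\mu$ intertwining composition with tensor product, so a priori any $2$-morphism between $1$-morphisms of $\frakF_{r,m}$, being a morphism of the corresponding (completed) singular Soergel bimodules, lifts uniquely to a morphism in ${}_\lambda\widehat\calP_\mu$, hence to a natural transformation of projective functors; and relations, being relations in $\widehat R^\lambda\bimod\widehat R^\mu$, automatically hold. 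One should be a little careful that the grading shifts built into $\scrE_i1_\bfa$, $\scrF_i1_\bfa$ are harmless after completion (they disappear, which is why we pass to completions), and that the passage $M\mapsto\widehat M$ from $\frakF(\bfa,\bfb)$ to $\widehat R^\bfa\bimod\widehat R^\bfb$ is faithful on the relevant full subcategory, which follows since $R^\bfa$ is a polynomial ring and the bimodules in question are free of finite rank. With these points in hand, the theorem follows by composing functors.
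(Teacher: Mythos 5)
Your proposal is correct and follows essentially the same route as the paper: the $2$-representation of $\frakF_{r,m}$ on the categories ${}_{\mu\langle\bfa\rangle}\calM$ is obtained from the functor $\bbU$ into completed projective functors (via the full faithfulness of $\widehat{\bbV}$, the identification \eqref{eq:image}, and Lemma~\ref{lem:tensor}), and the theorem is then the composition with $\Gamma$, with the descent to the listed subcategories being exactly the observation that each is closed under projective functors. The paper leaves these verifications implicit, so your write-up merely makes explicit what the text asserts.
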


Tracing through the definitions, this action sends the 1-morphisms $\calE_{\pm i}$
to the translation functor from the block ${}_{\mu\langle\bfa\rangle}\mathcal{M}$
to  ${}_{{}_{\pm i}\mu\langle\bfa\rangle}\mathcal{M}$.  
By \eqref{eq:image}, these are always given by summands of the functor $V\otimes-$ of tensor product with the defining representation.
In future work, we will expand the discussion of this categorical action in greater detail and describe further applications.

\medskip

\appendix

\section{Categorification of the $\jmath$-Serre relations}\label{sec:jserre}

In this appendix we shall derive some bubble slide formulas.
We then prove the identities \eqref{eq:prop:jserre1} and \eqref{eq:prop:jserre2}.
This completes the proof of Proposition~\ref{prop:jserre} on the categorification of the $\jmath$-Serre relations.

\subsection{Bubble slides} 

We first provide several bubble slide formulas for $\frakU^\jmath$, which are the counterparts of Lauda's bubble slide formulas \cite[Propositions~5.6, 5.7]{L}.
We recall our convention that all the strands without labels should be viewed as labeled by $\hf$. 

\begin{lemma}[Bubble slides] \label{lem:bubbleslides}
The following relations hold for all $\lambda \in \X_\jmath$ (recall $ \lambda_{\hf} = \langle \thal_\hf^\vee, \lambda \rangle$):
\begin{align}
	\mathord{
		\begin{tikzpicture}[baseline= -4]
 			\draw[-,thick,darkred] (.2,0.2) to[out=180,in=90] (0,0);
			 \draw[-,thick,darkred] (0,0) to[out=-90,in=180]  (.2,-0.2);
 			 \draw[->,thick,darkred] (0.4,0) to[out=90,in=0] (.2,.2);
  			 \draw[-,thick,darkred] (.2,-0.2) to[out=0,in=-90] node[label={[shift={(0,-0.75)}] $\scriptstyle -\lambda_{\hf}+\alpha-3$}] {$\bullet$} (0.4,0);			
  			 \node at (0.8,0.05) {$\scriptstyle{\lambda}$};
			\draw[<-,thick,darkred](-.4,.4) to node  {$\bullet$} (-.4,-.4);
		\end{tikzpicture}
			}
		+
			\mathord{
		\begin{tikzpicture}[baseline= -4]
 			\draw[-,thick,darkred] (.2,0.2) to[out=180,in=90] (0,0);
 			 \draw[->,thick,darkred] (0.4,0) to[out=90,in=0] (.2,.2);
 			\draw[-,thick,darkred] (0,0) to[out=-90,in=180]   (.2,-0.2);
  			\draw[-,thick,darkred] (.2,-0.2) to[out=0,in=-90] node[label={[shift={(0,-0.75)}] $\scriptstyle  -\lambda_{\hf}-2+\alpha$}] {$\bullet$} (0.4,0);			
  			 \node at (0.8,0.05) {$\scriptstyle{\lambda}$};
			\draw[<-,thick,darkred](-.4,.4) to (-.4,-.4);
		\end{tikzpicture}
			}
	&	= 
		\sum^{\alpha}_{s = 0} (s+1)  \!\!\!\!
			\mathord{
		\begin{tikzpicture}[baseline=-4]
 			\draw[-,thick,darkred] (.2,0.2) to[out=180,in=90] (0,0);
 			 \draw[->,thick,darkred] (0.4,0) to[out=90,in=0] (.2,.2);
 			\draw[-,thick,darkred] (0,0) to[out=-90,in=180]  node[label={[shift={(-0.2,-0.75)}] $\scriptstyle -\lambda_{\hf}-5+\alpha-s$}] {$\bullet$}  (.2,-0.2);
  			\draw[-,thick,darkred] (.2,-0.2) to[out=0,in=-90]  (0.4,0);  
  			 \node at (1.2,0.05) {$\scriptstyle{\lambda}$};
			\draw[<-,thick,darkred](0.8,.4) to node[label={[shift={(0.2, -0.25)}] $\scriptstyle s$}] {$\bullet$} (0.8,-.4);
		\end{tikzpicture}
			}; \label{eq:bubbleslides1}\\
	\mathord{
		\begin{tikzpicture}[baseline= -4]
 			\draw[-,thick,darkred] (.2,0.2) to[out=180,in=90] (0,0);
			 \draw[-,thick,darkred] (0,0) to[out=-90,in=180] node[label={[shift={(0,-0.75)}] $\scriptstyle \lambda_{\hf} +\alpha + 1$}] {$\bullet$} (.2,-0.2);
 			 \draw[<-,thick,darkred] (0.4,0) to[out=90,in=0] (.2,.2);
  			 \draw[-,thick,darkred] (.2,-0.2) to[out=0,in=-90] (0.4,0);			
  			 \node at (1.2,0.05) {$\scriptstyle{\lambda}$};
			\draw[<-,thick,darkred](0.8,.4) to node {$\bullet$} (0.8,-.4);
		\end{tikzpicture}
			}
		+
			\mathord{
		\begin{tikzpicture}[baseline= -4]
 			\draw[-,thick,darkred] (.2,0.2) to[out=180,in=90] (0,0);
 			 \draw[<-,thick,darkred] (0.4,0) to[out=90,in=0] (.2,.2);
 			\draw[-,thick,darkred] (0,0) to[out=-90,in=180]  node[label={[shift={(0,-0.75)}] $\scriptstyle \lambda_{\hf}+2+\alpha$}] {$\bullet$} (.2,-0.2);
  			\draw[-,thick,darkred] (.2,-0.2) to[out=0,in=-90] (0.4,0);			
  			 \node at (1.2,0.05) {$\scriptstyle{\lambda}$};
			\draw[<-,thick,darkred](0.8,.4) to (0.8,-.4);
		\end{tikzpicture}
			}
		& = 
		\sum^{\alpha}_{s = 0} (s+1)
			\mathord{
		\begin{tikzpicture}[baseline= -4]
 			\draw[-,thick,darkred] (.2,0.2) to[out=180,in=90] (0,0);
 			 \draw[<-,thick,darkred] (0.4,0) to[out=90,in=0] (.2,.2);
 			\draw[-,thick,darkred] (0,0) to[out=-90,in=180] (.2,-0.2);
  			\draw[-,thick,darkred] (.2,-0.2) to[out=0,in=-90] node[label={[shift={(0.1,-0.75)}] $\scriptstyle \lambda_{\hf}-1+\alpha-s$}] {$\bullet$}  (0.4,0);			
  			 \node at (0.8,0.05) {$\scriptstyle{\lambda}$};
			\draw[<-,thick,darkred] (-0.4,.4) to node[label={[shift={(0.2, -0.25)}] $\scriptstyle s$}] {$\bullet$} (-0.4,-.4);
		\end{tikzpicture}
			}; \label{eq:bubbleslides2}
\end{align}
\begin{align}
 \mathord{
		\begin{tikzpicture}[baseline= -4]
 			\draw[-,thick,darkred] (.2,0.2) to[out=180,in=90] (0,0);
			 \draw[-,thick,darkred] (0,0) to[out=-90,in=180]  (.2,-0.2);
 			 \draw[<-,thick,darkred] (0.4,0) to[out=90,in=0] (.2,.2);
  			 \draw[-,thick,darkred] (.2,-0.2) to[out=0,in=-90] node[label={[shift={(0,-0.75)}] $\scriptstyle \lambda_{\hf}-1+\alpha$}] {$\bullet$} (0.4,0);			
  			 \node at (0.8,0.05) {$\scriptstyle{\lambda}$};
			\draw[<-,thick,darkred](-.4,.4) to   (-.4,-.4);
		\end{tikzpicture}
			} 
			&=
			\mathord{
		\begin{tikzpicture}[baseline= -4]
 			\draw[-,thick,darkred] (.2,0.2) to[out=180,in=90] (0,0);
 			 \draw[<-,thick,darkred] (0.4,0) to[out=90,in=0] (.2,.2);
 			\draw[-,thick,darkred] (0,0) to[out=-90,in=180] node[label={[shift={(0,-0.75)}] $\scriptstyle \lambda_{\hf} +\alpha-1$}] {$\bullet$} (.2,-0.2);
  			\draw[-,thick,darkred] (.2,-0.2) to[out=0,in=-90]   (0.4,0);			
  			 \node at (1.2,0.05) {$\scriptstyle{\lambda}$};
			\draw[<-,thick,darkred](0.8,.4) to node[label={[shift={(0.2, -0.25)}] $\scriptstyle 3$}] {$\bullet$} (0.8,-.4);
		\end{tikzpicture}
			}
-
		\mathord{
		\begin{tikzpicture}[baseline= -4]
 			\draw[-,thick,darkred] (.2,0.2) to[out=180,in=90] (0,0);
 			 \draw[<-,thick,darkred] (0.4,0) to[out=90,in=0]  (.2,.2);
 			\draw[-,thick,darkred] (0,0) to[out=-90,in=180] node[label={[shift={(0,-0.75)}] $\scriptstyle \lambda_{\hf} +\alpha $}] {$\bullet$} (.2,-0.2);
  			\draw[-,thick,darkred] (.2,-0.2) to[out=0,in=-90]  (0.4,0);			
  			 \node at (1.2,0.05) {$\scriptstyle{\lambda}$};
			\draw[<-,thick,darkred](0.8,.4) to node[label={[shift={(0.2, -0.25)}] $\scriptstyle 2$}] {$\bullet$} (0.8,-.4);
		\end{tikzpicture}
			}
-
		\mathord{
		\begin{tikzpicture}[baseline= -4]
 			\draw[-,thick,darkred] (.2,0.2) to[out=180,in=90] (0,0);
 			 \draw[<-,thick,darkred] (0.4,0) to[out=90,in=0]  (.2,.2);
 			\draw[-,thick,darkred] (0,0) to[out=-90,in=180] node[label={[shift={(0,-0.75)}] $\scriptstyle \lambda_{\hf} +\alpha +1$}] {$\bullet$} (.2,-0.2);
  			\draw[-,thick,darkred] (.2,-0.2) to[out=0,in=-90]  (0.4,0);			
  			 \node at (1.2,0.05) {$\scriptstyle{\lambda}$};
			\draw[<-,thick,darkred](0.8,.4) to node  {$\bullet$} (0.8,-.4);
		\end{tikzpicture}
			}
+
		\mathord{
		\begin{tikzpicture}[baseline= -4]
 			\draw[-,thick,darkred] (.2,0.2) to[out=180,in=90] (0,0);
 			 \draw[<-,thick,darkred] (0.4,0) to[out=90,in=0]  (.2,.2);
 			\draw[-,thick,darkred] (0,0) to[out=-90,in=180] node[label={[shift={(0,-0.75)}] $\scriptstyle \lambda_{\hf}+2+\alpha$}] {$\bullet$} (.2,-0.2);
  			\draw[-,thick,darkred] (.2,-0.2) to[out=0,in=-90]  (0.4,0);			
  			 \node at (1.2,0.05) {$\scriptstyle{\lambda}$};
			\draw[<-,thick,darkred](0.8,.4) to  (0.8,-.4);
		\end{tikzpicture}
			}; \label{eq:bubbleslides3} \\
\mathord{
		\begin{tikzpicture}[baseline= -4]
 			\draw[-,thick,darkred] (.2,0.2) to[out=180,in=90] (0,0);
 			 \draw[->,thick,darkred] (0.4,0) to[out=90,in=0]  (.2,.2);
 			\draw[-,thick,darkred] (0,0) to[out=-90,in=180] node[label={[shift={(0,-0.75)}] $\scriptstyle -\lambda_{\hf}-5+\alpha$}] {$\bullet$} (.2,-0.2);
  			\draw[-,thick,darkred] (.2,-0.2) to[out=0,in=-90]  (0.4,0);			
  			 \node at (1.2,0.05) {$\scriptstyle{\lambda}$};
			\draw[<-,thick,darkred](0.8,.4) to  (0.8,-.4);
		\end{tikzpicture}
			}
&=
		\mathord{
		\begin{tikzpicture}[baseline= -4]
 			\draw[-,thick,darkred] (.2,0.2) to[out=180,in=90] (0,0);
			 \draw[-,thick,darkred] (0,0) to[out=-90,in=180]  (.2,-0.2);
 			 \draw[->,thick,darkred] (0.4,0) to[out=90,in=0] (.2,.2);
  			 \draw[-,thick,darkred] (.2,-0.2) to[out=0,in=-90] node[label={[shift={(0,-0.75)}] $\scriptstyle -\lambda_{\hf} +\alpha-5$}] {$\bullet$} (0.4,0);			
  			 \node at (0.8,0.05) {$\scriptstyle{\lambda}$};
			\draw[<-,thick,darkred](-.4,.4) to  node[label={[shift={(-0.2, -0.25)}] $\scriptstyle 3$}] {$\bullet$} (-.4,-.4);
		\end{tikzpicture}
			} 
-
		\mathord{
		\begin{tikzpicture}[baseline= -4]
 			\draw[-,thick,darkred] (.2,0.2) to[out=180,in=90] (0,0);
			 \draw[-,thick,darkred] (0,0) to[out=-90,in=180]  (.2,-0.2);
 			 \draw[->,thick,darkred] (0.4,0) to[out=90,in=0] (.2,.2);
  			 \draw[-,thick,darkred] (.2,-0.2) to[out=0,in=-90] node[label={[shift={(0,-0.75)}] $\scriptstyle -\lambda_{\hf} +\alpha-4$}] {$\bullet$} (0.4,0);			
  			 \node at (0.8,0.05) {$\scriptstyle{\lambda}$};
			\draw[<-,thick,darkred](-.4,.4) to node[label={[shift={(-0.2, -0.25)}] $\scriptstyle 2$}] {$\bullet$}  (-.4,-.4);
		\end{tikzpicture}
			} 
-
		\mathord{
		\begin{tikzpicture}[baseline= -4]
 			\draw[-,thick,darkred] (.2,0.2) to[out=180,in=90] (0,0);
			 \draw[-,thick,darkred] (0,0) to[out=-90,in=180]  (.2,-0.2);
 			 \draw[->,thick,darkred] (0.4,0) to[out=90,in=0] (.2,.2);
  			 \draw[-,thick,darkred] (.2,-0.2) to[out=0,in=-90] node[label={[shift={(0,-0.75)}] $\scriptstyle -\lambda_{\hf} +\alpha-3$}] {$\bullet$} (0.4,0);			
  			 \node at (0.8,0.05) {$\scriptstyle{\lambda}$};
			\draw[<-,thick,darkred](-.4,.4) to  node  {$\bullet$} (-.4,-.4);
		\end{tikzpicture}
			} 
+
		\mathord{
		\begin{tikzpicture}[baseline= -4]
 			\draw[-,thick,darkred] (.2,0.2) to[out=180,in=90] (0,0);
			 \draw[-,thick,darkred] (0,0) to[out=-90,in=180]  (.2,-0.2);
 			 \draw[->,thick,darkred] (0.4,0) to[out=90,in=0] (.2,.2);
  			 \draw[-,thick,darkred] (.2,-0.2) to[out=0,in=-90] node[label={[shift={(0,-0.75)}] $\scriptstyle -\lambda_{\hf} +\alpha-2$}] {$\bullet$} (0.4,0);			
  			 \node at (0.8,0.05) {$\scriptstyle{\lambda}$};
			\draw[<-,thick,darkred](-.4,.4) to   (-.4,-.4);
		\end{tikzpicture}
			} \label{eq:bubbleslides4}.
\end{align}
\end{lemma}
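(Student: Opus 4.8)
The four bubble-slide identities \eqref{eq:bubbleslides1}--\eqref{eq:bubbleslides4} are the $\frakU^\jmath$-counterparts of Lauda's formulas \cite[Propositions~5.6,~5.7]{L}, and the overall strategy mirrors his: one pushes a dotted strand through a bubble using the local relations defining $\widetilde{\frakU}^\jmath$ (and, where needed, the quotient relation \eqref{Pi=1}), keeping careful track of the extra factor of $2^{\delta_{i,\hf}}$ that appears in \eqref{bubble1}--\eqref{bubble2}. Since every strand here is labeled by $\hf$, the half-integer bookkeeping is uniform, and the relevant input relations are the cyclicity relations \eqref{cyc-x}--\eqref{cyc-tau}, the quiver-Hecke relations \eqref{qha}--\eqref{qhalast} at $i=j=\hf$, the nodal relations \eqref{nodal}, and the $\jmath$-bicrossing relations \eqref{j-bicross down}--\eqref{j-bicross up}; the bubble relations \eqref{bubble1}--\eqref{bubble3} fix the values of the ``honest'' and fake bubbles.

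First I would establish \eqref{eq:bubbleslides1} and \eqref{eq:bubbleslides2}, which are the ``easier'' pair analogous to \cite[Prop.~5.6]{L}: here the dotted vertical strand and the bubble have the \emph{opposite} orientation pattern relative to \eqref{eq:bubbleslides3}--\eqref{eq:bubbleslides4}, so only the standard bicrossing/curl manipulations intervene and no term of \eqref{Pi=1} beyond the leading crossing term is produced. Concretely, one writes the bubble as a composite of a cup and a cap (cf.\ \eqref{nodal}), slides the dot across using \eqref{qha}, resolves the resulting curls with \eqref{cyc-x}, and collects the telescoping sum $\sum_{s=0}^\alpha(s+1)(\cdots)$; the coefficient $(s+1)$ arises exactly as in Lauda's computation from iterating the dot-slide $\alpha+1$ times and counting multiplicities. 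The factor $2$ in \eqref{bubble1} for $i=\hf$ is what forces the precise degree shifts $-\lambda_\hf+\alpha-3$ versus $-\lambda_\hf-2+\alpha$ on the two clockwise bubbles on the left-hand side, and one checks these by the degree table together with $\langle\thal^\vee_\hf,\alpha_\hf\rangle = 3$.

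Then \eqref{eq:bubbleslides3} and \eqref{eq:bubbleslides4} — the analogues of \cite[Prop.~5.7]{L} — are the main obstacle. For these the orientations are such that pushing the strand through the bubble genuinely invokes the full five- (resp.\ six-) term expansion coming from the $\jmath$-bicrossing relations \eqref{j-bicross down}--\eqref{j-bicross up} (and hence implicitly the triple-point relation \eqref{Pi=1}), which is why the right-hand sides are alternating sums of four consecutive dotted bubbles with integer coefficients $1,-1,-1,1$ rather than a single clean sum. The plan is: (i) express the left-hand bubble via a cup-cap and slide the interior dotted strand through one of the two crossings of the bubble using \eqref{j-bicross down} (resp.\ \eqref{j-bicross up}); (ii) each of the lower-order ``curl'' and ``double-curl'' terms of that relation produces a dotted bubble of a shifted label, while the leading honest-crossing term is re-bubbled; (iii) use \eqref{bubble3} to rewrite fake bubbles and \eqref{bubble1}--\eqref{bubble2} to normalize, absorbing the $2^{\delta_{\hf,\hf}}=2$ factors — this is precisely the source of the coefficient pattern $3,-2,-1,1$ (resp.\ $3,-2,-1,1$) multiplying the four bubbles. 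I expect the bookkeeping of which of the two crossings to slide through, and the resulting sign and degree shifts, to be the delicate point; one can cross-check the final coefficients by specializing $\alpha=0$ and comparing both sides against the already-known single-bubble relations, and by applying the duality $\psi_\jmath$ and the equivalence $\sigma_\jmath$ of Section~\ref{sec:symmetries} to pass between \eqref{eq:bubbleslides3} and \eqref{eq:bubbleslides4}, so that in fact only one of the latter two needs a fully independent computation.
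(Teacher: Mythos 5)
Your plan captures the right general inputs (the $\jmath$-bicross relations and the bubble relations), and you are right to start with \eqref{eq:bubbleslides1}--\eqref{eq:bubbleslides2}; for the cases where only real bubbles occur on the left, closing off \eqref{j-bicross down} (resp.\ \eqref{j-bicross up}) and iterating does give these two identities, essentially as you describe. But there are two structural misconceptions that would derail the execution. First, \eqref{eq:bubbleslides3} and \eqref{eq:bubbleslides4} are \emph{not} the hard part and do not require any new diagrammatic computation, let alone the triple-point relation \eqref{Pi=1}: the four-term right-hand side of \eqref{eq:bubbleslides3} (whose coefficients are $+1,-1,-1,+1$, with $3,2,1,0$ being dot counts, not coefficients as you wrote) regroups into three instances of the left-hand side of \eqref{eq:bubbleslides2} with shifted parameters, so \eqref{eq:bubbleslides3} is a purely formal linear recombination of \eqref{eq:bubbleslides2}; likewise \eqref{eq:bubbleslides4} follows formally from \eqref{eq:bubbleslides1}. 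In particular the whole lemma holds already in $\widetilde{\frakU}^\jmath$, which matters because it is used in the appendix to prove identities asserted in $\widetilde{\frakU}^\jmath$; invoking \eqref{Pi=1} here would be both unnecessary and logically circular for that purpose.

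Second, and more seriously, your plan has no mechanism for \eqref{eq:bubbleslides1} in the range $0\le\alpha\le\lambda_\hf+2$, where the bubbles on the left are \emph{fake}. A fake bubble is not an honest diagram through which one can slide dots by local moves; it is a formal symbol defined by the recursion coming from \eqref{bubble3} (equation \eqref{eq:deffake} in the paper). The only way to verify the slide for fake bubbles is to unfold that recursion and induct on $\alpha$, and doing so requires the already-established identity \eqref{eq:bubbleslides3}. So the correct logical order is: (i) \eqref{eq:bubbleslides2} for all $\alpha$ and \eqref{eq:bubbleslides1} for $\alpha\ge\lambda_\hf+3$ via the closed-off bicross relation; (ii) \eqref{eq:bubbleslides3} from \eqref{eq:bubbleslides2} by recombination; (iii) \eqref{eq:bubbleslides1} for small $\alpha$ by induction using \eqref{eq:deffake} and \eqref{eq:bubbleslides3}; (iv) \eqref{eq:bubbleslides4} from \eqref{eq:bubbleslides1}. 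Your linear ordering (``first 1 and 2, then 3 and 4'') cannot be completed as stated. Your suggestion to use $\sigma_\jmath$, $\psi_\jmath$ is sound only for reducing the case $\lambda_\hf\le-3$ to $\lambda_\hf\ge-2$, which is how the paper disposes of the remaining weights.
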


\begin{proof}
We shall first prove the case when $\lambda_{\hf} \ge -2$. Note that in this case only real bubbles appear on the left hand side of the equation \eqref{eq:bubbleslides2}, since bubbles with negative degree are set to be $0$. We shall proceed in the following steps:

\begin{itemize}
	\item	[(a)]		Prove \eqref{eq:bubbleslides1} for $\alpha \ge \lambda_{\hf} +3$, and prove \eqref{eq:bubbleslides2} for all $\alpha$;
	\item	[(b)]	Prove \eqref{eq:bubbleslides3} by using \eqref{eq:bubbleslides2};
	\item	[(c)]	To complete the proof of \eqref{eq:bubbleslides1} for $0 \le \alpha \le \lambda_{\hf}+2$ (fake bubbles), we use induction on $\alpha$ based on the definition of fake bubbles, with the help of \eqref{eq:bubbleslides3}.
	\item	[(d)]	Prove \eqref{eq:bubbleslides4} by using \eqref{eq:bubbleslides1} (similar to (b)).
\end{itemize}

	We now proceed with Step (a). \\
From the relation \eqref{j-bicross down}, we have (for $m \ge 0$)
\begin{align*}
\mathord{

			}.  
\end{align*}
Summing up the above 4 identities, we finish the proof of  \eqref{eq:bubbleslides1}. 

The proof of Step (d) that the identity \eqref{eq:bubbleslides4} follows from \eqref{eq:bubbleslides1} 
is entirely similar to the proof of Step~(b) that the \eqref{eq:bubbleslides3} follows by  \eqref{eq:bubbleslides2}. We skip the detail. 
\\

This finishes the proof of the bubble slide formulas for the case $\lambda_{\hf} \ge -2$. 
The case $\lambda_{\hf} \le -3$ is entirely similar, where the left hand side  of the identity \eqref{eq:bubbleslides1} involves only real bubble, hence can be easily proved, as well as the identity \eqref{eq:bubbleslides4}. The proof of the identities  \eqref{eq:bubbleslides2} and  \eqref{eq:bubbleslides3} follows a similar argument as above. 
Alternatively, we could simply apply symmetries of $\frakUj$ in Section~\ref{sec:symmetries}.
\end{proof}

\begin{cor}
The following relations hold for all $\lambda \in \X_\jmath$:
\begin{align*}
			\mathord{
		\begin{tikzpicture}[baseline= -4]
 			\draw[-,thick,darkred] (.2,0.2) to[out=180,in=90] (0,0);
 			 \draw[->,thick,darkred] (0.4,0) to[out=90,in=0] (.2,.2);
 			\draw[-,thick,darkred] (0,0) to[out=-90,in=180]   (.2,-0.2);
  			\draw[-,thick,darkred] (.2,-0.2) to[out=0,in=-90] node[label={[shift={(0,-0.75)}] $\scriptstyle  -\lambda_{\hf}-2+\alpha$}] {$\bullet$} (0.4,0);			
  			 \node at (0.8,0.05) {$\scriptstyle{\lambda}$};
			\draw[<-,thick,darkred](-.4,.4) to (-.4,-.4);
		\end{tikzpicture}
			}
	&	= 
		\sum^{\alpha}_{s = 0} \left \lceil \frac{s+1}{2} \right \rceil  \!\!\!\!
			\mathord{
		\begin{tikzpicture}[baseline=-4]
 			\draw[-,thick,darkred] (.2,0.2) to[out=180,in=90] (0,0);
 			 \draw[->,thick,darkred] (0.4,0) to[out=90,in=0] (.2,.2);
 			\draw[-,thick,darkred] (0,0) to[out=-90,in=180]  node[label={[shift={(-0.2,-0.75)}] $\scriptstyle -\lambda_{\hf}-5+\alpha-s$}] {$\bullet$}  (.2,-0.2);
  			\draw[-,thick,darkred] (.2,-0.2) to[out=0,in=-90]  (0.4,0);  
  			 \node at (1.2,0.05) {$\scriptstyle{\lambda}$};
			\draw[<-,thick,darkred](0.8,.4) to node[label={[shift={(0.2, -0.25)}] $\scriptstyle s$}] {$\bullet$} (0.8,-.4);
		\end{tikzpicture}
			};  \\
			\mathord{
		\begin{tikzpicture}[baseline= -4]
 			\draw[-,thick,darkred] (.2,0.2) to[out=180,in=90] (0,0);
 			 \draw[<-,thick,darkred] (0.4,0) to[out=90,in=0] (.2,.2);
 			\draw[-,thick,darkred] (0,0) to[out=-90,in=180]  node[label={[shift={(0,-0.75)}] $\scriptstyle \lambda_{\hf}+2+\alpha$}] {$\bullet$} (.2,-0.2);
  			\draw[-,thick,darkred] (.2,-0.2) to[out=0,in=-90] (0.4,0);			
  			 \node at (1.2,0.05) {$\scriptstyle{\lambda}$};
			\draw[<-,thick,darkred](0.8,.4) to (0.8,-.4);
		\end{tikzpicture}
			}
		& = 
		\sum^{\alpha}_{s = 0} \left\lceil \frac{s+1}{2} \right\rceil
			\mathord{
		\begin{tikzpicture}[baseline= -4]
 			\draw[-,thick,darkred] (.2,0.2) to[out=180,in=90] (0,0);
 			 \draw[<-,thick,darkred] (0.4,0) to[out=90,in=0] (.2,.2);
 			\draw[-,thick,darkred] (0,0) to[out=-90,in=180] (.2,-0.2);
  			\draw[-,thick,darkred] (.2,-0.2) to[out=0,in=-90] node[label={[shift={(0.1,-0.75)}] $\scriptstyle \lambda_{\hf}-1+\alpha-s$}] {$\bullet$}  (0.4,0);			
  			 \node at (0.8,0.05) {$\scriptstyle{\lambda}$};
			\draw[<-,thick,darkred] (-0.4,.4) to node[label={[shift={(0.2, -0.25)}] $\scriptstyle s$}] {$\bullet$} (-0.4,-.4);
		\end{tikzpicture}
			}.
\end{align*}
\end{cor}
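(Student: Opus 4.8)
The plan is to deduce both identities from the bubble slide formulas \eqref{eq:bubbleslides1}--\eqref{eq:bubbleslides2} of Lemma~\ref{lem:bubbleslides} by a short induction on $\alpha\geqslant 0$. I would carry out the argument for the first identity in detail; the second follows verbatim with \eqref{eq:bubbleslides2} in place of \eqref{eq:bubbleslides1}, and with the threshold case of \eqref{bubble1} replacing that of \eqref{bubble2} in the base case. To set up notation, write $L_\alpha$ for the left-hand side of the identity to be proved (a bubble carrying $-\lambda_\hf-2+\alpha$ dots next to a plain $\calE_\hf$-strand), $R_{\alpha,s}$ for the $s$-th summand of its right-hand side (a bubble carrying $-\lambda_\hf-5+\alpha-s$ dots next to a strand carrying $s$ dots), and $A_\alpha$ for the first summand on the left of \eqref{eq:bubbleslides1} (a bubble carrying $-\lambda_\hf-3+\alpha$ dots next to a strand carrying one dot). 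With this notation \eqref{eq:bubbleslides1} is the relation $A_\alpha+L_\alpha=\sum_{s=0}^{\alpha}(s+1)\,R_{\alpha,s}$.

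For the base case $\alpha=0$ I expect everything to be immediate: the bubble in $A_0$ has degree strictly below the vanishing threshold of \eqref{bubble2}, so $A_0=0$, and \eqref{eq:bubbleslides1} collapses to $L_0=R_{0,0}$, which is exactly the claimed identity since $\lceil 1/2\rceil=1$. (Alternatively one checks directly that $L_0$ and $R_{0,0}$ are the threshold bubbles of \eqref{bubble2} taken in the two ambient weights $\lambda$ and $\lambda+\alpha_\hf$, both of which reduce to $\mathrm{id}_{\calE_\hf 1_\lambda}$.)

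For the inductive step, the key observation is that $-\lambda_\hf-3+\alpha=-\lambda_\hf-2+(\alpha-1)$, so that $A_\alpha$ is precisely $L_{\alpha-1}$ with one extra dot placed on the adjacent vertical strand; here I would use that a floating bubble commutes with the dot on a neighbouring strand. Feeding in the inductive hypothesis $L_{\alpha-1}=\sum_{t=0}^{\alpha-1}\lceil\tfrac{t+1}{2}\rceil\,R_{\alpha-1,t}$, adding the dot (which leaves the bubble's degree unchanged and sends the strand from $t$ to $t+1$ dots), and re-indexing by $s=t+1$, I obtain $A_\alpha=\sum_{s=1}^{\alpha}\lceil\tfrac{s}{2}\rceil\,R_{\alpha,s}$. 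Substituting into \eqref{eq:bubbleslides1} and solving for $L_\alpha$, the coefficient of $R_{\alpha,s}$ becomes $(s+1)-\lceil\tfrac{s}{2}\rceil$, which equals $\lceil\tfrac{s+1}{2}\rceil$ for every $s\geqslant 0$; this is exactly the asserted right-hand side, completing the induction. The second identity is handled identically starting from \eqref{eq:bubbleslides2}.

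I do not anticipate a genuine obstacle: all the content is already in Lemma~\ref{lem:bubbleslides}, and what remains is bookkeeping. The one place requiring care is that the two sides of \eqref{eq:bubbleslides1} involve bubbles living in the two different ambient weights $\lambda$ and $\lambda+\alpha_\hf$ on either side of the strand, so that the vanishing and evaluation thresholds governed by \eqref{bubble1}--\eqref{bubble2} are shifted by $\langle\thal_\hf^\vee,\alpha_\hf\rangle$ between the two sides; keeping track of this shift is what makes the base case and the shift of dot labels in the re-indexing come out consistently.
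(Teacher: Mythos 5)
Your proof is correct and follows exactly the route the paper indicates for this corollary, namely induction on $\alpha$ using Lemma~\ref{lem:bubbleslides}: the base case is the threshold evaluation of \eqref{bubble2}, and the inductive step rests on the observation that the first summand on the left of \eqref{eq:bubbleslides1} is the previous left-hand side with one extra dot on the strand, together with the identity $(s+1)-\lceil s/2\rceil=\lceil (s+1)/2\rceil$. The details you supply (including the weight shift between the two sides of the strand) are the ones the paper omits, and they check out.
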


\begin{proof}
The proof follows from Lemma~\ref{lem:bubbleslides} by induction on $\alpha$.
\end{proof}

We can also formulate the downward arrow counterparts of the bubble slide relations in Lemma~\ref{lem:bubbleslides}. The proof is similar, hence shall be omitted. 

\begin{prop}\label{prop:down:bubbleslides}
The following relations hold for all $\lambda \in \X_\jmath$:
\begin{align*}
	\mathord{
		\begin{tikzpicture}[baseline= -4]
 			\draw[-,thick,darkred] (.2,0.2) to[out=180,in=90] (0,0);
			 \draw[-,thick,darkred] (0,0) to[out=-90,in=180]  (.2,-0.2);
 			 \draw[<-,thick,darkred] (0.4,0) to[out=90,in=0] (.2,.2);
  			 \draw[-,thick,darkred] (.2,-0.2) to[out=0,in=-90] node[label={[shift={(0,-0.75)}] $\scriptstyle \lambda_{\hf}+\alpha-2$}] {$\bullet$} (0.4,0);			
  			 \node at (0.8,0.05) {$\scriptstyle{\lambda}$};
			\draw[->,thick,darkred](-.4,.4) to node  {$\bullet$} (-.4,-.4);
		\end{tikzpicture}
			}
		+
			\mathord{
		\begin{tikzpicture}[baseline= -4]
 			\draw[-,thick,darkred] (.2,0.2) to[out=180,in=90] (0,0);
 			 \draw[-,thick,darkred] (0.4,0) to[out=90,in=0] (.2,.2);
 			\draw[<-,thick,darkred] (0,0) to[out=-90,in=180]   (.2,-0.2);
  			\draw[-,thick,darkred] (.2,-0.2) to[out=0,in=-90] node[label={[shift={(0,-0.75)}] $\scriptstyle   \lambda_{\hf}-1+\alpha$}] {$\bullet$} (0.4,0);			
  			 \node at (0.8,0.05) {$\scriptstyle{\lambda}$};
			\draw[->,thick,darkred](-.4,.4) to (-.4,-.4);
		\end{tikzpicture}
			}
	&	= 
		\sum^{\alpha}_{s = 0} (s+1)  \!\!\!\!
			\mathord{
		\begin{tikzpicture}[baseline=-4]
 			\draw[-,thick,darkred] (.2,0.2) to[out=180,in=90] (0,0);
 			 \draw[<-,thick,darkred] (0.4,0) to[out=90,in=0] (.2,.2);
 			\draw[-,thick,darkred] (0,0) to[out=-90,in=180]  node[label={[shift={(-0.2,-0.75)}] $\scriptstyle \lambda_{\hf}-4+\alpha-s$}] {$\bullet$}  (.2,-0.2);
  			\draw[-,thick,darkred] (.2,-0.2) to[out=0,in=-90]  (0.4,0);  
  			 \node at (1.2,0.05) {$\scriptstyle{\lambda}$};
			\draw[->,thick,darkred](0.8,.4) to node[label={[shift={(0.2, -0.25)}] $\scriptstyle s$}] {$\bullet$} (0.8,-.4);
		\end{tikzpicture}
			}; \\
	\mathord{
		\begin{tikzpicture}[baseline= -4]
 			\draw[-,thick,darkred] (.2,0.2) to[out=180,in=90] (0,0);
			 \draw[-,thick,darkred] (0,0) to[out=-90,in=180] node[label={[shift={(0,-0.75)}] $\scriptstyle -\lambda_{\hf} +\alpha  $}] {$\bullet$} (.2,-0.2);
 			 \draw[->,thick,darkred] (0.4,0) to[out=90,in=0] (.2,.2);
  			 \draw[-,thick,darkred] (.2,-0.2) to[out=0,in=-90] (0.4,0);			
  			 \node at (1.2,0.05) {$\scriptstyle{\lambda}$};
			\draw[->,thick,darkred](0.8,.4) to node {$\bullet$} (0.8,-.4);
		\end{tikzpicture}
			}
		+
			\mathord{
		\begin{tikzpicture}[baseline= -4]
 			\draw[-,thick,darkred] (.2,0.2) to[out=180,in=90] (0,0);
 			 \draw[->,thick,darkred] (0.4,0) to[out=90,in=0] (.2,.2);
 			\draw[-,thick,darkred] (0,0) to[out=-90,in=180]  node[label={[shift={(0,-0.75)}] $\scriptstyle -\lambda_{\hf}+1+\alpha$}] {$\bullet$} (.2,-0.2);
  			\draw[-,thick,darkred] (.2,-0.2) to[out=0,in=-90] (0.4,0);			
  			 \node at (1.2,0.05) {$\scriptstyle{\lambda}$};
			\draw[->,thick,darkred](0.8,.4) to (0.8,-.4);
		\end{tikzpicture}
			}
		& = 
		\sum^{\alpha}_{s = 0} (s+1)
			\mathord{
		\begin{tikzpicture}[baseline= -4]
 			\draw[-,thick,darkred] (.2,0.2) to[out=180,in=90] (0,0);
 			 \draw[->,thick,darkred] (0.4,0) to[out=90,in=0] (.2,.2);
 			\draw[-,thick,darkred] (0,0) to[out=-90,in=180] (.2,-0.2);
  			\draw[-,thick,darkred] (.2,-0.2) to[out=0,in=-90] node[label={[shift={(0.1,-0.75)}] $\scriptstyle -\lambda_{\hf}-2+\alpha-s$}] {$\bullet$}  (0.4,0);			
  			 \node at (0.8,0.05) {$\scriptstyle{\lambda}$};
			\draw[->,thick,darkred] (-0.4,.4) to node[label={[shift={(0.2, -0.25)}] $\scriptstyle s$}] {$\bullet$} (-0.4,-.4);
		\end{tikzpicture}
			}; 
\end{align*}
\begin{align*}
 \mathord{
		\begin{tikzpicture}[baseline= -4]
 			\draw[-,thick,darkred] (.2,0.2) to[out=180,in=90] (0,0);
			 \draw[-,thick,darkred] (0,0) to[out=-90,in=180]  (.2,-0.2);
 			 \draw[->,thick,darkred] (0.4,0) to[out=90,in=0] (.2,.2);
  			 \draw[-,thick,darkred] (.2,-0.2) to[out=0,in=-90] node[label={[shift={(0,-0.75)}] $\scriptstyle -\lambda_{\hf}-2+\alpha$}] {$\bullet$} (0.4,0);			
  			 \node at (0.8,0.05) {$\scriptstyle{\lambda}$};
			\draw[->,thick,darkred](-.4,.4) to   (-.4,-.4);
		\end{tikzpicture}
			} 
			&=
			\mathord{
		\begin{tikzpicture}[baseline= -4]
 			\draw[-,thick,darkred] (.2,0.2) to[out=180,in=90] (0,0);
 			 \draw[->,thick,darkred] (0.4,0) to[out=90,in=0] (.2,.2);
 			\draw[-,thick,darkred] (0,0) to[out=-90,in=180] node[label={[shift={(0,-0.75)}] $\scriptstyle -\lambda_{\hf}-2+\alpha $}] {$\bullet$} (.2,-0.2);
  			\draw[-,thick,darkred] (.2,-0.2) to[out=0,in=-90]   (0.4,0);			
  			 \node at (1.2,0.05) {$\scriptstyle{\lambda}$};
			\draw[->,thick,darkred](0.8,.4) to node[label={[shift={(0.2, -0.25)}] $\scriptstyle 3$}] {$\bullet$} (0.8,-.4);
		\end{tikzpicture}
			}
-
		\mathord{
		\begin{tikzpicture}[baseline= -4]
 			\draw[-,thick,darkred] (.2,0.2) to[out=180,in=90] (0,0);
 			 \draw[->,thick,darkred] (0.4,0) to[out=90,in=0]  (.2,.2);
 			\draw[-,thick,darkred] (0,0) to[out=-90,in=180] node[label={[shift={(0,-0.75)}] $\scriptstyle -\lambda_{\hf}-1+\alpha $}] {$\bullet$} (.2,-0.2);
  			\draw[-,thick,darkred] (.2,-0.2) to[out=0,in=-90]  (0.4,0);			
  			 \node at (1.2,0.05) {$\scriptstyle{\lambda}$};
			\draw[->,thick,darkred](0.8,.4) to node[label={[shift={(0.2, -0.25)}] $\scriptstyle 2$}] {$\bullet$} (0.8,-.4);
		\end{tikzpicture}
			}
-
		\mathord{
		\begin{tikzpicture}[baseline= -4]
 			\draw[-,thick,darkred] (.2,0.2) to[out=180,in=90] (0,0);
 			 \draw[->,thick,darkred] (0.4,0) to[out=90,in=0]  (.2,.2);
 			\draw[-,thick,darkred] (0,0) to[out=-90,in=180] node[label={[shift={(0,-0.75)}] $\scriptstyle -\lambda_{\hf}+\alpha$}] {$\bullet$} (.2,-0.2);
  			\draw[-,thick,darkred] (.2,-0.2) to[out=0,in=-90]  (0.4,0);			
  			 \node at (1.2,0.05) {$\scriptstyle{\lambda}$};
			\draw[->,thick,darkred](0.8,.4) to node  {$\bullet$} (0.8,-.4);
		\end{tikzpicture}
			}
+
		\mathord{
		\begin{tikzpicture}[baseline= -4]
 			\draw[-,thick,darkred] (.2,0.2) to[out=180,in=90] (0,0);
 			 \draw[->,thick,darkred] (0.4,0) to[out=90,in=0]  (.2,.2);
 			\draw[-,thick,darkred] (0,0) to[out=-90,in=180] node[label={[shift={(0,-0.75)}] $\scriptstyle -\lambda_{\hf}+1+\alpha$}] {$\bullet$} (.2,-0.2);
  			\draw[-,thick,darkred] (.2,-0.2) to[out=0,in=-90]  (0.4,0);			
  			 \node at (1.2,0.05) {$\scriptstyle{\lambda}$};
			\draw[->,thick,darkred](0.8,.4) to  (0.8,-.4);
		\end{tikzpicture}
			}; \\
\mathord{
		\begin{tikzpicture}[baseline= -4]
 			\draw[-,thick,darkred] (.2,0.2) to[out=180,in=90] (0,0);
 			 \draw[<-,thick,darkred] (0.4,0) to[out=90,in=0]  (.2,.2);
 			\draw[-,thick,darkred] (0,0) to[out=-90,in=180] node[label={[shift={(0,-0.75)}] $\scriptstyle \lambda_{\hf} +\alpha-4$}] {$\bullet$} (.2,-0.2);
  			\draw[-,thick,darkred] (.2,-0.2) to[out=0,in=-90]  (0.4,0);			
  			 \node at (1.2,0.05) {$\scriptstyle{\lambda}$};
			\draw[->,thick,darkred](0.8,.4) to  (0.8,-.4);
		\end{tikzpicture}
			}
&=
		\mathord{
		\begin{tikzpicture}[baseline= -4]
 			\draw[-,thick,darkred] (.2,0.2) to[out=180,in=90] (0,0);
			 \draw[-,thick,darkred] (0,0) to[out=-90,in=180]  (.2,-0.2);
 			 \draw[<-,thick,darkred] (0.4,0) to[out=90,in=0] (.2,.2);
  			 \draw[-,thick,darkred] (.2,-0.2) to[out=0,in=-90] node[label={[shift={(0,-0.75)}] $\scriptstyle \lambda_{\hf} +\alpha-4$}] {$\bullet$} (0.4,0);			
  			 \node at (0.8,0.05) {$\scriptstyle{\lambda}$};
			\draw[->,thick,darkred](-.4,.4) to  node[label={[shift={(-0.2, -0.25)}] $\scriptstyle 3$}] {$\bullet$} (-.4,-.4);
		\end{tikzpicture}
			} 
-
		\mathord{
		\begin{tikzpicture}[baseline= -4]
 			\draw[-,thick,darkred] (.2,0.2) to[out=180,in=90] (0,0);
			 \draw[-,thick,darkred] (0,0) to[out=-90,in=180]  (.2,-0.2);
 			 \draw[<-,thick,darkred] (0.4,0) to[out=90,in=0] (.2,.2);
  			 \draw[-,thick,darkred] (.2,-0.2) to[out=0,in=-90] node[label={[shift={(0,-0.75)}] $\scriptstyle \lambda_{\hf} +\alpha-3$}] {$\bullet$} (0.4,0);			
  			 \node at (0.8,0.05) {$\scriptstyle{\lambda}$};
			\draw[->,thick,darkred](-.4,.4) to node[label={[shift={(-0.2, -0.25)}] $\scriptstyle 2$}] {$\bullet$}  (-.4,-.4);
		\end{tikzpicture}
			} 
-
		\mathord{
		\begin{tikzpicture}[baseline= -4]
 			\draw[-,thick,darkred] (.2,0.2) to[out=180,in=90] (0,0);
			 \draw[-,thick,darkred] (0,0) to[out=-90,in=180]  (.2,-0.2);
 			 \draw[<-,thick,darkred] (0.4,0) to[out=90,in=0] (.2,.2);
  			 \draw[-,thick,darkred] (.2,-0.2) to[out=0,in=-90] node[label={[shift={(0,-0.75)}] $\scriptstyle \lambda_{\hf} +\alpha-2$}] {$\bullet$} (0.4,0);			
  			 \node at (0.8,0.05) {$\scriptstyle{\lambda}$};
			\draw[->,thick,darkred](-.4,.4) to  node  {$\bullet$} (-.4,-.4);
		\end{tikzpicture}
			} 
+
		\mathord{
		\begin{tikzpicture}[baseline= -4]
 			\draw[-,thick,darkred] (.2,0.2) to[out=180,in=90] (0,0);
			 \draw[-,thick,darkred] (0,0) to[out=-90,in=180]  (.2,-0.2);
 			 \draw[<-,thick,darkred] (0.4,0) to[out=90,in=0] (.2,.2);
  			 \draw[-,thick,darkred] (.2,-0.2) to[out=0,in=-90] node[label={[shift={(0,-0.75)}] $\scriptstyle \lambda_{\hf} +\alpha-1$}] {$\bullet$} (0.4,0);			
  			 \node at (0.8,0.05) {$\scriptstyle{\lambda}$};
			\draw[->,thick,darkred](-.4,.4) to   (-.4,-.4);
		\end{tikzpicture}
			} ;
\end{align*}
\begin{align*}
			\mathord{
		\begin{tikzpicture}[baseline= -4]
 			\draw[-,thick,darkred] (.2,0.2) to[out=180,in=90] (0,0);
 			 \draw[<-,thick,darkred] (0.4,0) to[out=90,in=0] (.2,.2);
 			\draw[-,thick,darkred] (0,0) to[out=-90,in=180]   (.2,-0.2);
  			\draw[-,thick,darkred] (.2,-0.2) to[out=0,in=-90] node[label={[shift={(0,-0.75)}] $\scriptstyle  \lambda_{\hf}-1+\alpha$}] {$\bullet$} (0.4,0);			
  			 \node at (0.8,0.05) {$\scriptstyle{\lambda}$};
			\draw[->,thick,darkred](-.4,.4) to (-.4,-.4);
		\end{tikzpicture}
			}
	&	= 
		\sum^{\alpha}_{s = 0} \left \lceil \frac{s+1}{2} \right \rceil  \!\!\!\!
			\mathord{
		\begin{tikzpicture}[baseline=-4]
 			\draw[-,thick,darkred] (.2,0.2) to[out=180,in=90] (0,0);
 			 \draw[<-,thick,darkred] (0.4,0) to[out=90,in=0] (.2,.2);
 			\draw[-,thick,darkred] (0,0) to[out=-90,in=180]  node[label={[shift={(-0.2,-0.75)}] $\scriptstyle \lambda_{\hf}-4+\alpha-s$}] {$\bullet$}  (.2,-0.2);
  			\draw[-,thick,darkred] (.2,-0.2) to[out=0,in=-90]  (0.4,0);  
  			 \node at (1.2,0.05) {$\scriptstyle{\lambda}$};
			\draw[->,thick,darkred](0.8,.4) to node[label={[shift={(0.2, -0.25)}] $\scriptstyle s$}] {$\bullet$} (0.8,-.4);
		\end{tikzpicture}
			};  \\
			\mathord{
		\begin{tikzpicture}[baseline= -4]
 			\draw[-,thick,darkred] (.2,0.2) to[out=180,in=90] (0,0);
 			 \draw[->,thick,darkred] (0.4,0) to[out=90,in=0] (.2,.2);
 			\draw[-,thick,darkred] (0,0) to[out=-90,in=180]  node[label={[shift={(0,-0.75)}] $\scriptstyle -\lambda_{\hf}+1+\alpha$}] {$\bullet$} (.2,-0.2);
  			\draw[-,thick,darkred] (.2,-0.2) to[out=0,in=-90] (0.4,0);			
  			 \node at (1.2,0.05) {$\scriptstyle{\lambda}$};
			\draw[->,thick,darkred](0.8,.4) to (0.8,-.4);
		\end{tikzpicture}
			}
		& = 
		\sum^{\alpha}_{s = 0} \left\lceil \frac{s+1}{2} \right\rceil
			\mathord{
		\begin{tikzpicture}[baseline= -4]
 			\draw[-,thick,darkred] (.2,0.2) to[out=180,in=90] (0,0);
 			 \draw[->,thick,darkred] (0.4,0) to[out=90,in=0] (.2,.2);
 			\draw[-,thick,darkred] (0,0) to[out=-90,in=180] (.2,-0.2);
  			\draw[-,thick,darkred] (.2,-0.2) to[out=0,in=-90] node[label={[shift={(0.2,-0.75)}] $\scriptstyle -\lambda_{\hf}-2+\alpha-s$}] {$\bullet$}  (0.4,0);			
  			 \node at (0.8,0.05) {$\scriptstyle{\lambda}$};
			\draw[->,thick,darkred] (-0.4,.4) to node[label={[shift={(0.2, -0.25)}] $\scriptstyle s$}] {$\bullet$} (-0.4,-.4);
		\end{tikzpicture}
			}.
\end{align*}
\end{prop}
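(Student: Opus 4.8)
The plan is to prove both identities by induction on $\alpha\geq 0$, using Lemma~\ref{lem:bubbleslides} to pass from level $\alpha$ to level $\alpha-1$: the first identity is extracted from \eqref{eq:bubbleslides1}, and the second, entirely parallel one, from \eqref{eq:bubbleslides2} (with $-\lambda_\hf$ replaced by $\lambda_\hf$ and orientations reversed). I will describe the argument for the first identity. Write $C(\alpha)$ for its left-hand side (the clockwise bubble carrying $-\lambda_\hf-2+\alpha$ dots sitting beside an undotted $\calE_\hf$-strand) and $R_s(\alpha)$ for the basic diagram occurring on its right-hand side (the clockwise bubble with $-\lambda_\hf-5+\alpha-s$ dots beside an $\calE_\hf$-strand carrying $s$ dots), so that the claim reads $C(\alpha)=\sum_{s=0}^{\alpha}\lceil\tfrac{s+1}{2}\rceil\,R_s(\alpha)$.

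First I would dispose of the base case $\alpha=0$: in \eqref{eq:bubbleslides1} at $\alpha=0$ the first summand on the left carries a bubble with $-\lambda_\hf-3$ dots, which lies below the relevant degree threshold and hence is zero by the bubble relations \eqref{bubble1}--\eqref{bubble3}; thus \eqref{eq:bubbleslides1} reduces to $C(0)=R_0(0)$, which is the asserted identity since $\lceil\tfrac12\rceil=1$. For the inductive step the key observation is that, because $-\lambda_\hf+\alpha-3=-\lambda_\hf-2+(\alpha-1)$, the first summand on the left of \eqref{eq:bubbleslides1} at level $\alpha$ is exactly $C(\alpha-1)$ post-composed with a dot $x'$ on the $\calE_\hf$-strand; and adding such a dot carries $R_s(\alpha-1)$ to $R_{s+1}(\alpha)$. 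Feeding in the inductive hypothesis $C(\alpha-1)=\sum_{t=0}^{\alpha-1}\lceil\tfrac{t+1}{2}\rceil R_t(\alpha-1)$ and sliding the dot through, I obtain $x'\cdot C(\alpha-1)=\sum_{t=0}^{\alpha-1}\lceil\tfrac{t+1}{2}\rceil R_{t+1}(\alpha)=\sum_{s=0}^{\alpha}\lceil\tfrac s2\rceil R_s(\alpha)$ (the $s=0$ term vanishing). Substituting this into \eqref{eq:bubbleslides1} and solving for $C(\alpha)$ gives $C(\alpha)=\sum_{s=0}^{\alpha}\bigl((s+1)-\lceil\tfrac s2\rceil\bigr)R_s(\alpha)$, and the proof is concluded by the elementary identity $(s+1)-\lceil\tfrac s2\rceil=\lceil\tfrac{s+1}{2}\rceil$ (equivalently $\lceil\tfrac s2\rceil+\lceil\tfrac{s+1}{2}\rceil=s+1$), valid for every $s\geq 0$.

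I expect the main obstacle to be not this combinatorics but the bookkeeping at the ``boundary'': one must check that precisely the intended bubbles vanish or collapse to scalars under \eqref{bubble1}--\eqref{bubble3}, both in the base case and, for small $\alpha$, among the fake bubbles $R_s(\alpha)$ appearing on the right. As in the proof of Lemma~\ref{lem:bubbleslides}, the clean way to handle this is to treat $\lambda_\hf\geq -2$ and $\lambda_\hf\leq -3$ separately (in the latter range only genuine bubbles intervene on the relevant side, so the induction is transparent), or — once one case is settled — to deduce the other, as well as to transport the first identity to the second, by applying the symmetries of $\frakU^\jmath$ recorded in Section~\ref{sec:symmetries}.
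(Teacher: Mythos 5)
Your induction scheme is, in itself, sound: granting the two ``$(s+1)$''-coefficient slide formulas, the observation that the dotted term on their left-hand side is the level-$(\alpha-1)$ instance of the ceiling identity composed with a dot, together with the elementary identity $(s+1)-\lceil s/2\rceil=\lceil(s+1)/2\rceil$ and the vanishing of sub-threshold bubbles in the base case, does produce the $\lceil\frac{s+1}{2}\rceil$-coefficient formulas. This is exactly how the paper intends the Corollary to Lemma~\ref{lem:bubbleslides} to be proved. The difficulty is that what you have written is a proof of that (upward-strand) Corollary, not of Proposition~\ref{prop:down:bubbleslides}. The Proposition consists of \emph{six} identities in which the spectator strand is a downward ($\calF_\hf$-) strand: the downward analogues of \eqref{eq:bubbleslides1}--\eqref{eq:bubbleslides4} together with the downward analogues of the two ceiling identities. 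The left-hand side you call $C(\alpha)$ --- a bubble with $-\lambda_\hf-2+\alpha$ dots beside an undotted $\calE_\hf$-strand --- matches none of them; the corresponding downward ceiling identities carry the dot counts $\lambda_\hf-1+\alpha$ and $-\lambda_\hf+1+\alpha$ and an $\calF_\hf$-strand, and there are four further identities you do not mention.

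Concretely, two things are missing. First, the engine of your induction in the downward setting --- the $(s+1)$-coefficient slides past a downward strand, i.e.\ the first two identities of the Proposition --- is itself part of the statement and is nowhere established; it is not Lemma~\ref{lem:bubbleslides} and does not follow from it by relabelling. One must either rerun the computation of Lemma~\ref{lem:bubbleslides} starting from the bicross relation \eqref{j-bicross up} in place of \eqref{j-bicross down}, or transport Lemma~\ref{lem:bubbleslides} through the equivalence $\omega_\jmath$ of Section~\ref{sec:symmetries}, which interchanges $\calE_\hf$ and $\calF_\hf$ and sends $\lambda$ to $-\lambda-\varpi$; the shift $(-\lambda-\varpi)_\hf=-\lambda_\hf-1$ is precisely what converts the dot counts of \eqref{eq:bubbleslides1}--\eqref{eq:bubbleslides2} into those of the Proposition, and the units $2^{\pm\delta_{i,\hf}}$ attached to $\eta,\epsilon$ must be checked to cancel around each closed bubble. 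Second, the third and fourth identities (the four-term alternating analogues of \eqref{eq:bubbleslides3}--\eqref{eq:bubbleslides4}) are not addressed at all; as in the upward case they are needed to handle the fake-bubble range and are deduced from the second (resp.\ first) downward identity by the telescoping trick used in Step~(b) of the proof of Lemma~\ref{lem:bubbleslides}. Once the first four downward identities are in place, your induction does give the last two verbatim, so the remedy is to supply the missing first stage rather than to change strategy.
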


\subsection{Proof of Proposition~\ref{p:left-inv} }

In this subsection, we shall first establish a variant of the identity \eqref{eq:prop:jserre1}, as a preparation toward the proof  of the identity \eqref{eq:prop:jserre1}. 

Let $\lambda \in \X_\jmath$ with $\lambda_\hf \ge 2$. Recall the $2$-morphism $\kappa \in \End(\calE_\hf \calF^2_\hf 1_\lambda)$ in Lemma~\ref{lem:dec:eff}. The following lemma follows from the proof therein.

\begin{lemma}
\label{lem:*}
We have $\kappa \cdot \iota_t=0$, for $0\le t \le \lambda_{\hf}-3$.
\end{lemma}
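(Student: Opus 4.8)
\textbf{Proof plan for Lemma~\ref{lem:*}.}
The statement $\kappa\cdot\iota_t=0$ for $0\le t\le\lambda_\hf-3$ is already implicit in the proof of Lemma~\ref{lem:dec:eff}, so the task is really to extract it cleanly. Recall that in that proof we constructed, in $\End(\calE_\hf\calF_\hf^2 1_\lambda)$, a family of mutually orthogonal idempotents $\iota_s\pi_s$ ($0\le s\le\lambda_\hf-3$) together with the element $\varrho=\id-\sum_s\iota_s\pi_s$, and that $\varrho$ was shown to have the diagrammatic form displayed in the statement of Lemma~\ref{lem:dec:eff}(a). The element $\kappa$ of the present lemma is defined to be exactly this $\varrho$ (the notation change is just to free up the letter $\varrho$), so what must be checked is the orthogonality relation $\varrho\cdot\iota_t=0$.

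The cleanest route is to argue purely formally from the relations $\pi_s\cdot\iota_{s'}=\delta_{s,s'}\id$ established in Lemma~\ref{lem:dec:eff}. First I would recall that $\varrho=\id_{\calE_\hf\calF_\hf^{(2)}1_\lambda}-\sum_{0\le s\le\lambda_\hf-3}\iota_s\pi_s$ by definition. Then, for $0\le t\le\lambda_\hf-3$, compute
\[
\varrho\cdot\iota_t=\iota_t-\sum_{0\le s\le\lambda_\hf-3}\iota_s\,(\pi_s\cdot\iota_t)=\iota_t-\sum_{0\le s\le\lambda_\hf-3}\iota_s\,\delta_{s,t}=\iota_t-\iota_t=0.
\]
This is the whole argument; the only subtlety is that $\iota_s,\pi_s$ are morphisms between $\calE_\hf\calF_\hf^{(2)}1_\lambda$ (viewed in the idempotent completion) and the summands $\calF_\hf 1_\lambda\{\ast\}$, so one should be slightly careful that the composites $\iota_s\pi_s$ and $\varrho$ are genuinely endomorphisms of $\calE_\hf\calF_\hf^2 1_\lambda$ (or of its summand $\calE_\hf\calF_\hf^{(2)}1_\lambda$), as arranged in the proof of Lemma~\ref{lem:dec:eff}, so that the composition with $\iota_t$ makes sense and the displayed cancellation is legitimate.

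I expect no real obstacle here: the statement is a formal consequence of orthogonality already proved. The only thing to be careful about is bookkeeping — making sure the grading shifts on the various copies of $\calF_\hf 1_\lambda$ match up so that $\pi_s\cdot\iota_t$ is literally $\delta_{s,t}$ times an identity $2$-morphism (not merely a scalar times a degree-$0$ map), and that one is working consistently in the idempotent completion of $\widetilde{\frakU}^\jmath$ where $\calF_\hf^{(2)}$ and hence $\varrho=\kappa$ live. Once that is in place, the displayed one-line computation finishes the proof.
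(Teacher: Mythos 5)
Your proof is correct and is exactly what the paper intends: its "proof" of this lemma is the one-line remark that it "follows from the proof" of Lemma~\ref{lem:dec:eff}, i.e.\ from the orthogonality relations $\pi_s\cdot\iota_{t}=\delta_{s,t}$ together with the definition $\kappa=\varrho=\id-\sum_s\iota_s\pi_s$, which is precisely the computation you wrote out. The bookkeeping caveat you raise (that $\iota_t$ lands in the summand cut out by $\id_{\calE_\hf\calF_\hf^{(2)}1_\lambda}$ so the cancellation is legitimate) is the right thing to check and is already arranged in Lemma~\ref{lem:dec:eff}.
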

%

Let us define
\[
\kappa_2 = 
{\frac 12} \sum_{a+b + c=-2}
	\mathord{
		\begin{tikzpicture}[baseline = 0pt]
   		\draw[->,thick,darkred] (0.9, 0.2) -- (0.9,-0.7);
 		\draw[thick,darkred] (0.9,-0.7) to[out=-90, in=90] (0.3, -1.3);
  		\draw[->,thick,darkred] (0.2,0) to[out=90,in=0] (0,0.2);
  		\draw[-,thick,darkred] (0,0.2) to[out=180,in=90] (-.2,0);
		\draw[-,thick,darkred] (-.2,0) to[out=-90,in=180] (0,-0.2);
  		\draw[-,thick,darkred] (0,-0.2) to[out=0,in=-90] (0.2,0);
   		\node[label={[shift={(0.2,-0.5)}]$\color{darkred}\scriptstyle{a}$}] at (0.2,0) {$\color{darkred}\bullet$};
		\draw[<-,thick,darkred] (0.3,-.7) to[out=90, in=0] (0,-0.3);
		\draw[-,thick,darkred] (0.3,-.7)  to[out=-90, in=90] (0.9,-1.3) ;
		\draw[-,thick,darkred] (0,-0.3) to[out = 180, in = 90] (-0.3,-.7);
		\draw[-,thick, darkred] (-0.3,-.7) -- (-0.3, -1.3);
   		\node[label={[shift={(-0.2,-0.5)}]$\color{darkred}\scriptstyle{b}$}] at (-0.25,-0.5) {$\color{darkred}\bullet$};
   		\draw[-, thick, darkred] (-0.3, 0.9) -- (-0.3, 1.6);
		\draw[<-, thick, darkred] (0.9, 0.9) to[out=90, in=-90] (0.3, 1.6);
		\draw[->, thick, darkred] (0.9, 1.6) to[out=-90, in=90] node[near start] {$\color{darkred}\bullet$}  (0.3, 0.9);
		\draw[-, thick, darkred] (0.9, 0.9) to[out=-90, in=90] (0.9, 0.5);
		\draw[-,thick, darkred] (0.9, 0.5) -- (0.9, 0.2); 
		\draw[-,thick,darkred] (0.3,0.9) to[out=-90, in=0] (0,0.5);
		\draw[->,thick,darkred] (0,0.5) to[out = 180, in = -90] (-0.3,0.9);
      	\node[label={[shift={(-.3,-0.5)}] $\color{darkred}\scriptstyle{c}$}] at (0,0.5) {$\color{darkred}\bullet$};
        \node at (1.2,1.5) {$\scriptstyle{\lambda}$};
		\end{tikzpicture}
			}\quad 
			\text{ and }
			\quad 
			\kappa_3 = 
			{\frac 12} \sum_{a+b +c = -3}
	\mathord{
		\begin{tikzpicture}[baseline = 0pt]
   			\draw[->,thick,darkred] (0.9, 0.2) -- (0.9,-0.7);
 			\draw[thick,darkred] (0.9,-0.7) to[out=-90, in=90] (0.3, -1.3);
  			\draw[->,thick,darkred] (0.2,0) to[out=90,in=0] (0,0.2);
  			\draw[-,thick,darkred] (0,0.2) to[out=180,in=90] (-.2,0);
			\draw[-,thick,darkred] (-.2,0) to[out=-90,in=180] (0,-0.2);
  			\draw[-,thick,darkred] (0,-0.2) to[out=0,in=-90] (0.2,0);
   			\node[label={[shift={(0.2,-0.5)}]$\color{darkred}\scriptstyle{a}$}] at (0.2,0) {$\color{darkred}\bullet$};
			\draw[<-,thick,darkred] (0.3,-.7) to[out=90, in=0] (0,-0.3);
			\draw[-,thick,darkred] (0.3,-.7)  to[out=-90, in=90] (0.9,-1.3) ;
			\draw[-,thick,darkred] (0,-0.3) to[out = 180, in = 90] (-0.3,-.7);
			\draw[-,thick, darkred] (-0.3,-.7) -- (-0.3, -1.3);
   			\node[label={[shift={(-0.2,-0.5)}]$\color{darkred}\scriptstyle{b}$}] at (-0.25,-0.5) {$\color{darkred}\bullet$};
   			\draw[-, thick, darkred] (-0.3, 0.9) -- (-0.3, 1.6);
			\draw[<-, thick, darkred] (0.9, 0.9) to[out=90, in=-90] (0.3, 1.6);
			\draw[->, thick, darkred] (0.9, 1.6) to[out=-90, in=90] node[near start] {$\color{darkred}\bullet$}  (0.3, 0.9);
			\draw[-, thick, darkred] (0.9, 0.9) to[out=-90, in=90] (0.9, 0.5);
			\draw[-, thick, darkred] (0.9, 0.5) -- node {$\color{darkred}\bullet$} (0.9, 0.2); 
			\draw[-,thick,darkred] (0.3,0.9) to[out=-90, in=0] (0,0.5);
			\draw[->,thick,darkred] (0,0.5) to[out = 180, in = -90] (-0.3,0.9);
     		\node[label={[shift={(-0.3,-0.5)}] $\color{darkred}\scriptstyle{c}$}] at (0,0.5) {$\color{darkred}\bullet$};
        		\node at (1.2,1.5) {$\scriptstyle{\lambda}$};
		\end{tikzpicture}
			}.
\]

Recall the definition of $\kappa$ in Lemma~\ref{lem:dec:eff} uses $\iota_s$, which consists of the top parts of $\kappa_2$ and $\kappa_3$.
An alternative way of decomposing $\kappa$ is available by use of the low parts of $\kappa_2$ and $\kappa_3$. To that end, we set
\[
\tilde{\pi}_s =
\mathord{
		\begin{tikzpicture}[baseline = -25]
		\draw[->,thick,darkred] (0.9, 0.2) -- (0.9,-0.7);
		\draw[<-,thick,darkred] (0.3,-.7) to[out=90, in=0] (0,-0.3);
		\draw[-,thick,darkred] (0.3,-.7)  to[out=-90, in=90] (0.9,-1.3) ;
		\draw[-,thick,darkred] (0,-0.3) to[out = 180, in = 90] (-0.3,-.7);
		\draw[-,thick, darkred] (-0.3,-.7) -- (-0.3, -1.3);
   		\node[label={[shift={(-0.2,-0.5)}]$\color{darkred}\scriptstyle{s}$}] at (-0.25,-0.5) {$\color{darkred}\bullet$};
 		\draw[thick,darkred] (0.9,-0.7) to[out=-90, in=90] (0.3, -1.3);
		\end{tikzpicture}
		}
\quad \text{ and } \quad
\tilde{\iota}_s = -  {\frac 12} \sum_{a+c=-s-2}
	\mathord{
		\begin{tikzpicture}[baseline = 10]
		\draw[-, thick, darkred] (-0.3, 0.9) -- (-0.3, 1.6);
		\draw[<-, thick, darkred] (0.9, 0.9) to[out=90, in=-90] (0.3, 1.6);
		\draw[->, thick, darkred] (0.9, 1.6) to[out=-90, in=90] node[near start] {$\color{darkred}\bullet$}  (0.3, 0.9);
		\draw[-, thick, darkred] (0.9, 0.9) to[out=-90, in=90] (0.9, 0.5);
		\draw[-,thick,darkred] (0.3,0.9) to[out=-90, in=0] (0,0.5);
		\draw[->,thick,darkred] (0,0.5) to[out = 180, in = -90] (-0.3,0.9);
      	\node[label={[shift={(-.3,-0.5)}] $\color{darkred}\scriptstyle{c}$}] at (0,0.5) {$\color{darkred}\bullet$};
  		\draw[->,thick,darkred] (0.2,0) to[out=90,in=0] (0,0.2);
  		\draw[-,thick,darkred] (0,0.2) to[out=180,in=90] (-.2,0);
		\draw[-,thick,darkred] (-.2,0) to[out=-90,in=180] (0,-0.2);
  		\draw[-,thick,darkred] (0,-0.2) to[out=0,in=-90] (0.2,0);
   		\node[label={[shift={(0.2,-0.5)}]$\color{darkred}\scriptstyle{a}$}] at (0.2,0) {$\color{darkred}\bullet$};
		\draw[-,thick, darkred] (0.9, 0.5) -- (0.9, 0.2); 
		\draw[->,thick,darkred] (0.9, 0.2) -- (0.9,-0.7);
		\end{tikzpicture}
			}+\;
			{\frac 12} \sum_{a+c=-s-3}
	\mathord{
		\begin{tikzpicture}[baseline = 10]
		\draw[-, thick, darkred] (-0.3, 0.9) -- (-0.3, 1.6);
		\draw[<-, thick, darkred] (0.9, 0.9) to[out=90, in=-90] (0.3, 1.6);
		\draw[->, thick, darkred] (0.9, 1.6) to[out=-90, in=90] node[near start] {$\color{darkred}\bullet$}  (0.3, 0.9);
		\draw[-, thick, darkred] (0.9, 0.9) to[out=-90, in=90] (0.9, 0.5);
		\draw[-,thick,darkred] (0.3,0.9) to[out=-90, in=0] (0,0.5);
		\draw[->,thick,darkred] (0,0.5) to[out = 180, in = -90] (-0.3,0.9);
      	\node[label={[shift={(-.3,-0.5)}] $\color{darkred}\scriptstyle{c}$}] at (0,0.5) {$\color{darkred}\bullet$};
  		\draw[->,thick,darkred] (0.2,0) to[out=90,in=0] (0,0.2);
  		\draw[-,thick,darkred] (0,0.2) to[out=180,in=90] (-.2,0);
		\draw[-,thick,darkred] (-.2,0) to[out=-90,in=180] (0,-0.2);
  		\draw[-,thick,darkred] (0,-0.2) to[out=0,in=-90] (0.2,0);
   		\node[label={[shift={(0.2,-0.5)}]$\color{darkred}\scriptstyle{a}$}] at (0.2,0) {$\color{darkred}\bullet$};
		\draw[-,thick, darkred] (0.9, 0.5) -- (0.9, 0.2); 
		\draw[->,thick,darkred] (0.9, 0.2) -- node {$\color{darkred}\bullet$} (0.9,-0.7);
		\end{tikzpicture}
			}
\]

Then clearly we also  have 
\begin{equation}
\label{k2}
\kappa = \id_{\calE_{\hf} \calF^{(2)}_{\hf} 1_{\lambda}} - \sum_{0\le s \le \lambda_\hf -3} \tilde{\iota}_s \tilde{\pi}_s.
\end{equation}

The following lemma is a counterpart of Lemma~\ref{lem:*}, whose proof is skipped.
\begin{lemma}
\label{lem:**}
We have 
$\tilde{\pi}_t \tilde{\iota}_s = \delta_{s,t} \id_{\calF_{\hf} 1_{\lambda}}$, for $0\le s \le \lambda_{\hf}-3$ and $0\le t \le \lambda_{\hf}-3$.
Moreover, we have 
$\tilde{\pi}_t \cdot \kappa =0$ for $0\le t \le \lambda_{\hf}-3$.
\end{lemma}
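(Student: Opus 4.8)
\textbf{Proof proposal for Lemma~\ref{lem:**}.}

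The statement has two parts, and the plan is to reduce both to diagrammatic identities that mirror the computation already carried out in the proof of Lemma~\ref{lem:dec:eff}. For the first claim, $\tilde{\pi}_t\tilde{\iota}_s=\delta_{s,t}1_f$, I would compose the two $2$-morphisms directly and simplify. After stacking $\tilde{\iota}_s$ on top of $\tilde{\pi}_t$, the upward strand carries the dots coming from the two summands of $\tilde{\iota}_s$ (with $a+c=-s-2$ or $a+c=-s-3$ and an extra dot on the right strand in the latter case), while the downward cup/cap of $\tilde{\pi}_t$ contributes a dot labelled $t$. The key move is to use the nodal relation \eqref{nodal} together with the bubble relations \eqref{bubble1}--\eqref{bubble3} to collapse the resulting closed components into bubbles, exactly as in the computation of $\pi_s\cdot\iota_{s'}$ in Lemma~\ref{lem:dec:eff}. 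One then repeatedly applies the bubble slide formulas of Lemma~\ref{lem:bubbleslides} (and their downward counterparts in Proposition~\ref{prop:down:bubbleslides}) to move all bubbles to the left, and finally the bubble relations \eqref{bubble1}--\eqref{bubble2} to evaluate the resulting sum; the factors of $\tfrac12$ and the coefficient $2^{\delta_{i,\hf}}$ in \eqref{bubble1} are precisely what make the arithmetic work out to $\delta_{s,t}$. This is formally the transpose (under the adjunction $\End(\calE_\hf\calF^2_\hf 1_\lambda)$) of the computation $\pi_s\cdot\iota_{s'}=\delta_{s,s'}$ already done in Lemma~\ref{lem:dec:eff}, so I expect it to require essentially the same steps with orientations and the roles of the top and bottom halves swapped.

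For the second claim, $\tilde{\pi}_t\cdot\kappa=0$, the cleanest route is to use the alternative decomposition \eqref{k2}: write $\kappa=\id-\sum_{0\le s\le\lambda_\hf-3}\tilde\iota_s\tilde\pi_s$. Then
\[
\tilde{\pi}_t\cdot\kappa=\tilde{\pi}_t-\sum_{0\le s\le\lambda_\hf-3}(\tilde{\pi}_t\tilde\iota_s)\tilde{\pi}_s
=\tilde{\pi}_t-\sum_{0\le s\le\lambda_\hf-3}\delta_{s,t}\,\tilde{\pi}_s=\tilde{\pi}_t-\tilde{\pi}_t=0,
\]
where the middle equality is exactly the first part of the lemma (valid for $0\le t\le\lambda_\hf-3$, so that the index $t$ does occur in the sum). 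Thus the second claim is a formal consequence of the first, provided one has \eqref{k2}; the plan is therefore to establish \eqref{k2} first, by the same orthogonality bookkeeping that gives the original decomposition $\kappa=\id-\sum\iota_s\pi_s$ in Lemma~\ref{lem:dec:eff}, together with Lemma~\ref{lem:*} (which says $\kappa\cdot\iota_t=0$) and its transpose. Concretely, one checks that $\tilde\iota_s\tilde\pi_s$ are mutually orthogonal idempotents whose complementary idempotent is $\kappa$, by the same argument used for $\iota_s\pi_s$.

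The main obstacle is the first identity $\tilde{\pi}_t\tilde{\iota}_s=\delta_{s,t}1_f$: unwinding the cups, caps and nested dots into bubbles and then resumming after the bubble slides is genuinely lengthy, and one must be careful with the fake bubbles (negative labels), which are only well defined because $2$ is invertible — the bubble slide formulas in Lemma~\ref{lem:bubbleslides} and Proposition~\ref{prop:down:bubbleslides} are exactly the tool that handles them, but keeping track of the ceiling-function coefficients $\lceil(s+1)/2\rceil$ appearing in the corollary after Lemma~\ref{lem:bubbleslides} requires some care. Everything else — the derivation of \eqref{k2} from orthogonality, and the deduction of $\tilde{\pi}_t\cdot\kappa=0$ — is formal once the first identity is in hand. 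Since these computations are direct analogues of ones already performed in the excerpt, I would present the first identity with the analogous chain of relation references and leave the verbatim bookkeeping to the reader, as is done for Lemma~\ref{lem:*} and Lemma~\ref{lem:**} themselves.
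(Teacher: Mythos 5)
Your proposal is correct and matches the argument the paper intends: the paper skips this proof precisely because the first identity is the same bubble computation as $\pi_s\cdot\iota_{s'}=\delta_{s,s'}$ in Lemma~\ref{lem:dec:eff} with the roles of top and bottom halves exchanged, and the second claim then follows formally from \eqref{k2} exactly as you write, $\tilde{\pi}_t\cdot\kappa=\tilde{\pi}_t-\sum_s(\tilde{\pi}_t\tilde{\iota}_s)\tilde{\pi}_s=0$. The only cosmetic remark is that the bubble slide formulas are not actually needed for the first identity (the original computation only uses adjunction to straighten the snake and then the orthogonality relation \eqref{bubble3}), and \eqref{k2} itself is obtained not by a new orthogonality argument but by regrouping the sum defining $\kappa_2-\kappa_3$ according to the dot on the bottom cup rather than the top cap.
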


%

Recall the diagrams $B_1, B_2, P_k, C_1, C_2$ and $I_k$, for $0\le k \le \lambda_\hf-1$, from 
\eqref{diag:BBP}--\eqref{diag:CCI}. 
We shall establish first a simpler version of the identity \eqref{eq:prop:jserre1}. 

\begin{prop}
\label{p:left-inv}
The following identity holds in $\mathfrak{\tilde U}^\jmath$:
\begin{align}
  \label{eq:inverse1}

			}
\end{align*}
Note that the third summand above equals $ \id_{\calE_{\hf} \calF^{(2)}_{\hf} 1_{\lambda}}$ and the second equals $0$.
Denote by $Y$ the first summand above. It remains to show that $ \kappa \cdot  Y=0$.
Note that $Y$ is of the form $ Y =\sum_{0\le t \le \lambda_{\hf}-3} \iota_t z_t $ for some $z_t$.
This implies by Lemma~\ref{lem:*} that $ \kappa \cdot Y=0$. The lemma follows.
\end{proof}
Therefore, we have completed the proof of Proposition~\ref{p:left-inv}.

\subsection{Proof of the identity \eqref{eq:prop:jserre1} }

Now we would like to modify some $2$-morphisms involved in Proposition~\ref{p:left-inv}.
Introduce

\[ 
A = 
		 \mathord{
			\begin{tikzpicture}[baseline=0,scale=0.5]
				\draw[<-, thick, darkred] (1, -1) -- (-1, 1);
				\draw[-, thick, darkred] (0, 1) to[out=-90, in=180] (0.5, 0.3);
				\draw[<-, thick, darkred] (0.5, 0.3) to[out=0, in=-90] (1, 1);
				\draw[->, thick, darkred] (0, -1) to[out=90, in=0] (- 0.5, -0.3);
				\draw[-, thick, darkred] (- 0.5, -0.3) to[out=180, in=90] (-1, -1);
				\node at (1.8, 0) {$\scriptstyle \lambda$};
			\end{tikzpicture}
			},
\qquad
B= 
 {\frac 12}\sum_{u+s + t = -3} \,
	\mathord{
		\begin{tikzpicture}[baseline=0]
			\draw[<-,thick,darkred] (0.4,-0.6) to[out=90, in=0] (.2,-0.3);
			\draw[-,thick,darkred] (.2,-0.3) to[out = 180, in = 90] (0,-.6);
  			 \node at (.35,-0.35) {$\color{darkred}\bullet$};
  			 \node at (0.5,-.35) {$\color{darkred}\scriptstyle{t}$};
  			\draw[-,thick,darkred] (.2,0.2) to[out=180,in=90] (0,0);
 			\draw[->,thick,darkred] (0.4,0) to[out=90,in=0] (.2,.2);
 			\draw[-,thick,darkred] (0,0) to[out=-90,in=180]  (.2,-0.2);
  			\draw[-,thick,darkred] (.2,-0.2) to[out=0,in=-90] node[near end, label={[shift={(0.2,-0.25)}]$\color{darkred}\scriptstyle{s}$}] {$\color{darkred} \bullet$}  (0.4,0);			
  			 \node at (0.8,0.05) {$\scriptstyle{\lambda}$};
			\draw[-,thick,darkred] (0.4,.6) to[out=-90, in=0] (.2,0.3);
			\draw[->,thick,darkred] (.2,0.3) to[out = -180, in = -90] (0,.6);
			\draw[->,thick,darkred](-.4,.6) to node[label={[shift={(-0.2,-.45)}]$\color{darkred}\scriptstyle{u}$}] {$\color{darkred} \bullet$} (-.4,-.6);
		\end{tikzpicture}
			},
\qquad 
C =
  \mathord{
			\begin{tikzpicture}[baseline=0,scale=0.5]
				\draw[->, thick, darkred] (1, 1) -- (-1, -1);
				\draw[->, thick, darkred] (0, -1) to[out=90, in=180] (0.5, -0.3);
				\draw[-, thick, darkred] (0.5, -0.3) to[out=0, in=90] (1, -1);
				\draw[-, thick, darkred] (0, 1) to[out=-90, in=0] (- 0.5, 0.3);
				\draw[<-, thick, darkred] (- 0.5, 0.3) to[out=180, in=-90] (-1, 1);
				\node at (1.8, 0) {$\scriptstyle \lambda$};
			\end{tikzpicture}
			},
\]
\\
\[
D= 
 {\frac 12}\sum_{u+s + t = -3} \,
	\mathord{
		\begin{tikzpicture}[baseline=0]
			\draw[<-,thick,darkred] (0.4,-0.6) to[out=90, in=0] (.2,-0.3);
			\draw[-,thick,darkred] (.2,-0.3) to[out = 180, in = 90] (0,-.6);
  			\draw[-,thick,darkred] (.2,0.2) to[out=180,in=90] (0,0);
 			\draw[->,thick,darkred] (0.4,0) to[out=90,in=0] (.2,.2);
 			\draw[-,thick,darkred] (0,0) to[out=-90,in=180]  (.2,-0.2);
  			\draw[-,thick,darkred] (.2,-0.2) to[out=0,in=-90] node[near end, label={[shift={(0.2,-0.25)}]$\color{darkred}\scriptstyle{s}$}] {$\color{darkred} \bullet$}  (0.4,0);			
  			 \node at (0.8,0.05) {$\scriptstyle{\lambda}$};
			\draw[-,thick,darkred] (0.4,.6) to[out=-90, in=0] (.2,0.3);
			\draw[->,thick,darkred] (.2,0.3) to[out = -180, in = -90] (0,.6);
   			\node at (0.4,0.45) {$\color{darkred}\bullet$};
   			\node at (0.6,0.45) {$\color{darkred}\scriptstyle{t}$};
			\draw[->,thick,darkred](-.4,.6) to node[label={[shift={(-0.2,-.45)}]$\color{darkred}\scriptstyle{u}$}] {$\color{darkred} \bullet$} (-.4,-.6);
		\end{tikzpicture}
			},
\qquad 
E = {\frac 12} \,
	\mathord{
		\begin{tikzpicture}[baseline=-4]
			 \node at (0.6,0.05) {$\scriptstyle{\lambda}$};
			\draw[<-,thick,darkred] (0.4,-0.5) to[out=90, in=0] (.2,-0.2);
			\draw[-,thick,darkred] (.2,-0.2) to[out = 180, in = 90] (0,-.5);
			\draw[-,thick,darkred] (0.4,.5) to[out=-90, in=0] (.2,0.2);
			\draw[->,thick,darkred] (.2,0.2) to[out = -180, in = -90] (0,.5);
			\draw[->,thick,darkred](-.3,.5) to  (-.3,-.5);
			\draw[-,thick,darkred] (-0.8,0.2) to[out=180,in=90] (-1,0);
 			\draw[->,thick,darkred] (-0.6,0) to[out=90,in=0] (-0.8,.2);
 			\draw[-,thick,darkred] (-1,0) to[out=-90,in=180]  (-0.8,-0.2);
  			\draw[-,thick,darkred] (-0.8,-0.2) to[out=0,in=-90]   (-0.6,0);	
		\end{tikzpicture}
			}
			-\,
			 {\frac 12} \,
	\mathord{
		\begin{tikzpicture}[baseline=-4]
			 \node at (0.6,0.05) {$\scriptstyle{\lambda}$};
			\draw[<-,thick,darkred] (0.4,-0.5) to[out=90, in=0] (.2,-0.2);
			\draw[-,thick,darkred] (.2,-0.2) to[out = 180, in = 90] (0,-.5);
			\draw[-,thick,darkred] (0.4,.5) to[out=-90, in=0] (.2,0.2);
			\draw[->,thick,darkred] (.2,0.2) to[out = -180, in = -90] (0,.5);
			\draw[->,thick,darkred](-.3,.5) to node {$\color{darkred} \bullet$} (-.3,-.5);
			\draw[-,thick,darkred] (-0.8,0.2) to[out=180,in=90] (-1,0);		
			\draw[->,thick,darkred] (-0.6,0) to[out=90,in=0] (-0.8,.2);
 			\draw[-,thick,darkred] (-1,0) to[out=-90,in=180] node[label={[shift={(-0.1,-.75)}]$\color{darkred}\scriptstyle{-1}$}] {$\color{darkred} \bullet$} (-0.8,-0.2);
  			\draw[-,thick,darkred] (-0.8,-0.2) to[out=0,in=-90]   (-0.6,0);	
		\end{tikzpicture}
			}.
\]

We define $I_{\lambda_{\hf} -1}' =(1-C+D) \cdot I_{\lambda_{\hf}-1}$.
It is easy to show that
\begin{equation}
\label{eq:I'}
\begin{split}
I_{\lambda_{\hf}-1}' &  =   {\frac 12}\sum_{s+t = \lambda_{\hf} -1 }
	\mathord{
		\begin{tikzpicture}[baseline=0]
  			\draw[-,thick,darkred] (.2,0.2) to[out=180,in=90] (0,0);
 			\draw[-,thick,darkred] (0,0) to[out=-90,in=180] (.2,-0.2);
  			\draw[-,thick,darkred] (.2,-0.2) to[out=0,in=-90]  node[label={[shift={(0.2,-0.75)}]$\color{darkred}\scriptstyle{-\lambda_{\hf}-2+t}$}] {$\color{darkred}\bullet$} (0.4,0);
			 \draw[->,thick,darkred] (0.4,0) to[out=90,in=0] (.2,.2);			
  			 \node at (0.8,0.05) {$\scriptstyle{\lambda}$};
			\draw[-,thick,darkred] (0.4,.6) to[out=-90, in=0] (.2,0.3);
			\draw[->,thick,darkred] (.2,0.3) to[out = -180, in = -90] (0,.6);
   			\node at (0.4,0.45) {$\color{darkred}\bullet$};
   			\node at (0.6,0.45) {$\color{darkred}\scriptstyle{s}$};
			\draw[->,thick,darkred](-.4,.6) to (-.4,-.6);
 		\end{tikzpicture}
			}
		-
	\mathord{
		\begin{tikzpicture}[baseline=0, scale=0.4]
			\draw[-, thick, darkred] (-1, 1) to[out=-90, in =180] (-0.5, 0);
			\draw[->, thick, darkred] (-0.5, 0) to[out=0, in=-90] (0,1);
			\draw[->, thick, darkred] (1,1) -- (1,-1);
			\node at (1.6, 1) {$\scriptstyle{\color{black} \lambda}$};
		\end{tikzpicture}
			}
		+ {\frac 12}\sum_{s+t +u  = \lambda_{\hf} -1 }
	\mathord{
		\begin{tikzpicture}[baseline=0]
  			\draw[-,thick,darkred] (.2,0.2) to[out=180,in=90] (0,0);
 			\draw[-,thick,darkred] (0,0) to[out=-90,in=180] (.2,-0.2);
  			\draw[-,thick,darkred] (.2,-0.2) to[out=0,in=-90]  node[label={[shift={(0.2,-0.75)}]$\color{darkred}\scriptstyle{-\lambda_{\hf}-2+t}$}] {$\color{darkred}\bullet$} (0.4,0);
			 \draw[->,thick,darkred] (0.4,0) to[out=90,in=0] (.2,.2);			
  			 \node at (0.8,0.05) {$\scriptstyle{\lambda}$};
			\draw[-,thick,darkred] (0.4,.6) to[out=-90, in=0] (.2,0.3);
			\draw[->,thick,darkred] (.2,0.3) to[out = -180, in = -90] (0,.6);
   			\node at (0.4,0.45) {$\color{darkred}\bullet$};
   			\node at (0.6,0.45) {$\color{darkred}\scriptstyle{s}$};
			\draw[->,thick,darkred](-.4,.6) --node[label={[shift={(-0.2, -0.2)}]$\scriptstyle u$}] {$\bullet$} (-.4,-.6);
 		\end{tikzpicture}
			}	
			\\
&= I_{\lambda_{\hf}-1} -I(2) +I(3).
\end{split}
\end{equation}
The second line above defines $I(2)$ and $I(3)$ as the second and third summands without signs in the first line; these notations will be used below. 

We also define
$P_0' = P_0 \cdot (1 -A +B -E).
$
Then we can show readily that

\begin{equation}
\label{eq:P'}
\begin{split}
P_{0}' 
&= \mathord{
		\begin{tikzpicture}[baseline=0,scale=0.4]
			\draw[->, thick, darkred] (-1, 1) -- (-1, -1);
			\draw[->, thick, darkred] (0, -1) to[out=90, in=180] (0.5, 0);
			\draw[-, thick, darkred] (0.5, 0) to[out=0, in=90] node[label={[shift={(0.1, -0.05)}]$\scriptstyle\lambda_{\hf} - 1$}] {$\color{darkred} \bullet$} (1, -1);
			\node at (1.8, 0) {$\scriptstyle \lambda$};
		\end{tikzpicture}
			}
		- 2 \,
		\mathord{
			\begin{tikzpicture}[baseline=0,scale=0.4]
				\draw[<-, thick, darkred] (-1, -1) to[out=90, in=180] (-0.5, 0);
				\draw[-, thick, darkred] (-0.5,0) to[out=0, in=90] (0, -1);
				\draw[->, thick, darkred] (1, 1) --node[label={[shift={(0.3,0)}]$\color{black}\scriptstyle \lambda$}] {} (1, -1);
			\end{tikzpicture}
				}
		+ \sum_{u+s+t= -3} 
	\mathord{
		\begin{tikzpicture}[baseline=-4]
			\draw[<-,thick,darkred] (0.4,-0.6) to[out=90, in=0]  node[label={[shift={(0.2, -0.45)}]$\color{darkred}\scriptstyle{t}$}] {$\color{darkred}\bullet$} (.2,-0.3);
			\draw[-,thick,darkred] (.2,-0.3) to[out = 180, in = 90] (0,-.6);
  			\draw[-,thick,darkred] (.2,0.2) to[out=180,in=90] (0,0);
 			 \draw[->,thick,darkred] (0.4,0) to[out=90,in=0] (.2,.2);
 			\draw[-,thick,darkred] (0,0) to[out=-90,in=180] (.2,-0.2);
  			\draw[-,thick,darkred] (.2,-0.2) to[out=0,in=-90] (0.4,0);			
  			 \node at (0.8,0.05) {$\scriptstyle{\lambda}$};
   			\node at (0,0) {$\color{darkred}\bullet$};
   			\node at (-0.2,0) {$\color{darkred}\scriptstyle{s}$};
			\draw[->,thick,darkred](-.4,.4) --node[label={[shift={(-0.2, -0.35)}]$\scriptstyle u$}] {$\color{darkred}\bullet$} (-.4,-.6);
		\end{tikzpicture}
			}
		+
	\mathord{
		\begin{tikzpicture}[baseline=-4]
			\draw[-,thick,darkred] (.2,0.2) to[out=180,in=90] (0,0);
 			\draw[-,thick,darkred] (0,0) to[out=-90,in=180] (.2,-0.2);
  			\draw[-,thick,darkred] (.2,-0.2) to[out=0,in=-90] node[label={[shift={(0, -0.75)}]$\scriptstyle {-1}$}] {$\color{darkred}\bullet$} (0.4,0);
			 \draw[->,thick,darkred] (0.4,0) to[out=90,in=0] (.2,.2);			
  			 \node at (1.1,0.05) {$\scriptstyle{\lambda}$};
			\draw[->, thick, darkred] (0.8,.4) --node {$\color{darkred}\bullet$} (0.8,-.6);
			\draw[<-,thick,darkred] (1.6,-0.6) to[out=90, in=0]  (1.4,-0.3);
			\draw[-,thick,darkred] (1.4,-0.3) to[out = 180, in = 90] (1.2,-.6);
		\end{tikzpicture}
			}
		- \,
			\mathord{
		\begin{tikzpicture}[baseline=-4]
			\draw[-,thick,darkred] (.2,0.2) to[out=180,in=90] (0,0);
 			\draw[-,thick,darkred] (0,0) to[out=-90,in=180] (.2,-0.2);
  			\draw[-,thick,darkred] (.2,-0.2) to[out=0,in=-90] (0.4,0);
			 \draw[->,thick,darkred] (0.4,0) to[out=90,in=0] (.2,.2);			
  			 \node at (1.1,0.05) {$\scriptstyle{\lambda}$};
			\draw[->, thick, darkred] (0.8,.4) -- (0.8,-.6);
			\draw[<-,thick,darkred] (1.6,-0.6) to[out=90, in=0]  (1.4,-0.3);
			\draw[-,thick,darkred] (1.4,-0.3) to[out = 180, in = 90] (1.2,-.6);
		\end{tikzpicture}
		}	
\\
& =P_0 -P(2) +P(3) -P(4) +P(5).
		\end{split}
\end{equation}
The second line above defines $P(2)$, $P(3)$, $P(4)$, and $P(5)$ as the second to fifth summands without signs in the first line. 

We would like to redo Proposition~\ref{p:left-inv} with $I_{\lambda_{\hf}-1}$ and $P_0$ replaced by
$I_{\lambda_{\hf} -1}'$ and $P_0'$, respectively. 

\begin{prop}
  \label{p:left-inv'}
The identity \eqref{eq:prop:jserre1} holds, that is, we have 
\begin{align*}
\begin{bmatrix}
B_1 \\ \kappa \cdot B_2 \\ P'_0 \\ \vdots \\ P_{\lambda_{\hf}-1}
\end{bmatrix}
\cdot
\begin{bmatrix}
C_1 & C_2 \cdot \kappa &I_0 & \cdots & I'_{\lambda_{\hf}-1} 
\end{bmatrix}
=\text{id}_{(\lambda_{\hf}+2) \times (\lambda_{\hf}+2)}. 
\end{align*}
\end{prop}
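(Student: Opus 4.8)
The plan is to deduce Proposition~\ref{p:left-inv'} from Proposition~\ref{p:left-inv} by a base-change (change-of-splitting) argument, rather than recomputing all the matrix entries from scratch. The key observation is that the modified morphisms differ from the originals only in the last row/column: we replaced $P_0$ by $P_0'=P_0\cdot(1-A+B-E)$ and $I_{\lambda_\hf-1}$ by $I_{\lambda_\hf-1}'=(1-C+D)\cdot I_{\lambda_\hf-1}$. So the new matrix product is obtained from the old one (which is the identity by Proposition~\ref{p:left-inv}) by the following operations: the last column of the row-vector $[C_1\ C_2\kappa\ I_0\ \cdots\ I_{\lambda_\hf-1}]$ gets left-multiplied by $(1-C+D)$, and the last row of the column-vector $[B_1;\kappa B_2;P_0;\cdots;P_{\lambda_\hf-1}]$ gets right-multiplied by $(1-A+B-E)$. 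The entries of the product that do not involve either the last row or the last column are therefore unchanged, so they already agree with the identity matrix by Proposition~\ref{p:left-inv}. What remains is to check: (i) the entries $P_0'\cdot C_1$, $P_0'\cdot C_2\kappa$, $P_0'\cdot I_k$ for $0\le k\le\lambda_\hf-2$ are still $0$; (ii) the entries $B_1\cdot I_{\lambda_\hf-1}'$, $(\kappa B_2)\cdot I_{\lambda_\hf-1}'$, $P_j\cdot I_{\lambda_\hf-1}'$ for $1\le j\le\lambda_\hf-1$ are still $0$; and (iii) the corner entry $P_0'\cdot I_{\lambda_\hf-1}'=1_f$.

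First I would record the degree-zero and nilpotency bookkeeping: $A,B,C,D,E$ are all degree-zero $2$-morphisms built from cups, caps, crossings and (fake) bubbles, so that $P_0',I_{\lambda_\hf-1}'$ live in the correct graded Hom-spaces; this is a routine check using the degree table and the bubble-slide formulas of Lemma~\ref{lem:bubbleslides}. Then for (i) and (ii) I would argue structurally: by Lemmas~\ref{lem:BC1}--\ref{lem:B2C2}, each of the products $P_0\cdot C_1$, $P_0\cdot C_2\kappa$, $P_0\cdot I_k$ ($k\le\lambda_\hf-2$), $B_1\cdot I_{\lambda_\hf-1}$, $(\kappa B_2)\cdot I_{\lambda_\hf-1}$, $P_j\cdot I_{\lambda_\hf-1}$ ($j\ge 1$) already vanishes; since post-/pre-composing a zero morphism with $(1-A+B-E)$ or $(1-C+D)$ stays zero, these all remain zero with no further work. (One has to be slightly careful that $P_0'$ really equals $P_0\cdot(1-A+B-E)$ and $I'_{\lambda_\hf-1}$ really equals $(1-C+D)\cdot I_{\lambda_\hf-1}$; the explicit diagrammatic expansions \eqref{eq:I'} and \eqref{eq:P'} are exactly the identities needed to justify this, and I would verify them by expanding $A,B,C,D,E$ and using adjunction \eqref{adj}, the quiver-Hecke relations \eqref{qha}--\eqref{qhalast}, and the bubble relations \eqref{bubble1}--\eqref{bubble3}.)

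The main obstacle — and the only place where genuine computation is unavoidable — is step (iii): showing $P_0'\cdot I_{\lambda_\hf-1}'=1_f$. We know from Lemma~\ref{lem:PI} that $P_0\cdot I_{\lambda_\hf-1}=1_f$, so expanding $P_0'\cdot I_{\lambda_\hf-1}'=P_0\cdot(1-A+B-E)(1-C+D)\cdot I_{\lambda_\hf-1}$ reduces the claim to showing that all the cross terms $P_0\cdot(\text{non-identity word in }A,B,C,D,E)\cdot I_{\lambda_\hf-1}$ cancel in pairs or vanish individually. Here I would proceed by the same techniques as in the proof of Lemma~\ref{lem:dec:eff} and the bubble-slide lemma: slide bubbles to the far left using Lemma~\ref{lem:bubbleslides} and Proposition~\ref{prop:down:bubbleslides}, use the vanishing of negative-degree bubbles \eqref{bubble1}--\eqref{bubble2}, use the recursive definition of fake bubbles \eqref{eq:deffake}, and repeatedly apply \eqref{nodal} and the bicross relations \eqref{j-bicross down}--\eqref{j-bicross up} to reduce all terms to bubbles times identity strands. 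This is lengthy but mechanical, and it is deferred (together with the proof of \eqref{eq:prop:jserre2}) to the body of Appendix~\ref{sec:jserre}; the point of the present proposition is precisely that once Proposition~\ref{p:left-inv} is in hand, the passage to the ``corrected'' splitting maps $P_0',I_{\lambda_\hf-1}'$ — which are the ones compatible with the triple-point relation \eqref{Pi=1} and hence with $\dot\frakU^\jmath$ rather than merely $\widetilde{\frakU}^\jmath$ — costs only this finite, localized computation.
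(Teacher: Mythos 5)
There is a genuine gap, and it comes from a bookkeeping error about which entries of the $(\lambda_\hf+2)\times(\lambda_\hf+2)$ matrix are diagonal. By Lemma~\ref{lem:PI} the pairing is $P_k\cdot I_\ell=\delta_{k,\ell}1_f$, so in the product the diagonal entries are $B_1C_1$, $\kappa B_2C_2\kappa$, $P_0'\cdot I_0$, $P_1\cdot I_1$, \dots, $P_{\lambda_\hf-1}\cdot I_{\lambda_\hf-1}'$, while $P_0'\cdot I_{\lambda_\hf-1}'$ is the \emph{off-diagonal} $(3,\lambda_\hf+2)$ entry and must vanish. Your step (iii) aims at the wrong value: the required identity is $P_0'\cdot I_{\lambda_\hf-1}'=0$ (this is exactly Lemma~\ref{lem:PI'} of the paper), not $1_f$; and your starting point ``$P_0\cdot I_{\lambda_\hf-1}=1_f$'' is false, since Lemma~\ref{lem:PI} gives $P_0\cdot I_{\lambda_\hf-1}=0$ because $\lambda_\hf\geq 2$. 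Symmetrically, your lists (i) and (ii) wrongly require $P_0'\cdot I_0=0$ and $P_{\lambda_\hf-1}\cdot I_{\lambda_\hf-1}'=0$; both are diagonal entries that must equal $1_f$.

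The second problem is the ``no further work'' argument for the remaining entries. Since $P_0'=P_0\cdot(1-A+B-E)$, the entry $P_0'\cdot C_1$ equals $P_0\cdot(1-A+B-E)\cdot C_1$: the correction factor sits \emph{between} $P_0$ and $C_1$, so this is not the zero morphism $P_0\cdot C_1$ pre- or post-composed with anything, and its vanishing does not follow formally from $P_0\cdot C_1=0$. The same applies to $P_0'\cdot C_2\kappa$, $P_0'\cdot I_k$, $B_1\cdot I'_{\lambda_\hf-1}$, $\kappa B_2\cdot I'_{\lambda_\hf-1}$ and $P_k\cdot I'_{\lambda_\hf-1}$. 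Each affected entry genuinely has to be recomputed by expanding $P_0'=P_0-P(2)+P(3)-P(4)+P(5)$ and $I'_{\lambda_\hf-1}=I_{\lambda_\hf-1}-I(2)+I(3)$ and checking the correction terms one by one (using the vanishing of negative-degree bubbles, the bubble slides of Lemma~\ref{lem:bubbleslides}, and Lemmas~\ref{lem:*}--\ref{lem:**}); this is precisely the content of Lemmas~\ref{lem:BI'}--\ref{lem:PI'} in the paper, and it is not dispensable. Your overall strategy --- reduce to Proposition~\ref{p:left-inv} and only treat the entries touching $P_0'$ or $I'_{\lambda_\hf-1}$ --- is the paper's strategy, but as written the reduction both mislabels which entries need which value and declares free of charge the computations that actually constitute the proof.
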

Thanks to Proposition~\ref{p:left-inv}, we only need to consider the relations involving $P'_0$ or $I'_{\lambda_{\hf}-1}$. 
The computation is divided into Lemmas~\ref{lem:BI'}--\ref{lem:PI'} below.
\begin{lemma}
 \label{lem:BI'}
We have
$B_1 \cdot I_{\lambda_{\hf}-1}' =0$.
\end{lemma}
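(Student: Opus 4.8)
\textbf{Proof proposal for Lemma~\ref{lem:BI'} (the identity $B_1 \cdot I'_{\lambda_{\hf}-1}=0$).}

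The plan is to compute $B_1 \cdot I'_{\lambda_{\hf}-1}$ directly, using the alternative expansion \eqref{eq:I'} of $I'_{\lambda_{\hf}-1}$ as $I_{\lambda_{\hf}-1} - I(2) + I(3)$, where $I(2)$ is the single cup--cap-on-a-strand diagram and $I(3)$ is the double-bubble-with-dots term. By linearity it suffices to compute the three products $B_1 \cdot I_{\lambda_{\hf}-1}$, $B_1 \cdot I(2)$, and $B_1 \cdot I(3)$ separately and check that they cancel in the combination. The first product $B_1 \cdot I_{\lambda_{\hf}-1}$ was already shown to vanish in Lemma~\ref{lem:B1I} (that argument produces, after sliding the cup through the crossing via \eqref{bubble3}, a bubble of negative degree which is killed by \eqref{bubble2}). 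So the content of the present lemma is to show that $B_1 \cdot I(2) = B_1 \cdot I(3)$.

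The order of steps I would carry out: first, stack $B_1$ (the negatively-oriented double crossing with one dot, from \eqref{diag:BBP}) on top of $I(2)$, resolve the resulting $\uparrow\uparrow$ crossings using the quiver Hecke relation for $i=j=\hf$ (the first branch of \eqref{qha} and \eqref{now}), and then pull the cup and cap past what remains using the adjunction relations \eqref{adj} and the nodal/curl relations \eqref{nodal}; this should leave a curl which re-expands, via \eqref{nodal} again, into a sum of dotted-bubble times dotted-strand terms. Second, do the same for $B_1 \cdot I(3)$: here one of the dotted bubbles in $I(3)$ is a genuine (real or fake) bubble which can be slid across the vertical strand using the bubble slide formulas of Lemma~\ref{lem:bubbleslides} (specifically \eqref{eq:bubbleslides1}), converting the double-bubble expression into the same shape as what came out of the $B_1\cdot I(2)$ computation. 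Third, match the two resulting linear combinations of normal-form diagrams term by term; this is where the explicit shifts $-\lambda_{\hf}-2+t$ etc. in the definitions must be tracked carefully, and where the factor $\tfrac12$ in $I(2),I(3)$ and the coefficient $2$ appearing in bubble relation \eqref{bubble1} interact.

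The main obstacle will be the bookkeeping in the third step: after resolving crossings and curls both products become sums indexed by a dot-distribution $u+s+t=\text{const}$, and one must confirm that the index ranges and the multiplicities (including the telescoping produced by the bubble-slide coefficients $s+1$ in \eqref{eq:bubbleslides1}) agree exactly. I expect that the genuinely nontrivial input is precisely the bubble slide formula \eqref{eq:bubbleslides1} together with the vanishing \eqref{bubble1} of bubbles of degree below $\langle\thal^\vee_{\hf},\lambda\rangle-1$; once those are applied, the remaining identity is a formal rearrangement of sums. As with the other lemmas in this appendix, the computation is elementary but lengthy, so in the write-up I would present the reduction to "$B_1\cdot I(2)=B_1\cdot I(3)$", display the two key intermediate normal forms, and indicate that the equality of coefficients is the telescoping consequence of \eqref{eq:bubbleslides1}, leaving the fully spelled-out diagram chase to the reader in the spirit of the surrounding text.
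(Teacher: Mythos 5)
Your starting point agrees with the paper: decompose $I'_{\lambda_{\hf}-1}=I_{\lambda_{\hf}-1}-I(2)+I(3)$ via \eqref{eq:I'} and dispose of the first summand by Lemma~\ref{lem:B1I}. But from there your plan rests on a wrong prediction of what the remaining two products look like. You assume $B_1\cdot I(2)$ and $B_1\cdot I(3)$ are nonzero sums of normal-form diagrams that must cancel against each other, and you propose to prove $B_1\cdot I(2)=B_1\cdot I(3)$ by a term-by-term matching driven by the bubble slides \eqref{eq:bubbleslides1}. In fact each product vanishes on its own: $B_1\cdot I(3)=0$ by exactly the same mechanism as Lemma~\ref{lem:B1I} (after absorbing the cup into the crossings one is left with a dotted counterclockwise bubble of negative degree, killed by \eqref{bubble2}), and $B_1\cdot I(2)=0$ by a one-step diagram manipulation --- composing the cup of $I(2)$ with the crossings of $B_1$ yields a single configuration that is identically zero. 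No bubble slide, no telescoping, and no matching of coefficients ever occurs; there is nothing to cancel.

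The gap is therefore twofold. First, the guiding claim that ``the genuinely nontrivial input is precisely the bubble slide formula \eqref{eq:bubbleslides1}'' is false for this lemma, so the machinery you plan to deploy in your third step would never produce the cancellation you envision. Second, the actual content of the lemma --- the verification that each of $B_1\cdot I(2)$ and $B_1\cdot I(3)$ vanishes --- is exactly the part you defer (``leaving the fully spelled-out diagram chase to the reader''). A proof must exhibit these two short computations; as written, the proposal asserts an identity $B_1\cdot I(2)=B_1\cdot I(3)$ that happens to be true only because both sides are zero, without establishing either vanishing.
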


\begin{proof}
Recall from \eqref{eq:I'} that $I_{\lambda_{\hf}-1}' = I_{\lambda_{\hf}-1} -I(2) +I(3)$.
By Lemma~\ref{lem:B1I} that $B_1 \cdot I_{\lambda_{\hf}-1}=0$. By the same argument as for Lemma~\ref{lem:B1I},
we show that $B_1 \cdot I(3)=0$. Finally, we have
\begin{align*}
B_1 \cdot I(2) = \mathord{

			}.
\end{align*}
	\end{enumerate}
\end{lemma}

\begin{proof}
The first four items are straightforward. Here we show the computation for  (e). 
We remind the reader that we shall use the bubble slides lemma (Lemma~\ref{lem:bubbleslides}) extensively here. We have 
\begin{align*}
&C_2 \cdot \kappa_2 \cdot  B_2 - C_2 \cdot  \kappa_3 \cdot B_2 \\
 = & 
		\,
			 {\frac 12} \sum_{s+t+v =-2}\,
				\mathord{

							}.
				\]
\end{enumerate}
\end{lemma}

Now combining Lemma~\ref{lem:CB} and Lemma~\ref{lem:IP}
gives us Proposition~\ref{prop:invR:2}. 
Finally Proposition~\ref{prop:jserre} follows by Proposition~\ref{p:left-inv'} and Proposition~\ref{prop:invR:2}. 
This completes the categorification of the $\jmath$Serre relations.

\vspace{5mm}


\end{document}